\definecolor{chianti}{rgb}{0.6,0,0}
\definecolor{meretale}{rgb}{0,0,.6}
\definecolor{leaf}{rgb}{0,.35,0}
\newtheorem{theorem}{Theorem}[section]
\newtheorem{lemma}[theorem]{Lemma}
\newtheorem{corollary}[theorem]{Corollary}
\newtheorem{proposition}[theorem]{Proposition}
\theoremstyle{definition}
\newtheorem{definition}[theorem]{Definition}
\numberwithin{equation}{theorem}
\newenvironment{remark}{\pushQED{\qed}\remarkx}{\popQED\endremarkx}
\def\ara{\operatorname{ara}}
\def\cptdim{\operatorname{cptdim}}
\def\height{\operatorname{ht}}
\def\lcd{\operatorname{lcd}}
\def\rad{\operatorname{rad}}
\def\ker{\operatorname{ker}}
\def\image{\operatorname{image}}
\def\Pf{\operatorname{Pf}}
\def\rank{\operatorname{rank}}
\def\sgn{\operatorname{sgn}}
\def\tr{{\operatorname{{tr}}}}
\def\Gr{\operatorname{Gr}}
\def\GL{\operatorname{GL}}
\def\SL{\operatorname{SL}}
\def\Sp{\operatorname{Sp}}
\def\Ort{\operatorname{O}}
\def\PosSymR{\operatorname{PosSymR}}
\def\Hom{\operatorname{Hom}}
\def\Spec{\operatorname{Spec}}
\def\Supp{\operatorname{Supp}}
\def\Alt{\operatorname{Alt}}
\def\pairs{\operatorname{P}}
\def\Sym{\operatorname{Sym}}
\def\US{{\operatorname{US}}}
\def\Var{\operatorname{Var}}
\def\fraka{\mathfrak{a}}
\def\frakb{\mathfrak{b}}
\def\frakm{\mathfrak{m}}
\def\frakp{\mathfrak{p}}
\def\frakA{\mathfrak{A}}
\def\frakB{\mathfrak{B}}
\def\frakC{\mathfrak{C}}
\def\frakP{\mathfrak{P}}
\def\frakQ{\mathfrak{Q}}
\def\frakS{\mathfrak{S}}
\def\AA{\mathbb{A}}
\def\CC{\mathbb{C}}
\def\FF{\mathbb{F}}
\def\FFp{\mathbb{F}_{\!p}}
\def\KK{\mathbb{K}}
\def\LL{\mathbb{L}}
\def\QQ{\mathbb{Q}}
\def\RR{\mathbb{R}}
\def\SS{\mathbb{S}}
\def\ZZ{\mathbb{Z}}
\def\one{\mathbbm{1}}
\def\calF{\mathcal{F}}
\def\calG{\mathcal{G}}
\def\calL{\mathcal{L}}
\def\calP{\mathcal{P}}
\def\calS{\mathcal{S}}
\def\eps{\varepsilon}
\def\ge{\geqslant}
\def\le{\leqslant}
\def\phi{\varphi}
\def\bar{\overline}
\def\tilde{\widetilde}
\def\to{\longrightarrow}
\def\mapsto{\longmapsto}
\def\into{\lhook\joinrel\longrightarrow}
\def\mapsfrom{\mathrel{\reflectbox{\ensuremath{\mapsto}}}}
\def\comp{\mathrm{c}}
\newcommand\et{\textrm{{\rm \'et}}}
\newcommand\sing{{\textrm{{\rm sing}}}}
\newcommand{\AN}{{\rm (AN)}\xspace}
\newcommand{\ET}{{\rm (ET)}\xspace}
\begin{document}
\title[The arithmetic rank of determinantal nullcones]{The arithmetic rank of determinantal nullcones}

\author{Jack Jeffries}
\address{Department of Mathematics, University of Nebraska, 203 Avery Hall, Lincoln, NE-68588, USA}
\email{jack.jeffries@unl.edu}

\author{Vaibhav Pandey}
\address{Department of Mathematics, Purdue University, 150 N University St., West Lafayette, IN~47907, USA}
\email{pandey94@purdue.edu}

\author{Anurag K. Singh}
\address{Department of Mathematics, University of Utah, 155 South 1400 East, Salt Lake City, UT~84112, USA}
\email{singh@math.utah.edu}

\author{Uli Walther}
\address{Department of Mathematics, Purdue University, 150 N University St., West Lafayette, IN~47907, USA}
\email{walther@math.purdue.edu}

\thanks{J.J. was supported by NSF CAREER award DMS 2044833, V.P. by AMS-Simons travel grant ASTG-23-284908, A.K.S. by NSF grants DMS~2101671 and DMS~2349623, and U.W. by NSF grant DMS~2100288 and Simons grant~SFI-MPS-TSM-00012928. This material is based upon work supported by the National Science Foundation under grant DMS~1928930 and by the Alfred P. Sloan Foundation under grant G-2021-16778, while the authors were in residence at the Simons Laufer Mathematical Sciences Institute (formerly MSRI) in Berkeley, California, during the Spring 2024 semester.}

\subjclass[2020]{Primary 13A50; Secondary 13C40, 13D45, 13F20, 14F20, 55N10.}

\begin{abstract}
We compute the arithmetic rank as well as the local/\'etale cohomological dimension of nullcone ideals arising from the classical actions of the symplectic group, the general linear group, and the orthogonal group. We use these calculations to establish striking vanishing results for local cohomology modules supported at these nullcone ideals; this is achieved via a careful analysis of the critical local cohomology modules. The vanishing theorems that we prove are sharp in various respects.
\end{abstract}
\maketitle

\setcounter{tocdepth}{1}
\tableofcontents

%%%%%%%%%%%%%%%%%%%%%%%%%%%%%%%%%%%%%%%%%%%%%%%%%%%%%%%%%%%%%%%%%%%%%%%%
\section{Introduction}
%%%%%%%%%%%%%%%%%%%%%%%%%%%%%%%%%%%%%%%%%%%%%%%%%%%%%%%%%%%%%%%%%%%%%%%%

Consider a polynomial ring $S$ over a field $\KK$, and a group $G$ acting on $S$ via degree-preserving $\KK$-algebra automorphisms. By the \emph{nullcone ideal} of the action, we mean the expansion of the homogeneous maximal ideal of the invariant ring $S^G$ to the polynomial ring~$S$. The notion arises at least as far back as Hilbert's proof of the finite generation of invariant rings~\cite{Hilbert}, and has been studied extensively e.g., \cite{Hesselink, HJPS, Kraft-Schwarz, Kraft-Wallach, Lorincz:symplectic, PTW, Schwarz}. For classical invariant rings of characteristic zero, work of Kraft and Schwarz records precisely when the nullcone ideal is radical or prime~\cite[Theorem~9.1]{Kraft-Schwarz}; the positive characteristic case is settled in~\cite{HJPS}, where it is also determined precisely when the nullcone ideal is perfect, i.e., when it defines a Cohen--Macaulay ring --- for each of the classical group actions, independent of the characteristic, it turns out that the minimal primes of the nullcone ideal are perfect. The $F$-regularity property is investigated in~\cite{PTW} and~\cite{Lorincz:symplectic}.

Motivated by the Nullstellensatz, the \emph{arithmetic rank} of an ideal $\fraka$ is the least number of elements required to generate $\fraka$ up to taking radicals. This is often a notoriously difficult invariant to compute, with some innocuous looking examples remaining a challenge for over sixty years, e.g.,~\cite{Hartshorne:stci}. Our paper begins with the observation that for the action of a linearly reductive group, the arithmetic rank of the nullcone ideal is readily determined:

\begin{theorem}
\label{theorem:ara:intro}
Let $S$ be a polynomial ring over a field $\KK$, and let $G$ be a linearly reductive group acting on $S$ by degree-preserving $\KK$-algebra automorphisms. Let $S^G$ denote the ring of invariants, and $\frakm_{S^G}$ the homogeneous maximal ideal of $S^G$. Then the nullcone ideal~$\frakm_{S^G}S$ has arithmetic rank $\dim S^G$.
\end{theorem}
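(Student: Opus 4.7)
My plan is to prove the two inequalities $\ara(\frakm_{S^G}S) \le \dim S^G$ and $\ara(\frakm_{S^G}S) \ge \dim S^G$, relying on two classical inputs about linearly reductive actions: Hilbert's finiteness theorem, which makes $S^G$ a finitely generated graded $\KK$-algebra, and the Reynolds operator, which splits the inclusion $S^G \into S$ as $S^G$-modules. Write $d := \dim S^G$.

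For the upper bound, applying Noether normalization to $S^G$ produces a homogeneous system of parameters $f_1,\ldots,f_d \in \frakm_{S^G}$, so that $\frakm_{S^G}^N \subseteq (f_1,\ldots,f_d)$ in $S^G$ for some $N$. Extending to $S$ yields $(\frakm_{S^G}S)^N \subseteq (f_1,\ldots,f_d)S \subseteq \frakm_{S^G}S$, whence $\rad(\frakm_{S^G}S) = \rad((f_1,\ldots,f_d)S)$. Therefore $\ara(\frakm_{S^G}S) \le d$.

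For the lower bound, the idea is to use the inequality $\ara(\fraka) \ge \lcd(\fraka, S)$ and to exhibit $H^d_{\frakm_{S^G}S}(S) \ne 0$. Fix generators $g_1,\ldots,g_k$ of $\frakm_{S^G}$ in $S^G$. The Reynolds operator makes $S^G$ a direct summand of $S$ as $S^G$-modules, so the \v{C}ech complex $C^\bullet(g_1,\ldots,g_k;S^G)$ is a direct summand of $C^\bullet(g_1,\ldots,g_k;S)$ as complexes of $S^G$-modules. Passing to cohomology, $H^d_{\frakm_{S^G}}(S^G)$ is a direct summand of $H^d_{\frakm_{S^G}S}(S)$; the former is nonzero by Grothendieck's non-vanishing theorem applied to the graded local ring $S^G$ of dimension $d$, so the latter is nonzero as well.

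The argument is short and I do not anticipate any serious obstacle. The only point worth checking carefully is that the $S^G$-module splitting of $S^G \into S$ really does propagate to local cohomology; this is transparent at the \v{C}ech-complex level, since the \v{C}ech complex over $S$ on elements of $S^G$ is obtained by tensoring its $S^G$-analog with $S$, a split $S^G$-module inclusion remains split after such a tensor degreewise, and split injections then survive the passage to cohomology.
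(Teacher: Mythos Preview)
Your proof is correct and follows essentially the same approach as the paper: both use a homogeneous system of parameters for $S^G$ to get the upper bound, and both use the $S^G$-splitting of $S^G\hookrightarrow S$ (the paper phrases this as purity) to transfer the nonvanishing of $H^d_{\frakm_{S^G}}(S^G)$ to $H^d_{\frakm_{S^G}S}(S)$ for the lower bound. The only cosmetic difference is that the paper writes the key step as $H^d_{\frakm_{S^G}}(S^G)\otimes_{S^G} S = H^d_{\frakm_{S^G}S}(S)$ (using right-exactness at the top \v Cech degree) rather than your direct-summand argument, but the content is the same.
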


The proof is so elementary that we present it right away, though some notation and background is provided later in this section.

\begin{proof}
Set $R\colonequals S^G$ and $d\colonequals\dim R$. The homogeneous maximal ideal $\frakm_R$ of $R$ may be generated up to radical by $d$ elements, namely by a homogeneous system of parameters for~$R$. This gives us an upper bound for the arithmetic rank of the nullcone,
\[
\ara(\frakm_RS)\ \le\ d.
\]
Since $G$ is assumed to be linearly reductive, the inclusion $R\into S$ is pure, and so
\begin{equation}
\label{equation:theorem:ara:intro}
H^d_{\frakm_R}(R)\otimes_RS\ =\ H^d_{\frakm_R}(S)\ =\ H^d_{\frakm_RS}(S)
\end{equation}
is nonzero. But then $\ara(\frakm_RS)\ge d$, see~\cite[Proposition~9.12]{24hours}, or the discussion later in this section.
\end{proof}

Theorem~\ref{theorem:ara:intro} applies in the case of classical invariant rings of characteristic zero, i.e., when $G$ is the general linear group, the symplectic group, the orthogonal group, or the special linear group, over a field of characteristic zero, and the action is as in Weyl's book: for the general linear group, consider a direct sum of copies of the standard representation and copies of the dual; in the other cases take copies of the standard representation. The invariant rings, respectively, are determinantal rings, rings defined by Pfaffians of alternating matrices, symmetric determinantal rings, and the Pl\"ucker coordinate rings of Grassmannians. It is the nullcones of these actions that have been studied extensively in~\cite{Kraft-Schwarz, HJPS, Lorincz:symplectic, PTW}. One of the main goals of the present paper is to determine the arithmetic rank of the corresponding nullcone ideals in the case of positive characteristic; the issue is that the classical groups are typically \emph{not} linearly reductive in positive characteristic, and the inclusion $S^G\into S$ is typically no longer pure,~\cite[Theorem~1.1]{HJPS}. Indeed, the local cohomology obstruction~\eqref{equation:theorem:ara:intro} vanishes, and the lower bound on arithmetic rank is instead obtained using \'etale cohomology.

The case of the special linear group---with the invariant rings being the Pl\"ucker coordinate rings of Grassmannians---is subsumed by the work of Bruns and Schw\"anzl: Let $S\colonequals\KK[Y]$ be a polynomial ring, where $Y$ is a $t\times n$ matrix of indeterminates. Consider the action of $G\colonequals\SL_t(\KK)$ on $S$, where
\[
M\colon Y\mapsto MY\quad\text{ for }\ M\in\SL_{t}(\KK).
\]
When $\KK$ is infinite, the invariant ring $R\colonequals S^G$ is the $\KK$-algebra generated by the size $t$ minors of $Y$, so the nullcone ideal $\frakm_RS$ is the determinantal ideal $I_t(Y)S$. By \cite[Theorem~2]{Bruns-Schwanzl}, if $t\le n$, the arithmetic rank of this ideal is
\[
\ara(\frakm_RS)\ =\ nt-t^2+1\ =\ \dim R.
\]
The following theorem summarizes our results on the arithmetic rank of the nullcone ideals~$\frakm_RS\subseteq S$ for the other classical invariant rings; the rings $R$ in cases (a), (b), (c), are, respectively, Pfaffian determinantal rings, determinantal rings, and symmetric determinantal rings. When the field $\KK$ is infinite, these arise as invariant rings for the actions of the symplectic group, the general linear group, and the orthogonal group, described earlier.

\begin{theorem}
Let $\KK$ be a field of characteristic other than two, and let $R\subseteq S$ denote one of the following inclusions:
\begin{enumerate}[\quad\rm(a)]
\item $\KK[Y^\tr \Omega Y] \subseteq \KK[Y]$, where $Y$ is a $2t\times n$ matrix of indeterminates, and $\Omega$ is as in~\eqref{equation:omega};

\item $\KK[YZ] \subseteq \KK[Y,Z]$, where $Y$ and $Z$ are $m\times t$ and $t\times n$ matrices of indeterminates;

\item $\KK[Y^\tr Y] \subseteq \KK[Y]$, where $Y$ is a $t\times n$ matrix of indeterminates.
\end{enumerate}
Let $\frakm_R$ denote the homogeneous maximal ideal of $R$. Then, in each of the cases above, the nullcone ideal~$\frakm_RS$ has arithmetic rank $\dim R$.

Let $\frakm_S$ denote the homogeneous maximal ideal of the polynomial ring $S$. If $\KK$ has characteristic zero, then there exists a degree-preserving $S$-module isomorphism
\[
H^{\dim R}_{\frakm_RS}(S)\ \cong\ H^{\dim S}_{\frakm_S}(S),
\]
provided that $2t+1\le n$ in case \textrm{(a)}, $1<t<\min\{m,n\}$ in case \textrm{(b)}, or $3\le t\le n$ in case \textrm{(c)}.
\end{theorem}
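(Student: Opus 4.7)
The plan has two parts. For the arithmetic rank, the upper bound $\ara(\frakm_RS)\le\dim R$ holds in every characteristic by the argument of Theorem~\ref{theorem:ara:intro}: a homogeneous system of parameters $\theta_1,\dots,\theta_d$ of $R$ with $d=\dim R$ generates $\frakm_R$ up to radical in $R$, and the same elements then generate $\frakm_RS$ up to radical in $S$. For the matching lower bound in positive characteristic, the linear-reductivity argument of Theorem~\ref{theorem:ara:intro} is unavailable since $R\to S$ typically fails to be pure. The plan is to replace the local-cohomology witness by its étale analogue: for $\ell\ne\operatorname{char}\KK$,
\[
\ara(\fraka)\ \ge\ \max\bigl\{i : H^i_{V(\fraka),\et}(\Spec S,\ZZ/\ell\ZZ)\ne 0\bigr\},
\]
and to produce a nonzero class of degree $\dim R$ supported along each nullcone. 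Since every nullcone in (a)--(c) is a $\KK^\times$-stable cone, projectivizing and applying the Gysin/localization long exact sequence reduces the computation to the $\ell$-adic cohomology of the projectivized nullcone; in each case the latter fibers over a classical homogeneous space — a symplectic-isotropic Grassmannian in (a), a Grassmannian or flag variety in (b), an orthogonal-isotropic Grassmannian in (c) — whose cohomology is computable via Schubert calculus. Calibrating the cohomological degree against the classical dimension count for $\dim R$ yields the required nonvanishing uniformly in characteristic $\ne 2$.

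For the characteristic-zero isomorphism, purity of $R\to S$ supplies an injection $H^{\dim R}_{\frakm_R}(R)\hookrightarrow H^{\dim R}_{\frakm_RS}(S)$, giving nonvanishing of the right-hand side. The decisive step is to show that under the stated numerical hypotheses, $H^{\dim R}_{\frakm_RS}(S)$ has support equal to $\{\frakm_S\}$. In characteristic zero this module is naturally a holonomic $D_S$-module, and I would analyze its composition factors as a $G$-equivariant holonomic $D_S$-module, verifying that in the specified ranges the only factor that can appear is $E_S(\KK)$. Concretely, for each $\frakp\in V(\frakm_RS)$ strictly contained in $\frakm_S$, one takes a transverse slice at a generic point of the corresponding $G$-orbit stratum of the nullcone and checks that the localized top local cohomology vanishes; the bounds $2t+1\le n$, $1<t<\min\{m,n\}$, and $3\le t\le n$ are precisely the ranges in which no intermediate orbit obstructs this vanishing.

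Once support is pinned to $\{\frakm_S\}$, the module $H^{\dim R}_{\frakm_RS}(S)$ is a holonomic $D_S$-module supported at the origin, hence isomorphic to a direct sum of copies of the unique such simple module $E_S(\KK)=H^{\dim S}_{\frakm_S}(S)$. A socle-dimension or graded Hilbert-series comparison then pins the multiplicity to $1$ and the internal degree shift to $0$, producing the asserted degree-preserving isomorphism. The main obstacle I anticipate is the orbit-stratified support analysis: verifying, under the stated numerical conditions and in parallel across all three classical cases, that no non-origin orbit stratum of the nullcone contributes to the top local cohomology. This is where the sharp numerical restrictions in the hypothesis must be used — they are presumably the exact ranges isolating the ``generic'' orbit structure in which the transverse local cohomology collapses.
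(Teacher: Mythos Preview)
Your overall architecture is right---upper bound from a system of parameters, lower bound from \'etale cohomology of the complement, and for the isomorphism a support-at-the-origin argument followed by Kashiwara's equivalence---but two load-bearing steps are not carried out the way the paper does, and your sketches for them do not obviously work.

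\textbf{The \'etale lower bound.} Your reduction ``projectivize, then the projectivized nullcone fibers over a classical homogeneous space'' is not correct as stated. The nullcone is singular and, in case~(b), not even equidimensional: its minimal primes are the variety-of-complexes ideals $\frakp_{i,t-i}$ of different heights. There is no single fibration over an isotropic Grassmannian. What the paper actually does (Sections~6--8) is stratify the ambient matrix space by the rank of $Y^\tr\Omega Y$ (resp.\ $YZ$, $Y^\tr Y$), build explicit Zariski/\'etale locally trivial fiber bundles for each stratum $X^k$ over spaces like $\Sp(2t,2k)$, $\Alt(2k)$, $\pairs(t,k)$, $\Ort(t,k)$, $\Sym(k)$, compute $\cptdim X^k$ via Leray--Serre, and then feed the whole filtration into Lemma~\ref{lemma:les:induction} to extract $\cptdim X^0$. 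Poincar\'e duality and the subspace sequence then give the required nonvanishing of $H^i_\et(\KK^d\smallsetminus V,\ZZ/2)$ at $i=d+\dim R-1$. Schubert calculus on a single homogeneous base does not substitute for this.

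\textbf{Multiplicity one.} Once you know $H^{\dim R}_{\frakm_RS}(S)$ is supported at $\frakm_S$, Kashiwara gives a finite direct sum of copies of $H^{\dim S}_{\frakm_S}(S)$, but your ``socle-dimension or Hilbert-series comparison'' does not determine the number of copies: you have no independent handle on either invariant. The paper uses Lemma~\ref{lemma:lsw:2} (from \cite{LSW}/\cite{BBLSZ}), which identifies the multiplicity with the rank of the singular cohomology group $H^{d+c-1}_\sing(\CC^d\smallsetminus V,\QQ)$. That rank is then shown to be~$1$ by the same topological machinery used for the arithmetic-rank lower bound (Theorems~\ref{theorem:cohomology:open:pfaffian}, \ref{theorem:cohomology:open:generic}, \ref{theorem:cohomology:open:symmetric}). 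So the topology is doing double duty, and without it you cannot pin the multiplicity.

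Your support analysis via transverse slices is in spirit close to what the paper does, though the paper's implementation is more concrete: it inverts a single entry $y_{11}$ and uses explicit matrix reductions (Lemmas~\ref{lemma:matrix:invert:pfaffian}, \ref{lemma:matrix:invert:determinantal}, \ref{lemma:matrix:invert:symmetric}) to reduce to a smaller nullcone, where the arithmetic rank has already dropped below~$c$.
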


While the assumption that the characteristic of $\KK$ differs from two is primarily to allow for calculations of \'etale cohomology with coefficients in $\ZZ/2$, the arithmetic rank of $\frakm_RS$ may indeed differ in characteristic two, see Remark~\ref{remark:characteristic:two}. Sections~\ref{section:local:pfaffian},~\ref{section:local:determinantal},~and~\ref{section:local:symmetric} summarize our results for the respective nullcones. In each case, we obtain the arithmetic rank of the nullcone ideal, and also study the critical local cohomology module. We prove vanishing theorems for local cohomology modules supported at nullcone ideals that mirror corresponding results for determinantal ideals obtained in \cite[Theorem~1.1]{LSW}. For example, in the Pfaffian case, this involves working with inclusions of the form $\ZZ[Y^\tr \Omega Y] \subseteq \ZZ[Y]$, with $\ZZ$ the ring of integers, where we prove that each local cohomology module of the form 
\begin{equation}
\label{equation:intro:lc}
H^k_{I_1(Y^\tr \Omega Y)}(\ZZ[Y])
\end{equation}
is a torsionfree $\ZZ$-module, and that it is a $\QQ$-vectorspace precisely when $k$ differs from the height of $I_1(Y^\tr \Omega Y)$; here $I_1(Y^\tr \Omega Y)$ denotes the ideal generated by the entries of the matrix $Y^\tr \Omega Y$. Moreover, if $Y$ is a~$2t\times n$ matrix of indeterminates and $n\ge 2t+1$, we prove that there exists a degree-preserving isomorphism
\[
H^c_{I_1(Y^\tr \Omega Y)}(\ZZ[Y])\ \cong\ H^{2tn}_\frakm(\QQ[Y]),
\]
where $\frakm$ denotes the homogeneous maximal ideal of $\QQ[Y]$ under the standard grading, and~$c\colonequals\binom{n}{2} - \binom{n-2t}{2}$, which is the cohomological dimension of $I_1(Y^\tr \Omega Y)$.

The reason for investigating local cohomology over the integers is that it enables base change to arbitrary commutative Noetherian rings, which then yields vanishing results such as Theorems~\ref{theorem:vanish:pfaffian},~\ref{theorem:vanish:determinantal}, and~\ref{theorem:vanish:symmetric}; the ring of integers has a canonical homomorphism to any commutative ring. In Section~\ref{section:p:torsion} we prove a powerful new result for showing that various local cohomology modules of interest are torsionfree $\ZZ$-modules or $\QQ$-vectorspaces:

\begin{theorem}
\label{theorem:vector:space}
Let $S$ be a polynomial ring in finitely many indeterminates over $\ZZ$, and~$\frakP$ a prime ideal of $S$ such that $\frakP\cap\ZZ=0$, and, for each positive prime integer $p$, the ring $S/(\frakP + pS)$ is $F$-rational. Then $H^k_\frakP(S)$ is a torsionfree $\ZZ$-module for each $k$; it is a \mbox{$\QQ$-vectorspace} whenever $k$ differs from $\height\frakP$.
\end{theorem}

This greatly simplifies the proofs of earlier results from \cite{LSW} and \cite{Pandey:veronese}; for example, the theorem applies when $S\colonequals\ZZ[X]$, for $X$ an $m\times n$ matrix of indeterminates, and $\frakP$ is the ideal generated by the minors of $X$ of a fixed size $t$; this recovers~\cite[Theorem~1.2]{LSW}. Similarly, the theorem applies when $\frakP$ is the ideal of size~$t$ minors of a symmetric matrix of indeterminates, or the ideal generated by the size~$2t$ Pfaffians of an alternating matrix of indeterminates; these results were originally proven as~\cite[Theorem~7.1 (1), (2)]{LSW} and~\cite[Theorem~6.2 (1), (2)]{LSW} respectively. Another consequence, Theorem~\ref{theorem:toric}, is new to the best of our knowledge: for $S$ a polynomial ring over the integers, and~$\frakP$ an ideal defining a normal semigroup ring, each local cohomology module of the form $H^k_\frakP(S)$ is a torsionfree $\ZZ$-module; it is a $\QQ$-vectorspace if $k$ differs from $\height\frakP$.

The singular and \'etale cohomology calculations required for the arithmetic rank results are performed in Sections~\ref{section:topology:pfaffian},~\ref{section:topology:determinantal}, and~\ref{section:topology:symmetric}, for the respective cases of Pfaffian nullcones, determinantal nullcones, and symmetric determinantal nullcones. The locally trivial fiber bundles used in these sections, along with the necessary results from linear algebra, are recorded in Appendix~\ref{appendix}. Preliminary remarks on singular and \'etale cohomology may be found in Section~\ref{section:prelim:cohomology}.

\subsection*{Definitions and notation}

The \emph{local cohomological dimension} of an ideal $\fraka$ in a Noetherian ring $R$ is
\[
\lcd\fraka\colonequals\sup\{k\in\ZZ \mid H^k_\fraka(R)\neq0\}.
\]
For $i>\lcd\fraka$, it turns out that $H^i_\fraka(M)$ vanishes for each $R$-module $M$, \cite[Theorem~9.6]{24hours}. The \emph{arithmetic rank} of $\fraka$, denoted $\ara\fraka$, is the least integer $k$ such that
\[
\rad\fraka \ =\ \rad (f_1,\dots,f_k)R
\]
for elements $f_i\in R$. Since $H^\bullet_\fraka(R)$ may be computed using a \v Cech complex on $f_1,\dots,f_k$, it follows that $H^i_\fraka(R)=0$ for $i>\ara\fraka$. Hence $\ara\fraka\ge\lcd\fraka$, and indeed this is the local cohomology obstruction used in the proof of Theorem~\ref{theorem:ara:intro}. The corresponding lower bounds for $\ara\fraka$ from singular and \'etale cohomology are recorded in the lemma below. Let $G$ be an Abelian group and~$X$ a quasiprojective variety over a field $\KK$. When $\KK$ is the complex numbers, $H^i_\sing(X,G)$ denotes the singular cohomology of~$X$ in the Euclidean topology, with coefficients in $G$. When $\KK$ is an arbitrary algebraically closed field, $H^i_\et(X,G)$ denotes the \'etale cohomology of $X$ with coefficients in $G$; see~Subsections~\ref{ssec:singular} and~\ref{ssec:etale} for more.

\begin{lemma}
\label{lemma:ara}
Let $\KK$ be a field, and let $V\colonequals\Var(f_1,\dots,f_k)$ be an algebraic set in $\KK^d$. 
\begin{enumerate}[\quad\rm(1)]
\item When $\KK$ equals $\CC$, we have
\[
H^i_\sing(\CC^d\smallsetminus V,\, \QQ)=0\quad\text{ for each }\ i>d+k-1.
\]
\item For $\KK$ an algebraically closed field, we have
\[
H^i_\et(\KK^d\smallsetminus V,\, \ZZ/\ell)=0\quad\text{ for each }\ i>d+k-1,
\]
where $\ell$ is a prime integer that is relatively prime to the characteristic of $\KK$.
\end{enumerate}
\end{lemma}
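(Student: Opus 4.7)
The plan is to reduce both statements to standard affine vanishing theorems via a Mayer--Vietoris argument on the Zariski open cover
\[
\KK^d \smallsetminus V\ =\ \bigcup_{i=1}^k U_i, \qquad U_i\ \colonequals\ \KK^d \smallsetminus \Var(f_i),
\]
which is legitimate because $V = \bigcap_i \Var(f_i)$ by definition. The key structural observation is that every finite intersection
\[
U_{i_0} \cap \cdots \cap U_{i_p}\ =\ \KK^d \smallsetminus \Var(f_{i_0} \cdots f_{i_p})
\]
is the complement of a single hypersurface in affine $d$-space, and is therefore a smooth affine variety of dimension $d$.

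Next I would invoke the appropriate affine vanishing theorem on each such intersection. In case (1), the Andreotti--Frankel theorem says that a smooth complex affine variety of complex dimension $d$ has the homotopy type of a CW complex of real dimension at most $d$; in particular, its singular cohomology with $\QQ$-coefficients vanishes above degree $d$. In case (2), M.~Artin's affine vanishing theorem for \'etale cohomology yields $H^i_\et(W,\ZZ/\ell)=0$ for $i>\dim W$ whenever $W$ is an affine variety over an algebraically closed field and $\ell$ is coprime to the characteristic.

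These ingredients feed into the \v Cech-to-derived-functor (Mayer--Vietoris) spectral sequence attached to the cover $\{U_1,\dots,U_k\}$:
\[
E_1^{p,q}\ =\ \bigoplus_{i_0<\cdots<i_p} H^q\bigl(U_{i_0}\cap\cdots\cap U_{i_p}\bigr)\ \Longrightarrow\ H^{p+q}\bigl(\KK^d\smallsetminus V\bigr),
\]
with the respective $\QQ$ or $\ZZ/\ell$ coefficients. Affine vanishing forces $E_1^{p,q}=0$ for $q>d$, while the combinatorial bound $p\le k-1$ reflects that only $k$ indices are available. Hence the abutment vanishes in total degree $p+q>(k-1)+d = d+k-1$, which is the desired conclusion.

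I do not expect a serious obstacle here: the argument is the standard \v Cech packaging of affine vanishing. The only delicate point is justifying that the \v Cech spectral sequence computes \'etale cohomology in case (2), but this is fine because the $U_i$ form a Zariski (hence \'etale) open cover and the affineness of every intersection guarantees that higher \v Cech cohomology on the cover agrees with \'etale cohomology of the union.
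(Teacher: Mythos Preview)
Your proposal is correct and follows essentially the same approach as the paper. The paper does not give a self-contained proof of this lemma in the introduction, but the argument is recorded in Section~\ref{section:prelim:cohomology} under ``Affine vanishing'': cover $\KK^d\smallsetminus V$ by the $k$ principal affine opens $D(f_i)$, invoke Andreotti--Frankel (respectively Artin's \'etale affine vanishing) on each, and combine via Mayer--Vietoris; your \v Cech spectral sequence is just the packaged form of that same induction.
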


When working over the complex numbers, singular cohomology also reflects Bass numbers of local cohomology as follows:

\begin{lemma}{\cite[Theorem~3.1]{LSW}, see also~\cite[Theorem~3.7]{BBLSZ}}
\label{lemma:lsw}
Consider the polynomial ring $S\colonequals\CC[x_1,\dots,x_d]$. Let $\fraka$ be an ideal of $S$, and~$\frakm$ a maximal ideal. Suppose $k_0$ is a positive integer such that $\Supp H^k_\fraka(S)\subseteq\{\frakm\}$ for each integer $k\ge k_0$. Then, for each~$k\ge k_0$, one has an isomorphism of $S$-modules
\[
H^k_\fraka(S)\ \cong\ H^n_\frakm(S)^\mu,
\]
where $\mu$ is the rank of $H_\sing^{d+k-1}(\CC^d\smallsetminus\Var(\fraka),\,\QQ)$.
\end{lemma}

We use the convention that the binomial coefficient $\binom{i}{j}$ is zero for integers $i<j$.

%%%%%%%%%%%%%%%%%%%%%%%%%%%%%%%%%%%%%%%%%%%%%%%%%%%%%%%%%%%%%%%%%%%%%%%%
\section{Integer torsion in local cohomology}
\label{section:p:torsion}
%%%%%%%%%%%%%%%%%%%%%%%%%%%%%%%%%%%%%%%%%%%%%%%%%%%%%%%%%%%%%%%%%%%%%%%%

Huneke~\cite[Problem~4]{Huneke:Sundance} asks whether local cohomology modules of Noetherian rings have finitely many associated prime ideals. The answer is negative in general; counterexamples may be found in \cite{Singh:MRL, Katzman, SS:IMRN, Jeffries:IMRN, JS}. Affirmative answers include the case of regular rings of prime characteristic~\cite{HS:TAMS}, regular local and affine rings of characteristic zero~\cite{Lyubeznik:Invent}, unramified regular local rings of mixed characteristic~\cite{Lyubeznik:Comm}, and smooth $\ZZ$-algebras~\cite{BBLSZ}. While the counterexamples constructed in~\cite{Katzman} and~\cite{SS:IMRN} are for affine algebras over a field, the rest come from integer torsion in local cohomology; for example, in \cite{Singh:MRL}, it is proven that for
\[
S\colonequals\ZZ[u,v,w,x,y,z]/(ux+vy+wz),
\]
the local cohomology module $H^3_{(x,y,z)}(S)$ has a $p$-torsion element for each prime integer~$p$, and therefore has infinitely many associated prime ideals as an $S$-module. Given a \emph{finite} set of prime integers~$\calS$, there exists a polynomial ring $S$ over $\ZZ$ with a local cohomology module $H^k_\fraka(S)$ that has $p$-torsion precisely for $p\in\calS$,~\cite[Example~5.11]{SinghWalther:Bock}; the ideal $\fraka$ may be chosen as a Stanley--Reisner ideal. The following theorem enables us to control $p$-torsion, and will be a key ingredient towards proving that various local cohomology modules of interest in this paper, such as in~\eqref{equation:intro:lc}, are torsionfree $\ZZ$-modules. Since we need the case of the Gaussian integers in Section~\ref{section:local:symmetric}, we record it in the generality below:

\begin{theorem}
\label{theorem:F:rational:torsion}
Let $A$ be the ring of integers in a number field, $\pi\in A$ a nonzero prime element, and $S$ a polynomial ring in finitely many indeterminates over $A$. Let $\frakP$ be a prime ideal of $S$ such that $\pi\notin\frakP$, and the ring $S/\rad(\frakP + \pi S)$ is $F$-rational. Then, for each~$k\ge 0$, multiplication by $\pi$ on $H^k_{\frakP}(S)$ is injective.
\end{theorem}

\begin{proof}
Multiplication by $\pi$ on $S$ induces the local cohomology exact sequence
\[
\CD
H^{k-1}_\frakP(S/\pi S) @>>> H^k_\frakP(S) @>\cdot\pi>> H^k_\frakP(S) @>>> H^k_\frakP(S/\pi S) @>\delta>> H^{k+1}_\frakP(S) @>>>.
\endCD
\]
Since $S/\rad(\frakP+\pi S)$ is $F$-rational, and hence Cohen--Macaulay upon localization at each maximal ideal, \cite[Proposition~III.4.1]{PS} implies that $H^k_\frakP(S/\pi S)=0$ when $k$ differs from $\height\frakP(S/\pi S)$. Note that by our assumptions,
\[
\height\frakP(S/\pi S)\ =\ \height\frakP\ =\ \height(\frakP+\pi S)-1.
\]
It therefore suffices to show that the connecting homomorphism
\[
\CD
H^{\height\frakP}_\frakP(S/\pi S) @>\delta>> H^{\height\frakP+1}_\frakP(S)
\endCD
\]
is zero. The map $\delta$ is one of left $D_{S|A}$-modules, where $D_{S|A}$ denotes the ring of $A$-linear differential operators on $S$. Since $S/\pi S$ is an $F$-finite regular ring, and $S/\rad(\frakP + \pi S)$ is $F$-rational, \cite[Corollary~4.10]{Blickle} implies that $H^{\height\frakP}_\frakP(S/\pi S)$ is \emph{locally} a simple $D_{S/\pi S|A/\pi A}$-module; but, then, $H^{\height\frakP}_\frakP(S/\pi S)$ is a simple $D_{S/\pi S|A/\pi A}$-module. By~\cite[Lemma~2.1]{BBLSZ}, 
\[
D_{S|A}\otimes_A A/\pi A\ \cong\ D_{S/\pi S|A/\pi A}
\]
so $H^{\height\frakP}_\frakP(S/\pi S)$ is a simple $D_{S|A}$-module. It follows that $\delta$ is either zero or injective.

Suppose $\delta$ is injective. Let $\frakQ$ be a minimal prime of $\frakP+\pi S$, in which case
\[
H^{\height\frakP}_\frakP(S/\pi S)_\frakQ
\]
is nonzero, see for example~\cite[Theorem~9.3]{24hours}, so the injectivity of $\delta$ implies that
\[
{H^{\height\frakP+1}_\frakP(S)}_\frakQ\ =\ H^{\height\frakP+1}_\frakP(S_\frakQ)
\]
is nonzero as well. But $\height\frakP+1=\dim S_\frakQ$, so $H^{\height\frakP+1}_\frakP(S_\frakQ)=0$ by the Hartshorne-Lichtenbaum vanishing theorem \cite[Theorem~14.6]{24hours}, a contradiction. Hence $\delta=0$.
\end{proof}

Theorem~\ref{theorem:vector:space} is a straightforward consequence:

\begin{proof}[Proof of Theorem~\ref{theorem:vector:space}]
In view of Theorem~\ref{theorem:F:rational:torsion}, multiplication by each prime integer $p>0$ on~$S$ gives an exact sequence
\[
\CD
0 @>>> H^k_\frakP(S) @>\cdot p>> H^k_\frakP(S) @>>> H^k_\frakP(S/pS) @>>> 0.
\endCD
\]
As $\frakP(S/pS)$ is a perfect ideal, \cite[Proposition~III.4.1]{PS} gives the vanishing of $H^k_\frakP(S/pS)$ when $k$ differs from $\height\frakP(S/pS)=\height\frakP$.
\end{proof}

Another immediate consequence:

\begin{theorem}
\label{theorem:toric}
Let $S\colonequals\ZZ[x_1,\dots,x_n]$ be a polynomial ring, and $\frakP$ an ideal such that $S/\frakP$ is a normal semigroup ring over $\ZZ$. Then each local cohomology module $H^k_\frakP(S)$ is a torsionfree $\ZZ$-module; $H^k_\frakP(S)$ is a $\QQ$-vectorspace if $k$ differs from $\height\frakP$.
\end{theorem}

\begin{proof}
Let $p>0$ be a prime integer. By \cite[Proposition~1]{Hochster:toric}, the ring $S/(\frakP + pS)$ is a direct summand of a polynomial ring over $\FFp$, hence $F$-regular.
\end{proof}

Theorem~\ref{theorem:toric} applies, for example, when $\frakP\subseteq S$ is the defining ideal of a Veronese subring of a polynomial ring over $\ZZ$. This was proven earlier in~\cite{Pandey:veronese}.

\begin{proposition}
\label{prop:MV:torsion}
Let $A$ be the ring of integers in a number field, $\pi\in A$ a prime element such that $A_{(\pi)}$ is an unramified discrete valuation ring, and $S$ a polynomial ring in finitely many indeterminates over $A$. Let $\frakA$ and $\frakB$ be ideals of $S$ such that:
\begin{enumerate}[\quad \rm(1)]
\item the element $\pi$ is a nonzerodivisor on $S/(\frakA+\frakB)$;
\item for each $k$, multiplication by $\pi$ is injective on $H^k_\frakA(S)$ and also on $H^k_\frakB(S)$;
\item the ring $S/\frakQ$ is an $F$-rational domain, where $\frakQ\colonequals \frakA + \frakB + \pi S$;
\item $\dim(S_\frakQ/\frakA S_\frakQ)\ge 2$ and $\dim(S_\frakQ/\frakB S_\frakQ)\ge 2$;
\item The punctured spectra of $(S_\frakQ/\frakA S_\frakQ)^\star$ and $(S_\frakQ/\frakB S_\frakQ)^\star$ are connected, where $(-)^\star$ denotes the completion of the strict henselization of the completion.
\end{enumerate}
Then, for each $k$, multiplication by $\pi$ on $H^k_{\frakA\cap\frakB}(S)$ is injective.
\end{proposition}

\begin{proof}
Since $S$ has no $\pi$-torsion, neither does $H^0_{\frakA\cap\frakB}(S)$. Assume $k\ge 1$. One has a commutative diagram with exact rows and columns
\[
\begin{CD}
@VVV @VVV @VVV @. \\
H^{k-1}_\frakA(S/\pi S)\oplus H^{k-1}_\frakB(S/\pi S) @>\alpha>> 
H^{k-1}_{\frakA\cap\frakB}(S/\pi S) @>\beta>> H^k_\frakQ(S/\pi S) @>>> \\
@V{\delta'}VV @V{\delta}VV @V{\delta''}VV @. \\
H^k_\frakA(S)\oplus H^k_\frakB(S) @>\alpha'>> H^k_{\frakA\cap\frakB}(S) @>\beta'>> H^{k+1}_{\frakA+\frakB}(S) @>>> \\
@V{\cdot\pi}VV @V{\cdot\pi}VV @V{\cdot\pi}VV @. \\
H^k_\frakA(S)\oplus H^k_\frakB(S) @>>> H^k_{\frakA\cap\frakB}(S) @>>> H^{k+1}_{\frakA+\frakB}(S) @>>> \\
@VVV @VVV @VVV @. \\
\end{CD}
\]
where the rows arise as Mayer--Vietoris sequences, and the columns are induced by multiplication by $\pi$ on $S$; note that
\[
H^k_{\frakA+\frakB}(S/\pi S)\ =\ H^k_\frakQ(S/\pi S)
\]
for each $k$. The maps in the diagram are maps of left $D_{S|A}$-modules.

It suffices to show that $\delta$ is zero. The map $\delta'$ is zero by (2), so $\delta$ kills $\image\alpha=\ker\beta$. Hence $\delta$ factors through
\[
\image\beta\ \subseteq\ H^k_\frakQ(S/\pi S).
\]
Since $S/\frakQ$ is Cohen--Macaulay by (3), the module $H^k_\frakQ(S/\pi S)$ is nonzero only when $k$ equals $\height\frakQ(S/\pi S)$ by \cite[Proposition~III.4.1]{PS}. For the rest of the proof, set $k\colonequals\height\frakQ(S/\pi S)$.

Suppose $\delta$ is nonzero. Then $\image\beta$ is a nonzero $D_{S|A}$-submodule of $H^k_\frakQ(S/\pi S)$, but the latter is a simple $D_{S/\pi S|A/\pi A}$-module by \cite[Corollary~4.10]{Blickle} in view of (3), hence also a simple $D_{S|A}$-module. It follows that $\beta$ is surjective, so $\delta$ factors as $\delta=\gamma\beta$, where
\[
\gamma\colon H^k_\frakQ(S/\pi S)\ \to\ H^k_{\frakA\cap\frakB}(S)
\]
is a nonzero map of $D_{S|A}$-modules; note that $\gamma$ must be injective by the simplicity of $H^k_\frakQ(S/\pi S)$. But $\delta''=\beta'\gamma$ is zero by Theorem~\ref{theorem:F:rational:torsion} and~(3), so $\image\gamma$ is contained in $\ker\beta'=\image\alpha'$. The image of $\gamma$ is an isomorphic copy of $H^k_\frakQ(S/\pi S)$, and thus supported at $\frakQ$. It follows that $H^k_\frakA(S)\oplus H^k_\frakB(S)$ is supported at $\frakQ$. But (4) and (5) imply
\[
H^k_\frakA(S_\frakQ)\ =\ 0\ =\ H^k_\frakB(S_\frakQ)
\]
in view of the Second Vanishing Theorem, \cite[Theorem~1.4]{ZhangSVT}, since $S_\frakQ$ is an unramified mixed characteristic regular local ring of dimension $k+1$.
\end{proof}

\begin{remark}
Let $R$ be an excellent local ring, and $\frakA$ an ideal such that for each minimal prime $\frakP$ of $\frakA$, the ring $R/\frakP$ is a normal domain. If the punctured spectrum of $R/\frakA$ is connected, then, in the notation of Proposition~\ref{prop:MV:torsion}, the punctured spectrum of $(R/\frakA)^\star$ is connected as well: the completion of an excellent local normal ring is normal by~\cite[\href{https://stacks.math.columbia.edu/tag/0C23}{Tag 0C23}]{stacks-project}, while its strict henselization is normal by~\cite[\href{https://stacks.math.columbia.edu/tag/06DI}{Tag 06DI}]{stacks-project}; the claim is then a consequence of the flatness of $R/\frakA \to (R/\frakA)^\star$.
\end{remark}

%%%%%%%%%%%%%%%%%%%%%%%%%%%%%%%%%%%%%%%%%%%%%%%%%%%%%%%%%%%%%%%%%%%%%%%%
\section{Local cohomology of Pfaffian nullcones}
\label{section:local:pfaffian}
%%%%%%%%%%%%%%%%%%%%%%%%%%%%%%%%%%%%%%%%%%%%%%%%%%%%%%%%%%%%%%%%%%%%%%%%

Let $X$ be an $n\times n$ alternating matrix of indeterminates over a field $\KK$, and $\Pf_{2t+2}(X)$ the ideal of $\KK[X]$ generated by the Pfaffians of the size $2t+2$ principal submatrices of $X$; we refer to $\KK[X]/\Pf_{2t+2}(X)$ as a \emph{Pfaffian determinantal ring}. For an equivalent description, consider the $2t\times 2t$ alternating matrix
\begin{equation}
\label{equation:omega}
\Omega \colonequals \begin{bmatrix}
0 & 1 & & & & & \\
-1 & 0 & & & & & \\
& & 0 & 1 & & & \\
& & -1 & 0 & & & \\
& & & & \ddots & &\\
& & & & & 0 & 1 \\
& & & & & -1 & 0
\end{bmatrix},
\end{equation}
where the remaining entries are zero, and let $Y$ be a $2t\times n$ matrix of indeterminates over~$\KK$. In this case, $Y^\tr\Omega Y$ is an $n\times n$ alternating matrix with rank at most $2t$, so the entrywise map provides a surjective ring homomorphism
\[
\CD
\KK[X]/\Pf_{2t+2}(X) @>>> \KK[Y^\tr\Omega Y],
\endCD
\]
that one verifies is an isomorphism via a dimension count. It follows that the subring ${R\colonequals \KK[Y^\tr\Omega Y]}$ of~$S\colonequals \KK[Y]$ is isomorphic to $\KK[X]/\Pf_{2t+2}(X)$. The displayed isomorphism remains valid if the field $\KK$ is replaced by the integers $\ZZ$.

Consider the $\KK$-linear action of the symplectic group~$\Sp_{2t}(\KK)$ on $S$, where
\begin{equation}
\label{equation:action:symplectic}
M\colon Y\mapsto MY\quad\text{ for }\ M\in\Sp_{2t}(\KK).
\end{equation}
When $\KK$ is infinite, the invariant ring is precisely the subring~$R$, see~\cite{Weyl},~\cite[\S6]{DeConcini-Procesi}, or~\cite[Theorem~5.1]{Hashimoto}, with the nullcone ideal being the ideal of $S$ generated by the entries of the matrix $Y^\tr\Omega Y$.

We use $\frakP$ or~$\frakP(Y)$, as needed, to denote the ideal of $S$ generated by the entries of $Y^\tr\Omega Y$. By \cite[Theorem~6.8]{HJPS}, the ideal $\frakP$ is prime, $S/\frakP$ is Cohen--Macaulay, and
\begin{equation}
\label{equation:pfaffian:height}
\height \frakP\ =\ 
\begin{cases}
\displaystyle{\binom{n}{2}}& \text{if }\ n\le t+1,\\
nt-\displaystyle{\binom{t+1}{2}}& \text{if }\ n\ge t.\\
\end{cases}
\end{equation}
Moreover, the ring $S/\frakP$ is $F$-regular if $\KK$ has positive characteristic, and has rational singularities if $\KK$ has characteristic zero; see \cite[Theorem~3.6]{PTW} or \cite[Proposition~4.7]{Lorincz:symplectic}.

Note that $Y^\tr\Omega Y$ is an alternating matrix, so the ideal $\frakP$ has $\binom{n}{2}$ minimal generators. In the case that $n\le t+1$, it follows that $\frakP$ is generated by a regular sequence of length $\binom{n}{2}$, which, of course, is then the arithmetic rank of $\frakP$. More generally:

\begin{theorem}
\label{theorem:ara:pfaffian}
Let $Y$ be a~$2t\times n$ matrix of indeterminates over a field $\KK$ of characteristic other than two. Then the arithmetic rank of the ideal $\frakP\colonequals I_1(Y^\tr\Omega Y)$ in $\KK[Y]$ is
\[
\binom{n}{2} - \binom{n-2t}{2}.
\]
In particular, the following are equivalent:
\begin{enumerate}[\quad\rm(1)]
\item the ideal $\frakP$ is generated by a regular sequence;
\item the ideal $\frakP$ is a set theoretic complete intersection;
\item $n\le t+1$.
\end{enumerate}
\end{theorem}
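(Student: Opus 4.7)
Write $b\colonequals\binom{n}{2} - \binom{n-2t}{2}$; the standard dimension formula for Pfaffian determinantal rings, applied to the isomorphism $R\cong\KK[X]/\Pf_{2t+2}(X)$ recorded above, gives $\dim R = b$. Once the equality $\ara(\frakP) = b$ is established, the three equivalences follow by elementary arithmetic: the ideal $\frakP$ has exactly $\binom{n}{2}$ minimal generators (the distinct entries above the diagonal of the alternating matrix $Y^\tr \Omega Y$), so condition (1) is equivalent to $\height(\frakP) = \binom{n}{2}$, while condition (2) is equivalent to $\height(\frakP) = b$. A direct computation with the two cases of~\eqref{equation:pfaffian:height} shows that each of these equalities is equivalent to $n \le t+1$.

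For the upper bound $\ara(\frakP) \le b$, I would mimic the upper bound step in the proof of Theorem~\ref{theorem:ara:intro}: choose a homogeneous system of parameters $f_1, \ldots, f_b$ for $R$. Since $\sqrt{(f_1, \ldots, f_b) R} = \frakm_R$, each entry of $Y^\tr \Omega Y$ has some power lying in $(f_1, \ldots, f_b) R \subseteq (f_1, \ldots, f_b) S$. Hence $\frakP \subseteq \sqrt{(f_1, \ldots, f_b) S}$, and the reverse inclusion is immediate. This yields $\ara(\frakP) \le b$ in every characteristic.

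For the matching lower bound, the purity-based argument of Theorem~\ref{theorem:ara:intro} is unavailable: in positive characteristic the inclusion $R \hookrightarrow S$ is no longer pure, and the critical local cohomology module in~\eqref{equation:theorem:ara:intro} vanishes. I would instead appeal to Lemma~\ref{lemma:ara}(2). After the harmless base change to the algebraic closure of $\KK$, and using the hypothesis $\text{char}(\KK) \ne 2$ to take $\ell = 2$, it suffices to exhibit non-vanishing of
\[
H^{2tn + b - 1}_\et(\KK^{2tn} \setminus V,\, \ZZ/2),
\]
where $V\colonequals\Var(\frakP)$; this immediately forces $\ara(\frakP) \ge b$. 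By Artin comparison together with standard base-change results in \'etale cohomology (valid since $2$ is a unit in $\KK$), the problem reduces to computing the singular cohomology $H^{2tn + b - 1}_\sing(\CC^{2tn} \setminus V(\CC),\, \ZZ/2)$.

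The hard part is this last singular cohomology computation, which I would carry out in Section~\ref{section:topology:pfaffian}. The plan is to realize $\CC^{2tn} \setminus V(\CC)$ as the total space of a locally trivial fiber bundle over a classical symplectic homogeneous space---for example a symplectic analog of a Stiefel manifold or an isotropic Grassmannian---attached to each matrix $Y$ via a symplectically nondegenerate geometric datum built from its columns; the required fiber bundles are constructed in Appendix~\ref{appendix}. The Leray--Serre spectral sequence with $\ZZ/2$-coefficients then computes the cohomology of the total space from the well-known $\ZZ/2$-cohomology rings of the base and fiber. The main obstacle is to exhibit a specific nonzero class in the top degree $2tn + b - 1$, presumably realized as a product of characteristic classes pulled back from the base, and to verify that it survives the spectral sequence to the $E_\infty$ page.
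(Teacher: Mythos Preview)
Your upper bound and the derivation of the equivalences are fine and match the paper. The genuine gap is in the lower bound: the cohomological input you invoke---what becomes Theorem~\ref{theorem:cohomology:open:pfaffian}---is only proved under the hypothesis $2t\le n$. Your proposal treats the lower bound uniformly, but the required \'etale (or singular) cohomology group is simply not computed when $n<2t$, and the fiber-bundle machinery in Section~\ref{section:topology:pfaffian} does not extend to that range without further work. The paper handles $n<2t$ by an entirely different and elementary argument: when $n\le t$ the height formula~\eqref{equation:pfaffian:height} already gives $\ara\frakP\ge\height\frakP=\binom{n}{2}=b$, and when $t<n<2t$ one specializes the last $n-t$ columns of $Y$ to zero, reducing to the $2t\times t$ case just established. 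You need to insert this case split.

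Two smaller remarks. First, your reduction from \'etale to singular cohomology via Artin comparison and base change is legitimate but is not what the paper does: it runs the singular and \'etale computations in parallel (Settings~\AN and~\ET), which avoids invoking smooth base change for the non-proper variety $\KK^{2tn}\smallsetminus V$. Second, your sketch of the topological computation---a single fiber bundle over a symplectic homogeneous space, with a top class built from characteristic classes---does not match the actual argument. The paper instead stratifies $\KK^{2t\times n}$ by $\rank(Y^\tr\Omega Y)$, computes the \emph{compactly supported} cohomology of each stratum via a chain of fiber bundles (Lemma~\ref{lemma:ltfbs:alt}) and Lemma~\ref{lemma:LTFB}, assembles these via the long exact sequence of a subspace (Lemma~\ref{lemma:les:induction}), and only at the end applies Poincar\'e duality to pass to ordinary cohomology of the complement. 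No explicit cocycle or characteristic class is ever produced.
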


\begin{proof}
Let $X$ be an $n\times n$ alternating matrix of indeterminates. The subring $R\colonequals \KK[Y^\tr\Omega Y]$ of~$S\colonequals \KK[Y]$ is isomorphic to the Pfaffian determinantal ring $\KK[X]/\Pf_{2t+2}(X)$, that has dimension
\[
c\colonequals\binom{n}{2} - \binom{n-2t}{2}.
\]
If the field $\KK$ has characteristic zero, $c$ equals $\ara\frakP$ by Theorem~\ref{theorem:ara:intro}, using the $\Sp_{2t}(\KK)$ action on $S$ as in~\eqref{equation:action:symplectic}. More generally, as in the proof of Theorem~\ref{theorem:ara:intro}, the homogeneous maximal ideal of $R$ is generated, up to radical, by~$c$ homogeneous elements, namely by a homogeneous system of parameters for~$R$. These~$c$ elements, when viewed as elements of the ring $S$, generate an ideal that has radical $\frakP$. Thus, independent of the characteristic of $\KK$, one has
\[
\ara\frakP\ \le\ c.
\]
While proving the reverse inequality, it suffices to assume that $\KK$ is algebraically closed. If~$n\ge 2t$ and the characteristic of $\KK$ is other than two, Theorem~\ref{theorem:cohomology:open:pfaffian}, in view of Lemma~\ref{lemma:ara}, yields $\ara\frakP\ge c$. Assume next that $n<2t$. Then, following our convention regarding binomial coefficients, $c=\binom{n}{2}$, and it remains to verify that this is a lower bound for $\ara\frakP$. If $n\le t$, then
\[
\ara\frakP\ \ge\ \height\frakP\ =\ \binom{n}{2}
\]
by~\eqref{equation:pfaffian:height}. Suppose for some $n$ with $t<n<2t$, the ideal $\frakP$ is generated up to radical by fewer than $\binom{n}{2}$ elements. Let $2k$ denote the largest even integer with $2k\le n$, and set
\[
Y'\colonequals\begin{bmatrix}
y_{11} & \cdots & y_{1n}\\
\vdots & & \vdots\\
y_{2k,1} & \cdots & y_{2k,n}\\
0 & \cdots & 0\\
\vdots & & \vdots\\
0 & \cdots & 0\\
\end{bmatrix},
\]
i.e., $Y'$ is the specialization of $Y$ obtained by setting the entries of the last $2t-2k$ rows to~$0$. Let $S'$ denote the corresponding specialization of $S$, which we may regard as a polynomial ring in $2k\times n$ indeterminates. Then the ideal $I_1(Y'^\tr\Omega Y')S'$ is generated up to radical by fewer than $\binom{n}{2}$ elements, contradicting what we have verified in the case of a $2k\times n$ matrix of indeterminates.

For the equivalences, note that $(3) \implies (1)$ follows from~\eqref{equation:pfaffian:height}, while $(1) \implies (2)$ is immediate. Lastly, if $n\ge t+2$, we see that $\ara\frakP>\height\frakP$, i.e., that
\[
\binom{n}{2}-\binom{n-2t}{2}\ >\ nt-\displaystyle{\binom{t+1}{2}}
\]
since
\[
\binom{n}{2}-nt+\displaystyle{\binom{t+1}{2}}\ =\ \binom{n-t}{2}\ >\ \binom{n-2t}{2}.
\qedhere
\]
\end{proof}

\begin{remark}
\label{remark:pfaffian:integers}
Working over the integers, one continues to have an isomorphism
\[
\ZZ[X]/\Pf_{2t+2}(X)\ \cong\ \ZZ[Y^\tr\Omega Y],
\]
as in the proof of Theorem~\ref{theorem:ara:pfaffian}, given by mapping the entries of the alternating matrix of indeterminates $X$ to the corresponding entries of $Y^\tr\Omega Y$. The displayed ring admits the structure of an algebra with a straightening law (ASL), see~\cite[\S6]{DeConcini-Procesi} or~\cite[\S4]{Barile}, so one obtains $\height I_1(Y^\tr\Omega Y)$ many elements that generate an ideal with radical $I_1(Y^\tr\Omega Y)$ in view of \cite[Proposition~5.10]{Bruns-Vetter}. Using the lower bound from the proof of Theorem~\ref{theorem:ara:pfaffian}, the formula for arithmetic rank continues to hold when $\KK$ is replaced by $\ZZ$, as recorded next.
\end{remark}

\begin{corollary}
\label{corollary:cd:pfaffian}
Let $Y$ be a~$2t\times n$ matrix of indeterminates over a torsionfree $\ZZ$-algebra $B$. Set $\frakP\colonequals I_1(Y^\tr\Omega Y)$ in $B[Y]$. Then
\[
\ara\frakP\ =\ \lcd\frakP\ =\ \binom{n}{2} - \binom{n-2t}{2}.
\]
\end{corollary}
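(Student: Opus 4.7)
The plan is to bound $\ara\frakP$ and $\lcd\frakP$ on opposite sides by $c\colonequals \binom{n}{2}-\binom{n-2t}{2}$: the upper bound on $\ara$ coming from the ASL construction of Remark~\ref{remark:pfaffian:integers}, and the lower bound on $\lcd$ from flat base change combined with the characteristic-zero case of Theorem~\ref{theorem:ara:intro}. Combined with the general inequality $\ara \ge \lcd$ recorded in the introduction, these sandwich $\ara\frakP = \lcd\frakP = c$.

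For the upper bound $\ara\frakP \le c$, I would invoke Remark~\ref{remark:pfaffian:integers}, which already produces $c$ elements $f_1,\ldots,f_c \in \ZZ[Y]$ whose radical in $\ZZ[Y]$ equals $\frakP_\ZZ \colonequals I_1(Y^\tr\Omega Y)\ZZ[Y]$. Viewing them in $B[Y]$ via $\ZZ[Y] \hookrightarrow B[Y]$, each entry of $Y^\tr\Omega Y$ still has a power in $(f_1,\ldots,f_c)B[Y]$, so $\rad(f_1,\ldots,f_c)B[Y] = \rad\frakP$.

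For the lower bound $\lcd\frakP \ge c$, I would use flat base change. Since $B$ is $\ZZ$-torsion-free and $\ZZ$ is a principal ideal domain, $B$ is $\ZZ$-flat, and hence $B[Y]$ is flat over $\ZZ[Y]$. This yields
\[
H^c_\frakP(B[Y])\ \cong\ H^c_{\frakP_\ZZ}(\ZZ[Y]) \otimes_\ZZ B.
\]
Applying the same flat base change with $\QQ$ in place of $B$ identifies $H^c_{\frakP_\ZZ}(\ZZ[Y]) \otimes_\ZZ \QQ$ with $H^c_{\frakP_\QQ}(\QQ[Y])$, which is nonzero by Theorem~\ref{theorem:ara:intro} applied with $\KK=\QQ$ and $G = \Sp_{2t}$ (linearly reductive in characteristic zero): its proof shows $H^{\dim R}_{\frakm_RS}(S)\neq 0$, and here $\frakm_R\QQ[Y] = \frakP_\QQ$ and $\dim R = c$. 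Consequently $M \colonequals H^c_{\frakP_\ZZ}(\ZZ[Y])$ is not $\ZZ$-torsion, so it admits an injection $\ZZ \hookrightarrow M$; tensoring with the flat module $B$ produces an injection $B \hookrightarrow M \otimes_\ZZ B \cong H^c_\frakP(B[Y])$, which is therefore nonzero.

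I do not anticipate serious obstacles. The argument is essentially formal once Theorem~\ref{theorem:ara:intro} and Remark~\ref{remark:pfaffian:integers} are available; the only subtle point is the double use of flat base change, together with the observation that $\ZZ$-torsion-freeness of $B$ is precisely what guarantees that an element of infinite order in $M$ survives tensoring with $B$.
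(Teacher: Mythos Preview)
Your proposal is correct and follows essentially the same strategy as the paper: bound $\ara\frakP\le c$ via Remark~\ref{remark:pfaffian:integers}, then show $H^c_\frakP(B[Y])\neq 0$ by flat base change from $\ZZ[Y]$. The one noteworthy difference is that the paper invokes the forward reference Theorem~\ref{theorem:torsion:free:pfaffian}\,(1) to assert that $H^c_\frakP(\ZZ[Y])$ is torsion-free, whereas you observe that only the existence of a single non-torsion element is needed, and this already follows from $H^c_{\frakP_\QQ}(\QQ[Y])\neq 0$ via Theorem~\ref{theorem:ara:intro}; your variant is thus slightly more self-contained.
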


\begin{proof}
Note that $\lcd\frakP\le\ara\frakP\le \binom{n}{2} - \binom{n-2t}{2}\equalscolon c$, where the second inequality uses Remark~\ref{remark:pfaffian:integers}. It remains to prove that $H^c_\frakP(B[Y])$ is nonzero. But $H^c_\frakP(\ZZ[Y])$ is a torsionfree $\ZZ$-module by Theorem~\ref{theorem:torsion:free:pfaffian}\,(1), while $B$ is a torsionfree $\ZZ$-module by hypothesis, so~$H^c_\frakP(\ZZ[Y])\otimes_\ZZ\QQ$ and $B\otimes_\ZZ\QQ$ are nonzero $\QQ$-vectorspaces. Hence their tensor product $H^c_\frakP(B[Y])\otimes_\ZZ\QQ$ is a nonzero $\QQ$-vectorspace.
\end{proof}

The next theorem is the analogue of \cite[Theorem~1.2]{LSW} for the Pfaffian nullcone ideals considered in this section.

\begin{theorem}
\label{theorem:torsion:free:pfaffian}
Let $Y$ be a~$2t\times n$ matrix of indeterminates; set $\frakP\colonequals I_1(Y^\tr\Omega Y)$ in the polynomial ring $\ZZ[Y]$. Then:
\begin{enumerate}[\quad\rm(1)]
\item $H^k_\frakP(\ZZ[Y])$ is a torsionfree $\ZZ$-module for each integer $k$.
\item If $k$ differs from the height of $\frakP$, then $H^k_\frakP(\ZZ[Y])$ is a $\QQ$-vectorspace.
\item Set $c\colonequals\binom{n}{2} - \binom{n-2t}{2}$, which is the cohomological dimension of $\frakP$. If $n\ge 2t+1$, then one has a degree-preserving isomorphism
\[
H^c_\frakP(\ZZ[Y])\ \cong\ H^{2tn}_\frakm(\QQ[Y]),
\]
where $\frakm$ is the homogeneous maximal ideal of $\QQ[Y]$ under the standard grading.
\end{enumerate}
\end{theorem}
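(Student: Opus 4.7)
The plan mirrors the strategy of \cite[Theorem~1.2]{LSW}. The engine is the long exact sequence
\[
\cdots \to H^{k-1}_\frakP(\FF_p[Y]) \to H^k_\frakP(\ZZ[Y]) \xrightarrow{\,p\,} H^k_\frakP(\ZZ[Y]) \to H^k_\frakP(\FF_p[Y]) \to \cdots
\]
induced by $0 \to \ZZ[Y] \xrightarrow{p} \ZZ[Y] \to \FF_p[Y] \to 0$, combined with the uniform cohomological dimension bound $\lcd\frakP = c$, which by Corollary~\ref{corollary:cd:pfaffian} and Remark~\ref{remark:pfaffian:integers} holds over $\ZZ$, $\QQ$, and $\FF_p$ simultaneously. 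The $p$-torsion of $H^k_\frakP(\ZZ[Y])$ equals the image of the connecting map from $H^{k-1}_\frakP(\FF_p[Y])$, while $H^k_\frakP(\ZZ[Y])$ is uniquely $p$-divisible exactly when the forward map to $H^k_\frakP(\FF_p[Y])$ is zero. Thus both (1) and (2) reduce to tight control of the mod $p$ local cohomology.

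For (1), I would argue by descending induction on $k$, starting from the vanishing for $k > c$. The inductive step requires the connecting map $H^{k-1}_\frakP(\FF_p[Y]) \to H^k_\frakP(\ZZ[Y])$ to vanish. I would obtain this by matching graded ranks of $H^{k-1}_\frakP(\FF_p[Y])$ against those of $H^{k-1}_\frakP(\QQ[Y])$ via the \'etale and singular cohomology of the complement $\Spec(\cdot)\smallsetminus V(\frakP)$ computed in Section~\ref{section:topology:pfaffian}; combined with the inductive torsion-freeness of $H^{k-1}_\frakP(\ZZ[Y])$, such agreement forces the connecting map to be zero. For (2), the same comparison is meant to yield $H^k_\frakP(\FF_p[Y]) = 0$ whenever $k \neq \height\frakP$, so that multiplication by $p$ is surjective on $H^k_\frakP(\ZZ[Y])$ and the module acquires its $\QQ$-vector space structure.

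For (3), under the hypothesis $n \ge 2t+1$ one has $c > \height\frakP$ by Theorem~\ref{theorem:ara:pfaffian}, so (1) and (2) together yield $H^c_\frakP(\ZZ[Y]) = H^c_\frakP(\QQ[Y])$. I would then identify the latter with $H^{2tn}_\frakm(\QQ[Y])$ by exploiting the linearly reductive $\Sp_{2t}(\QQ)$-action on $S = \QQ[Y]$: the purity isomorphism~\eqref{equation:theorem:ara:intro} specializes to $H^c_\frakP(S) \cong H^c_{\frakm_R}(R) \otimes_R S$, and since $R$ is Cohen--Macaulay of dimension $c$ with an explicit Gorenstein canonical module, tracking the $a$-invariant and applying Grothendieck local duality should identify the result with $H^{\dim S}_\frakm(S)$ up to the appropriate degree shift, which gives the claimed degree-preserving isomorphism.

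The main obstacle, I expect, is the graded rank-matching that underlies (1) and (2): one needs not merely a global rank comparison between the $\FF_p$- and $\QQ$-versions of $H^{k-1}_\frakP$, but term-by-term agreement across all graded degrees. This is precisely where the characteristic $\neq 2$ hypothesis enters and where the fiber bundle decompositions of Appendix~\ref{appendix}, applied integrally and reduced modulo $p$, do their essential work; controlling potential $\ZZ/\ell$-torsion in the relevant \'etale cohomology groups of the complement for primes $\ell$ other than~$2$ is the technical heart of the argument.
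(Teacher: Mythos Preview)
Your framework for (1) and (2)---the long exact sequence on multiplication by $p$---is the right starting point, and the paper uses it too. But your proposed engine for the key step is not workable. The \'etale and singular cohomology computations of Section~\ref{section:topology:pfaffian} identify only a \emph{single} cohomology group of the complement (Theorem~\ref{theorem:cohomology:open:pfaffian}), the one in degree $4tn - \binom{2t+1}{2} - 1$; they say nothing about the remaining groups and hence cannot be used to match graded ranks of $H^k_\frakP(\FF_p[Y])$ against $H^k_\frakP(\QQ[Y])$ across all $k$. The paper instead proceeds far more directly: since $S/(\frakP + pS)$ is Cohen--Macaulay by \cite[Theorem~6.8]{HJPS}, Peskine--Szpiro gives $H^k_\frakP(\FF_p[Y]) = 0$ for all $k \neq \height\frakP$ immediately, reducing both (1) and (2) to the single assertion that multiplication by $p$ is injective on $H^{\height\frakP+1}_\frakP(\ZZ[Y])$. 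That injectivity is then obtained by induction on $t$, not on $k$: the $F$-regularity of $S/(\frakP+pS)$ forces a negative $a$-invariant, so Lemma~\ref{lemma:lsw:1} reduces to checking injectivity after inverting each $y_{ij}$, and Lemma~\ref{lemma:matrix:invert:pfaffian} identifies the localized module with copies of the analogous module for a $(2t-2)\times(n-1)$ matrix. Your descending induction on $k$ has no analogue of this localization-and-shrink mechanism, and the base case $t=1$ (where $\frakP = I_2(Y)$ and \cite[Theorem~1.2]{LSW} applies) plays no role in your outline.

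For (3), your route through $H^c_{\frakm_R}(R) \otimes_R S$ and Grothendieck duality does not reach the target: even though $R$ is Gorenstein, there is no evident reason why this tensor product should collapse to a single copy of $H^{2tn}_\frakm(S)$---the $R$-module $S$ is not free, and tracking $a$-invariants does not determine the multiplicity. The paper's argument is structurally different. After establishing that $H^c_\frakP(\QQ[Y])$ is supported only at $\{\frakm\}$ (again via Lemma~\ref{lemma:matrix:invert:pfaffian} and an arithmetic-rank drop for the smaller matrix), it invokes the $D$-module classification of holonomic modules with point support to write $H^c_\frakP(\QQ[Y])$ as a finite direct sum of copies of $H^{2tn}_\frakm(\QQ[Y])$, and only then uses Theorem~\ref{theorem:cohomology:open:pfaffian} via Lemma~\ref{lemma:lsw:2} to pin the number of copies at one. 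The topological input enters only at this final counting step, not as a rank-matching device throughout.
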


\begin{proof}
In view of the $F$-regularity of $S/(\frakP+pS)$ mentioned earlier, (1) and (2) follow from Theorem~\ref{theorem:vector:space}. It remains to prove (3). Since $n\ge 2t+1$, the equivalent conditions in Theorem~\ref{theorem:ara:pfaffian} give~$c>\height\frakP$, so $H^c_\frakP(\ZZ[Y])$ is indeed a nonzero $\QQ$-vectorspace. We change notation and work with $S\colonequals\QQ[Y]$ for the remainder of the proof. We claim that the support of $H^c_\frakP(S)$ is the homogeneous maximal ideal $\frakm$ of~$S$, for which it suffices, without loss of generality, to verify that $H^c_\frakP(S)_{y_{11}} = H^c_\frakP(S_{y_{11}})$ is zero:

By \cite[Lemma~3.2]{PTW}, there exists a $(2t-2)\times(n-1)$ matrix $Y'$ with entries from $S_{y_{11}}$, and elements $f_2,\dots,f_n$ in $S_{y_{11}}$, such that the elements $Y', f_2,\dots,f_n$ are algebraically independent over $\QQ$, and
\[
\frakP S_{y_{11}}\ =\ \frakP(Y')S_{y_{11}}+(f_2,\dots,f_n)S_{y_{11}},
\]
and $S_{y_{11}}$ is free over the polynomial subring $\QQ[Y',f_2\dots,f_n]$. It follows that $H^c_\frakP(S_{y_{11}})$ is a direct sum of copies of
\[
H^c_{\frakP(Y')+(f_2,\dots,f_n)}(\QQ[Y',f_2,\dots,f_n])\ \cong\ H^{c-n+1}_{\frakP(Y')}(\QQ[Y']) \otimes_\QQ H^{n-1}_{(f_2,\dots,f_n)}(\QQ[f_2,\dots,f_n]).
\]
But
\[
H^{c-n+1}_{\frakP(Y')}(\QQ[Y'])\ =\ 0
\]
since
\[
\ara\frakP(Y')\ =\ \binom{n-1}{2}-\binom{(n-1)-(2t-2)}{2}\ <\ c-n+1.
\]

Note that $H^c_\frakP(S)$ is a holonomic $D_{S|\QQ}$-module, \cite[Section~2]{Lyubeznik:Invent} or \cite[Lecture~23]{24hours}. Since it has support~$\{\frakm\}$, it is isomorphic, as a $D_{S|\QQ}$-module, to a finite direct sum of copies of $H^{2tn}_\frakm(S)$, as follows from \cite[Proposition~4.3]{Kashiwara} or~\cite[Lemma~(c),~page~208]{Lyubeznik:injective}. This isomorphism is degree-preserving by \cite[Theorem~1.1]{Ma-Zhang}, see also~\cite[Section~3.2]{BBLSZ}. It remains to check that $H^c_\frakP(S)$ is isomorphic to \emph{one} copy of $H^{2tn}_\frakm(S)$. For this, replace $\QQ$ by~$\CC$ and use Lemma~\ref{lemma:lsw} and Theorem~\ref{theorem:cohomology:open:pfaffian}.
\end{proof}

We next prove a vanishing theorem analogous to~\cite[Theorem~1.1]{LSW}:

\begin{theorem}
\label{theorem:vanish:pfaffian}
Let $M=(m_{ij})$ be a $2t\times n$ matrix with entries from a commutative Noetherian ring~$A$, where $n\ge 2t+1$. Set $\frakp\colonequals I_1(M^\tr\Omega M)$ and $c\colonequals\binom{n}{2} - \binom{n-2t}{2}$. Then:

\begin{enumerate}[\quad\rm(1)]
\item The local cohomology module $H^c_\frakp(A)$ is a $\QQ$-vectorspace, and thus vanishes if the canonical homomorphism $\ZZ\to A$ is not injective.

\item If $\dim A\otimes_\ZZ\QQ<2tn$, then $H^c_\frakp(A)=0$, i.e., $\lcd\frakp<c$.

\item If the images of the matrix entries $m_{ij}$ in the ring $A\otimes_\ZZ\QQ$ are algebraically dependent over a field that is a subring of $A\otimes_\ZZ\QQ$, then $H^c_\frakp(A)=0$.
\end{enumerate}
\end{theorem}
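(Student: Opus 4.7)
My plan is to reduce all three statements to a single key isomorphism obtained by base-changing the generic local cohomology module from $\ZZ[Y]$ to $A$, and then reinterpreting it via Theorem~\ref{theorem:torsion:free:pfaffian}\,(3) as a top local cohomology module on $A\otimes_\ZZ\QQ$.

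For the base change, let $\phi\colon\ZZ[Y]\to A$ be the ring map sending $y_{ij}$ to $m_{ij}$, so that $\phi(\frakP)A=\frakp$. By Theorem~\ref{theorem:ara:pfaffian} I can choose $c$ elements $f_1,\dots,f_c\in\ZZ[Y]$ with $\rad(f_1,\dots,f_c)=\frakP$, and their images then generate $\frakp$ up to radical in $A$. The tail of the \v{C}ech complex gives a right-exact sequence
\[
\check{C}^{c-1}(f_1,\dots,f_c;\ZZ[Y])\to\check{C}^{c}(f_1,\dots,f_c;\ZZ[Y])\to H^c_\frakP(\ZZ[Y])\to 0,
\]
and since localization commutes with base change, tensoring over $\ZZ[Y]$ with $A$ turns this into the analogous tail of the \v{C}ech complex on $\phi(f_1),\dots,\phi(f_c)$ over $A$. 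Right-exactness of the tensor product thus yields
\[
H^c_\frakp(A)\ \cong\ H^c_\frakP(\ZZ[Y])\otimes_{\ZZ[Y]}A.
\]
Since $n\ge 2t+1$, Theorem~\ref{theorem:torsion:free:pfaffian}\,(3) identifies the first factor with $H^{2tn}_\frakm(\QQ[Y])$, which is already a $\QQ$-vector space. Running the same right-exactness trick once more, now for the \v{C}ech complex on $y_{11},\dots,y_{2t,n}$ base-changed along $\QQ[Y]\to B\colonequals A\otimes_\ZZ\QQ$, yields the key isomorphism
\[
H^c_\frakp(A)\ \cong\ H^{2tn}_{(m_{11},\dots,m_{2t,n})B}(B).
\]

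From this isomorphism, the three parts should drop out smoothly. For (1), $B$ is a $\QQ$-algebra, so every local cohomology module of $B$ is a $\QQ$-vector space; moreover, if $\ZZ\to A$ is not injective then $B=0$ and the module vanishes. For (2), the hypothesis $\dim B=\dim A\otimes_\ZZ\QQ<2tn$ forces the top local cohomology of $B$ to vanish at any ideal, since local cohomological dimension is bounded above by Krull dimension. For (3), an algebraic dependence of the $m_{ij}$ over a field $k\subseteq B$ means the subring $T\colonequals k[m_{11},\dots,m_{2t,n}]$ of $B$ is a quotient of $k[Y]$ by a nonzero ideal, so $\dim T<2tn$; one last application of the right-exactness trick, factored now as $k[Y]\to T\to B$, converts the key isomorphism into $H^c_\frakp(A)\cong H^{2tn}_{(m_{11},\dots,m_{2t,n})T}(T)\otimes_T B$, whose first factor vanishes by Krull dimension.

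The main obstacle I anticipate is justifying cleanly that the top local cohomology commutes with the non-flat specialization $\phi$. The right-exactness of the top \v{C}ech differential is precisely what permits this, and it is essential, since the analogous identification fails in lower cohomological degrees. One must also verify at each application that the chosen generators continue to generate the relevant ideal up to radical after base change, but this is routine from the compatibility of radicals with ring homomorphisms.
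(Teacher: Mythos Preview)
Your proof is correct and follows essentially the same approach as the paper: both establish the key isomorphism $H^c_\frakp(A)\cong H^{2tn}_{\frakm A}(A\otimes_\ZZ\QQ)$ via right-exact base change of the top \v{C}ech cohomology combined with Theorem~\ref{theorem:torsion:free:pfaffian}\,(3), and then read off all three parts. One small citation slip: the existence of $c$ elements $f_1,\dots,f_c\in\ZZ[Y]$ generating $\frakP$ up to radical is not Theorem~\ref{theorem:ara:pfaffian} (which is stated over a field) but rather Remark~\ref{remark:pfaffian:integers} or Corollary~\ref{corollary:cd:pfaffian}.
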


\begin{proof}
For $Y$ a $2t\times n$ matrix of indeterminates and $\frakP\colonequals I_1(Y^\tr\Omega Y)$, Theorem~\ref{theorem:torsion:free:pfaffian}\,(3) gives
\[
H^c_\frakP(\ZZ[Y])\ \cong\ H^{2tn}_\frakm(\QQ[Y]).
\]
Consider $A$ as a $\ZZ[Y]$-algebra via $y_{ij}\mapsto m_{ij}$, so that $\frakP A$ equals $\frakp$. By base change using the right-exactness of $A\otimes_{\ZZ[Y]}-$, see for example,~\cite[Lemma~3.3]{LSW}, one obtains
\begin{equation}
\label{equation:iso}
H^c_\frakp(A)\ \cong\ H^{2tn}_{\frakm A}(A\otimes_\ZZ\QQ).
\end{equation}
It follows that $H^c_\frakp(A)$ is a $\QQ$-vectorspace, which settles (1).

For (2), note that $H^{2tn}_{\frakm A}(A\otimes_\ZZ\QQ)$ vanishes if $\dim A\otimes_\ZZ\QQ<2tn$. But then, by~\eqref{equation:iso},
\[
H^c_\frakp(A)\ =\ 0.
\]

For (3), suppose the matrix entries $m_{ij}$ are algebraically dependent over a field $\FF$ contained in $A\otimes_\ZZ\QQ$. Take $B$ to be the $\FF$-subalgebra of~$A\otimes_\ZZ\QQ$ generated by the images of $m_{ij}$ and consider the ideal $I_1(M^\tr\Omega M)$ in $B$. Then $\dim B<2tn$, so (2) gives $H^c_{I_1(M^\tr\Omega M)}(B)=0$. But then
\[
H^c_\frakp(A)\ \cong \ H^c_\frakp(A\otimes_\ZZ\QQ)\ \cong \ H^c_{I_1(M^\tr\Omega M)}(B)\otimes_B(A\otimes_\ZZ\QQ)
\]
vanishes as well.
\end{proof}

\begin{remark}
The bound $\lcd\frakp<c$ in Theorem~\ref{theorem:vanish:pfaffian}\,(2) is optimal: take $S\colonequals\QQ[Y]$ for~$Y$ a~$2t\times n$ matrix of indeterminates with $n\ge 2t+1$ and $A\colonequals S/y_{11}S$. Note that $\dim A<2tn$. Set $\frakp\colonequals\frakP A$. Multiplication by~$y_{11}$ on $S$ induces the exact sequence
\[
\CD
@>>>H^{c-1}_\frakp(A) @>>> H^c_{\frakP}(S) @>y_{11}>> H^c_{\frakP}(S) @>>>0,
\endCD
\]
where the vanishing on the right is by Theorem~\ref{theorem:vanish:pfaffian}\,(2). But multiplication by $y_{11}$ on $H^c_{\frakP}(S)$ has a nonzero kernel by Theorem~\ref{theorem:torsion:free:pfaffian}\,(3), so $H^{c-1}_\frakp(A)$ is nonzero, i.e., $\lcd\frakp=c-1$.

The requirement $n\ge 2t+1$ in Theorem~\ref{theorem:vanish:pfaffian} is essential: Suppose instead that $n\le 2t$. For indeterminates $y_{ij}$ over $\QQ$, set
\[
M\colonequals\begin{bmatrix}
1 & 0 & 0 & \cdots & 0\\
0 & y_{22} & y_{23} & \cdots & y_{2n}\\
\vdots & \vdots & \vdots & & \vdots\\
0 & y_{2t,2} & y_{2t,3} & \cdots & y_{2t,n}
\end{bmatrix}
\]
and $A\colonequals\QQ[M]$. Then $\dim A<2tn$, and we claim that $H^c_\frakp(A)$ is nonzero for $\frakp\colonequals I_1(M^\tr\Omega M)$. Note that $c=\binom{n}{2}$ in this case, and that
\[
\frakp\ =\ \frakP(M')+(y_{22},\dots,y_{2n}),
\]
where $M'$ is the $(2t-2)\times(n-1)$ submatrix of $M$ obtained by deleting the first column and the first two rows. But then $H^c_\frakp(A)$ is a direct sum of copies of
\[
H^{c-n+1}_{\frakP(M')}(\QQ[M']),
\]
which is nonzero by Corollary~\ref{corollary:cd:pfaffian}.

The requirement that $n\ge 2t+1$ in Theorem~\ref{theorem:torsion:free:pfaffian}\,(3) is needed as well: if $n\le 2t$, one may see that the isomorphism $H^c_\frakP(\ZZ[Y])\cong H^{2tn}_\frakm(\QQ[Y])$ does not hold by specializing $Y$ to the matrix $M$ displayed above.
\end{remark}

%%%%%%%%%%%%%%%%%%%%%%%%%%%%%%%%%%%%%%%%%%%%%%%%%%%%%%%%%%%%%%%%%%%%%%%%
\section{Local cohomology of generic determinantal nullcones}
\label{section:local:determinantal}
%%%%%%%%%%%%%%%%%%%%%%%%%%%%%%%%%%%%%%%%%%%%%%%%%%%%%%%%%%%%%%%%%%%%%%%%

Let $X$ be an $m \times n$ matrix of indeterminates over a field $\KK$; we use $I_{t+1}(X)$ to denote the ideal of $\KK[X]$ generated by the size $t+1$ minors of $X$. The \emph{determinantal ring} $\KK[X]/I_{t+1}(X)$ is a subring of a polynomial ring as follows: Taking $Y$ and $Z$ to be matrices of indeterminates of sizes $m \times t$ and $t \times n$ respectively, the product matrix $YZ$ has rank at most $t$, and the entrywise map provides an isomorphism
\[
\KK[X]/I_{t+1}(X)\ \to\ \KK[YZ].
\]
It follows that the subring $R\colonequals \KK[YZ]$ of~$S\colonequals \KK[Y,Z]$ is isomorphic to $\KK[X]/I_{t+1}(X)$. This isomorphism remains valid if the field $\KK$ is replaced by $\ZZ$.

Consider the $\KK$-linear action of the general linear group~$\GL_t(\KK)$ on $S$, where an element~$M$ in~$\GL_t(\KK)$ acts via
\begin{equation}
\label{equation:action:general}
M \colon \begin{cases}
Y & \mapsto YM^{-1}\\
Z & \mapsto MZ.
\end{cases}
\end{equation}
When $\KK$ is infinite, the invariant ring is precisely the subring $R$, see \cite{Weyl},~\cite[\S3]{DeConcini-Procesi}, or~\cite[Theorem~4.1]{Hashimoto}.

We use $\frakA$ to denote the ideal of $S$ generated by the entries of the product matrix $YZ$. Unlike the Pfaffian case, the ideal $\frakA$ need not be prime or even equidimensional; its irreducible components correspond to varieties of complexes that have been studied extensively, beginning with Buchsbaum--Eisenbud~\cite{Buchsbaum-Eisenbud:1975}. For the case at hand, consider a complex of~$\KK$-vectorspaces
\[
\CD
\KK^m @<M<< \KK^t @<N<< \KK^n,
\endCD
\]
and regard the matrix entries of $M,N$ as a point in affine space $\AA^{mt+tn}_\KK$. Note that
\[
\rank M + \rank N\ \le\ t.
\]
Fixing nonnegative integers $i,j$ with $i+j\le t$, the corresponding \emph{variety of complexes} is the algebraic set consisting of matrices $M,N$ with $\rank M\le i$, $\rank N\le j$, and $MN=0$. The defining ideal of this variety is
\[
\frakp_{i,j}\colonequals I_{i+1}(Y) + I_{j+1}(Z) + \frakA.
\]
The ring $S/\frakp_{i,j}$ has rational singularities if $\KK$ has characteristic zero, \cite{Kempf:BAMS, Kempf:Invent}, and is $F$-regular if $\KK$ has positive characteristic, \cite[Corollary~4.2]{Lorincz:ASENS}, \cite[Theorem~5.6]{PTW}. The ideal $\frakA$ equals the intersection of the $\frakp_{i,j}$ with $i+j=t$. If $i\le m$ and $j\le n$, then
\begin{equation}
\label{equation:height:pij}
\height \frakp_{i,j}\ =\ (m-i)(t-i) + (n-j)(t-j) + ij,
\end{equation}
see for example~\cite{Huneke:TAMS} or~\cite{DeConcini-Strickland}. Our first result in this section concerns the arithmetic rank of the ideal $\frakA$:

\begin{theorem}
\label{theorem:ara:determinantal}
Let $Y$ and $Z$ be matrices of indeterminates of sizes $m \times t$ and $t \times n$ respectively, over a field $\KK$ of characteristic other than two. Then the arithmetic rank of the ideal~$\frakA \colonequals I_1(YZ)$ in $\KK[Y,Z]$ is
\[
\ara \frakA\ =\ \begin{cases}
mt+nt-t^2 & \text{if }\ t< \min \{m,n\},\\
mn & \text{otherwise}.
\end{cases}
\]
The following are equivalent:
\begin{enumerate}[\quad\rm(1)]
\item the ideal $\frakA$ is generated by a regular sequence;
\item the ideal $\frakA$ is a set theoretic complete intersection;
\item $m+n\le t+1$.
\end{enumerate}
\end{theorem}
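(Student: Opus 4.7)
The plan is to mirror the proof of Theorem~\ref{theorem:ara:pfaffian}. Set $R\colonequals\KK[YZ]$, which is isomorphic to $\KK[X]/I_{t+1}(X)$ for $X$ an $m\times n$ matrix of indeterminates. When $t\le\min\{m,n\}$ the determinantal dimension formula gives $\dim R = mt+nt-t^2$; when $t\ge\min\{m,n\}$ the ideal $I_{t+1}(X)$ vanishes, so $R$ is a polynomial ring and $\dim R = mn$. For the upper bound, a homogeneous system of parameters for $R$, viewed as elements of $S$, generates $\frakA = \frakm_R S$ up to radical, giving $\ara\frakA \le \dim R$; together with the trivial bound $\ara\frakA \le mn$ from the $mn$ entries of $YZ$, this yields $\ara\frakA \le \min\{mt+nt-t^2,\,mn\}$, matching the formula in the theorem.

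For the lower bound when $t \le \min\{m,n\}$, I would appeal to the \'etale cohomology calculation carried out in Section~\ref{section:topology:determinantal} (the determinantal analogue of Theorem~\ref{theorem:cohomology:open:pfaffian}), combined with Lemma~\ref{lemma:ara}. Since the two expressions $mt+nt-t^2$ and $mn$ coincide at the boundary $t = \min\{m,n\}$, this should also deliver $\ara\frakA \ge mn$ at the boundary.

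For $t > \min\{m,n\}$, use the symmetry $I_1(YZ) = I_1((YZ)^\tr) = I_1(Z^\tr Y^\tr)$ to reduce to the case $n \le m$, so $n < t$. Specialize $S \to S'$ by setting rows $n+1,\dots,t$ of $Z$ to zero; the image of $\frakA$ becomes the ideal $I_1(Y'Z')$ in a polynomial ring, where $Y'$ is the leftmost $m\times n$ block of $Y$ and $Z'$ is the topmost $n\times n$ block of $Z$ (the remaining variables $y_{ik}$ with $k>n$ are adjoined as free indeterminates and do not affect $\ara$). This is precisely the boundary case $(m,\,n,\,t')=(m,\,n,\,n)$ handled in the previous step, giving $\ara\ge mn$ for the specialized ideal; since surjective specializations only decrease $\ara$, one concludes $\ara\frakA \ge mn$.

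For the equivalences, $(3)\implies(1)$ follows from~\eqref{equation:height:pij}: when $m+n\le t+1$, the admissible pairs $(i,j)$ with $i+j=t$, $i\le m$, $j\le n$ are only $(m,t-m)$ and $(t-n,n)$, each contributing $\height\frakp_{i,j}=mn$, so $\height\frakA = mn$ equals the number of generators of $\frakA$. The implication $(1)\implies(2)$ is immediate, and $(2)\implies(3)$ is the contrapositive check that $m+n\ge t+2$ forces $\ara\frakA > \height\frakA$, via a direct comparison of the $\ara$ formula above with the minimum of $(m-i)(t-i)+(n-j)(t-j)+ij$ over admissible pairs. The principal obstacle will be the \'etale cohomology computation in Section~\ref{section:topology:determinantal}, in particular its sharpness at the boundary $t = \min\{m,n\}$ where the two regimes meet; the specialization step mirrors the $t<n<2t$ portion of Theorem~\ref{theorem:ara:pfaffian} and should be routine.
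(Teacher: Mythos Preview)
Your plan matches the paper's proof closely: the upper bound via a homogeneous system of parameters for $R\cong\KK[X]/I_{t+1}(X)$, the lower bound for $t\le\min\{m,n\}$ via Theorem~\ref{theorem:cohomology:open:generic} and Lemma~\ref{lemma:ara}, and the reduction for $t>\min\{m,n\}$ by specializing to the boundary case are all exactly what the paper does (the paper specializes both $Y$ and $Z$ symmetrically rather than just $Z$, but your version works equally well). Your sketch of $(2)\Rightarrow(3)$ by comparing $\ara\frakA$ with $\min_{i+j=t}\height\frakp_{i,j}$ is also what the paper does, though the paper is more explicit: it exhibits $\frakp_{0,t}$ when $t<n$, and $\frakp_{m-1,\,t-m+1}$ (of height $mn+t-m-n+1$) when $t\ge\max\{m,n\}$.

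There is one small gap in your $(3)\Rightarrow(1)$. When $m+n<t$ strictly, there are \emph{no} pairs $(i,j)$ with $i+j=t$, $i\le m$, $j\le n$, so your claimed pairs $(m,t-m)$ and $(t-n,n)$ fail the constraints $j\le n$ and $i\le m$, and the height formula~\eqref{equation:height:pij} does not apply to them. The paper handles this by first specializing $t+1-(m+n)$ columns of $Y$ and the corresponding rows of $Z$ to zero, which reduces to the case $m+n=t+1$ without changing the $mn$ generators of $\frakA$; since the images form a regular sequence in the specialized polynomial ring, permutability of regular sequences in the graded Cohen--Macaulay ring $S$ shows the original generators do as well. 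Alternatively, for $m+n\le t$ one can observe directly that $\frakp_{m,n}=\frakA$ (the determinantal summands vanish), and~\eqref{equation:height:pij} with $(i,j)=(m,n)$ gives $\height\frakA=mn$.
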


\begin{proof}
Let $X$ be an $m \times n$ matrix of indeterminates. The subring $R\colonequals\KK[YZ]$ of $S\colonequals\KK[Y,Z]$ is isomorphic to the generic determinantal ring $\KK[X]/I_{t+1}(X)$, and this has dimension
\[
c\colonequals
\begin{cases}
mt+nt-t^2 &\text{if }\ t < \min \{m,n\},\\
mn & \text{otherwise}.
\end{cases}
\]
If the field $\KK$ has characteristic zero, the dimension $c$ equals $\ara\frakA$ by Theorem~\ref{theorem:ara:intro}, using the $\GL_t(\KK)$ action on $S$ as in~\eqref{equation:action:general}. More generally, as in the proof of Theorem~\ref{theorem:ara:intro}, the homogeneous maximal ideal of $R$ is generated, up to radical, by a homogeneous system of parameters for~$R$, and these~$c$ elements generate an ideal of $S$ that has radical $\frakA$. Thus, independent of the characteristic of $\KK$, one has
\[
\ara\frakA\ \le\ c.
\]

Assume for the rest of the proof that the characteristic of $\KK$ is not two; we may also assume that $\KK$ is algebraically closed. If $t\le\min\{m,n\}$, Theorem~\ref{theorem:cohomology:open:generic} along with Lemma~\ref{lemma:lsw} yields $\ara\frakA\ge c$. For the remaining case, assume without loss of generality that $m\le n$, and that $t>m$. We need to verify that $c=mn$ is a lower bound for~$\ara\frakA$. Suppose $\frakA$ can be generated up to radical by fewer than $mn$ elements. Consider the specializations
\[
Y'\colonequals\begin{bmatrix}
y_{11} & \cdots & y_{1m} & 0 & \cdots & 0\\
\vdots & & \vdots & \vdots & & \vdots\\
y_{m1} & \cdots & y_{mm} & 0 & \cdots & 0\\
\end{bmatrix}
\quad\text{ and }\quad
Z'\colonequals\begin{bmatrix}
z_{11} & \cdots & z_{1n}\\
\vdots & & \vdots\\
z_{m1} & \cdots & z_{mn}\\
0 & \cdots & 0\\
\vdots & & \vdots\\
0 & \cdots & 0\\
\end{bmatrix}
\]
of $Y$ and $Z$ respectively, i.e., the entries of $Y$ beyond the first $m$ columns and the entries of $Z$ beyond the first $m$ rows are specialized to $0$. Let $S'$ denote the corresponding specialization of $S$. Then the ideal $I_1(Y'Z')S'$ is generated up to radical by fewer than $mn$ elements, contradicting what we have verified earlier.

Among the equivalent conditions, $(1) \implies (2)$ is immediate. For $(2) \implies (3)$, since the ideal $\frakA$ is a set theoretic complete intersection by assumption, minimal primes of $\frakA$ must have the same height. If $t<n$, then $\frakp_{0,\,t}$ is one of the minimal primes, so
\[
\height\frakA\ =\ \height \frakp_{0,\,t}\ =\ mt\ < mt +nt-t^2\ =\ \ara\frakA,
\]
a contradiction. Similarly, one cannot have $t<m$. Thus, $t\ge m$ and $t\ge n$; it follows that $\ara\frakA=mn$. If $t \le m+n-2$, we obtain a contradiction: the ideal $\frakp_{m-1,\,t-m+1}$ is a minimal prime of $\frakA$, but
\[
\height\frakp_{m-1,\,t-m+1}\ =\ mn+t-m-n+1\ <\ mn\ =\ \ara\frakA.
\]

It remains to prove $(3) \implies (1)$. For this, it suffices to prove that $S/\frakA$ is a complete intersection ring after specializing the entries of $t+1-(m+n)$ columns of~$Y$ and the corresponding $t+1-(m+n)$ rows of $Z$ to zero, since this leaves the number of defining equations unchanged. Thus, we may assume that $m+n=t+1$, in which case
\[
\frakA\ =\ \frakp_{m-1,\,n}\cap \frakp_{m,\,n-1}.
\]
Since $\height\frakp_{m-1,\,n}=mn=\height\frakp_{m,\,n-1}$, it follows that $\height\frakA=mn$, which is the number of generators of $\frakA$.
\end{proof}

As in Remark~\ref{remark:pfaffian:integers}, the theorem remains valid if the field $\KK$ is replaced by $\ZZ$, since one has an isomorphism
$\ZZ[X]/I_{t+1}(X)\ \cong\ \ZZ[YZ]$,
where the ring above has an ASL structure by~\cite[Chapter~4]{Bruns-Vetter}. Using this, along with Theorem~\ref{theorem:torsion:free:generic}\,(1), one obtains the following result; the proof follows that of Corollary~\ref{corollary:cd:pfaffian}.

\begin{corollary}
\label{corollary:cd:determinantal}
Let $Y$ and $Z$ be $m \times t$ and $t \times n$ matrices of indeterminates over a torsionfree $\ZZ$-algebra $B$. Set $\frakA\colonequals I_1(YZ)$ in $B[Y,Z]$. Then
\[
\ara\frakA\ =\ \lcd\frakA\ =\
\begin{cases}
mt+nt-t^2 &\text{if }\ t < \min \{m,n\},\\
mn & \text{otherwise}.
\end{cases}
\]
\end{corollary}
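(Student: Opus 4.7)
The plan is to mirror the proof of Corollary~\ref{corollary:cd:pfaffian}, swapping the Pfaffian ASL and Theorem~\ref{theorem:torsion:free:pfaffian}(1) for their generic determinantal analogues. Write $c$ for the piecewise value appearing in the statement; in both branches $c$ equals the Krull dimension of $R\colonequals\ZZ[YZ]\cong\ZZ[X]/I_{t+1}(X)$ over a field. Since $\lcd\frakA\le\ara\frakA$ holds unconditionally, the task reduces to the upper bound $\ara\frakA\le c$ and the matching lower bound $\lcd\frakA\ge c$.

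For the upper bound I would appeal to the ASL structure on $R$, cf.\ \cite[Chapter~4]{Bruns-Vetter}: by \cite[Proposition~5.10]{Bruns-Vetter} the irrelevant ideal of $R$ is generated up to radical by $\dim R=c$ homogeneous elements, and pushing these along $R\to S\colonequals\ZZ[Y,Z]$ and then base-changing to $B[Y,Z]$ yields $c$ elements with radical $\frakA$. This is the exact analogue of Remark~\ref{remark:pfaffian:integers}. It then remains to exhibit $H^c_\frakA(B[Y,Z])\ne 0$.

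For this last step, Theorem~\ref{theorem:torsion:free:generic}(1) says that $H^c_\frakA(\ZZ[Y,Z])$ is $\ZZ$-torsion-free, while Theorem~\ref{theorem:ara:intro} applied to the linearly reductive $\GL_t(\QQ)$-action~\eqref{equation:action:general} gives $H^c_\frakA(\QQ[Y,Z])\ne 0$. Flatness of $\ZZ\to\QQ$ identifies this with $H^c_\frakA(\ZZ[Y,Z])\otimes_\ZZ\QQ$, which is therefore a nonzero $\QQ$-vector space; similarly $B\otimes_\ZZ\QQ$ is nonzero by the torsion-free hypothesis on $B$. Since a torsion-free module over the PID $\ZZ$ is flat, base change along $\ZZ[Y,Z]\to B[Y,Z]$ produces
\[
H^c_\frakA(B[Y,Z])\otimes_\ZZ\QQ\ \cong\ \bigl(H^c_\frakA(\ZZ[Y,Z])\otimes_\ZZ\QQ\bigr)\otimes_\QQ\bigl(B\otimes_\ZZ\QQ\bigr),
\]
a tensor product of two nonzero $\QQ$-vector spaces, hence nonzero.

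The main obstacle I anticipate is checking that Theorem~\ref{theorem:torsion:free:generic}(1) is genuinely available in both branches of the formula for $c$: the generic regime $t<\min\{m,n\}$ is the classical determinantal setting, while when $t\ge\min\{m,n\}$ the ring $\ZZ[YZ]$ collapses to a polynomial ring and $\frakA$ becomes the expansion of its irrelevant ideal. Modulo confirming that the torsion-freeness statement of that theorem matches the appropriate integer $c$ in each case, the argument reduces to a straightforward transposition of the Pfaffian corollary.
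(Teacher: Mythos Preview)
Your proof is correct and mirrors the paper's approach, which likewise combines the ASL upper bound (as in Remark~\ref{remark:pfaffian:integers}) with Theorem~\ref{theorem:torsion:free:generic}(1), exactly along the lines of Corollary~\ref{corollary:cd:pfaffian}. Your residual concern is unfounded: Theorem~\ref{theorem:torsion:free:generic}(1) is stated for all sizes $m,n,t$ and all cohomological degrees $k$, so the torsion-freeness of $H^c_\frakA(\ZZ[Y,Z])$ is available in both branches of the formula for $c$.
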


The answer is different in positive characteristic:

\begin{proposition}
\label{proposition:det:lcd}
Let $Y$ and $Z$ be $m\times t$ and $t\times n$ matrices of indeterminates respectively, over a field $\KK$ of positive characteristic. If $1 < t < \min\{m,n\}$, then
\[
\lcd I_1(YZ)\ <\ mt+nt-t^2.
\]
\end{proposition}

\begin{proof}
Set~$S\colonequals\KK[Y,Z]$. Since $S/\frakp_{i,j}$ is a Cohen--Macaulay ring of positive prime characteristic, one has $\lcd\frakp_{i,j} = \height\frakp_{i,j}$ by~\cite[Proposition~III.4.1]{PS}. For $\ell$ with $0\le\ell\le t$, set
\[
\fraka_\ell\colonequals\bigcap_{i=0}^\ell\frakp_{i,\,t-i}.
\]
Suppose $\ell\le t-1$. Since 
\begin{equation}
\label{equation:sum:pij}\frakp_{i_1,j_1}+\frakp_{i_2,j_2}=\frakp_{i,j} \quad \text{for} \ i\colonequals\min\{i_1,i_2\} \ \text{and} \ j\colonequals\min\{j_1,j_2\},
\end{equation} up to taking radicals, the ideal
\[
\fraka_\ell+\frakp_{\ell+1,\,t-\ell-1}
\]
coincides with
\begin{multline*}
(\frakp_{0,\,t}+\frakp_{\ell+1,\,t-\ell-1})\cap(\frakp_{1,\,t-1}+\frakp_{\ell+1,\,t-\ell-1})\cap\dots
\cap(\frakp_{\ell,\,t-\ell}+\frakp_{\ell+1,\,t-\ell-1})\\
=\
\frakp_{0,\,t-\ell-1}\cap\frakp_{1,\,t-\ell-1}\cap\dots\cap\frakp_{\ell,\,t-\ell-1}
\ =\
\frakp_{\ell,\,t-\ell-1}.
\end{multline*}
But
\[
\frakp_{\ell,\,t-\ell-1}\ \subseteq\ \fraka_\ell+\frakp_{\ell+1,\,t-\ell-1},
\]
and $\frakp_{\ell,\,t-\ell-1}$ is prime, so one has the equality
\begin{equation}
\label{equation:intersect:pij}
\frakp_{\ell,\,t-\ell-1}\ =\ \fraka_\ell+\frakp_{\ell+1,\,t-\ell-1}.
\end{equation}

We use induction on $\ell$ to prove that
\[
\lcd\fraka_\ell\ <\ mt+nt-t^2
\]
for each $\ell$ with $0\le\ell\le t$. When $\ell=0$, this is simply the verification that
\[
\height \frakp_{0,\,t}\ =\ mt\ <\ mt+nt-t^2.
\]
For the inductive step, \eqref{equation:intersect:pij} gives a Mayer--Vietoris sequence
\[
\CD
@>>> H^k_{\fraka_\ell}(S)\oplus H^k_{\frakp_{\ell+1,\,t-\ell-1}}(S) @>>> H^k_{\fraka_{\ell+1}}(S) @>>> H^{k+1}_{\frakp_{\ell,\,t-\ell-1}}(S) @>>>
\endCD
\]
which gives
\[
\lcd\fraka_{\ell+1}\ \le\ \max\left\{\lcd\fraka_\ell,\ \lcd\frakp_{\ell+1,\,t-\ell-1},\ \lcd\frakp_{\ell,\,t-\ell-1}-1 \right\}.
\]
It now suffices to verify that
\[
\height\frakp_{\ell+1,\,t-\ell-1}\ <\ mt+nt-t^2
\quad\text{ and }\quad
\height\frakp_{\ell,\,t-\ell-1}-1\ <\ mt+nt-t^2
\]
for $0\le\ell\le t-1$. In view of~\eqref{equation:height:pij}, these simplify respectively as
\[
0\ <\ (n-t)(t-\ell-1)+(m-\ell-1)(\ell+1)
\quad\text{ and }\quad
0\ <\ (n-t)(t-\ell-1)+\ell(m-\ell-1),
\]
each of which holds since $1 < t < \min\{m,n\}$.
\end{proof}

\begin{remark}
\label{remark:generic:prime:case}
Set $S\colonequals\ZZ[Y,Z]$ where $Y$ and $Z$ are $m \times t$ and $t \times n$ matrices of indeterminates, and $\frakp_{i,j}\colonequals I_{i+1}(Y) + I_{j+1}(Z) + I_1(YZ)$ in $S$. Let $p$ be a positive prime integer. Using Theorem~\ref{theorem:vector:space} and the $F$-regularity of $S/(\frakp_{i,j}+pS)$ mentioned earlier, each $H^k_{\frakp_{i,j}}(S)$ is a torsionfree $\ZZ$-module; it is a $\QQ$-vectorspace precisely when $k$ differs from $\height\frakp_{i,j}$.
\end{remark}

Corresponding to Theorem~\ref{theorem:torsion:free:pfaffian}, for determinantal nullcones one has:

\begin{theorem}
\label{theorem:torsion:free:generic}
Let $Y$ and $Z$ be $m \times t$ and $t\times n$ matrices of indeterminates respectively. Consider the ideal $\frakA\colonequals I_1(YZ)$ in $\ZZ[Y,Z]$. Then:
\begin{enumerate}[\quad\rm(1)]
\item $H^k_\frakA(\ZZ[Y,Z])$ is a torsionfree $\ZZ$-module for each integer $k$.
\item Suppose $1 < t < \min\{m,n\}$. Set $c\colonequals mt+nt-t^2$, which is the cohomological dimension of $\frakA$. Then one has a degree-preserving isomorphism
\[
H^c_\frakA(\ZZ[Y,Z])\ \cong\ H^{mt+tn}_\frakm(\QQ[Y,Z]),
\]
where $\frakm$ is the homogeneous maximal ideal of $\QQ[Y,Z]$ under the standard grading.
\end{enumerate}
\end{theorem}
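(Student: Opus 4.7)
The plan is to parallel the proof of Theorem~\ref{theorem:torsion:free:pfaffian}, with essential modifications to accommodate the non-equidimensional nature of $\frakA = \bigcap_{i+j=t}\frakp_{i,j}$.

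For part (1), I would induct on $t$. The base case $t=1$ reduces to $\frakA = (y_1,\dots,y_m)\cap(z_1,\dots,z_n)$, handled directly via the Mayer--Vietoris sequence in local cohomology upon observing that $H^k_{(y_i)}(S)$, $H^k_{(z_j)}(S)$, and $H^k_{(y_i,z_j)}(S)$ are $\ZZ$-torsion-free, followed by a diagram chase. For the inductive step, I would first formulate an analog of Lemma~\ref{lemma:matrix:invert:pfaffian}: upon inverting $y_{11}$, elementary column operations on $Y$ (compensated by row operations on $Z$) transform $Y$ into $\operatorname{diag}(y_{11}, Y'')$ with $Y''$ an $(m-1)\times(t-1)$ matrix of new variables; the first row of $Z$ becomes new variables $z'_{11},\dots,z'_{1n}$, and the remaining rows form a $(t-1)\times n$ matrix $Z''$. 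These transformed entries, together with the first row and column of $Y$, are an algebraically independent family generating $S_{y_{11}}$, and
\[
\frakA S_{y_{11}}\ =\ \frakA(Y''Z'')S_{y_{11}}\ +\ (z'_{11},\dots,z'_{1n})S_{y_{11}}.
\]
The K\"unneth decomposition expresses $H^{k+1}_\frakA(S)_{y_{11}}$ as a sum of copies of $H^{k+1-n}_{\frakA(Y''Z'')}(\ZZ[Y'',Z''])$, on which $p$-injectivity holds by induction. By symmetry under the exchange of $Y$ and $Z^\tr$, and by $\GL_m\times\GL_n$-equivariance, the same conclusion holds after inverting any $y_{ij}$ or $z_{ij}$. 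Applying Lemma~\ref{lemma:lsw:1}, it then remains to verify that the Frobenius action on $\bigl[H^{\dim S-k}_\frakm(S/(\frakA+pS))\bigr]_0$ is nilpotent; this is obtained by a Mayer--Vietoris argument along $\frakA+pS = \bigcap_{i+j=t}(\frakp_{i,j}+pS)$, exploiting the $F$-regularity and negative $a$-invariant of each $S/(\frakp_{i,j}+pS)$ via~\cite[Theorem~5.6]{PTW}, together with the fact that sums $\frakp_{i,j}+\frakp_{i',j'}$ are again of the form $\frakp_{i'',j''}$ and thus share these properties.

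For part (2), once (1) is in hand, Corollary~\ref{corollary:cd:determinantal} shows $H^c_\frakA(\ZZ[Y,Z])\otimes_\ZZ\QQ$ is nonzero, so (1) makes $H^c_\frakA(\ZZ[Y,Z])$ a $\QQ$-vector space. Passing to $S = \QQ[Y,Z]$, the localization lemma gives $H^c_\frakA(S)_{y_{11}}$ as a sum of copies of the K\"unneth product $H^{c-n}_{\frakA(Y''Z'')}(\QQ[Y'',Z''])\otimes H^n_{(z'_{1k})}(\QQ[z'_{11},\dots,z'_{1n}])$. By Theorem~\ref{theorem:ara:determinantal} and the assumption $t<\min\{m,n\}$, one has $\ara\frakA(Y''Z'') = (t-1)(m+n-t)$, and a direct computation yields
\[
c - n - (t-1)(m+n-t)\ =\ m - t\ >\ 0,
\]
so $H^{c-n}_{\frakA(Y''Z'')}(\QQ[Y'',Z''])=0$, hence $H^c_\frakA(S)_{y_{11}}=0$. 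The symmetric argument gives $H^c_\frakA(S)_{z_{11}}=0$, and $\GL_m\times\GL_n$-equivariance then forces the support of $H^c_\frakA(S)$ to be the homogeneous maximal ideal $\frakm$. As $H^c_\frakA(S)$ is a holonomic $D$-module (\cite[Section~2]{Lyubeznik:Invent}), Kashiwara's equivalence (\cite[Proposition~4.3]{Kashiwara}, \cite[Lemma~(c),~p.~208]{Lyubeznik:injective}) identifies it as a $D$-module with a finite direct sum of copies of $H^{mt+tn}_\frakm(S)$; the isomorphism is degree-preserving by \cite[Theorem~1.1]{Ma-Zhang}; and the multiplicity equals $1$ by Lemma~\ref{lemma:lsw:2} combined with Theorem~\ref{theorem:cohomology:open:generic}.

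The main obstacle is the Frobenius nilpotency condition required by Lemma~\ref{lemma:lsw:1} in part (1). Unlike the Pfaffian setting, where $S/(\frakP+pS)$ is itself Cohen--Macaulay and $F$-regular with negative $a$-invariant, here $S/(\frakA+pS)$ fails to be Cohen--Macaulay; we must instead thread the argument through Mayer--Vietoris on the irreducible components, using the fact that each variety of complexes, as well as each pairwise sum, remains $F$-regular.
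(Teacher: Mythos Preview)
Your approach to part~(1) is essentially the same as the paper's: induction on $t$, base case via Mayer--Vietoris on $I_1(Y)\cap I_1(Z)$, inductive step via the localization lemma (the paper's Lemma~\ref{lemma:matrix:invert:determinantal}) reducing to $(m-1)\times(t-1)$ and $(t-1)\times n$ matrices, and the degree-zero vanishing of $H^\bullet_\frakm(S/(\frakA+pS))$ via an incremental Mayer--Vietoris along the $\frakp_{i,j}$ (the paper's Lemma~\ref{lemma:det:negative:a:invariant}). Your sketch of the latter is a bit loose---the paper filters by $\fraka_\ell=\bigcap_{i\le\ell}\frakp_{i,t-i}$ and uses the identity $\fraka_\ell+\frakp_{\ell+1,t-\ell-1}=\frakp_{\ell,t-\ell-1}$, noting carefully that $\frakp_{0,0}$ never appears precisely because $t\ge 2$---but the idea is right.

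There is, however, a genuine gap in part~(2). You write that ``(1) makes $H^c_\frakA(\ZZ[Y,Z])$ a $\QQ$-vector space,'' but (1) only gives \emph{torsion-freeness}, i.e., that multiplication by each prime $p$ is injective. To conclude that $H^c_\frakA(\ZZ[Y,Z])$ is a $\QQ$-vector space you also need $p$-\emph{divisibility}, i.e., that multiplication by $p$ is surjective. This does not follow from (1); it requires the separate vanishing $H^c_\frakA(S/pS)=0$ for every prime $p$, so that the exact sequence
\[
\CD
H^c_\frakA(S) @>p>> H^c_\frakA(S) @>>> H^c_\frakA(S/pS) @>>> 0
\endCD
\]
forces surjectivity. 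In the Pfaffian setting this vanishing was automatic away from $\height\frakP$ because $S/(\frakP+pS)$ is Cohen--Macaulay, but here $S/(\frakA+pS)$ is not, so you need an honest bound $\lcd_{\FF_p[Y,Z]}\frakA<c$. The paper obtains this as Lemma~\ref{lemma:det:lcd}, again via an incremental Mayer--Vietoris on the $\frakp_{i,j}$, checking the elementary inequalities $\height\frakp_{\ell+1,t-\ell-1}<c$ and $\height\frakp_{\ell,t-\ell-1}-1<c$; it is here that the hypothesis $1<t<\min\{m,n\}$ is used in an essential way. Once you supply this lemma, the rest of your argument for part~(2)---the support computation via $c-n-(t-1)(m+n-t)=m-t>0$, the $D$-module structure, and the multiplicity-one conclusion from Theorem~\ref{theorem:cohomology:open:generic}---matches the paper and is correct.
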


\begin{proof}
Set $S\colonequals\ZZ[Y,Z]$ and let $p$ be a positive prime integer. As in the proof of Proposition~\ref{proposition:det:lcd}, set $\fraka_\ell\colonequals\bigcap_{i=0}^\ell\frakp_{i,\,t-i}$. We induce on $\ell$ to prove that each local cohomology of the form $H^k_{\fraka_\ell}(S)$ is a torsionfree $\ZZ$-module. The case $\ell=0$ holds by Remark~\ref{remark:generic:prime:case}. Assuming the result for $\ell\le t-1$, the result for $H^k_{\fraka_{\ell+1}}(S)$ follows using Proposition~\ref{prop:MV:torsion}: note that
\[
\fraka_{\ell+1}\ =\ \fraka_\ell\cap\frakp_{\ell+1,\,t-\ell-1},
\]
and that $S/\frakQ$ is $F$-rational where
\[
\frakQ\colonequals\fraka_\ell+\frakp_{\ell+1,\,t-\ell-1}+pS\ =\ \frakp_{\ell,\,t-\ell-1}+pS.
\]
The conditions on dimension and on the punctured spectrum are readily verified using \eqref{equation:height:pij} and \eqref{equation:sum:pij}. The case $\ell=t$ gives (1).

Next, assume that $1 < t < \min\{m,n\}$; we prove that $H^c_\frakA(S)$ is $p$-divisible. Since $c$ is the cohomological dimension of $\frakA$, one has an exact sequence
\[
\CD
@>>> H^c_\frakA(S) @>\cdot p>> H^c_\frakA(S) @>>> H^c_\frakA(S/pS) @>>>0.
\endCD
\]
But $H^c_\frakA(S/pS)=0$ by Proposition~\ref{proposition:det:lcd}, so one obtains
\[
H^c_\frakA(\ZZ[Y,Z])\ =\ H^c_\frakA(\QQ[Y,Z]).
\]

To complete the proof of (2), we change notation and work with $S \colonequals \QQ[Y,Z]$. We claim that $H^c_\frakA(S)$ has support $\{\frakm\}$; it suffices, without loss of generality, to verify that~$H^c_\frakA(S)_{y_{11}}$ vanishes. Let $Z'$ be the submatrix of $Z$ obtained by deleting the first row. By~\cite[Lemma 5.1]{PTW}, there exists an $(m-1)\times (t-1)$ matrix $Y'$ with entries from $S_{y_{11}}$, and elements $f_1,\dots,f_n\in S_{y_{11}}$ such that $Y'$, $Z'$ and $f_1,\dots,f_n$ are algebraically independent over $\QQ$, and 
\[
\frakA S_{y_{11}}\ =\ I_1(Y'Z')S_{y_{11}} + (f_1,\dots,f_n)S_{y_{11}},
\]
and the ring $S_{y_{11}}$ is a free module over the polynomial subring $\QQ[Y',Z',f_1,\dots,f_n]$. It follows that $H^c_\frakA(S)_{y_{11}} = H^c_\frakA(S_{y_{11}})$ is a direct sum of copies of
\[
H^{c-n}_{I_1(Y'Z')}(\QQ[Y',Z'])\otimes_\QQ H^n_{(f_1,\dots,f_n)}(\QQ[f_1,\dots,f_n]).
\]
But $H^{c-n}_{I_1(Y'Z')}(\QQ[Y',Z'])=0$, since
\[
\ara I_1(Y'Z')\ =\ (m-1)(t-1)+n(t-1)-(t-1)^2\ =\ c-n-m+t\ <\ c-n.
\]
As in the proof of Theorem~\ref{theorem:torsion:free:pfaffian}\,(3), $H^c_\frakA(\QQ[Y,Z])$ is a holonomic $D_{S|\QQ}$-module with support~$\{\frakm\}$, hence a direct sum of copies of $H^{mt+nt}_\frakm(S)$. To determine the number of copies, replace $\QQ$ by $\CC$ and use Lemma~\ref{lemma:lsw} and Theorem~\ref{theorem:cohomology:open:generic}.
\end{proof}

We record the analogue of Theorem~\ref{theorem:vanish:pfaffian} for determinantal nullcones:

\begin{theorem}
\label{theorem:vanish:determinantal}
Let $M$ and $N$ be $m\times t$ and $t\times n$ matrices with entries from a commutative Noetherian ring $A$, where $1<t<\min\{m,n\}$. Set $\fraka\colonequals I_1(MN)$ and $c\colonequals mt+nt-t^2$. Then:
\begin{enumerate}[\quad\rm(1)]
\item The local cohomology module $H^c_\fraka(A)$ is a $\QQ$-vectorspace, and thus vanishes if the canonical homomorphism $\ZZ \to A$ is not injective.
\item If $\dim A\otimes_\ZZ\QQ<mt+nt$, then $H^c_\fraka(A)=0$, i.e., $\lcd\fraka<c$.
\item If the images of the matrix entries in the ring $A\otimes_\ZZ\QQ$ are algebraically dependent over a field that is a subring of $A\otimes_\ZZ\QQ$, then $H^c_\fraka(A)=0$.
\end{enumerate}
\end{theorem}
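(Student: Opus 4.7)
The plan is to mimic the proof of Theorem~\ref{theorem:vanish:pfaffian} almost verbatim, replacing the Pfaffian isomorphism coming from Theorem~\ref{theorem:torsion:free:pfaffian}\,(3) with the generic determinantal isomorphism coming from Theorem~\ref{theorem:torsion:free:generic}\,(2). Concretely, let $Y$ and $Z$ be matrices of indeterminates of sizes $m\times t$ and $t\times n$, and set $\frakA\colonequals I_1(YZ)$ in $\ZZ[Y,Z]$. The hypothesis $1<t<\min\{m,n\}$ is exactly what is required to invoke Theorem~\ref{theorem:torsion:free:generic}\,(2), yielding a degree-preserving isomorphism
\[
H^c_\frakA(\ZZ[Y,Z])\ \cong\ H^{mt+tn}_\frakm(\QQ[Y,Z]),
\]
where $\frakm$ is the homogeneous maximal ideal of $\QQ[Y,Z]$.

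Next I would view $A$ as a $\ZZ[Y,Z]$-algebra via $y_{ij}\mapsto m_{ij}$ and $z_{ij}\mapsto n_{ij}$, so that $\frakA A=\fraka$. Applying base change along $A\otimes_{\ZZ[Y,Z]}-$, using the right-exactness argument of~\cite[Lemma~3.3]{LSW} exactly as in the proof of Theorem~\ref{theorem:vanish:pfaffian}, gives
\[
H^c_\fraka(A)\ \cong\ H^{mt+tn}_{\frakm A}(A\otimes_\ZZ\QQ).
\]
From this one isomorphism all three conclusions will drop out. For (1), the right-hand side is automatically a $\QQ$-vector space, and vanishes when $A\otimes_\ZZ\QQ=0$, i.e., when $\ZZ\to A$ is not injective. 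For (2), if $\dim A\otimes_\ZZ\QQ<mt+tn$, the local cohomology module $H^{mt+tn}_{\frakm A}(A\otimes_\ZZ\QQ)$ vanishes on dimension grounds, and hence so does $H^c_\fraka(A)$.

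For (3), I would copy the reduction used for the Pfaffian analogue: if the matrix entries of $M$ and $N$ are algebraically dependent over a field $\FF\subseteq A\otimes_\ZZ\QQ$, let $B$ be the $\FF$-subalgebra of $A\otimes_\ZZ\QQ$ generated by the images of these entries. Then $\dim B<mt+tn$, so part~(2) applied to $B$ gives $H^c_{I_1(MN)}(B)=0$; flat base change along $B\to A\otimes_\ZZ\QQ$, together with the fact that $H^c_\fraka(A)\cong H^c_\fraka(A\otimes_\ZZ\QQ)$ (already established in part~(1)), yields the vanishing of $H^c_\fraka(A)$.

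There is no real obstacle here: the entire proof is a formal consequence of the key structural isomorphism Theorem~\ref{theorem:torsion:free:generic}\,(2), and the three parts are standard base-change and dimension-count corollaries. The only point that needs a moment of care is confirming that the right-exactness-based base-change isomorphism of~\cite[Lemma~3.3]{LSW} applies in our setting to both variables $Y$ and $Z$ simultaneously, but this is immediate since $H^c_\frakA(\ZZ[Y,Z])$ is already concentrated in the top local cohomology degree relative to $\frakm$, which is what the lemma requires.
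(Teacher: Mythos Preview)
Your proposal is correct and matches the paper's own proof essentially line for line: invoke Theorem~\ref{theorem:torsion:free:generic}\,(2), base change along $\ZZ[Y,Z]\to A$ via \cite[Lemma~3.3]{LSW} to obtain $H^c_\fraka(A)\cong H^{mt+nt}_{\frakm A}(A\otimes_\ZZ\QQ)$, and then read off (1)--(3) exactly as in Theorem~\ref{theorem:vanish:pfaffian}. The only cosmetic slip is calling the passage from $B$ to $A\otimes_\ZZ\QQ$ in part~(3) ``flat base change''; it is the same right-exactness base change at the top cohomological degree that you already used, and flatness is neither needed nor available.
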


\begin{proof}
For matrices of indeterminates $Y$ and $Z$, set $S\colonequals\ZZ[Y,Z]$, and regard $A$ as an $S$-algebra via $y_{ij}\mapsto m_{ij}$ and $z_{ij}\mapsto n_{ij}$, so that $\fraka$ equals $I_1(YZ)A$. By Theorem~\ref{theorem:torsion:free:generic}\,(2)
\[
H^c_{I_1(YZ)}(\ZZ[Y,Z])\ \cong\ H^{mt+tn}_\frakm(\QQ[Y,Z]),
\]
with $\frakm$ the homogeneous maximal ideal of $\QQ[Y,Z]$ and base change along $S\to A$ gives
\[
H^c_\fraka(A)\ \cong\ H^{mt+nt}_{\frakm A}(A\otimes_\ZZ\QQ),
\]
from which the assertions follow as in the proof of Theorem~\ref{theorem:vanish:pfaffian}.
\end{proof}

\begin{remark}
The bound $\lcd\fraka<c$ in Theorem~\ref{theorem:vanish:determinantal}\,(2) is optimal: take $S\colonequals\QQ[Y,Z]$ where $Y$ and $Z$ are $m \times t$ and $t \times n$ matrices of indeterminates respectively, and $t < \min\{m,n\}$. Set $\frakA\colonequals I_1(YZ)$, $A\colonequals S/y_{11}S$, and $\fraka\colonequals \frakA A$. Note that $\dim A < mt+nt$. Let $c$ be as in the above theorem. Multiplication by $y_{11}$ on $S$ induces the exact sequence
\[
\CD
@>>>H^{c-1}_\fraka(A) @>>> H^c_\frakA(S) @>y_{11}>> H^c_\frakA(S) @>>>0,
\endCD
\]
where the vanishing on the right is by Theorem~\ref{theorem:vanish:determinantal}\,(2). Multiplication by $y_{11}$ on $H^c_\frakA(S)$ has a nonzero kernel by Theorem~\ref{theorem:torsion:free:generic}\,(2), so $H^{c-1}_\fraka(A)$ is nonzero, i.e., $\lcd\fraka=c-1$.

The requirement $t <\min\{m,n\}$ in Theorem~\ref{theorem:vanish:determinantal} is needed: Suppose instead that one has $t=m$. For indeterminates $y_{ij}$ and $z_{ij}$ over $\QQ$, set
\[
M \colonequals \begin{bmatrix}
1 & 0 & \cdots & 0 \\
0 & y_{22} & \cdots & y_{2m}\\
\vdots & \vdots & & \vdots\\
0 & y_{m2} & \cdots & y_{mm}
\end{bmatrix},
\qquad
N \colonequals \begin{bmatrix}
z_{11} & z_{12} & \cdots & z_{1n} \\
z_{21} & z_{22} & \cdots & z_{2n}\\
\vdots & \vdots & & \vdots\\
z_{m1} & z_{m2} & \cdots & z_{mn}
\end{bmatrix},
\]
and $A\colonequals \QQ[M,N]$. Then $\dim A<m^2+mn$ and $c=mn$, and we shall see that $H^{mn}_{I_1(MN)}(A)$ is nonzero. Note that
\[
I_1(MN)\ =\ I_1(M'N') + (z_{11},\dots,z_{1n}),
\]
where $M'$ is the $(m-1)\times (m-1)$ submatrix of $M$ obtained by deleting the first row and the first column, and $N'$ is the submatrix of $N$ obtained by deleting the first row. The nonvanishing of $H^{mn}_{I_1(MN)}(A)$ is now a consequence of the nonvanishing of $H^{mn-n}_{I_1(M'N')}(A)$, which in turn holds by Corollary~\ref{corollary:cd:determinantal}.

Similarly, the hypothesis $t <\min\{m,n\}$ in Theorem~\ref{theorem:torsion:free:generic}\,(2) is needed: If, for example, $t=m$, then $c=mn$, and it can be seen that the isomorphism
\[
H^c_\frakA(\ZZ[Y,Z])\ \cong\ H^{mt+tn}_\frakm(\QQ[Y,Z])
\]
does not hold by specializing $Y$ and $Z$ to the matrices $M$ and $N$ above. Lastly, if~$t=1$, then $H^c_\frakA(\ZZ[Y,Z])=H^{m+n-1}_\frakA(\ZZ[Y,Z])$ is not a $\QQ$-vectorspace since $\frakA=I_1(Y)\cap I_1(Z)$, and a Mayer--Vietoris sequence shows that the cokernel of
\[
\CD
H^{m+n-1}_\frakA(S) @>\cdot p>> H^{m+n-1}_\frakA(S)
\endCD
\]
is nonzero for $p$ any positive prime integer.

\end{remark}

%%%%%%%%%%%%%%%%%%%%%%%%%%%%%%%%%%%%%%%%%%%%%%%%%%%%%%%%%%%%%%%%%%%%%%%%
\section{Local cohomology of symmetric determinantal nullcones}
\label{section:local:symmetric}
%%%%%%%%%%%%%%%%%%%%%%%%%%%%%%%%%%%%%%%%%%%%%%%%%%%%%%%%%%%%%%%%%%%%%%%%

Let $X$ be a symmetric $n \times n$ matrix of indeterminates over a field $\KK$, and $I_{t+1}(X)$ the ideal generated by the size $t+1$ minors of $X$. The \emph{symmetric determinantal ring} $\KK[X]/I_{t+1}(X)$ is a subring of a polynomial ring: take $Y$ to be a $t\times n$ matrix of indeterminates, in which case the product matrix $Y^\tr Y$ has rank at most $t$, and the entrywise map yields an isomorphism
\[
\CD
\KK[X]/I_{t+1}(X) @>>> \KK[Y^\tr Y].
\endCD
\]
Hence the subring $R\colonequals \KK[Y^\tr Y]$ of~$S\colonequals \KK[Y]$ is isomorphic to $\KK[X]/I_{t+1}(X)$, and the isomorphism remains valid if the field $\KK$ is replaced by $\ZZ$.

The orthogonal group~$\Ort_t(\KK)$ acts $\KK$-linearly on $S$ via
\begin{equation}
\label{equation:action:orthogonal}
M\colon Y\mapsto MY\quad\text{ for }\ M\in\Ort_t(\KK).
\end{equation}
When the field $\KK$ is infinite, of characteristic other than $2$, the invariant ring is precisely the subring~$R$, see \cite{Weyl} or \cite[\S5]{DeConcini-Procesi}.

We use $\frakS$ to denote the ideal of $S$ generated by the entries of $Y^\tr Y$. Since $Y^\tr Y$ is symmetric, the ideal $\frakS$ has $\binom{n+1}{2}$ minimal generators; in the case $n\le (t+1)/2$, this is also the height of $\frakS$, see~\cite[Theorem~7.1]{HJPS}. Write $t$ as $2s$ or $2s+1$, for $s$ an integer. Suppose next that $n>(t+1)/2$. Then
\[
\height\frakS\ =\
\begin{cases}
ns-\displaystyle{\binom{s}{2}}& \text{if }\ t=2s,\\
ns+n-\displaystyle{\binom{s+1}{2}}& \text{if }\ t=2s+1.
\end{cases}
\]

Suppose $\KK$ is an algebraically closed field of characteristic other than two. If $t$ is odd, or if $n<s$, then $S/\rad\frakS$ is $F$-regular if $\KK$ has positive characteristic, and has rational singularities if $\KK$ has characteristic zero; see \cite[Proposition~5.4]{Lorincz:symplectic}. If $t=2s$ and $n\ge s$, then $\frakS$ has minimal primes $\frakP$ and $\frakQ$ of the same height, with each of the rings $S/\frakP$, $S/\frakQ$, $S/(\frakP+\frakQ)$ being $F$-regular/having rational singularities by \cite[Proposition~5.4]{Lorincz:symplectic}. Moreover,
\[
\height (\frakP+\frakQ)\ =\ ns+n+1-\binom{s+1}{2},
\]
see Theorems~7.2,~7.12, and~7.13 of~\cite{HJPS}. The ideals $\frakP$ and $\frakQ$ may be defined over the ring of Gaussian integers, see Definition~\ref{definition:P:Q}. 

In the case that $\KK$ has characteristic two, the ring $S/\rad\frakS$ is $F$-regular as well: it is isomorphic to a Pfaffian nullcone $\KK[Z]/(Z^\tr\Omega Z)$ if $t$ is odd, and to a polynomial extension of a Pfaffian nullcone if $t$ is even; see the proof of \cite[Theorem~7.2]{HJPS}.

Regarding the arithmetic rank of $\frakS$, we prove:

\begin{theorem}
\label{theorem:ara:symmetric}
Let $Y$ be a $t \times n$ matrix of indeterminates over a field $\KK$ of characteristic other than two. Then the arithmetic rank of the ideal $\frakS \colonequals I_1(Y^\tr Y)$ in $\KK[Y]$ is
\[
\binom{n+1}{2} - \binom{n+1-t}{2}.
\]
The ideal $\frakS$ is a set theoretic complete intersection if and only if $t=1$ or $n\le (t+1)/2$.
\end{theorem}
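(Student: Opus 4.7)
The plan is to follow the template of Theorems~\ref{theorem:ara:pfaffian} and~\ref{theorem:ara:determinantal}. Set $c\colonequals\binom{n+1}{2}-\binom{n+1-t}{2}$. The ring $R\colonequals\KK[Y^\tr Y]$ is isomorphic to the symmetric determinantal ring $\KK[X]/I_{t+1}(X)$ and therefore has Krull dimension $c$. As in the proof of Theorem~\ref{theorem:ara:intro}, the homogeneous maximal ideal $\frakm_R$ is generated up to radical by~$c$ homogeneous elements, namely a homogeneous system of parameters for~$R$, and these elements, when viewed in $S\colonequals\KK[Y]$, generate an ideal whose radical is $\frakB$. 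Hence $\ara\frakB\le c$, independent of the characteristic; in characteristic zero the bound is also an immediate consequence of Theorem~\ref{theorem:ara:intro} via the $\Ort_t(\KK)$-action described in~\eqref{equation:action:orthogonal}.

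The matching lower bound when $n\ge t$ should come from the topological results developed in Section~\ref{section:topology:symmetric}: the aim there is to produce a nonzero \'etale cohomology class on the complement $\KK^{tn}\smallsetminus\Var(\frakB)$ in cohomological degree $tn+c-1$, with $\ZZ/\ell$-coefficients for a prime $\ell$ invertible in $\KK$, so that Lemma~\ref{lemma:ara}\,(2) yields $\ara\frakB\ge c$; this is also the step that forces the hypothesis on the characteristic. For $n<t$ our convention on binomial coefficients gives $c=\binom{n+1}{2}$, and I would reduce to the previous range by specializing the last $t-n$ rows of $Y$ to zero, so that $Y^\tr Y$ specializes to $\tilde Y^\tr\tilde Y$ for an $n\times n$ matrix $\tilde Y$ of indeterminates; any radical generating set of $\frakB$ of size less than $\binom{n+1}{2}$ would specialize to one for $I_1(\tilde Y^\tr\tilde Y)$, contradicting the bound already established for $n'=t'=n$. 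This mirrors the reductions in Theorems~\ref{theorem:ara:pfaffian} and~\ref{theorem:ara:determinantal}.

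For the set-theoretic complete intersection equivalence it suffices to compare $c$ with $\height\frakB$, as recorded just before the theorem. If $t=1$ then $c=n=\height\frakB$; if $n\le(t+1)/2$ then $c=\binom{n+1}{2}=\height\frakB$; in both cases $\frakB$ is a set-theoretic complete intersection. Conversely, assume $t\ge 2$ and $n>(t+1)/2$, and write $t=2s$ or $t=2s+1$ with $s\ge 1$. A direct arithmetic check using the height formulae gives
\[
c-\height\frakB\ =\ \binom{n-s+1}{2}\ \text{ or }\ \binom{n-s}{2}
\]
in the range $n\le t$, and
\[
c-\height\frakB\ =\ \frac{s(2n+1-3s)}{2}\ \text{ or }\ \frac{s(2n-3s-1)}{2}
\]
in the range $n\ge t$; in every subcase the assumptions $s\ge 1$ and $n>(t+1)/2$ make the difference strictly positive, so $\ara\frakB>\height\frakB$ and $\frakB$ is not a set-theoretic complete intersection.

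The main obstacle is the \'etale cohomology computation in Section~\ref{section:topology:symmetric}, analogous to Theorems~\ref{theorem:cohomology:open:pfaffian} and~\ref{theorem:cohomology:open:generic} for the Pfaffian and generic cases; once that is in hand, the arithmetic and specialization arguments above only organize the passage from that computation to the stated formula and to the complete intersection characterization.
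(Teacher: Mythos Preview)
Your proposal is correct and follows essentially the same approach as the paper: the upper bound via a homogeneous system of parameters for $R\cong\KK[X]/I_{t+1}(X)$, the lower bound for $n\ge t$ from Theorem~\ref{theorem:cohomology:open:symmetric}, the specialization to an $n\times n$ matrix for $n<t$, and the arithmetic comparison of $c$ with $\height\frakB$ to settle the set-theoretic complete intersection characterization. The only cosmetic difference is that the paper leaves the inequality $c>\height\frakB$ as a comparison of binomial coefficients, $\binom{n+1-s}{2}>\binom{n+1-2s}{2}$ and $\binom{n-s}{2}>\binom{n-2s}{2}$, whereas you compute the differences in closed form; your formulas are correct and the conclusions agree.
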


\begin{proof}
Let $X$ be a symmetric $n \times n$ matrix of indeterminates. The subring $R\colonequals\KK[Y^\tr Y]$ of~$S\colonequals\KK[Y]$ is isomorphic to $\KK[X]/I_{t+1}(X)$ that has dimension
\[
c\colonequals\binom{n+1}{2} - \binom{n+1-t}{2}.
\]
If $\KK$ has characteristic zero, then $c$ equals $\ara\frakS$ by Theorem~\ref{theorem:ara:intro} in view of the $\Ort_t(\KK)$ action~\eqref{equation:action:orthogonal}. More generally, a homogeneous system of parameters for $R$ generates $\frakS$ up to radical, so
\[
\ara\frakS\ \le\ c
\]
independently of the characteristic.

Assume next that $\KK$ has characteristic other than two; for the reverse inequality, it suffices to assume that $\KK$ is algebraically closed. If $n\ge t$, this inequality follows from Theorem~\ref{theorem:cohomology:open:symmetric} and Lemma~\ref{lemma:lsw}. If $t>n$, we need to verify that $c=\binom{n+1}{2}$ is a lower bound for~$\ara\frakS$. If the arithmetic rank were less than $c$, considering the specialization
\[
Y'\colonequals\begin{bmatrix}
y_{11} & \cdots & y_{1n}\\
\vdots & & \vdots\\
y_{n1} & \cdots & y_{nn}\\
0 & \cdots & 0\\
\vdots & & \vdots\\
0 & \cdots & 0\\
\end{bmatrix}
\]
yields a contradiction.

For the equivalent statements, we have already observed that the ideal $\frakS$ is generated by a regular sequence if $n\le (t+1)/2$, whereas, if $t=1$, then $\frakS$ is the square of the homogeneous maximal ideal of $\KK[Y]$. It only remains to verify that $\ara\frakS>\height\frakS$ outside of these cases.

If $t=2s$, the required verification is
\begin{equation}
\label{equation:symmetric:ara:even}
\binom{n+1}{2}-\binom{n+1-2s}{2}\ >\ ns-\binom{s}{2},
\end{equation}
equivalently,
\[
\binom{n+1}{2}-ns+\binom{s}{2}\ =\ \binom{n+1-s}{2}\ >\ \binom{n+1-2s}{2},
\]
which holds if $n\ge s+1$. If $t=2s+1$, we need to check that
\begin{equation}
\label{equation:symmetric:ara:odd}
\binom{n+1}{2}-\binom{n-2s}{2}\ >\ ns+n-\binom{s+1}{2},
\end{equation}
which holds since
\[
\binom{n+1}{2}-ns-n+\binom{s+1}{2}\ =\ \binom{n-s}{2}\ >\ \binom{n-2s}{2}
\]
as long as $n\ge s+2$ and $s\ge 1$.
\end{proof}

In Theorem~\ref{theorem:ara:symmetric}, one needs the hypothesis on the characteristic of the field:

\begin{remark}
\label{remark:characteristic:two}
Let $Y$ be a $t\times n$ matrix of indeterminates over a field $\KK$ of characteristic two, and set
$\frakS\colonequals I_1(Y^\tr Y)$. As observed in the proof of Theorem~\ref{theorem:ara:symmetric}, one has 
\[
\ara\frakS\ \le\ \binom{n+1}{2} - \binom{n+1-t}{2},
\]
though the inequality may be strict: for example, if $Y$ is $2\times n$, then $\rad\frakS$ is generated by the $n$ linear forms
\[
y_{11}+y_{21},\ y_{12}+y_{22},\ \dots,\ y_{1n}+y_{2n}.
\qedhere
\]
\end{remark}

\begin{definition}
\label{definition:P:Q}
Let $Y$ be a matrix of indeterminates of size $2s\times n$ over the ring of Gaussian integers $\ZZ[i]$, where $n\ge s$. For a subset $\alpha$ of the row indices, and a subset $\beta$ of the column indices, set~$Y_{\alpha|\beta}$ to be the submatrix of $Y$ with rows $\alpha$ and columns $\beta$. Let~$\alpha^\comp$ denote the complement of $\alpha$ in $\{1,\dots,2s\}$; set $\sgn(\alpha)$ to be the sign of the permutation that sends the~$2s$-tuple $(1,\dots,2s)$ to the $2s$-tuple $(\alpha,\alpha^\comp)$, where the entries of each of~$\alpha$ and~$\alpha^\comp$ are in ascending order.

Following \cite[\S7.3]{HJPS}, set $\frakP$ to be the ideal generated by $I_1(Y^\tr Y)$ and the polynomials
\[
\det(Y_{\alpha|\beta}) - i\,^s\sgn(\alpha)\det(Y_{\alpha^\comp|\beta}),
\]
for all subsets $\alpha\subseteq\{1,\dots,2s\}$ and $\beta\subseteq\{1,\dots,n\}$ of size $s$.

Similarly, set $\frakQ$ to be the ideal generated by $I_1(Y^\tr Y)$ and the polynomials
\[
\det(Y_{\alpha|\beta}) + i\,^s\sgn(\alpha)\det(Y_{\alpha^\comp|\beta}),
\]
for all $\alpha$ and $\beta$ as above.
\end{definition}

The ideals $\frakP$ and $\frakQ$ arise as the minimal primes of the ideal $I_1(Y^\tr Y)$. With this notation, we prove:

\begin{proposition}
\label{proposition:symmetric:no:torsion}
Let $Y$ be a $t \times n$ matrix of indeterminates over the ring of Gaussian integers $A\colonequals\ZZ[i]$. Set $S\colonequals A[Y]$ and $\frakS\colonequals I_1(Y^\tr Y)$.
\begin{enumerate}[\quad\rm(1)]
\item Then $H^k_\frakS(S)$ is a torsionfree $A$-module for each integer $k$. 
\item If $t=2s$ is even, and $n\ge s$, consider the ideals $\frakP$ and $\frakQ$ as above. Then $H^k_\frakP(S)$ and~$H^k_\frakQ(S)$ are torsionfree $A$-modules for each integer $k$.
\end{enumerate}
\end{proposition}

\begin{proof}
Let $\pi$ be a nonzero prime element of $A$. Since $S/\rad(\frakP+\pi S)$ and $S/\rad(\frakQ+\pi S)$ are $F$-regular, Theorem~\ref{theorem:F:rational:torsion} gives (2).

For (1), note that the ring $S/\rad(\frakS+\pi S)$ is $F$-regular in each of the cases (a) $\pi=1+i$; or (b) $t$ is odd; or (c) $t=2s$ and $n\le s-1$, so Theorem~\ref{theorem:F:rational:torsion} applies. The remaining case follows from Proposition~\ref{prop:MV:torsion}, bearing in mind that $S/(\frakP+\frakQ+\pi S)$ is $F$-regular.
\end{proof}

As with Pfaffian rings and determinantal rings, e.g., Remark~\ref{remark:pfaffian:integers}, the ring
\[
\ZZ[X]/I_{t+1}(X)\ \cong\ \ZZ[Y^\tr Y]
\]
has an ASL structure, see~\cite[\S5]{DeConcini-Procesi} or~\cite[\S3]{Barile}. Using this along with Theorem~\ref{theorem:vector:space:symmetric}\,(1), one obtains the following; the proof follows that of Corollary~\ref{corollary:cd:pfaffian}.

\begin{corollary}
\label{corollary:cd:symmetric}
Let $Y$ be a~$t\times n$ matrix of indeterminates over a torsionfree $\ZZ$-algebra $B$. Set $\frakS\colonequals I_1(Y^\tr Y)$ in $B[Y]$. Then
\[
\ara\frakS\ =\ \lcd\frakS\ =\ \binom{n+1}{2} - \binom{n+1-t}{2}.
\]
\end{corollary}

We next record a preliminary result towards studying $H^c_\frakS({\ZZ[Y]})$.

\begin{lemma}
\label{lemma:matrix:invert:symmetric}
Let $Y$ be a $t \times n$ matrix of indeterminates over an infinite field~$\KK$ of characteristic other than two. Set $\frakS$ to be the ideal~$I_1(Y^\tr Y)$ in $S\colonequals \KK[Y]$. If $t\ge 2$, then
\[
\ara\frakS S_{y_{11}}\ \le\ \binom{n+1}{2} - \binom{n+2-t}{2}.
\]
\end{lemma}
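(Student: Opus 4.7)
The approach is to produce $\binom{n+1}{2}-\binom{n+2-t}{2}$ elements in $\frakB S_{y_{11}}$ whose radical is $\frakB S_{y_{11}}$, by first performing a linear elimination using some of the generators of $\frakB$ and then exploiting a rank drop in the remaining quadratic part. Write $h_1\colonequals(Y^\tr Y)_{11}=y_{11}^2+\sum_{k\ge 2}y_{k1}^2$, set $u_k\colonequals y_{k1}/y_{11}$ for $k\ge 2$, $u'\colonequals(u_2,\dots,u_t)$, and let $V$ be the $(t-1)\times(n-1)$ submatrix $(y_{kj})_{k\ge 2,\,j\ge 2}$ of $Y$. Dividing $(Y^\tr Y)_{1j}$ by $y_{11}$ for $j\ge 2$ gives the linear elements $w_j\colonequals y_{1j}+\sum_{k\ge 2}u_k y_{kj}$; together with $h_1$ and the elements $g_{ij}\colonequals(Y^\tr Y)_{ij}$ for $2\le i\le j\le n$, these generate $\frakB S_{y_{11}}$.

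The first step is a linear elimination: the unitriangular substitution $y_{1j}\mapsto y_{1j}-\sum_{k\ge 2}u_k y_{kj}$ for $j\ge 2$ is an automorphism of $S_{y_{11}}$ that replaces $y_{1j}$ with $w_j$ as a coordinate, and setting $\bar R\colonequals S_{y_{11}}/(w_2,\dots,w_n)$, under this substitution each $g_{ij}$ projects in $\bar R$ to $\tilde g_{ij}\colonequals(V^\tr AV)_{i-1,\,j-1}$, where $A\colonequals I_{t-1}+u'^\tr u'$. Hence
\[
\ara\frakB S_{y_{11}}\ \le\ (n-1)\ +\ \ara_{\bar R}\bigl(h_1,\ \tilde g_{ij}\ :\ 2\le i\le j\le n\bigr).
\]

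The second step is a rank-drop observation modulo $h_1$: the matrix-determinant lemma gives $\det A=1+|u'|^2=h_1/y_{11}^2$, and a direct computation shows $A^2\equiv A\pmod{h_1}$. Thus modulo $h_1$ the matrix $A$ is an idempotent of rank $t-2$ with kernel $\KK u'^\tr$, and $V^\tr AV\equiv(AV)^\tr(AV)\pmod{h_1}$ with $AV$ of rank at most $t-2$. After choosing a basis of $\KK^{t-1}$ diagonalizing~$A$ as $\operatorname{diag}(0,1,\dots,1)$---available over $\bar R/(h_1)$ after an appropriate Zariski-local or faithfully flat extension, using that $\KK$ is infinite of characteristic other than two---one identifies $(AV)^\tr(AV)$ with $W^\tr W$ for a $(t-2)\times(n-1)$ matrix~$W$. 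Theorem~\ref{theorem:ara:symmetric} applied to a generic $(t-2)\times(n-1)$ matrix then yields $\ara_{\bar R/(h_1)} I_1(V^\tr AV)\le\binom{n}{2}-\binom{n+2-t}{2}$, which combines with the generator $h_1$ to give $\ara_{\bar R}(h_1,\tilde g_{ij})\le 1+\binom{n}{2}-\binom{n+2-t}{2}$.

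Assembling,
\[
\ara\frakB S_{y_{11}}\ \le\ (n-1)+1+\binom{n}{2}-\binom{n+2-t}{2}\ =\ \binom{n+1}{2}-\binom{n+2-t}{2},
\]
as required. The main obstacle lies in the identification at the end of the second step: rigorously realizing $I_1(V^\tr AV)\pmod{h_1}$ as a specialization of the symmetric determinantal nullcone ideal of a generic $(t-2)\times(n-1)$ matrix. This requires trivializing the orthogonal complement of $u'^\tr$ over $\bar R/(h_1)$ in a manner that preserves arithmetic rank, which is precisely where the hypotheses that $\KK$ be infinite and of characteristic other than two enter.
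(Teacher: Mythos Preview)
Your strategy is the same as the paper's: eliminate the first row of $Y^\tr Y$, then reduce the remaining block to a symmetric nullcone of size $(t-2)\times(n-1)$ and invoke Theorem~\ref{theorem:ara:symmetric}. Your idempotent computation $A^2\equiv A\pmod{h_1}$ is a clean algebraic encoding of the rank drop. The paper organizes the elimination via column operations on $Y$ (normalizing the first row to $[1,0,\dots,0]$) rather than your substitution eliminating $y_{1j}$, but the two are equivalent.

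The gap you flag is genuine, and your suggested fix is not the right one. Trivializing the orthogonal complement of $u'^\tr$ over $\bar R/(h_1)$ via a faithfully flat extension can be made to work, but it is delicate and unnecessary. The paper instead resolves the identification pointwise via the Nullstellensatz: fix the $\ell$ general $\KK$-linear combinations of the entries of $V^\tr AV$ once and for all (these live in $\bar R$), pass to the algebraic closure, and check that at every closed point of $\Var(h_1,w_2,\dots,w_n,\text{the }\ell\text{ forms})$ one has $V^\tr AV=0$. At such a point $u'$ satisfies $|u'|^2=-1$, so $\iota u'^\tr$ extends to an element of $\Ort_{t-1}(\bar\KK)$; conjugating by this orthogonal matrix realizes $V^\tr AV$ as $W^\tr W$ for a concrete $(t-2)\times(n-1)$ matrix $W$, and the vanishing of the $\ell$ general combinations on $W^\tr W$ forces $W^\tr W=0$ by the generic case. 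This is the paper's argument, and it avoids any descent issue: the $\ell$ elements were chosen in $\bar R$ from the start, and one only verifies the radical equality by checking varieties.

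In short, your outline is correct and matches the paper's; to close it, replace the ring-level trivialization by a pointwise orthogonal diagonalization over $\bar\KK$ and appeal to the Nullstellensatz.
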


\begin{proof}
The assertion is immediate if $n\le t-1$, since the upper bound asserted in that case is simply $\binom{n+1}{2}$, which is the number of generators of $\frakS$. We assume $n\ge t$ henceforth.

The ideal $I_1(Y^\tr Y)S_{y_{11}}$ is unaffected by elementary column operations on $Y$ so, after renaming variables, we may take
\[
Y\colonequals\begin{bmatrix}
1 & 0 & 0 & \cdots & 0\\
y_{21} & y_{22} & y_{23} & \cdots & y_{2n}\\
\vdots & \vdots & \vdots & & \vdots\\
y_{t1} & y_{t2} & y_{t3} & \cdots & y_{tn}
\end{bmatrix}
\]
and work in affine space with coordinates $y_{ij}$ as above. After a change of notation, take $S$ to be the polynomial ring in the $(t-1)\times n$ indeterminates above, and set $\frakS\colonequals I_1(Y^\tr Y)S$. The task at hand is to prove that the arithmetic rank of $\frakS$ is bounded above by $\binom{n+1}{2} - \binom{n+2-t}{2}$.

If $t=2$, then $\frakS$ agrees up to radical with $(1+y_{21}^2,\ y_{22},\ y_{23},\ \dots,\ y_{2n})$, so the inequality holds. Assume $t\ge 3$. Let $Z$ be a $(t-2)\times (n-1)$ matrix of indeterminates over $\KK$. By Theorem~\ref{theorem:ara:symmetric}, the ideal generated by the entries of $Z^\tr Z$ has arithmetic rank
\[
\ell\colonequals \binom{n}{2} - \binom{n+2-t}{2}
\]
and, in fact, the proof shows that $\ell$ general $\KK$-linear combinations of the entries of $Z^\tr Z$ generate $I_1(Z^\tr Z)$ up to radical. With this in mind, let $\frakC$ denote the ideal of $S$ generated by the entries of the first row of $Y^\tr Y$, along with $\ell$ general linear combinations of the entries of the bottom right $(n-1)\times (n-1)$ submatrix of $Y^\tr Y$. Note that $\frakC$ has
\[
n+\ell\ =\ \binom{n+1}{2} - \binom{n+2-t}{2}
\]
generators, so it suffices to prove that $\frakS$ and $\frakC$ agree up to radical; for this, we replace the field $\KK$ by an algebraic closure, and use the Nullstellensatz as follows:

Consider a specialization
\[
M\colonequals
\begin{bmatrix}
1 & 0 & \cdots & 0\\
m_1 & m_2 & \cdots & m_n
\end{bmatrix}
\]
of $Y$ that belongs to the algebraic set defined by $\frakC$, where each $m_i$ is a size $t-1$ column vector. The condition that the first row of $M^\tr M$ is zero (as enforced on $M$ by the first $n$ generators of the ideal $\frakC$) reads
\[
1+m_1^\tr m_1=0, \quad m_2^\tr m_1=0, \quad \dots, \quad m_n^\tr m_1=0.
\]
Setting $\iota\colonequals\sqrt{-1}$ in $\KK$, the vector $\iota m_1$ satisfies $(\iota m_1)^\tr (\iota m_1)=1$, and may be taken as the first column of a matrix
\[
A\colonequals
\begin{bmatrix}
\iota m_1 & a_2 & \cdots & a_{t-1}
\end{bmatrix}
\]
in $\Ort_{t-1}(\KK)$, see Lemma~\ref{lemma:section:o}. Set
\[
B\colonequals
\begin{bmatrix}
1 & 0 \\
0 & A^\tr
\end{bmatrix}.
\]
Since $B$ is an orthogonal matrix, $\tilde{M}\colonequals BM$ satisfies $\tilde{M}^\tr\tilde{M}=M^\tr M$. It suffices to verify that~$\tilde{M}$ belongs to the algebraic set defined by $\frakS$, i.e., that $\tilde{M}^\tr\tilde{M}$ is zero, under the assumption that $\tilde{M}$ belongs to the algebraic set defined by $\frakC$.

The matrix $\tilde{M}$ has the form
\begin{multline*}
\tilde{M}\ =\ \begin{bmatrix}
1 & 0 \\
0 & \iota m_1^\tr\\
0 & a_2^\tr\\
\vdots & \vdots\\
0 & a_{t-1}^\tr
\end{bmatrix}
\begin{bmatrix}
1 & 0 & \cdots & 0\\
m_1 & m_2 & \cdots & m_n
\end{bmatrix}\\
\ =\
\begin{bmatrix}
1 & 0 & \cdots & 0\\
\iota m_1^\tr m_1 & \iota m_1^\tr m_2 & \cdots & \iota m_1^\tr m_n\\
a_2^\tr m_1 & a_2^\tr m_2 & \cdots & a_2^\tr m_n\\
\vdots & \vdots & & \vdots\\
a_{t-1}^\tr m_1 & a_{t-1}^\tr m_2 & \cdots & a_{t-1}^\tr m_n
\end{bmatrix}
\ =\
\begin{bmatrix}
1 & 0 & \cdots & 0\\
-\iota & 0 & \cdots & 0\\
0 & a_2^\tr m_2 & \cdots & a_2^\tr m_n\\
\vdots & \vdots & & \vdots\\
0 & a_{t-1}^\tr m_2 & \cdots & a_{t-1}^\tr m_n
\end{bmatrix}.
\end{multline*}
Setting $N$ to be the bottom right $(t-2)\times (n-1)$ submatrix of $\tilde{M}$, it follows that
\[
\tilde{M}^\tr\tilde{M}\ =\
\begin{bmatrix}
0 & 0 \\
0 & N^\tr N
\end{bmatrix}.
\]
Since the $\ell$ general linear combinations that constitute the second set of generators for $\frakC$ vanish on $\tilde{M}$, it follows that $N^\tr N$ is zero, and hence that $\tilde{M}^\tr\tilde{M}$ is zero.
\end{proof}

Corresponding to Theorems~\ref{theorem:torsion:free:pfaffian} and~\ref{theorem:torsion:free:generic}, we prove next:

\begin{theorem}
\label{theorem:vector:space:symmetric}
Let $Y$ be a $t \times n$ matrix of indeterminates, and set $\frakS\colonequals I_1(Y^\tr Y)$ in $\ZZ[Y]$. Set $c\colonequals \binom{n+1}{2} - \binom{n+1-t}{2}$, which is the cohomological dimension of the ideal $\frakS$. Then:
\begin{enumerate}[\quad\rm(1)]
\item $H^k_\frakS(\ZZ[Y])$ is a torsionfree $\ZZ$-module for each integer $k$.
\item If~$t\ge 3$ and~$n\ge(t+3)/2$, then $H^c_\frakS(\ZZ[Y])$ is a $\QQ$-vectorspace.
\item If~$n\ge t\ge 3$, then one has a degree-preserving isomorphism
\[
H^c_\frakS(\ZZ[Y])\ \cong\ H^{tn}_\frakm(\QQ[Y]),
\]
with $\frakm$ is the homogeneous maximal ideal of $\QQ[Y]$ under the standard grading.
\end{enumerate}
\end{theorem}

\begin{proof}
Assertion (1) follows readily from Proposition~\ref{proposition:symmetric:no:torsion}\,(1). Set $S\colonequals\ZZ[Y]$. For each positive prime integer $p$, one has an exact sequence
\[
\CD
@>>> H^c_\frakS(S) @>\cdot p>> H^c_\frakS(S) @>>> H^c_\frakS(S/pS) @>>>,
\endCD
\]
so (2) follows once we verify that $H^c_\frakS(S/pS)=0$. If $p=2$ or if~$t$ is odd, this follows using~\cite[Proposition~III.4.1]{PS} since $\rad\frakS$ is perfect and $c>\height\frakS$ under our assumptions using~\eqref{equation:symmetric:ara:even} and~\eqref{equation:symmetric:ara:odd}. The remaining case is where $p$ is odd and $t=2s$. Replace $S$ by $\ZZ[i][Y]$. For $\pi$ a Gaussian prime lying over $p$, consider the exact sequence
\[
\CD
@>>> H^c_\frakP(S/\pi S)\oplus H^c_\frakQ(S/\pi S) @>>> H^c_\frakS(S/\pi S) @>>> H^{c+1}_{\frakP+\frakQ}(S/\pi S) @>>>
\endCD
\]
The modules to the left vanish by~\cite[Proposition~III.4.1]{PS} since $\frakP(S/\pi S)$ and $\frakQ(S/\pi S)$ are perfect of height less than $c$. The remaining verification $c+1>\height(\frakP+\frakQ)(S/\pi S)$ is
\[
\binom{n+1}{2}-\binom{n+1-2s}{2}+1\ >\ ns+n+1-\binom{s+1}{2}.
\]
Rearranging terms, this is
\[
\binom{n-s}{2}\ >\ \binom{n+1-2s}{2}
\]
which holds since $s\ge 2$ and $n\ge s+2$.

For (3), first note that the hypotheses imply that $n\ge(t+3)/2$, so $H^c_\frakS(\ZZ[Y])$ is indeed a $\QQ$-vectorspace by (2). We change notation and work with $S\colonequals\QQ[Y]$, and show that $H^c_\frakS(S)$ has support~$\{\frakm\}$. For this, it suffices to verify that $H^c_\frakS(S)_{y_{ij}}$ vanishes. This follows from Lemma~\ref{lemma:matrix:invert:symmetric} since
\[
\ara\frakS S_{y_{ij}}\ \le\ \binom{n+1}{2} - \binom{n+2-t}{2}\ <\ \binom{n+1}{2} - \binom{n+1-t}{2}\ =\ c
\]
whenever $n\ge t$. The familiar $D$-module argument, e.g., from the proof of Theorem~\ref{theorem:torsion:free:pfaffian}\,(3), implies that $H^c_\frakS(S)$ is a direct sum of copies of $H^{tn}_\frakm(S)$, while the fact that it is exactly one copy follows from Theorem~\ref{theorem:cohomology:open:symmetric} and Lemma~\ref{lemma:lsw}.
\end{proof}

Lastly, we have the analogue of Theorem~\ref{theorem:vanish:pfaffian} and Theorem~\ref{theorem:vanish:determinantal}:

\begin{theorem}
\label{theorem:vanish:symmetric}
Let $M$ be a $t\times n$ matrix with entries from a commutative Noetherian ring~$A$, where $n\ge t\ge 3$. Set $\frakb\colonequals I_1(M^\tr M)$ and $c\colonequals \binom{n+1}{2} - \binom{n+1-t}{2}$. Then:
\begin{enumerate}[\quad\rm(1)]
\item The local cohomology module $H^c_\frakb(A)$ is a $\QQ$-vectorspace, and thus vanishes if the canonical homomorphism $\ZZ \to A$ is not injective.
\item If $\dim A\otimes_\ZZ\QQ<tn$, then $H^c_\frakb(A)=0$, i.e., $\lcd\frakb<c$.
\item If the images of the matrix entries in the ring $A\otimes_\ZZ\QQ$ are algebraically dependent over a field that is a subring of $A\otimes_\ZZ\QQ$, then $H^c_\frakb(A)=0$.
\end{enumerate}
\end{theorem}

\begin{proof}
Set $S\colonequals\ZZ[Y]$ for $Y$ a $t\times n$ matrix of indeterminates, and regard $A$ as an $S$-algebra via $y_{ij}\mapsto m_{ij}$, so that $\frakb$ equals $I_1(Y^\tr Y)A$. Using Theorem~\ref{theorem:vector:space:symmetric}\,(2) and base change along the map $S\to A$, one has
\[
H^c_\frakb(A)\ \cong\ H^{tn}_{\frakm A}(A\otimes_\ZZ\QQ),
\]
from which the assertions follow as in the earlier cases.
\end{proof}

\begin{remark}
The hypothesis $t\ge3$ in Theorem~\ref{theorem:vector:space:symmetric}\,(2) is essential: if $t=1$, then
\[
H^c_\frakS(\ZZ[Y])\ =\ H^n_{(y_{11},\dots,y_{1n})}(\ZZ[Y])
\]
is not a $\QQ$-vectorspace; if $t=2$ then $c=2n-1$, and we claim that multiplication by an odd prime $p$ is not surjective on
$H^{2n-1}_\frakS(\ZZ[Y])$. Considering the exact sequence
\[
\CD
@>>> H^{2n-1}_\frakS(S) @>\cdot p>> H^{2n-1}_\frakS(S) @>>> H^{2n-1}_\frakS(S/pS) @>>>0,
\endCD
\]
it suffices to verify that $H^{2n-1}_\frakS(S/pS)$ is nonzero; for simplicity, take $p\equiv 1\mod 4$, so that the ideals $\frakP$ and $\frakQ$ are defined in $S/pS$. One then has an exact sequence 
\[
\CD
@>>> H^{2n-1}_\frakS(S/pS) @>>> H^{2n}_{\frakP+\frakQ}(S/pS) @>>> H^{2n}_\frakP(S/pS)\oplus H^{2n}_\frakQ(S/pS) @>>>.
\endCD
\]
The ideals $\frakP(S/pS)$ and $\frakQ(S/pS)$ are perfect of height $n$, while $(\frakP+\frakQ)(S/pS)$ is perfect of height $2n$.

It follows from the above that Theorem~\ref{theorem:vanish:symmetric}\,(1) fails if $t\le 2$. We next observe next that Theorem~\ref{theorem:vanish:symmetric}\,(2) fails if $t\ge 3$ but $n<t$. For indeterminates $y_{ij}$ over $\QQ$, set
\[
M \colonequals \begin{bmatrix}
1 & 0 & \cdots & 0 \\
y_{21} & y_{22} & \cdots & y_{2n}\\
0 & y_{32} & \cdots & y_{3n}\\
\vdots & \vdots & & \vdots\\
0 & y_{t2} & \cdots & y_{tn}
\end{bmatrix}
\]
and $A\colonequals \QQ[M]$, in which case $\dim A<tn$. Set $\frakb\colonequals I_1(M^\tr M)$, and note that $c=\binom{n+1}{2}$ since $n<t$. We claim that $H^c_\frakb(A)$ is nonzero.

Setting $M'$ to be the $(t-2)\times(n-1)$ submatrix of $M$ obtained by deleting the first column and the first two rows, one has
\[
\frakb\ =\ I_1(M'^\tr M') + (1+y_{21}^2,\ y_{21}y_{22},\ \dots,\ y_{21}y_{2n}),
\]
and this agrees up to radical with
\[
I_1(M'^\tr M') + (1+y_{21}^2,\ y_{22},\ \dots,\ y_{2n}).
\]
It follows that $H^c_\frakb(A)$ is a direct sum of copies of
\[
H^{c-n}_{I_1(M'^\tr M')}(\QQ[M']),
\]
and this is nonzero by Corollary~\ref{corollary:cd:symmetric}.
\end{remark}

%%%%%%%%%%%%%%%%%%%%%%%%%%%%%%%%%%%%%%%%%%%%%%%%%%%%%%%%%%%%%%%%%%%%%%%%
\section{Preliminaries on cohomology}
\label{section:prelim:cohomology}
%%%%%%%%%%%%%%%%%%%%%%%%%%%%%%%%%%%%%%%%%%%%%%%%%%%%%%%%%%%%%%%%%%%%%%%%

\subsection{Notation and terminology}
\label{ssec:notation}

For our main results on arithmetic rank and structure of local cohomology modules, we will require computations of singular cohomology of complex varieties and \'etale cohomology of varieties over fields of positive characteristic. We denote by
\[
H^i_\sing(X,G) \quad\text{ and }\quad H^i_{c,\sing}(X,G)
\]
the singular cohomology and the compactly supported singular cohomology, respectively, of a complex quasiprojective variety $X$ in the analytic (Euclidean) topology, with coefficients in an Abelian group $G$, or more generally, a local system of Abelian groups $\calG$. We will implicitly use the identification of singular cohomology and sheaf cohomology with local coefficients for paracompact locally contractible spaces (e.g., quasiprojective complex varieties), cf.~\cite[Theorem~III.1.1]{Bredon1997} and \cite[\S2.5]{Dimca}. We refer the reader to \cite{Hatcher} and \cite{Dimca} as general sources on these notions, though we will review, in the next subsection, the basic facts that we use in the sequel.

Similarly, for an Abelian group $G$, and a quasiprojective variety $X$ over an algebraically closed field, we denote by
\[
H^i_\et(X,G) \quad\text{ and }\quad H^i_{c,\et}(X,G)
\]
the \'etale cohomology groups and compactly supported cohomology groups, respectively, with coefficients in the constant sheaf on $X$ given by $G$. More generally, we use the analogous notation with a local system of Abelian groups $\calG$ on $X$. We refer the reader to \cite{Milne} as a general source on these, and we will review in Subsection~\ref{ssec:etale} the basic facts that we use in the sequel.

We will be particularly interested in the first nonvanishing compact singular or compact \'etale cohomology groups of various spaces. We say that the \emph{singular compact dimension} of a space $X$ with coefficient group $G$ is
\[
\cptdim(X,G) \colonequals \inf\{ i \ge 0 \mid H^i_{c,\sing}(X,G)\neq 0\},
\]
sometimes abbreviated as $\cptdim X$. By the \emph{critical cohomology group}, we mean
\[
H^{\cptdim X}_{c,\sing}(X,G).
\]
We will use corresponding terminology and notation in the \'etale setting.

In many of our arguments, the computations of singular cohomology and \'etale cohomology are formally identical, and in these cases, we may drop the subscripts $\sing$ or $\et$. In particular, we will refer to \emph{Setting \AN} to mean that the ground field $\KK$ equals $\CC$, and that the varieties under consideration are quasiprojective complex varieties. Let $X$ be a given such variety. We set
\[
H^i(X)\colonequals H^i_\sing(X,\QQ) \quad\text{ and }\quad H^i_c(X)\colonequals H^i_{c,\sing}(X,\QQ).
\]
We use $\cptdim X$ for the singular compact dimension, taking coefficients in $\QQ$. The \emph{rank} of the critical cohomology group refers to the rank of $H^{\cptdim X}_{c,\sing}(X,\QQ)$ as a $\QQ$-vectorspace.

Similarly, we will refer to \emph{Setting \ET} to mean that the ground field $\KK$ is algebraically closed, of odd characteristic, and that the varieties under consideration are quasiprojective $\KK$-varieties. Let $X$ be a given such variety. We set
\[
H^i(X)\colonequals H^i_\et(X,\ZZ/2) \quad\text{ and }\quad H^i_c(X)\colonequals H^i_{c,\et}(X,\ZZ/2).
\]
We use $\cptdim X$ for the \'etale compact dimension with coefficients in $\ZZ/2$. The \emph{rank} of the critical cohomology group refers to the rank of~$H^{\cptdim X}_{c,\et}(X,\ZZ/2)$ as a $\ZZ/2$-vectorspace.

%%%%%%%%%%%%%%%%%%%%%%%%%%%%%%%%%%%%%%%%%%%%%%%%%%%%%%%%%%%%%%%%%%%%%%%%
\subsection{Singular cohomology}
\label{ssec:singular}
%%%%%%%%%%%%%%%%%%%%%%%%%%%%%%%%%%%%%%%%%%%%%%%%%%%%%%%%%%%%%%%%%%%%%%%%

We review some general facts about singular cohomology; we tailor our discussion to the settings used in the sequel, rather than record the most general statements.

%%%%%%%%%%%%%%%%%%%%%%%%%%%%%%%%%%%%%%%%%%%%%%%%%%%%%%%%%%%%%%%%%%%%%%%%
\subsubsection*{Poincar\'e duality} \cite[Corollary~3.3.12]{Dimca}
%%%%%%%%%%%%%%%%%%%%%%%%%%%%%%%%%%%%%%%%%%%%%%%%%%%%%%%%%%%%%%%%%%%%%%%%
Let $X$ be an $n$-dimensional real connected manifold, not necessarily compact, that is oriented with respect to a field $\LL$. Let $\calL$ be a locally constant sheaf of finite dimensional $\LL$-vectorspaces on $X$, and let $\calL^*$ denote the $\LL$-dual sheaf. Then, for each $i$, there is an isomorphism
\[
H^i_{c,\sing}(X,\calL)\ \cong\ H^{n-i}_\sing(X,\calL^*).
\]
These isomorphisms hold in particular when $X$ is a complex manifold and $\LL$ equals $\QQ$, since then $X$ is oriented by \cite[Example~3.2.10]{Dimca}; a particular case arises when $\calL$ is the constant sheaf with stalk $\QQ$, in which case $\calL^*=\calL$.

%%%%%%%%%%%%%%%%%%%%%%%%%%%%%%%%%%%%%%%%%%%%%%%%%%%%%%%%%%%%%%%%%%%%%%%%
\subsubsection*{Mayer--Vietoris sequence} \cite[p.~103~and~185]{Iversen}
%%%%%%%%%%%%%%%%%%%%%%%%%%%%%%%%%%%%%%%%%%%%%%%%%%%%%%%%%%%%%%%%%%%%%%%%
Let $X$ be a topological space with open subsets $U,V$ such that $U\cup V=X$. Let $\calF$ be a sheaf on $X$. Then there are long exact sequences of sheaf cohomology
\[
\to H^i(X, \calF) \to H^i(U, \calF) \oplus H^i(V, \calF) \to H^i(U \cap V, \calF) \to H^{i+1}(X, \calF) \to,
\]
and of compactly supported cohomology
\[
\to H^i_{c}(U \cap V, \calF) \to H^i_{c}(U, \calF) \oplus H^i_{c}(V, \calF) \to H^i_{c}(X, \calF) \to H^{i+1}_{c}(U \cap V, \calF) \to.
\]
In particular, if $X$ is a complex quasiprojective variety, and $\calL$ is a local system of $\QQ$-vectorspaces, we have the long exact sequences of singular cohomology
\begin{multline*}
\to H^i_\sing(X, \calL) \to H^i_\sing(U, \calL) \oplus H^i_\sing(V, \calL) \to H^i_\sing(U \cap V, \calL) \\
\to H^{i+1}_\sing(X, \calL) \to,
\end{multline*}
and of compactly supported singular cohomology
\begin{multline*}
\to H^i_{c,\sing}(U \cap V, \calL) \to H^i_{c,\sing}(U, \calL) \oplus H^i_{c,\sing}(V, \calL) \to H^i_{c,\sing}(X, \calL) \\
\to H^{i+1}_{c,\sing}(U \cap V, \calL) \to.
\end{multline*}

%%%%%%%%%%%%%%%%%%%%%%%%%%%%%%%%%%%%%%%%%%%%%%%%%%%%%%%%%%%%%%%%%%%%%%%%
\subsubsection*{Long exact sequence of a subspace} \cite[p.~185]{Iversen}
%%%%%%%%%%%%%%%%%%%%%%%%%%%%%%%%%%%%%%%%%%%%%%%%%%%%%%%%%%%%%%%%%%%%%%%%
Let $X$ be a topological space, and consider a triple $U\subseteq X\supseteq Z$, where $Z\subseteq X$ is a closed subspace, and $U=X\smallsetminus Z$. Then, for $G$ an Abelian group, there is a long exact sequence of compactly supported singular cohomology
\[
\CD
@>>> H^i_{c,\sing}(U,G) @>>> H^i_{c,\sing}(X,G) @>>> H^i_{c,\sing}(Z,G) @>>> H^{i+1}_{c,\sing}(U,G) @>>>.
\endCD
\]

%%%%%%%%%%%%%%%%%%%%%%%%%%%%%%%%%%%%%%%%%%%%%%%%%%%%%%%%%%%%%%%%%%%%%%%%
\subsubsection*{Affine vanishing}
%%%%%%%%%%%%%%%%%%%%%%%%%%%%%%%%%%%%%%%%%%%%%%%%%%%%%%%%%%%%%%%%%%%%%%%%
If $X$ is a smooth complex affine variety of algebraic dimension $d$, then $X$ is homotopy equivalent to a CW complex $Y$ of dimension $d$ \cite{Andreotti-Frankel}. It follows from this that for every locally constant sheaf $\calL$ of $\QQ$-vectorspaces on $X$, one has
\[
H^i_\sing(X,\calL)=0 \quad \text{ for }\ i>d.
\]
Indeed, if $f\colon Y\to X$ is the homotopy equivalence, then the pullback $f^{-1}(\calL)$ is a locally constant sheaf of $\QQ$-vectorspaces on $Y$, and
\[
H^i(Y,f^{-1}(\calL))\ \cong\ H^i(X,\calL) \quad \text{ for each $i$}
\]
by \cite[Remark~2.5.12]{Dimca}, while $H^i(Y,f^{-1}(\calL))=0$ for $i>d$ by \cite[Proposition~2.5.4]{Dimca}.

If $X$ is a smooth complex variety of algebraic dimension $d$ that admits an open cover by~$k$ affines, it follows from the vanishing above and the Mayer--Vietoris sequence that
\[
H^i_\sing(X,\calL)=0 \quad \text{ for }\ i>d+k-1.
\]
Combining this with Poincar\'e duality, under the same hypotheses we have
\[
H^i_{c,\sing}(X,\calL)=0 \quad \text{ for }\ i<d-k+1.
\]

%%%%%%%%%%%%%%%%%%%%%%%%%%%%%%%%%%%%%%%%%%%%%%%%%%%%%%%%%%%%%%%%%%%%%%%%
\subsubsection*{Leray--Serre spectral sequence}
%%%%%%%%%%%%%%%%%%%%%%%%%%%%%%%%%%%%%%%%%%%%%%%%%%%%%%%%%%%%%%%%%%%%%%%%
We say that
\[
\CD
F @>>> E @>>> B
\endCD
\]
is a \emph{locally trivial fiber bundle in the analytic topology} if there is a surjective map $E \stackrel{\pi}{\to} B$ and an open cover $U_1,\dots,U_t$ of $B$ in the analytic topology such that
\[
\pi|_{\pi^{-1}(U_i)}\colon {\pi^{-1}(U_i) \to U_i}
\]
is isomorphic to a projection $U_i \times F \to U_i$. Given such a locally trivial fiber bundle and a coefficient group $G$, there is a \emph{Leray--Serre spectral sequence}
\[
H^i_{c,\sing}(B,\calG^j) \Longrightarrow H^{i+j}_{c,\sing}(E,G),
\]
where $\calG^j$ is a locally constant sheaf on $B$ with stalk $H^j_{c,\sing}(F,G)$. The monodromy action on $\calG^j$ is induced by the monodromy action on $F$, i.e., the action of an element $[\gamma]$ of the fundamental group $\pi_1(B)$ on $\calG^j$ is the map on cohomology induced by the automorphism of $F$ given by lifting $\gamma$ to $E$.

%%%%%%%%%%%%%%%%%%%%%%%%%%%%%%%%%%%%%%%%%%%%%%%%%%%%%%%%%%%%%%%%%%%%%%%%
\subsection{\'Etale cohomology}
\label{ssec:etale}
%%%%%%%%%%%%%%%%%%%%%%%%%%%%%%%%%%%%%%%%%%%%%%%%%%%%%%%%%%%%%%%%%%%%%%%%
We discuss related statements for \'etale cohomology. Throughout this subsection, $\KK$ is an algebraically closed field, and $\ell$ a prime integer invertible in $\KK$.

%%%%%%%%%%%%%%%%%%%%%%%%%%%%%%%%%%%%%%%%%%%%%%%%%%%%%%%%%%%%%%%%%%%%%%%%
\subsubsection*{Poincar\'e duality}\cite[VI.11.1]{Milne}
%%%%%%%%%%%%%%%%%%%%%%%%%%%%%%%%%%%%%%%%%%%%%%%%%%%%%%%%%%%%%%%%%%%%%%%%
Let $X$ be a smooth quasiprojective variety over $\KK$, and let $\calL$ be a locally constant constructible sheaf of $\ZZ/\ell$-modules on $X$. Then there is a perfect pairing
\[
\CD
H^i_{c,\et}(X,\calL) \times H^{2d-i}_\et(X,\calL^*) @>>> \ZZ/\ell,
\endCD
\]
where $\calL^*=\Hom(\calL, \ZZ/\ell)$.

%%%%%%%%%%%%%%%%%%%%%%%%%%%%%%%%%%%%%%%%%%%%%%%%%%%%%%%%%%%%%%%%%%%%%%%%
\subsubsection*{Mayer--Vietoris sequences}
%%%%%%%%%%%%%%%%%%%%%%%%%%%%%%%%%%%%%%%%%%%%%%%%%%%%%%%%%%%%%%%%%%%%%%%%
Let $X$ be a quasiprojective variety over $\KK$, let $U$ and $V$ be Zariski open subsets of $X$ with $U\cup V = X$, and let $\calF$ be a sheaf of $\ZZ/\ell$-modules on $X$. Then there is a Mayer--Vietoris sequence
\[
\to H^i_\et(X, \calF) \to H^i_\et(U, \calF) \oplus H^i_\et(V, \calF) \to H^i_\et(U\cap V, \calF) \to H^{i+1}_\et(X, \calF) \to,
\]
see~\cite[III.2.24]{Milne}, and a Mayer--Vietoris sequence with compact supports
\begin{multline*}
\to H^i_{c,\et}(U \cap V, \calF) \to H^i_{c,\et}(U, \calF) \oplus H^i_{c,\et}(V, \calF) \to H^i_{c,\et}(X, \calF)\\
\to H^{i+1}_{c,\et}(U \cap V, \calF) \to.
\end{multline*}
In lieu of a reference, we give a brief argument for the Mayer--Vietoris sequence with compact supports. Write $u\colon U\to X$, $v\colon V\to X$, and $w\colon U\cap V\to X$ for the inclusion maps. Then $u_! u^{-1}(\calF)$ is the sheafification of the presheaf with sections on $W$ given by
\[
\begin{cases}
\Gamma(\calF,W) & \text{if }\ W\subseteq U, \\
0 & \text{if }\ W\not\subseteq U,
\end{cases}
\]
and likewise for $v_! v^{-1}(\calF)$ and $w_! w^{-1}(\calF)$. We then have a complex of sheaves
\begin{equation}
\label{equation:MV}
\CD
0 @>>> w_! w^{-1} \calF @>>> u_! u^{-1} \calF \oplus v_! v^{-1} \calF @>>> \calF @>>> 0.
\endCD
\end{equation}
For any geometric point $x\in X$, one has
\[
(u_! u^{-1} \calF)_x\ \cong\
\begin{cases}
\calF_x & \text{if }\ x\in U,\\
0 & \text{if }\ x\notin U,
\end{cases}
\]
and likewise for $v,w$. Thus, the sequence~\eqref{equation:MV} is exact, and the Mayer--Vietoris sequence is the long exact sequence obtained by applying $H^\bullet_{c,\et}(X,-)$, bearing in mind the isomorphisms $H^i_{c,\et}(X,u_! \calF) \cong H^i_{c,\et}(U,\calF)$ and their analogues for $v,w$.

%%%%%%%%%%%%%%%%%%%%%%%%%%%%%%%%%%%%%%%%%%%%%%%%%%%%%%%%%%%%%%%%%%%%%%%%
\subsubsection*{Long exact sequence of a subspace}
%%%%%%%%%%%%%%%%%%%%%%%%%%%%%%%%%%%%%%%%%%%%%%%%%%%%%%%%%%%%%%%%%%%%%%%%
Let $X$ be a variety over $\KK$, and $\calF$ a sheaf of $\ZZ/\ell$-modules on $X$. Consider a triple $U\subseteq X\supseteq Z$, where $Z\subseteq X$ is closed, and $U=X\smallsetminus Z$. Then there is a long exact sequence \cite[p.~94]{Milne}
\[
\CD
H^i_{c,\et}(U,\calF) @>>> H^i_{c,\et}(X,\calF) @>>> H^i_{c,\et}(Z,\calF) @>>> H^{i+1}_{c,\et}(U,\calF) @>>>.
\endCD
\]

%%%%%%%%%%%%%%%%%%%%%%%%%%%%%%%%%%%%%%%%%%%%%%%%%%%%%%%%%%%%%%%%%%%%%%%%
\subsubsection*{Affine vanishing}
%%%%%%%%%%%%%%%%%%%%%%%%%%%%%%%%%%%%%%%%%%%%%%%%%%%%%%%%%%%%%%%%%%%%%%%%
Let $X$ be an affine variety of dimension $d$ over an algebraically closed field $\KK$. According to \cite[VI.7.2]{Milne}, for any $\ell$-torsion sheaf $\calF$ with $\ell$ invertible in $\KK$,
\[
H^i_\et(X,\calF)=0 \quad \text{ for }\ i>d.
\]
It follows from this and the Mayer--Vietoris sequence, that if $X$ is a variety of dimension $d$ over an algebraically closed field, and admits an open cover by $k$ affines, then
\[
H^i_\et(X,\calF)=0 \quad \text{ for }\ i>d+k-1.
\]
By Poincar\'e duality, if $X$ is also smooth, it follows that for a locally constant invertible $\ZZ/\ell$ sheaf $\calL$, one has
\[
H^i_{c,\et}(X,\calL)=0 \quad \text{ for }\ i<d-k+1.
\]

%%%%%%%%%%%%%%%%%%%%%%%%%%%%%%%%%%%%%%%%%%%%%%%%%%%%%%%%%%%%%%%%%%%%%%%%
\subsubsection*{Leray--Serre spectral sequence for compact supports}
%%%%%%%%%%%%%%%%%%%%%%%%%%%%%%%%%%%%%%%%%%%%%%%%%%%%%%%%%%%%%%%%%%%%%%%%
We say that
\[
\CD
F @>>> E @>>> B
\endCD
\]
is a locally trivial fiber bundle in the \'etale topology if there is a surjective map $\pi\colon E\to B$ and an open cover $U_1,\dots,U_t$ of $B$ on the \'etale site such that the map $U_i \times_B E \to U_i$ obtained by base change is isomorphic to the projection map $U_i \times F \to U_i$.

In this setting, suppose that $\pi\colon E\to B$ is a morphism of quasiprojective varieties over~$\KK$. Then the functor $\pi_!$ exists by \cite[VI.3.3(e)]{Milne}. Let $\rho\colon B\to \Spec \KK$ be the constant map. By \cite[VI.3.2(c)]{Milne}, there is a spectral sequence
\[
R^i \rho_! \circ R^j \pi_! (\ZZ/\ell) \Longrightarrow R^{i+j} (\rho \pi)_! (\ZZ/\ell),
\]
which in this setting takes the form
\[
H^i_{c,\et}( B, R^j \pi_!(\ZZ/\ell)) \Longrightarrow H^{i+j}_{c,\et}(E,\ZZ/\ell).
\]
In a fibration, for each $j$, the sheaf $R^j \pi_! (\ZZ/\ell)$ is a locally constant constructible sheaf by \cite[VI.3.2(d)]{Milne}, with stalk $H^j_{c,\et}(F,\ZZ/\ell)$. Indeed, let $\{U_i\}$ be an open cover of $B$ such that
\begin{equation}
\label{equation:open:cover}
\begin{tikzcd}
E \arrow{r}{\ \ \pi} & B \\
U_i \times F \arrow{u} \arrow{r}{\! p} & U_i \arrow{u}
\end{tikzcd}
\end{equation}
commutes, where $p$ is the projection map. From the Cartesian diagram
\[
\begin{tikzcd}
U_i \times F \arrow{d} \arrow{r}{p} & U_i \arrow{d} \\
F \arrow{r} & \Spec \KK
\end{tikzcd}
\]
and the fact that $R^i \pi_!$ commutes with base change, \cite[VI.3.2(e)]{Milne}, we see that $R^j p_! (\ZZ/\ell)$ is the constant sheaf $H^j_{c,\et}(F,\ZZ/\ell)$ on $U_i \times F$. Applying the same fact to the Cartesian square~\eqref{equation:open:cover}, we obtain that
\[
R^j \pi_! (\ZZ/\ell)|_{U_i\times F}\ \cong\ R^j p_!(\ZZ/\ell).
\]
This shows the claim.

%%%%%%%%%%%%%%%%%%%%%%%%%%%%%%%%%%%%%%%%%%%%%%%%%%%%%%%%%%%%%%%%%%%%%%%%
\subsection{Additional lemmas}
%%%%%%%%%%%%%%%%%%%%%%%%%%%%%%%%%%%%%%%%%%%%%%%%%%%%%%%%%%%%%%%%%%%%%%%%

We record the main consequence of the Leray--Serre spectral sequence that will be used in the sequel.

\begin{lemma}
\label{lemma:LTFB}
Let $F,E,B$ be varieties over a field $\KK$. Assume that Setting \AN or \ET holds. Furthermore, we make the following assumptions:

\begin{enumerate}[\quad\rm(1)]
\item $F\to E\to B$ is a locally trivial fiber bundle;
\item the critical cohomology group of the fiber $F$ has rank one;
\item one of the following holds:
\begin{enumerate}[\rm(a)]
\item the base $B$ is simply connected with critical cohomology group of rank one, or
\item $B$ is smooth of dimension $b$ as an algebraic variety, is covered by $k$ affines where $\cptdim B=b-k+1$, and has critical cohomology group of rank one;
\end{enumerate}
\item in Setting \AN, the monodromy action of $\pi_1(B)$ on the critical cohomology group of $F$ is trivial (which is automatic when $B$ is simply connected).
\end{enumerate}
Then $\cptdim E = \cptdim F + \cptdim B$, and the critical cohomology of $E$ has rank one.
\end{lemma}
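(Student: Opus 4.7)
The plan is to apply the Leray--Serre spectral sequence for compact supports, which in both Settings \AN and \ET reads
\[
E_2^{i,j}\ =\ H^i_c(B, R^j\pi_!)\ \Longrightarrow\ H^{i+j}_c(E),
\]
and to show that exactly one $E_2$ entry survives to the first nonvanishing total degree. Writing $f \colonequals \cptdim F$ and $b' \colonequals \cptdim B$, the task reduces to verifying that $E_2^{i,j} = 0$ whenever $i+j < b'+f$, that $E_2^{b',f}$ has rank one, and that no differential $d_r$ enters or exits this surviving entry.

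First I would observe that $R^j \pi_!$ is locally constant constructible with stalk $H^j_c(F)$, as recorded in Section~\ref{section:prelim:cohomology}. Hence $R^j\pi_! = 0$ for $j < f$, and $R^f \pi_!$ is a local system of rank one. To upgrade $R^f \pi_!$ to a genuinely \emph{constant} sheaf I would invoke hypothesis~(4) in Setting \AN; in Setting \ET the fact that $\operatorname{Aut}(\ZZ/2)$ is trivial forces the monodromy action on the stalk to be trivial automatically. With $R^f \pi_!$ constant of rank one, one obtains
\[
E_2^{i, f}\ \cong\ H^i_c(B),
\]
which vanishes for $i < b'$ and has rank one at $i = b'$ by definition of $\cptdim B$.

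The main obstacle is the analysis of the higher rows $j > f$, where $R^j\pi_!$ may be a nontrivial local system and one still needs $E_2^{i,j} = 0$ for $i < b'$. In case (a), simple connectivity of $B$ makes every local system constant, so $E_2^{i,j}$ is a direct sum of copies of $H^i_c(B)$ and the vanishing is immediate. In case (b), my plan is to combine the affine vanishing with Poincar\'e duality as assembled in Section~\ref{section:prelim:cohomology}: the open cover by $k$ affines yields $H^m(B, \calF) = 0$ for $m > b + k - 1$ and every $\ell$-torsion (resp.\ $\QQ$-vector space) sheaf $\calF$, while Poincar\'e duality is available for \emph{arbitrary} locally constant constructible coefficient sheaves in both settings. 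Applying the duality isomorphism to $(R^j \pi_!)^*$ then yields $H^i_c(B, R^j \pi_!) = 0$ whenever $i < b - k + 1 = b'$, which disposes of the rows $j > f$.

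At this stage $E_2^{b', f}$ is the only nonzero entry on the antidiagonal $i + j = b' + f$, and every differential $d_r$ incident to it maps into a row of index $< f$ or a column of index $< b'$, all of which are zero. Consequently $H^{b'+f}_c(E) \cong E_\infty^{b', f}$ has rank one, while $H^n_c(E) = 0$ for $n < b' + f$, producing $\cptdim E = \cptdim F + \cptdim B$ with critical cohomology of rank one.
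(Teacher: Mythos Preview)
Your proposal is correct and follows essentially the same approach as the paper: apply the Leray--Serre spectral sequence for compact supports, use $R^j\pi_!=0$ for $j<f$ to kill the low rows, use either simple connectivity (case (a)) or affine vanishing plus Poincar\'e duality (case (b)) to kill the columns $i<b'$, and invoke the monodromy hypothesis (automatic in Setting~\ET since $\operatorname{Aut}(\ZZ/2)$ is trivial) to identify the corner term $E_2^{b',f}$ with the critical cohomology of $B$. Your treatment is slightly more explicit about the differentials incident to the surviving entry, but otherwise matches the paper's argument.
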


\begin{proof}
In either setting, the lemma is a consequence of the Leray--Serre spectral sequence; first consider Setting~\AN. If $B$ is simply connected, the spectral sequence takes the form
\[
H^i_{c,\sing}(B,H^j_{c,\sing}(F,\QQ)) \Longrightarrow H^{i+j}_{c,\sing}(E,\QQ),
\]
where $H^j_{c,\sing}(\QQ)$ denotes the constant system of $\QQ$-vectorspaces on $B$, with corresponding stalks. Thus, all terms on the $E_2$ page with $j<\cptdim F$ or $i<\cptdim B$ vanish, and the term with $i=\cptdim B$ and $j=\cptdim F$ is one copy of $\QQ$. The conclusion follows.

Next, suppose $B$ is smooth of dimension $b$ as an algebraic variety, is covered by~$k$ affines where $\cptdim B=b-k+1$, and that the monodromy action of $\pi_1(B)$ on the critical cohomology group of $F$ is trivial. By affine vanishing, we have $H^i_c(B, \calL)=0$ for any local system of $\QQ$-vectorspaces and any $i<b-k+1=\cptdim B$, so again all terms on the $E_2$ page with $j<\cptdim F$ or $i<\cptdim B$ vanish. The hypothesis on the monodromy action implies that the term with $i=\cptdim B$ and $j=\cptdim F$ is one copy of $\QQ$, and the conclusion follows.

In Setting~\ET, the argument is similar, using the analogous Leray--Serre spectral sequence and affine vanishing; the only difference is that the automorphism group of $\ZZ/2\ZZ$ is trivial, hence the monodromy action on $\ZZ/2\ZZ$ is necessarily trivial.
\end{proof}

\begin{lemma}[Filtrations and cohomology]
\label{lemma:les:induction}
We assume Setting \AN or \ET holds. Suppose $V_0\cup V_1\cup\cdots\cup V_t$ is a partition of the topological space $Y$ such that
\begin{enumerate}[\quad\rm(1)]
\item $V_i$ is open in $V_0\cup V_1\cup\cdots\cup V_i$ \ for $i=1,\dots,t$;
\item $\cptdim V_{i+1} - 1 > \cptdim V_i$ \ for $i=1,\dots,t-1$;
\item $\cptdim Y > \cptdim V_t + 1$.
\end{enumerate}
Then $\cptdim V_0 = \cptdim V_1 - 1$, and the critical cohomology groups of $V_0$ and $V_1$ are isomorphic.
\end{lemma}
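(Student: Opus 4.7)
Introduce the filtration $W_i \colonequals V_0 \cup V_1 \cup \cdots \cup V_i$ for $0 \le i \le t$, so that $W_0 = V_0$ and $W_t = Y$. Hypothesis~(1) says that $V_i$ is open in $W_i$, hence its complement $W_{i-1}$ is closed in $W_i$. The long exact sequence of a subspace in compactly supported cohomology, from Subsections~\ref{ssec:singular} and~\ref{ssec:etale}, then yields, for each $i \ge 1$,
\[
\cdots \to H^k_c(V_i) \to H^k_c(W_i) \to H^k_c(W_{i-1}) \to H^{k+1}_c(V_i) \to H^{k+1}_c(W_i) \to \cdots.
\]
The strategy is to prove by descending induction on $i$, for $i = t, t-1, \ldots, 1$, the claim that $\cptdim W_{i-1} = \cptdim V_i - 1$ and that the connecting homomorphism induces an isomorphism $H^{\cptdim V_i - 1}_c(W_{i-1}) \cong H^{\cptdim V_i}_c(V_i)$ between the critical cohomology groups. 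Specializing to $i=1$ yields the conclusion of the lemma, since $W_0 = V_0$.

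The key mechanism is this: whenever the strict inequality $\cptdim W_i > \cptdim V_i$ holds, both $H^k_c(W_i)$ and $H^{k+1}_c(W_i)$ vanish for every $k \le \cptdim V_i - 1$. Combined with the vanishing $H^{k+1}_c(V_i) = 0$ for $k < \cptdim V_i - 1$, the long exact sequence forces $H^k_c(W_{i-1}) = 0$ throughout this range, while at $k = \cptdim V_i - 1$ the connecting map becomes a bona fide isomorphism onto $H^{\cptdim V_i}_c(V_i)$, which is nonzero by definition of the critical cohomology. This establishes both assertions of the inductive claim in one stroke.

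It remains to verify the strict inequality $\cptdim W_i > \cptdim V_i$ at each stage. For the base case $i = t$, this is precisely what hypothesis~(3) provides: $\cptdim W_t = \cptdim Y > \cptdim V_t + 1 > \cptdim V_t$. For $i < t$, the inductive hypothesis applied to $i+1$ gives $\cptdim W_i = \cptdim V_{i+1} - 1$, and hypothesis~(2) at index $i$ then supplies $\cptdim V_{i+1} - 1 > \cptdim V_i$, carrying the induction down to $i=1$. The only subtle point is tracking the vanishing ranges finely enough that the connecting homomorphism is an honest isomorphism rather than merely a nonzero surjection; however, this falls out automatically from the strict spacing built into hypotheses~(2) and~(3), so there is no real obstacle beyond careful bookkeeping.
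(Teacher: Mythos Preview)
Your proof is correct and follows essentially the same approach as the paper: the paper also sets $V_{\le i}=V_0\cup\cdots\cup V_i$ and proves by descending induction on $i$ that $\cptdim V_{\le i-1}=\cptdim V_i-1$ with isomorphic critical cohomology, using the long exact sequence of the triple $V_i\subseteq V_{\le i}\supseteq V_{\le i-1}$ together with hypotheses~(2) and~(3) to supply the required strict inequality $\cptdim V_{\le i}>\cptdim V_i$ at each step. Your bookkeeping of the vanishing ranges is slightly more explicit than the paper's, but the argument is the same.
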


\begin{proof}
Set $V_{\le i} \colonequals V_0 \cup V_1 \cup \cdots\cup V_i$. Set $d_i \colonequals \cptdim V_i$ and $d \colonequals \cptdim Y$. In particular, note that $V_{\le t} = Y$, and that $V_i$ is open in $V_{\le i}$ with closed complement $V_{\le{i-1}}$ for each $i\ge 1$.

We show by descending induction on $i=t$ that
\[
\cptdim V_{\le i-1} + 1\ =\ \cptdim V_{i},
\]
and that the corresponding critical cohomology groups are isomorphic. For $i=t$ we have
\[
\cptdim V_t + 1 \ <\ \cptdim Y
\]
by hypothesis, whence from the exact sequence for the triple $V_t \subseteq Y\supseteq V_{\le t-1}$ one sees that
\[
H^{d_t-1}_c(V_{\le t-1})\ \cong\ H^{d_t}_c(V_{t})
\]
and all lower cohomology groups of $V_{\le t-1}$ vanish. For $i=t-1,\dots,1$, the already established equality $\cptdim V_{\le i}=d_{i+1}-1$, along with the hypothesis, implies that
\[
\cptdim V_{\le i}\ =\ d_{i+1}-1\ >\ d_i.
\]
Using the exact sequence for the triple ${V_i \subseteq V_{\le i}}\supseteq V_{\le i-1}$, it then follows that
\[
H_c^{d_i-1}(V_{\le i-1})\ \cong\ H_c^{d_i}(V_i)
\]
and the lower cohomology groups vanish, completing the induction.

The assertion of the lemma is the case $i=1$.
\end{proof}

%%%%%%%%%%%%%%%%%%%%%%%%%%%%%%%%%%%%%%%%%%%%%%%%%%%%%%%%%%%%%%%%%%%%%%%%
\section{Topology of Pfaffian nullcones}
\label{section:topology:pfaffian}
%%%%%%%%%%%%%%%%%%%%%%%%%%%%%%%%%%%%%%%%%%%%%%%%%%%%%%%%%%%%%%%%%%%%%%%%

The purpose of this section is to prove:

\begin{theorem}
\label{theorem:cohomology:open:pfaffian}
Let $Y$ be a $2t\times n$ matrix of indeterminates over a field $\KK$, where $2t\le n$. Let~$\Omega$ denote the $2t\times 2t$ alternating matrix as in~\eqref{equation:omega}. Consider the algebraic set
\[
X^0_{2t\times n}\colonequals \Var(Y^\tr \Omega Y).
\]
\begin{enumerate}[\quad\rm(1)]
\item In the case $\KK$ equals $\CC$, we have
\[
H^i_\sing(\CC^{2t\times n}\smallsetminus X^0_{2t\times n},\, \QQ)\ =\
\begin{cases}
\QQ &\text{if }\ i = 4tn - \binom{2t+1}{2} - 1,\\
0 &\text{if }\ i > 4tn - \binom{2t+1}{2} - 1.
\end{cases}
\]
\item For $\KK$ an algebraically closed field of characteristic other than two, we have
\[
H^i_\et(\KK^{2t\times n}\smallsetminus X^0_{2t\times n},\, \ZZ/2)\ =\
\begin{cases}
\ZZ/2 &\text{if }\ i = 4tn - \binom{2t+1}{2} - 1,\\
0 &\text{if }\ i > 4tn - \binom{2t+1}{2} - 1.
\end{cases}
\]
\end{enumerate}
\end{theorem}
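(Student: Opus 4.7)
Proof proposal.

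By Poincaré duality for the smooth complement $U := \KK^{2t\times n}\smallsetminus X^0_{2t\times n}$ (singular in Setting \AN, étale in Setting \ET), both assertions are equivalent to showing
\[
\cptdim U \ =\ \binom{2t+1}{2}+1,
\]
with critical cohomology of rank one. My plan would be to exploit that $\binom{2t+1}{2}+1 = \dim\Sp_{2t}(\CC)+1$ by organizing $U$ as a (quasi-)fibration with $\Sp_{2t}(\KK)$-type fibers over a base of critical dimension one.

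The organizing map is $\nu\colon U\to B := \Alt_n^{\le 2t}(\KK)\smallsetminus\{0\}$, $Y\mapsto Y^\tr\Omega Y$, invariant under the left $\Sp_{2t}(\KK)$-action. First I would show that $\cptdim B=1$ with rank-one critical cohomology; this follows from Lemma~\ref{lemma:LTFB} applied to the Hopf-type $\KK^*$-bundle from $B$ to the irreducible projective Pfaffian variety $\PP(\Alt_n^{\le 2t}(\KK))$, whose compactness gives $\cptdim = 0$ with rank-one critical cohomology. On the open stratum $\nu^{-1}(\Alt_n^{=2t})\subset U$, the map $\nu$ is a principal $\Sp_{2t}(\KK)$-bundle (the stabilizer of a surjective $Y$ is trivial, since $M\in\Sp_{2t}$ with $MY=Y$ must fix the full image $\KK^{2t}$), so another application of Lemma~\ref{lemma:LTFB}, together with the identification $\cptdim\Sp_{2t}(\CC) = \binom{2t+1}{2}$, targets the correct critical degree for this open piece.

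Next I would control the complement $U\smallsetminus\nu^{-1}(\Alt_n^{=2t})$, which is a union of strata indexed by $\rank Y < 2t$ (subject to non-isotropy of the column span). Appendix~A exhibits each of these strata as a locally trivial Stiefel-type fiber bundle, realized as a $\GL_r(\KK)$-bundle over a product of an isotropic Grassmannian $\SG(r,2t)$ and an ordinary Grassmannian $\Gr(n-r,n)$. Iterated application of Lemma~\ref{lemma:LTFB} then yields the $\cptdim$ and rank-one critical cohomology of each such stratum. Finally, I would glue all the pieces together by means of Lemma~\ref{lemma:les:induction} applied to the filtration of $U$ by increasing $\rank Y$, verifying the numerical hypotheses on successive $\cptdim$'s from the preceding computations.

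The main obstacle will be the final gluing step: the individual lower-rank strata have critical compact-support dimensions substantially smaller than the target $\binom{2t+1}{2}+1$, so the desired critical cohomology of $U$ cannot come from any single stratum. It must emerge globally, either via the cancellations tracked by Lemma~\ref{lemma:les:induction} in the successive long exact sequences, or more cleanly by showing that $\nu^{-1}(\Alt_n^{=2t})$ captures all the low-degree $H^j_c(U)$ — i.e.\ a codimension argument ensuring $\cptdim\bigl(U\smallsetminus\nu^{-1}(\Alt_n^{=2t})\bigr) > \binom{2t+1}{2}+1$. Executing this hinges on the explicit linear-algebra identifications of Appendix~A and on matching the Lefschetz structure of $\PP(\Alt_n^{\le 2t}(\KK))$ with the Poincaré polynomial of $\Sp_{2t}$ so that, in the end, precisely one copy of $\QQ$ (respectively $\ZZ/2$) survives at the predicted degree.
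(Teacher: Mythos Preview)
Your overall shape---stratify, compute pieces via fiber bundles, glue via the filtration lemma---matches the paper, but the execution diverges in ways that leave real gaps.

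First, the map $\nu\colon U\to B$ is not a fiber bundle: the fiber over $A\in B$ depends on $\rank A$, so Lemma~\ref{lemma:LTFB} does not apply to $\nu$ globally, and your computation $\cptdim B=1$ (even if correct) plays no role. You recognize this and pass to the top stratum $\nu^{-1}(\Alt^{=2t}_{n\times n})$, where $\nu$ \emph{is} a principal $\Sp_{2t}$-bundle. But the base there is $\Alt^{2t}_{n\times n}$, whose compact dimension is $\binom{2t}{2}$ (Lemma~\ref{lemma:alt:n}), so this stratum has compact dimension $\binom{2t+1}{2}+\binom{2t}{2}$, not $\binom{2t+1}{2}+1$. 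In fact the direction of your gluing heuristic is backwards: among the rank strata of $U$, it is the \emph{lowest} stratum (where $\rank(Y^\tr\Omega Y)=2$) that carries the target compact dimension $\binom{2t+1}{2}+1$, and all higher strata have strictly larger $\cptdim$. So the critical class does come from a single stratum---just not the one you identified.

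Second, your lower strata are indexed by $\rank Y$, but the relevant invariant is $\rank(Y^\tr\Omega Y)$; these differ once $\rank Y<2t$, since a rank-$r$ matrix can have $Y^\tr\Omega Y$ of any even rank up to $r$ depending on the symplectic geometry of its column span. Moreover, the column span must be \emph{non}-isotropic (not isotropic), so your strata are $\GL_r$-bundles over the complement of $\SG(r,2t)$ in $\Gr(r,2t)$, which is not a homogeneous space and whose compact dimension is not zero. Appendix~\ref{appendix} contains no such bundles; it is organized around $\rank(Y^\tr\Omega Y)$ via the spaces $F^{2k}$, $G^{2k}$, $X^{2k}$ and the groups $\Sp(2t,2k)$, $\Alt(2k)$.

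The paper's argument avoids all of this by stratifying $\KK^{2t\times n}$ (not $U$) as $X^0\cup X^2\cup\cdots\cup X^{2t}$ by $\rank(Y^\tr\Omega Y)$, proving inductively on $t$ that $\cptdim X^{2k}=\binom{2t+1}{2}+\binom{2k}{2}$ via the fiber bundles \eqref{equation:ltfbs:alt:3} and \eqref{equation:ltfbs:alt:4} (or \eqref{equation:ltfbs:alt:1}--\eqref{equation:ltfbs:alt:2} in Setting~\ET), then applying Lemma~\ref{lemma:les:induction} to this filtration to read off $\cptdim X^0=\binom{2t+1}{2}$. The statement for $U$ then follows from the long exact sequence of the pair $(X^0\subset\KK^{2t\times n})$ and Poincar\'e duality. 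The key structural point you are missing is that the induction on $t$---with the fiber of \eqref{equation:ltfbs:alt:3} being $X^0_{(2t-2k)\times(n-2k)}$---is what makes the numerics close up.
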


We study the cohomology of some auxiliary spaces; for integers $k\le t$, set:
\begin{equation}
\label{equation:pfaffian:pairs}
\begin{array}{r@{}l}
\Sp(2t,2k) &\ \colonequals\ \left\{M\in\KK^{2t \times 2k} \mid M^\tr\Omega_{2t} M = \Omega_{2k}\right\},\\
\\
\Alt(2k) &\ \colonequals\ \{M \in \KK^{2k\times 2k} \mid M \text{ is alternating and invertible}\},\\
\\
\Alt^{2k}_{n\times n} &\ \colonequals\ \{M \in \KK^{n\times n} \mid M \text{ is alternating and }\rank M = 2k\}.
\end{array}
\end{equation}

Note that $\Sp(2t,2t)$ is precisely the symplectic group $\Sp_{2t}$.

\begin{lemma}
\label{lemma:sp}
Consider positive integers $k\le t$. Then $\Sp(2t,2k)$ is a smooth affine variety and, in either Setting \AN or \ET,
\[
\cptdim\Sp(2t,2k)\ =\ \dim\Sp(2t,2k)\ =\ 4tk-\binom{2k}{2},
\]
with critical cohomology group of rank one.

Furthermore, in Setting \AN, the space $\Sp(2t,2k)$ is simply connected.
\end{lemma}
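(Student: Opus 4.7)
The plan is to induct on $k$, fibering $\Sp(2t,2k)$ over $\Sp(2t,2)$ via the forgetful map
\[
\phi\colon\Sp(2t,2k)\ \longrightarrow\ \Sp(2t,2),\quad M\ \longmapsto\ (\text{first two columns of }M).
\]
For smoothness and the dimension formula I would identify $\Sp(2t,2k)$ with the homogeneous space $\Sp_{2t}/\Sp_{2t-2k}$, where $\Sp_{2t-2k}$ is embedded as the stabilizer of the standard symplectic $2k$-frame $(e_1,\dots,e_{2k})$; transitivity of this action follows from Witt's extension theorem applied to symplectic frames, and yields smoothness together with the dimension count
\[
\dim\Sp_{2t}-\dim\Sp_{2t-2k}\ =\ (2t^2+t)-\bigl(2(t-k)^2+(t-k)\bigr)\ =\ 4tk-\binom{2k}{2}.
\]
Affineness is immediate since $\Sp(2t,2k)$ is a closed subvariety of $\KK^{2t\times 2k}$.

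For the base case $k=1$, the further projection $(v_1,v_2)\mapsto v_1$ exhibits $\Sp(2t,2)$ as a Zariski-locally-trivial affine bundle with fiber $\KK^{2t-1}$ over $\KK^{2t}\smallsetminus\{0\}$: for $v_1\ne 0$ the linear functional $v_2\mapsto v_1^\tr\Omega v_2$ is surjective, so its $1$-level set is an affine hyperplane, and local triviality follows by inverting a coordinate of $\Omega v_1$. Hence $\Sp(2t,2)$ has the same cohomology (and in Setting \AN the same homotopy type) as $\KK^{2t}\smallsetminus\{0\}$, whose cohomology matches that of $S^{4t-1}$ in either setting. This yields $\cptdim\Sp(2t,2)=4t-1$ with critical cohomology of rank one, together with simple connectedness in Setting \AN since $4t-1\ge 3$.

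For the inductive step, $\phi$ is a Zariski-locally-trivial fiber bundle with fiber $\Sp(2t-2,2k-2)$: over $(v_1,v_2)$ the fiber is identified with the space of symplectic $(2k-2)$-frames in the symplectic complement $\{v_1,v_2\}^{\perp_\Omega}\cong\KK^{2t-2}$, and local triviality follows from the transitive action of $\Sp_{2t}$ on $\Sp(2t,2)$ together with the fact that $\Sp_{2t-2}$-torsors over smooth affine bases are Zariski-locally trivial. I would then invoke Lemma~\ref{lemma:LTFB} with $F=\Sp(2t-2,2k-2)$, $E=\Sp(2t,2k)$, and $B=\Sp(2t,2)$: condition~(2) is the inductive hypothesis; condition~(3a) holds in Setting \AN since $B\simeq S^{4t-1}$ is simply connected with critical cohomology of rank one; condition~(3b) holds in Setting \ET since $B$ is smooth, affine, of dimension $b=4t-1$ with $\cptdim B=b$; and condition~(4) is vacuous because $\pi_1(B)=1$. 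The conclusion gives
\[
\cptdim E\ =\ \left(4(t-1)(k-1)-\binom{2k-2}{2}\right)+(4t-1)\ =\ 4tk-\binom{2k}{2},
\]
with critical cohomology of rank one. Simple connectedness of $\Sp(2t,2k)$ in Setting \AN then follows from the long exact sequence of homotopy groups of $\phi$, since both the fiber and the base are simply connected.

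The main obstacle I anticipate is verifying cleanly that $\phi$ is a locally trivial fiber bundle in both the Zariski and \'etale settings; concretely, that the principal bundle $\Sp_{2t}\to\Sp(2t,2)$ is a Zariski-locally-trivial $\Sp_{2t-2}$-torsor (so that $\phi$ inherits Zariski local triviality with fiber $\Sp(2t-2,2k-2)$), which is where one must invoke the special nature of the symplectic group.
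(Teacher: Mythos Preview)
Your proposal is correct and follows essentially the same inductive scheme as the paper: the fibration $\Sp(2t-2,2k-2)\to\Sp(2t,2k)\to\Sp(2t,2)$ for the inductive step, the affine-space bundle $\Sp(2t,2)\to\KK^{2t}\smallsetminus\{0\}$ for the base case, and Lemma~\ref{lemma:LTFB} to assemble the compact dimensions. The only notable differences are peripheral: the paper obtains the dimension formula by summing along the fiber bundles rather than via the homogeneous-space identification, and for Zariski local triviality of $\phi$ it gives an explicit ``alternating Gram--Schmidt'' section (Lemma~\ref{lemma:section:sp} in the appendix) rather than invoking that $\Sp_{2t-2}$ is special in the sense of Serre---your route is shorter but imports a nontrivial theorem of Grothendieck, while the paper's is self-contained.
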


\begin{proof}
It is clear from the construction that $\Sp(2t,2k)$ is affine. Since $\Sp_{2t}$ acts transitively on $\Sp(2t,2k)$ by left multiplication, $\Sp(2t,2k)$ is smooth as well.

For the rest, we proceed by induction on $k$. For the base case $k=1$, we induce on $t\ge 1$ using the locally trivial fiber bundle~\eqref{eqn:sp:1-app}
\[
\CD
\KK^{2t-1} @>>> \Sp(2t,2) @>>> \KK^{2t} \smallsetminus \{0\}.
\endCD
\]
Note that~$\KK^{2t}\smallsetminus \{0\}$ is smooth of dimension $2t$, covered by $2t$ affines, and has compact dimension one with critical cohomology group of rank one. Moreover, in Setting \AN, the space~$\CC^{2t} \smallsetminus \{0\}$ is homotopy equivalent to the real sphere $\SS^{4t-1}$ and therefore simply connected since $t\ge 1$. Thus, Lemma~\ref{lemma:LTFB} applies, and we have
\[
\dim\Sp(2t,2)\ =\ \dim\KK^{2t-1} + \dim \KK^{2t} \smallsetminus \{0\}\ =\ 2t-1 + 2t\ =\ 4t-1,
\]
and
\[
\cptdim\Sp(2t,2)\ =\ \cptdim\KK^{2t-1} + \cptdim \KK^{2t} \smallsetminus \{0\}\ =\ 2(2t-1) + 1\ =\ 4t-1,
\]
and $\Sp(2t,2)$ has critical cohomology group of rank one. Furthermore, in Setting \AN, the homotopy exact sequence
\[
\CD
@>>> \pi_1(\CC^{2t-1}) @>>> \pi_1(\Sp(2t,2)) @>>> \pi_1(\CC^{2t} \smallsetminus \{0\}) @>>>
\endCD
\]
shows that $\Sp(2t,2)$ is simply connected, completing the case $k=1$.

Next, consider the locally trivial fiber bundle~\eqref{eqn:sp:2-app}
\[
\CD
\Sp(2t-2,2k-2) @>>> \Sp(2t,2k) @>>> \Sp(2t,2)
\endCD
\]
given by projection to the first column pair. By the case established above and the inductive hypothesis on $k$, the hypotheses of Lemma~\ref{lemma:LTFB} hold in both settings, so
\begin{multline*}
\dim \Sp(2t,2k)\ =\ \dim \Sp(2t-2,2k-2) + \dim \Sp(2t,2) \\
=\ (2t-2)(2k-2) - \binom{2k-2}{2} + 4t - 1\ =\ 4tk-\binom{2k}{2},
\end{multline*}
and likewise for compact dimension; moreover, $\Sp(2t,2k)$ has critical cohomology group of rank one. The homotopy exact sequence shows that $\Sp(2t,2k)$ is simply connected along the same lines as above.
\end{proof}

\begin{lemma}
\label{lemma:alt}
In either Setting \AN or \ET, the variety $\Alt(2k)$ is smooth affine, and
\[
\cptdim \Alt(2k)\ =\ \dim \Alt(2k)\ =\ \binom{2k}{2},
\]
with critical cohomology group of rank one.
\end{lemma}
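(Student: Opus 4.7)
The plan is to realize $\Alt(2k)$ as a principal affine open in affine space to settle the dimension statement and affine vanishing easily, and then to induct on $k$ via a suitable fiber bundle to pin down the compact dimension and the rank of the critical cohomology group.

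First I would observe that the ambient space $\Alt_{2k}$ of all alternating $2k \times 2k$ matrices is an affine space of dimension $\binom{2k}{2}$, parametrized by the entries strictly above the diagonal, and that $\Alt(2k)$ is the principal open subset where the Pfaffian does not vanish. Hence $\Alt(2k)$ is smooth and affine of dimension $d \colonequals \binom{2k}{2}$. Being affine of algebraic dimension $d$ and trivially covered by one affine, affine vanishing together with Poincar\'e duality then gives $H^i_c(\Alt(2k)) = 0$ for $i < d$ in either setting, so $\cptdim \Alt(2k) \ge d$.

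For the reverse inequality and the rank one of the critical cohomology, I would proceed by induction on $k$. The base case $k = 1$ is direct: $\Alt(2)$ consists of scalar multiples $a \Omega_2$ with $a \in \KK \smallsetminus \{0\}$, so $\Alt(2) \cong \KK \smallsetminus \{0\}$, which has compact dimension one with critical cohomology of rank one, via Poincar\'e duality applied to $\CC \smallsetminus \{0\} \simeq \SS^1$ in Setting \AN and via the Kummer sequence in Setting \ET.

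For the inductive step, with $k \ge 2$, I would use a locally trivial fiber bundle
$$\KK^{2k-2} \times \Alt(2k-2) \ \to\ \Alt(2k) \ \to\ \KK^{2k-1} \smallsetminus \{0\},$$
of the type collected in the appendix, defined by sending $M \in \Alt(2k)$ to the vector $(m_{12}, \dots, m_{1,2k})$ of entries above the diagonal in its first row; this vector is nonzero since $M$ is invertible, and a block decomposition of $M$ after normalizing the first row to the standard vector $(1, 0, \dots, 0)$ identifies the fiber with $\KK^{2k-2} \times \Alt(2k-2)$ (the invertibility condition on $M$ reduces, via a Schur complement computation, to invertibility of the bottom right $(2k-2) \times (2k-2)$ alternating block). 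By the K\"unneth formula and the inductive hypothesis, the fiber has compact dimension $(4k-4) + \binom{2k-2}{2}$ with critical cohomology of rank one, while the base $\KK^{2k-1} \smallsetminus \{0\}$ has compact dimension one with critical cohomology of rank one and is simply connected in Setting \AN since $\CC^{2k-1} \smallsetminus \{0\}$ is homotopy equivalent to $\SS^{4k-3}$ with $4k-3 \ge 5$. Applying Lemma~\ref{lemma:LTFB} then yields
$$\cptdim \Alt(2k) \ =\ (4k-4) + \binom{2k-2}{2} + 1 \ =\ \binom{2k}{2},$$
with critical cohomology of rank one, completing the induction. The main verification to be carried out is the local triviality of the above bundle and the explicit identification of its fiber; I expect this to be the most delicate step and to be packaged in the appendix alongside the analogous bundles used in the proof of Lemma~\ref{lemma:sp}.
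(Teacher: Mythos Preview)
Your proof is correct and follows essentially the same approach as the paper: both realize $\Alt(2k)$ as a hypersurface complement in affine $\binom{2k}{2}$-space, handle the base case $\Alt(2)\cong\KK^\times$ directly, and induct via the same fiber bundle $\Alt(2k-2)\times\KK^{2k-2}\to\Alt(2k)\to\KK^{2k-1}\smallsetminus\{0\}$ together with Lemma~\ref{lemma:LTFB}. The only minor difference is that the paper invokes case~(b) of Lemma~\ref{lemma:LTFB} (the base is covered by $2k-1$ affines) rather than case~(a) (simply connectedness), and cites Barile~\cite{Barile} for the fiber bundle rather than the appendix; these are cosmetic.
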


\begin{proof}
First note that $\Alt(2k)$ is the complement of a hypersurface in the $\binom{2k}{2}$ dimensional affine space of alternating matrices, and thus is smooth and affine of the claimed dimension.

For the assertion on cohomology, we proceed by induction on $k$. In the case $k=1$, note that $\Alt(2)\cong \KK^\times$, so the assertion holds. For the inductive step, assuming the hypothesis for $\Alt(2k-2)$, first note that by the K\"unneth formula, we have
\begin{multline*}
\cptdim\big(\Alt(2k-2)\times\KK^{2k-2}\big)\ =\ \cptdim \Alt(2k-2)+ \cptdim \KK^{2k-2}\\
=\ \binom{2k-2}{2} + 4k-4\ =\ \binom{2k}{2}-1,
\end{multline*}
with critical cohomology group of rank one. By \cite[Lemma~1.3; Proof of Proposition~4.2]{Barile}, there is a locally trivial fiber bundle
\[
\CD
\Alt(2k-2) \times \KK^{2k-2} @>>> \Alt(2k) @>\pi>> \KK^{2k-1} \smallsetminus \{0\}.
\endCD
\]
But $\KK^{2k-1} \smallsetminus \{0\}$ is smooth and covered by $2k-1$ affines; use Lemma~\ref{lemma:LTFB}.
\end{proof}

\begin{lemma}
\label{lemma:alt:n}
Suppose $0<2k<n$. Then in either Setting \AN or \ET, the variety $\Alt^{2k}_{n\times n}$ is smooth, with compact dimension $\binom{2k}{2}$. In Setting \AN, $\Alt_{n\times n}^{2k}$ is simply connected.
\end{lemma}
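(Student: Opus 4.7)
The plan is to exploit two complementary locally trivial fiber bundles that I expect to be recorded in the Appendix. The first takes the form
\[
\Alt(2k)\ \longrightarrow\ \Alt^{2k}_{n\times n}\ \longrightarrow\ \Gr(n-2k,n),
\]
obtained by sending an alternating matrix of rank $2k$ to its $(n-2k)$-dimensional kernel; the fiber over such a kernel is the space of nondegenerate alternating forms on the $2k$-dimensional quotient. The second takes the form
\[
\Sp_{2k}\ \longrightarrow\ U\ \longrightarrow\ \Alt^{2k}_{n\times n},
\]
where $U\subseteq \KK^{2k\times n}$ denotes the Zariski-open subvariety of matrices of full row rank $2k$, and the projection is $Y\mapsto Y^\tr\Omega_{2k} Y$; the fibers are free $\Sp_{2k}$-orbits under left multiplication.

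First I would establish smoothness and the compact dimension via the Grassmannian fibration. The Grassmannian $\Gr(n-2k,n)$ is smooth, irreducible, and projective, hence $\cptdim\Gr(n-2k,n)=0$ with critical cohomology group of rank one; in Setting \AN it is also simply connected. The fiber $\Alt(2k)$ is smooth with $\cptdim=\binom{2k}{2}$ and critical rank one by Lemma~\ref{lemma:alt}. Smoothness of $\Alt^{2k}_{n\times n}$ is automatic from local triviality over a smooth base with smooth fiber. Applying Lemma~\ref{lemma:LTFB} under hypothesis~(3a) (the monodromy condition is automatic from simple connectedness of the base in Setting \AN, and the automorphism group of $\ZZ/2$ is trivial in Setting \ET) then yields $\cptdim\Alt^{2k}_{n\times n}=\binom{2k}{2}$, with critical rank one as a bonus.

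Next, for simple connectedness in Setting \AN, I would use the $\Sp_{2k}$-bundle. Since $2k<n$, the complement of $U$ in $\CC^{2kn}$, cut out by vanishing of the $2k\times 2k$ minors of $Y$, has complex codimension $n-2k+1\ge 2$; hence $\pi_1(U)\cong\pi_1(\CC^{2kn})=0$. Combined with the connectedness and simple connectedness of $\Sp_{2k}$ from Lemma~\ref{lemma:sp}, the homotopy long exact sequence
\[
\pi_1(\Sp_{2k})\ \longrightarrow\ \pi_1(U)\ \longrightarrow\ \pi_1(\Alt^{2k}_{n\times n})\ \longrightarrow\ \pi_0(\Sp_{2k})
\]
forces $\pi_1(\Alt^{2k}_{n\times n})=0$.

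The main obstacle will be verifying that the two displayed projections are genuinely locally trivial fiber bundles in the analytic (respectively, \'etale) topology, with the identified fibers; these statements, together with explicit trivializations, should be extracted from the Appendix. Once these inputs are granted, the remainder is a routine combination of Lemma~\ref{lemma:LTFB} for cohomology with the homotopy long exact sequence for $\pi_1$.
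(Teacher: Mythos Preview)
Your argument for smoothness and the compact dimension via the Grassmannian fibration is exactly the paper's approach: the bundle $\Alt(2k)\to\Alt^{2k}_{n\times n}\to\Gr(n-2k,n)$ is taken from Barile, and Lemma~\ref{lemma:LTFB} is applied with the simply connected base.

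For simple connectedness, however, your route is genuinely different from the paper's and arguably cleaner. The paper works entirely through the Grassmannian fibration: it first shows, by an inductive fiber-bundle argument, that $\pi_1(\Alt(2k))$ is infinite cyclic with an explicit generating loop; the homotopy sequence then yields a surjection $\pi_1(\Alt(2k))\twoheadrightarrow\pi_1(\Alt^{2k}_{n\times n})$, and the argument is finished by exhibiting an explicit two-step contraction of the image loop inside $\Alt^{2k}_{n\times n}$, exploiting the extra room afforded by $2k<n$. Your argument bypasses both the computation of $\pi_1(\Alt(2k))$ and the ad hoc homotopy: the complement of $U$ in $\CC^{2kn}$ has complex codimension $n-2k+1\ge 2$, so $\pi_1(U)=0$, and combined with $\pi_1(\Sp_{2k})=\pi_0(\Sp_{2k})=0$ from Lemma~\ref{lemma:sp} the homotopy sequence gives the result in one line.

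One caveat: contrary to your expectation, the bundle $\Sp_{2k}\to U\to\Alt^{2k}_{n\times n}$ is \emph{not} recorded in the Appendix (Lemma~\ref{lemma:ltfbs:alt} carries the hypothesis $2k<2t$, and the bundle~\eqref{equation:ltfbs:alt:4} has a different fiber). You would need to supply the local triviality yourself. This is not hard: combine the trivializations of Barile's bundle over affine charts of $\Gr(n-2k,n)$ with the Zariski-local sections of $\GL_{2k}\to\Alt(2k)$ furnished by Lemma~\ref{lemma:alt:root} to produce Zariski-local sections of $U\to\Alt^{2k}_{n\times n}$, whence the principal $\Sp_{2k}$-bundle structure.
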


\begin{proof}
By \cite[Proof of Theorem~4.1]{Barile}, we have a locally trivial fiber bundle
\begin{equation}
\label{equation:barile}
\CD
\Alt(2k) @>>> \Alt^{2k}_{n\times n} @>>> \Gr(n-2k,n),
\endCD
\end{equation}
where $\Gr(n-2k,n)$ is the Grassmannian parameterizing $(n-2k)$-dimensional subspaces of $\KK^n$. The base is simply connected, of compact dimension zero; the assertion regarding compact dimension follows from Lemma~\ref{lemma:alt} and Lemma~\ref{lemma:LTFB}.

We next examine the fundamental group of $\Alt(2k)$ in Setting \AN. When $k=1$, the space~$\Alt(2)$ is homeomorphic to $\CC^\times$, with fundamental group generated by the loop
\begin{equation}
\label{equation:loop:lambda}
\Lambda \colonequals \begin{bmatrix} 0 & \lambda\\ -\lambda & 0 \end{bmatrix}, \quad\text{ where $\lambda$ varies in $\SS^1$}.
\end{equation}
For $k>1$, as in the proof of Lemma~\ref{lemma:alt}, there is a locally trivial fiber bundle
\[
\CD
\Alt(2k-2) \times \CC^{2k-2} @>>> \Alt(2k) @>\pi>> \CC^{2k-1} \smallsetminus \{0\}.
\endCD
\]
Since $\pi_{2}(\CC^{2k-1} \smallsetminus \{0\})=\pi_{1}(\CC^{2k-1} \smallsetminus \{0\})$ is trivial, the homotopy exact sequence yields that the inclusion map
\[
\Alt(2k-2) \to \Alt(2k),\quad\text{ where }\ M \mapsto \begin{bmatrix} M & 0 \\ 0 & \Omega_2 \end{bmatrix},
\]
induces an isomorphism of fundamental groups
\[
\pi_1(\Alt(2k-2))\cong \pi_1( \Alt(2k)).
\]
In particular, the fundamental group of $\Alt(2k)$ is generated by the loop
\begin{equation}
\label{equation:generating:loop}
\begin{bmatrix}
\Lambda & 0 \\
0 & \Omega_{2k-2}
\end{bmatrix}.
\end{equation}
Similarly, since Grassmannians are simply connected, the locally trivial fiber bundle~\eqref{equation:barile} and the corresponding homotopy exact sequence yield a surjection
\[
\CD
\pi_1( \Alt(2k)) @>>> \pi_1(\Alt^{2k}_{n\times n}).
\endCD
\]
To show that $\Alt^{2k}_{n\times n}$ is simply connected, it suffices to show that the map above is zero, i.e., that the image of~\eqref{equation:generating:loop} in $\Alt^{2k}_{n\times n}$, namely, the loop given by $n\times n$ matrices
\[
L \colonequals
\begin{bmatrix} \Lambda & 0 & 0 \\
0 & \Omega_{2k-2} & 0 \\
0 & 0 & 0
\end{bmatrix},
\]
with $\Lambda$ as in~\eqref{equation:loop:lambda}, can be contracted in $\Alt^{2k}_{n\times n}$. Let $E$ be the $2\times (n-2k)$ matrix with $1$ as the top left entry, and zeros elsewhere. Within $\Alt^{2k}_{n\times n}$, one can continuously deform $L$ to the loop
\[
\begin{bmatrix} \Lambda & 0 & E \\
0 & \Omega_{2k-2} & 0 \\
-E^\tr & 0 & 0
\end{bmatrix},
\]
but this, in turn, deforms to the constant loop
\[
\begin{bmatrix} 0 & 0 & E \\
0 & \Omega_{2k-2} & 0 \\
-E^\tr & 0 & 0
\end{bmatrix}.
\]
It follows that $L$ is indeed contractible in $\Alt^{2k}_{n\times n}$.
\end{proof}

\begin{lemma}
Let $t$ be a positive integer. In either Setting \AN or \ET, one has
\[
\cptdim \GL_t\ =\ t^2,
\]
with critical cohomology group of rank one.
\end{lemma}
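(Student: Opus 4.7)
The plan is to induct on $t$ using the first-column projection $\GL_t \to \KK^t \smallsetminus \{0\}$ together with Lemma~\ref{lemma:LTFB}. For the base case $t = 1$, note that $\GL_1 = \KK^\times$ is smooth affine of dimension one; in Setting~\AN the homotopy equivalence $\CC^\times \simeq \SS^1$ gives $\cptdim \GL_1 = 1$ with rank one critical cohomology, while in Setting~\ET the same conclusion follows from the long exact sequence for the closed inclusion $\{0\} \hookrightarrow \AA^1$ combined with the vanishing of higher compactly supported \'etale cohomology of $\AA^1$ and of a point.

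For the inductive step with $t \ge 2$, I would apply Lemma~\ref{lemma:LTFB} to the locally trivial fiber bundle
\[
\CD
\GL_{t-1} \times \KK^{t-1} @>>> \GL_t @>>> \KK^t \smallsetminus \{0\}
\endCD
\]
given by projection to the first column, whose fiber over $e_1$ consists of matrices of the form $\begin{bmatrix} 1 & v^\tr \\ 0 & A \end{bmatrix}$ with $A \in \GL_{t-1}$ and $v \in \KK^{t-1}$. Local triviality in both topologies follows by exhibiting Zariski-local sections over each principal open $\{x_i \ne 0\}$ of the base via matrix completion (e.g., over $\{x_i \ne 0\}$ send the input vector $v$ to the matrix with first column $v$ and remaining columns $e_1, e_2, \ldots, \widehat{e_i}, \ldots, e_t$ after an obvious rearrangement). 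By the K\"unneth formula, the inductive hypothesis, and the fact that $\cptdim \KK^{t-1} = 2(t-1)$ with rank one critical cohomology, the fiber $\GL_{t-1} \times \KK^{t-1}$ has compact dimension $(t-1)^2 + 2(t-1) = t^2 - 1$ with critical cohomology of rank one.

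The base $\KK^t \smallsetminus \{0\}$ is smooth of algebraic dimension $t$ and is covered by the $t$ principal affine opens $\{x_i \ne 0\}$. One has $\cptdim(\KK^t \smallsetminus \{0\}) = 1 = t - t + 1$ with critical cohomology of rank one: in Setting~\AN this follows from $\CC^t \smallsetminus \{0\} \simeq \SS^{2t-1}$, and in Setting~\ET from the long exact sequence for the closed inclusion $\{0\} \hookrightarrow \AA^t$ together with the vanishing of higher compactly supported \'etale cohomology of $\AA^t$. Hence case (b) of Lemma~\ref{lemma:LTFB} applies: in Setting~\AN the base is simply connected for $t \ge 2$, so the monodromy hypothesis is automatic, and in Setting~\ET the triviality of the monodromy on $\ZZ/2$ is automatic since $\ZZ/2$ admits no nontrivial automorphisms. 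Lemma~\ref{lemma:LTFB} then gives $\cptdim \GL_t = (t^2 - 1) + 1 = t^2$ with critical cohomology of rank one, completing the induction. I expect no serious obstacle in this argument; the only genuinely subtle point is verifying local triviality of the fiber bundle, which is handled by the explicit sections described above.
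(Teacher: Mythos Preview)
Your inductive argument via the first-column fiber bundle is correct and carefully executed; the local triviality, the K\"unneth computation for the fiber, and the application of Lemma~\ref{lemma:LTFB} under hypothesis~(3b) all go through as you describe.

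The paper, however, takes a much shorter route: it simply cites \cite[Lemma~4 and~3.1]{Bruns-Schwanzl} for the statement that $H^i(\GL_t)$ vanishes for $i>t^2$ and has rank one for $i=t^2$, and then invokes Poincar\'e duality (since $\GL_t$ is smooth of algebraic dimension $t^2$) to convert this into the desired statement about compactly supported cohomology. Your approach has the virtue of being self-contained and in the same spirit as the other fiber-bundle arguments in the paper, whereas the paper's proof trades self-containment for brevity by outsourcing the computation to Bruns--Schw\"anzl. Note also that the paper already records your fiber bundle (in the transposed form~\eqref{equation:fib:gl}) for a different purpose, so your argument dovetails naturally with the existing infrastructure.
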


\begin{proof}
By \cite[Lemma~4 and 3.1]{Bruns-Schwanzl}, $H^i(\GL)$ vanishes for $i>t^2$ and has rank one for $i=t^2$. As $\GL_t$ is smooth, the claim for compact cohomology follows using Poincar\'e duality.
\end{proof}

The following auxiliary spaces will be used in our cohomology calculations:
\begin{equation}
\label{equation:pfaffian:aux}
\begin{array}{r@{}l}
X_{2t\times n}^{2k} &\ \colonequals\ \left\{M \in \KK^{2t\times n} \mid M^\tr \Omega_{2t} M \ \text{has rank } 2k\right\},\\
\\
G_{2t\times n}^{2k} &\ \colonequals\ \left\{M \in \KK^{2t\times n} \mid M^\tr \Omega_{2t} M = \begin{bmatrix} N & 0 \\ 0 & 0 \end{bmatrix} \text{ for } N\in \Alt(2k)\right\},\\
\\
F_{2t\times n}^{2k} &\ \colonequals\ \left\{M \in \KK^{2t\times n} \mid M^\tr \Omega_{2t} M = \begin{bmatrix} \Omega_{2k} & 0 \\ 0 & 0 \end{bmatrix}\right\}.
\end{array}
\end{equation}

\begin{theorem}
\label{theorem:compact:alternating}
Let $\KK$ be a field. Let $n,t,k$ be integers with $0\le 2k \le 2t \le n$. Then for $X^{2k}_{2t\times n}$ as above, the following hold:
\begin{enumerate}[\quad\rm(1)]
\item When $\KK$ equals $\CC$, we have
\[
H^i_{c,\sing}(X^{2k}_{2t\times n},\,\QQ)\ =\
\begin{cases}
\QQ & \text{if }\ i = \binom{2t+1}{2} + \binom{2k}{2},\\
0 & \text{if }\ i < \binom{2t+1}{2} + \binom{2k}{2}.
\end{cases}
\]
\item For $\KK$ an algebraically closed field of characteristic other than two, we have
\[
H^i_{c,\et}(X^{2k}_{2t\times n},\,\ZZ/2)\ =\
\begin{cases}
\ZZ/2 & \text{if }\ i = \binom{2t+1}{2} + \binom{2k}{2},\\
0 & \text{if }\ i < \binom{2t+1}{2} + \binom{2k}{2}.
\end{cases}
\]
\end{enumerate}
\end{theorem}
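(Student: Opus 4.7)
The plan is to prove Theorem~\ref{theorem:compact:alternating} by induction on $t$, handling Settings~\AN and~\ET uniformly via the conventions of Subsection~\ref{ssec:notation}; recall that the theorem statement is equivalent to asserting that $\cptdim X^{2k}_{2t\times n}=\binom{2t+1}{2}+\binom{2k}{2}$ with critical cohomology group of rank one. The base case $t=0$ is trivial, since $X^0_{0\times n}$ is a single point, with critical cohomology of rank one in degree $0$. For the inductive step, fix $t\ge 1$, assume the theorem for all smaller values of $t$, and split into two cases according to whether $k\ge 1$ or $k=0$.

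First, for $k\ge 1$, I would chain two locally trivial fiber bundles whose construction (with the indicated fibers) is deferred to Appendix~\ref{appendix}. The projection $M=[M_1\mid M_2]\mapsto M_1$ onto the first $2k$ columns gives
\[
\CD X^0_{2(t-k)\times(n-2k)} @>>> F^{2k}_{2t\times n} @>>> \Sp(2t,2k), \endCD
\]
since once $M_1\in\Sp(2t,2k)$ is fixed the defining condition of $F^{2k}_{2t\times n}$ forces the columns of $M_2$ to lie in the $\Omega$-complement of $\image M_1$ and to be pairwise isotropic; after choosing a symplectic basis of that complement, the fiber is identified with $X^0_{2(t-k)\times(n-2k)}$. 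By Lemma~\ref{lemma:sp}, $\Sp(2t,2k)$ is smooth, simply connected in Setting~\AN, with critical cohomology of rank one in degree $4tk-\binom{2k}{2}$, while by the inductive hypothesis the fiber has critical cohomology of rank one in degree $\binom{2(t-k)+1}{2}$. A direct expansion confirms
\[
\binom{2(t-k)+1}{2}+4tk-\binom{2k}{2}\ =\ \binom{2t+1}{2},
\]
so Lemma~\ref{lemma:LTFB} produces $\cptdim F^{2k}_{2t\times n}=\binom{2t+1}{2}$ with critical cohomology of rank one. The second bundle, $M\mapsto M^\tr\Omega M$, is
\[
\CD F^{2k}_{2t\times n} @>>> X^{2k}_{2t\times n} @>>> \Alt^{2k}_{n\times n}, \endCD
\]
locally trivial because $\GL_n$ acts transitively on $\Alt^{2k}_{n\times n}$ by congruence, with $F^{2k}_{2t\times n}$ as the fiber over the distinguished block-diagonal matrix. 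By Lemma~\ref{lemma:alt:n}, $\Alt^{2k}_{n\times n}$ is smooth, simply connected in Setting~\AN, with critical cohomology of rank one in degree $\binom{2k}{2}$, so a second application of Lemma~\ref{lemma:LTFB} yields $\cptdim X^{2k}_{2t\times n}=\binom{2t+1}{2}+\binom{2k}{2}$ with critical cohomology of rank one.

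The case $k=0$ requires a separate argument, since $X^0_{2t\times n}$ is not the total space of a convenient fiber bundle. I would apply Lemma~\ref{lemma:les:induction} to the rank stratification
\[
Y\colonequals\KK^{2t\times n}\ =\ V_0\sqcup V_1\sqcup\cdots\sqcup V_t,\qquad V_i\colonequals X^{2i}_{2t\times n}.
\]
The union $V_0\cup\cdots\cup V_i$ is the closed locus where $M^\tr\Omega M$ has rank at most $2i$ (cut out by Pfaffians), and $V_i$ is its open subset on which the rank equals $2i$, verifying hypothesis~(1). Using the values of $\cptdim V_i$ for $i\ge 1$ just established, hypothesis~(2) reduces to $\binom{2i+2}{2}-\binom{2i}{2}-1=4i>0$ for $i\ge 1$. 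Since $\KK^{2t\times n}$ has critical cohomology in degree $4tn$, hypothesis~(3) becomes $4tn>4t^2+1$, which holds because $n\ge 2t$ and $t\ge 1$ imply $n\ge t+1$. Lemma~\ref{lemma:les:induction} then identifies the critical cohomology of $V_0=X^0_{2t\times n}$ with that of $V_1=X^2_{2t\times n}$, placing it in degree $\cptdim V_1-1=\binom{2t+1}{2}$ with rank one, completing the induction.

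The principal obstacle is establishing the two locally trivial fiber bundle structures above; while the underlying linear algebra identifying the fibers is elementary, explicit local trivializations obtained from the transitive actions of $\Sp_{2t}$ on $\Sp(2t,2k)$ and of $\GL_n$ on $\Alt^{2k}_{n\times n}$ need to be recorded in Appendix~\ref{appendix}, parallel to~\eqref{eqn:sp:1-app} and~\eqref{eqn:sp:2-app}. Granting these bundle structures, the remainder of the argument is a mechanical combination of Lemmas~\ref{lemma:LTFB} and~\ref{lemma:les:induction}.
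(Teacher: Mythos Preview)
Your overall strategy---induction on $t$, fiber bundles for $k\ge 1$, then Lemma~\ref{lemma:les:induction} for $k=0$---matches the paper's, and your treatment of $k=0$ is essentially identical. The issues lie in how you handle $k\ge 1$.

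The serious one is in Setting~\ET. You pass directly from $F^{2k}_{2t\times n}$ to $X^{2k}_{2t\times n}$ via the single bundle over $\Alt^{2k}_{n\times n}$, invoking Lemma~\ref{lemma:alt:n}. But that lemma establishes simple connectedness only in Setting~\AN; in Setting~\ET nothing in the paper shows that $\Alt^{2k}_{n\times n}$ satisfies either branch of condition~(3) in Lemma~\ref{lemma:LTFB}. It is not known to be \'etale simply connected, nor to admit an affine cover of the required cardinality $\dim-\cptdim+1=2k(n-2k)+1$. The paper circumvents this by factoring the step in two: first $F^{2k}_{2t\times n}\to G^{2k}_{2t\times n}\to\Alt(2k)$, where the base is \emph{affine} so condition~(3)(b) holds with a single chart, and then $G^{2k}_{2t\times n}\to X^{2k}_{2t\times n}\to\Gr(n-2k,n)$, where the Grassmannian is simply connected in both settings. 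Your bundle~\eqref{equation:ltfbs:alt:4} is precisely the composite of these, and the paper appeals to it only in Setting~\AN.

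A secondary issue is the boundary case $k=t$. The paper handles it separately by identifying $X^{2t}_{2t\times n}$ with the locus of full-rank $2t\times n$ matrices and quoting Bruns--Schw\"anzl directly. Your fiber-bundle route is not covered by the lemmas as stated (Lemma~\ref{lemma:ltfbs:alt} assumes $2k<2t$), and when $n=2t$ the base of your second bundle collapses to $\Alt(2t)$, which is not simply connected even over~$\CC$ (Lemma~\ref{lemma:alt:n} requires $2k<n$); you would then owe a monodromy verification for condition~(4) of Lemma~\ref{lemma:LTFB}.
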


\begin{proof}
We first consider the case $k=t$ in both settings. We claim that
\[
X^{2t}_{2t\times n}\ =\ \{M \in \KK^{2t\times n} \mid \rank M = 2t\}.
\]
Indeed, if $\rank M < 2t$, then $\rank(M^\tr \Omega M) < 2t$. Conversely, if $\rank M = 2t$, then multiplication by $M$ is surjective, and multiplication by $M^\tr$ is injective, so $\rank (M^\tr \Omega M) = 2t$. The cohomology calculations now follow from \cite[Lemmas~2 and 2$^\prime$]{Bruns-Schwanzl}.

Next consider the case where $k=0$ and $t=1$. Note that $X^0_{2\times n}$ is closed in $\KK^{2\times n}$ with open complement $X^2_{2\times n}$, which was handled in the previous case. From the long exact sequence for an open subspace, we obtain
\[
\cptdim X^0_{2\times n}\ =\ \cptdim X^2_{2\times n}-1\ =\ 3,
\]
with the critical cohomology group of $X^0_{2\times n}$ having rank one. 
For the remaining cases, we proceed by induction on~$t$: fix~$t>1$, and assume that the claim holds for smaller values of~$t$. We have established the result for $k=t$; fix $k$ with $0<k<t$ and consider the locally trivial fiber bundle~\eqref{equation:ltfbs:alt:3}
\[
\CD
X^0_{(2t-2k) \times (n-2k)} @>>> F^{2k}_{2t\times n} @>>> \Sp(2t,2k).
\endCD
\]
By the inductive hypothesis on $t$ and Lemma~\ref{lemma:sp}, the hypotheses of Lemma~\ref{lemma:LTFB} apply, so
\begin{multline*}
\cptdim F^{2k}_{2t\times n}\ =\ \cptdim X^0_{(2t-2k) \times (n-2k)} + \cptdim \Sp(2t,2k) \\
=\ \binom{2t-2k+1}{2} + 4tk - \binom{2k}{2}\ =\ \binom{2t+1}{2},
\end{multline*}
with critical cohomology group of rank one.

At this stage, we proceed slightly differently in the two settings. In Setting \AN, we consider the locally trivial fiber bundle~\eqref{equation:ltfbs:alt:4}
\[
\CD
F^{2k}_{2t \times n} @>>> X^{2k}_{2t \times n} @>>> \Alt^{2k}_{n\times n}.
\endCD
\]
Since the base $\Alt^{2k}_{n\times n}$ is simply connected, we can apply Lemma~\ref{lemma:LTFB} and Lemma~\ref{lemma:alt:n} to conclude that
\[
\cptdim X^{2k}_{2t \times n}\ =\ \cptdim F^{2k}_{2t \times n} + \cptdim \Alt^{2k}_{n\times n}\ =\
\binom{2t+1}{2} + \binom{2k}{2},
\]
with critical cohomology group of rank one. This completes the case $t>1$ and $0<k<t$ in Setting \AN. In Setting \ET, we consider the locally trivial fiber bundle~\eqref{equation:ltfbs:alt:2}
\[
\CD
F^{2k}_{2t\times n} @>>> G^{2k}_{2t\times n} @>>> \Alt(2k).
\endCD
\]
By Lemma~\ref{lemma:alt}, the hypotheses of Lemma~\ref{lemma:LTFB} apply, and so
\[
\cptdim G^{2k}_{2t\times n}\ =\ \cptdim F^{2k}_{2t\times n} + \cptdim \Alt(2k)\ =\
\binom{2t+1}{2} + \binom{2k}{2},
\]
with critical cohomology group of rank one.

Next, consider the locally trivial fiber bundle~\eqref{equation:ltfbs:alt:1}
\[
\CD
G^{2k}_{2t\times n} @>>> X^{2k}_{2t\times n} @>>> \Gr(n-2k,n).
\endCD
\]
Since $\Gr(n-2k,n)$ is simply connected with compact dimension zero and critical cohomology group of rank one, we apply Lemma~\ref{lemma:LTFB} to obtain
\[
\cptdim X^{2k}_{2t\times n}\ =\ \cptdim G^{2k}_{2t\times n}\ =\ \binom{2t+1}{2} + \binom{2k}{2},
\]
with critical cohomology group of rank one, completing the case $t>1$ and $0<k<t$.

Finally, we deal with the case $k=0$ in both settings. For this, we apply Lemma~\ref{lemma:les:induction} to the partition
\[
\KK^{2t\times n}\ =\ X^0_{2t\times n} \cup X^2_{2t\times n} \cup \cdots \cup X^{2t}_{2t\times n}
\]
to conclude that
\[
\cptdim X^0_{2t\times n}\ =\ \cptdim X^2_{2t\times n} - 1\ =\ \binom{2t+1}{2} + \binom{2}{2} - 1\ =\
\binom{2t+1}{2},
\]
with critical cohomology group of rank one.
\end{proof}

\begin{proof}[Proof of Theorem~\ref{theorem:cohomology:open:pfaffian}]
Using the long exact sequence for a subspace, and the previous theorem, in the case $\KK$ equals $\CC$ we obtain
\[
H^{\binom{2t+1}{2}+1}_{c,\sing}(\CC^{2t\times n} \smallsetminus X^0_{2t\times n},\, \QQ)\ =\ \QQ,
\]
and the lower cohomology groups vanish. Since $\CC^{2t\times n} \smallsetminus X^0_{2t\times n}$ is a complex manifold (with boundary) of real dimension $4tn$, Poincar\'e duality gives
\[
H^{4tn - \binom{2t+1}{2}-1}_\sing(\CC^{2t\times n} \smallsetminus X^0_{2t\times n},\, \QQ)\ =\ \QQ,
\]
and the higher cohomology groups vanish.

Similarly, over an algebraically closed field of characteristic other than two, one has
\[
H^{\binom{2t+1}{2}+1}_{c,\et}(\KK^{2t\times n} \smallsetminus X^0_{2t\times n},\, \ZZ/2)\ =\ \ZZ/2,
\]
and the lower cohomology groups vanish; Poincar\'e duality gives the desired result.
\end{proof}

%%%%%%%%%%%%%%%%%%%%%%%%%%%%%%%%%%%%%%%%%%%%%%%%%%%%%%%%%%%%%%%%%%%%%%%%
\section{Topology of generic determinantal nullcones}
\label{section:topology:determinantal}
%%%%%%%%%%%%%%%%%%%%%%%%%%%%%%%%%%%%%%%%%%%%%%%%%%%%%%%%%%%%%%%%%%%%%%%%

The main goal of this section is to prove:

\begin{theorem}
\label{theorem:cohomology:open:generic}
Let $Y$ and $Z$ be matrices of indeterminates of size $m\times t$ and $t\times n$ respectively, over a field $\KK$, where $t\le\min\{m,n\}$. Consider the algebraic set
\[
X^0_{m,t,n} \colonequals \Var(YZ).
\]
\begin{enumerate}[\quad\rm(1)]
\item When $\KK$ equals $\CC$, we have
\[
H^i_\sing\big((\CC^{m\times t} \times \CC^{t\times n})\smallsetminus X^0_{m,t,n},\, \QQ\big)\ =\
\begin{cases}
\QQ &\text{if }\ i = 2mt + 2nt - t^2 - 1,\\
0 &\text{if }\ i > 2mt + 2nt - t^2 - 1.
\end{cases}
\]
\item For $\KK$ an algebraically closed field of characteristic other than two, we have
\[
H^i_\et\big((\KK^{m\times t} \times \KK^{t\times n})\smallsetminus X^0_{m,t,n},\, \ZZ/2\big)\ =\
\begin{cases}
\ZZ/2 &\text{if }\ i = 2mt + 2nt - t^2 - 1,\\
0 &\text{if }\ i > 2mt + 2nt - t^2 - 1.
\end{cases}
\]
\end{enumerate}
\end{theorem}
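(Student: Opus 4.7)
The strategy mirrors the proof of Theorem~\ref{theorem:cohomology:open:pfaffian}. First, I would reduce via the long exact sequence of a subspace and Poincar\'e duality: for the closed inclusion $X^0_{m,t,n} \subseteq \KK^{m\times t} \times \KK^{t\times n} = \KK^{mt+tn}$ with open complement $U$, the vanishing of $H^i_c$ of the contractible ambient space in degrees below $2(mt+tn)$ gives $H^i_c(U) \cong H^{i-1}_c(X^0_{m,t,n})$ in that range, and Poincar\'e duality on the smooth manifold $U$ of complex dimension $mt+tn$ translates compact cohomology into ordinary cohomology. Thus it suffices to establish $\cptdim X^0_{m,t,n} = t^2$ with critical cohomology of rank one; Poincar\'e duality then places the unique nontrivial top cohomology of $U$ in degree $2(mt+tn) - (t^2+1) = 2mt + 2nt - t^2 - 1$, as required.

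Next, I would stratify the ambient space by rank of the product:
\[
X^{(r)}_{m,t,n} \colonequals \{(Y,Z) \in \KK^{m\times t} \times \KK^{t\times n} \mid \rank(YZ) = r\}, \qquad r = 0, 1, \ldots, t.
\]
Since rank $\le r$ is a closed condition, each $X^{(r)}_{m,t,n}$ is open in the closed set $X^{(\le r)}_{m,t,n}$. Note that $X^{(0)}_{m,t,n} = X^0_{m,t,n}$, while $X^{(t)}_{m,t,n} = V^m_t \times V^n_t$ is the product of the (complex, or \'etale) Stiefel-type varieties of full-rank matrices, since $\rank(YZ) = t$ forces $\rank Y = \rank Z = t$.

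The main computation is to show, for $r = 1, \ldots, t$, that $\cptdim X^{(r)}_{m,t,n} = t^2 + r^2$ with rank-one critical cohomology. The case $r = t$ follows from K\"unneth together with the classical computation of the cohomology of the Stiefel manifold: the top cohomology of $V^m_t$ lies in degree $2mt - t^2$ (see \cite[Lemmas~2 and 2$^\prime$]{Bruns-Schwanzl}), so $\cptdim V^m_t = t^2$ of rank one by Poincar\'e duality, and likewise for $V^n_t$. For $0 < r < t$, I would use locally trivial fiber bundles of the form
\[
F^{(r)} \to X^{(r)}_{m,t,n} \to M^{(r)}_{m\times n}
\]
(to be recorded in Appendix~\ref{appendix}), where $M^{(r)}_{m\times n}$ is the variety of rank-$r$ matrices in $\KK^{m\times n}$ and $F^{(r)}$ parameterizes factorizations of a fixed such matrix through $\KK^t$. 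The base has $\cptdim r^2$ with rank-one critical cohomology, obtained from the principal $\GL_r$-bundle $V^m_r \times V^n_r \to M^{(r)}_{m\times n}$ together with Lemma~\ref{lemma:LTFB}. A parallel nested fiber bundle argument for $F^{(r)}$ --- built from the transitive $\GL_t$-action on factorizations and its stabilizers --- should give $\cptdim F^{(r)} = t^2$ with rank-one critical cohomology. Applying Lemma~\ref{lemma:LTFB} to the displayed bundle then yields the formula. Finally, Lemma~\ref{lemma:les:induction} applied to the partition $\KK^{mt+tn} = X^{(0)}_{m,t,n} \cup X^{(1)}_{m,t,n} \cup \cdots \cup X^{(t)}_{m,t,n}$ concludes: the inequalities $(r+1)^2 - 1 > r^2$ for $r \ge 1$ and $2(mt+tn) > 2t^2 + 1$ (valid since $t \le \min\{m,n\}$) verify the hypotheses, and the lemma delivers $\cptdim X^0_{m,t,n} = \cptdim X^{(1)}_{m,t,n} - 1 = t^2$ with rank-one critical cohomology.

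The main obstacle is the computation of $\cptdim F^{(r)}$ for $0 < r < t$: the factorization fiber is not $\GL_t$ itself (that occurs only for $r = t$) but a larger affine variety whose topology is controlled by the failure of $(Y,Z)$ to have full column and row rank in $\KK^t$. This requires careful nested bundles, analogous to the spaces $F^{2k}$ and $G^{2k}$ in the Pfaffian case, together with a further reduction to Stiefel-type computations. In Setting~\AN, one must also verify triviality of the monodromy of $\pi_1(M^{(r)}_{m\times n})$ on the top compactly supported cohomology of the fiber, along the lines of Lemma~\ref{lemma:alt:n} and the proof of Theorem~\ref{theorem:compact:alternating}.
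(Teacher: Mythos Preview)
Your overall architecture matches the paper's: reduce via the subspace sequence and Poincar\'e duality to computing $\cptdim X^k_{m,t,n} = t^2 + k^2$ with rank-one critical cohomology for all $0 \le k \le t$, then use Lemma~\ref{lemma:les:induction}. Your handling of the extreme $k=t$ (product of Stiefel varieties) and the final filtration argument for $k=0$ are both correct and essentially identical to the paper's. The one structural difference---fibering $X^{(r)}$ directly over the rank-$r$ locus $M^{(r)}_{m\times n}$ rather than in two steps over $\Gr(n-r,n)$ and then $\GL(m,r)$---is a harmless compression, since $M^{(r)}_{m\times n}$ has the required $\cptdim = r^2$ and is simply connected for $0<r<\min\{m,n\}$.

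The genuine gap is the computation of $\cptdim F^{(r)}$ for $0<r<t$. Two points. First, the $\GL_t$-action on the factorization fiber $F^{(r)} = \{(Y,Z) : YZ = A_0\}$ is \emph{not} transitive when $r<t$ (for instance $\rank Y$ is not constant on $F^{(r)}$), so ``transitive $\GL_t$-action and its stabilizers'' cannot be the mechanism. Second, and more importantly, the paper's computation proceeds by \emph{induction on $t$}: one fibers $F^r_{m,t,n} \to \pairs(t,r)$ by sending $(Y,Z)$ to the top $r$ rows of $Y$ paired with the left $r$ columns of $Z$, and the fiber is precisely $X^0_{m-r,\,t-r,\,n-r}$, whose compact dimension $(t-r)^2$ is known by the inductive hypothesis. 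Combined with $\cptdim\pairs(t,r) = 2tr - r^2$ (Lemma~\ref{lemma:pairs}), this yields $\cptdim F^r_{m,t,n} = t^2$. This inductive loop---together with the base case $t=1$, where $X^0_{m,1,n} = \KK^m \cup_{\{0\}} \KK^n$ has $\cptdim = 1$ by Mayer--Vietoris---is the key idea your proposal is missing; without it the computation of $F^{(r)}$ does not close.
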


For integers $k\le t$, we examine the following auxiliary spaces:
\begin{equation}
\label{equation:determinantal:pairs}
\begin{array}{r@{}l}
\GL(t,k) &\ \colonequals\ \{M\in \KK^{t\times k} \mid \rank M=k\},\\
\\
\pairs(t,k) &\ \colonequals\ \{(A,B)\in \KK^{k\times t}\times \KK^{t\times k} \mid AB=\one_k\}.
\end{array}
\end{equation}

\begin{lemma}
\label{lemma:gl}
Let $k\le t$ be positive integers. In either Setting \AN or \ET, the variety $\GL(t,k)$ is smooth, with
\[
\dim\GL(t,k)\ =\ tk \quad \text{ and } \quad \cptdim\GL(t,k)\ =\ k^2,
\]
and critical cohomology group of rank one. Moreover, $\GL(t,k)$ is covered by $tk-k^2+1$ affine open sets. If $t>k$, then $\GL(t,k)$ is simply connected in Setting \AN.
\end{lemma}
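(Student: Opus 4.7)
The plan is to extract smoothness and dimension from the Zariski-open description of $\GL(t,k)$, to obtain the affine cover count from the classical arithmetic-rank bound on the ideal of maximal minors, to compute the critical compact cohomology via a Grassmann fibration together with Lemma~\ref{lemma:LTFB}, and to deduce simple connectedness by induction on $k$ using a ``first-column'' fibration.

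The variety $\GL(t,k)$ is the Zariski-open complement in $\KK^{t\times k}$ of the vanishing locus of the $k\times k$ minors of a $t\times k$ matrix $Y$ of indeterminates; hence $\GL(t,k)$ is smooth of dimension $tk$. For the affine cover, I invoke the classical arithmetic-rank computation (in the spirit of~\cite{Bruns-Schwanzl}) that the ideal $I_k(Y)\subseteq\KK[Y]$ of maximal minors can be generated up to radical by $k(t-k)+1 = tk-k^2+1$ elements; the corresponding principal open affines cover $\GL(t,k)$.

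For the cohomological statements, I use the principal $\GL_k$-bundle
\[
\GL_k\ \longrightarrow\ \GL(t,k)\ \longrightarrow\ \Gr(k,t)
\]
given by sending $M$ to the $k$-plane spanned by its columns. Because $\GL_k$ is a special algebraic group, this bundle is Zariski-locally trivial, and hence locally trivial in both Setting~\AN and Setting~\ET. The base $\Gr(k,t)$ is smooth projective, simply connected in Setting~\AN, has $\cptdim=0$ and critical cohomology of rank one, and is covered by $k(t-k)+1$ affines (realizing the affine-vanishing bound, so that option~(b) of Lemma~\ref{lemma:LTFB} is available in Setting~\ET); the fiber $\GL_k$ has $\cptdim=k^2$ with critical cohomology of rank one by the preceding lemma. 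Lemma~\ref{lemma:LTFB} then yields $\cptdim\GL(t,k)=k^2$ with critical cohomology of rank one.

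For the simple connectedness when $t>k$ in Setting~\AN, I induct on $k$. The base case $k=1$ is $\GL(t,1)=\CC^t\smallsetminus\{0\}$, simply connected whenever $t\ge 2$. For the inductive step, projection onto the first column produces the Zariski-locally-trivial bundle
\[
\GL(t-1,k-1)\times \CC^{k-1}\ \longrightarrow\ \GL(t,k)\ \longrightarrow\ \CC^t\smallsetminus\{0\};
\]
the base is simply connected since $t\ge 2$, and the fiber is simply connected by the inductive hypothesis applied with parameters $t-1>k-1$, so the homotopy long exact sequence gives $\pi_1(\GL(t,k))=0$. The main obstacle I anticipate is securing a clean citation for the arithmetic-rank count of $I_k(Y)$; the remainder is bookkeeping with the Leray--Serre spectral sequence and the homotopy long exact sequence.
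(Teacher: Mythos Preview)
Your proposal is correct, but the paper takes a more direct route for two of the claims. For the critical cohomology, the paper simply cites \cite[Lemmas~2 and~2$^\prime$]{Bruns-Schwanzl} (which compute $H^\bullet$ of the complement of a maximal-minor variety in both settings) and applies Poincar\'e duality, whereas you deduce it from the $\GL_k$ lemma via the Grassmann fibration and Lemma~\ref{lemma:LTFB}; since the $\GL_k$ lemma already rests on Bruns--Schw\"anzl, your route is longer but has the virtue of being in the same fibration-plus-spectral-sequence style as the rest of the paper. For simple connectedness, the paper uses the ``forget the last column'' bundle $\KK^t\smallsetminus\KK^k\to\GL(t,k+1)\to\GL(t,k)$ and inducts on $k$ with $t$ fixed, while you use the ``first column'' bundle $\GL(t-1,k-1)\times\CC^{k-1}\to\GL(t,k)\to\CC^t\smallsetminus\{0\}$ and induct by lowering $t$ and $k$ simultaneously; both fibrations are Zariski-locally trivial and both inductions go through. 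One small point: your invocation of option~(3)(b) of Lemma~\ref{lemma:LTFB} for the Grassmannian base requires knowing $\Gr(k,t)$ is covered by exactly $k(t-k)+1$ affines, which is not entirely obvious; but since the Grassmannian is simply connected in both settings, option~(3)(a) already suffices and you can drop that claim.
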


\begin{proof}
The smoothness and dimension are immediate from the fact that $\GL(t,k)$ is an open subset of $\KK^{t\times k}$. The claim on cohomology follows from \cite[Lemma~2 and Lemma~2$^\prime$]{Bruns-Schwanzl} and Poincar\'e duality. The claim regarding the affine cover is \cite[Theorem~1(a)]{Bruns-Schwanzl}.

For $t>k$, there is a locally trivial fiber bundle~\eqref{equation:fib:gl}
\[
\CD
\KK^t\smallsetminus \KK^{k} @>>> \GL(t,k+1) @>>> \GL(t,k).
\endCD
\]
In Setting \AN, the fiber is the product of $\CC^k$ with $\CC^{t-k}\smallsetminus\{0\}$, and thus simply connected for $t-k\ge 2$. Fix $t\ge 2$, in which case $\GL(t,1)$ is simply connected; using the homotopy sequence, induction on $k$ shows that $\GL(t,k)$ is simply connected for $t>k$.
\end{proof}

\begin{lemma}
\label{lemma:pairs}
Let $k\le t$ be positive integers. In either Setting \AN or~\ET, the variety $\pairs(t,k)$ is smooth, and
\[
\cptdim \pairs(t,k)\ =\ \dim \pairs(t,k)\ =\ 2tk-k^2,
\]
with critical cohomology group of rank one. Furthermore, in Setting \AN, the space~$\pairs(t,k)$ is simply connected.
\end{lemma}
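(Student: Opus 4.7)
The plan is to exhibit $\pairs(t,k)$ as a locally trivial affine bundle over $\GL(t,k)$ with fiber $\KK^{k(t-k)}$, and then combine Lemma~\ref{lemma:LTFB} with the data on $\GL(t,k)$ from Lemma~\ref{lemma:gl} to read off the cohomology. Simple connectedness in Setting \AN will then follow from the homotopy long exact sequence of the fibration, since the fiber is contractible.

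The bundle structure is set up as follows. Project $\pi\colon\pairs(t,k)\to\GL(t,k)$ by $\pi(A,B)=B$; the condition $AB=\one_k$ forces $B$ to have rank $k$, so $\pi$ is well defined. The fiber $\pi^{-1}(B_0)$ is the affine space of left inverses of $B_0$: after fixing one such $A_0$, the remaining left inverses are parameterized by the linear subspace $\{C\in\KK^{k\times t}\mid CB_0=0\}$, which has dimension $k(t-k)$ because $B_0$ has full column rank. On the Zariski open locus where a given $k\times k$ submatrix of $B$ is invertible one extracts a polynomial section $B\mapsto A_0(B)$ (padding the inverse of that submatrix with zeros), and $(A,B)\mapsto(A-A_0(B),B)$ is an explicit trivialization; such opens cover $\GL(t,k)$. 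Smoothness of $\pairs(t,k)$ and the identity $\dim\pairs(t,k)=2tk-k^2$ then follow from the regular value theorem applied to $(A,B)\mapsto AB-\one_k$: at any point of the fiber over $\one_k$ the differential $(\delta A,\delta B)\mapsto(\delta A)B+A(\delta B)$ is surjective, since $\delta A\mapsto(\delta A)B$ already surjects onto $\KK^{k\times k}$ when $B$ admits a right inverse.

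With the fibration in hand, apply Lemma~\ref{lemma:LTFB}(3)(b) with fiber $F=\KK^{k(t-k)}$ and base $\GL(t,k)$. The fiber satisfies $\cptdim F=2k(t-k)$ with critical cohomology of rank one in both settings; by Lemma~\ref{lemma:gl} the base is smooth of dimension $tk$, admits a cover by $tk-k^2+1$ affines, satisfies $\cptdim\GL(t,k)=k^2=tk-(tk-k^2+1)+1$, and has critical cohomology of rank one. In Setting \ET the monodromy hypothesis is automatic because $\Aut(\ZZ/2)$ is trivial; in Setting \AN the bundle is an affine bundle modeled on the complex vector bundle $\{(B,C)\mid CB=0\}$ over $\GL(t,k)$, and complex vector bundles are orientable, so $\pi_1(\GL(t,k))$ acts trivially on $H^{2k(t-k)}_{c,\sing}(\CC^{k(t-k)},\QQ)$. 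The lemma then yields $\cptdim\pairs(t,k)=k^2+2k(t-k)=2tk-k^2$ with critical cohomology of rank one. Finally, in Setting \AN, contractibility of the fiber and the homotopy long exact sequence give $\pi_1(\pairs(t,k))\cong\pi_1(\GL(t,k))$, which vanishes for $t>k$ by Lemma~\ref{lemma:gl}. The only real subtlety is the monodromy check in Setting \AN, which the complex-vector-bundle observation dispatches cleanly.
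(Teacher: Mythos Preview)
Your proof is correct and takes a genuinely different route from the paper's. The paper proceeds by induction on $k$: it treats $\pairs(t,1)$ directly (in Setting~\AN via a diffeomorphism with the tangent bundle of $\SS^{2t-1}$, in Setting~\ET via an SGA reference), and then climbs up using the fibration $\pairs(t-k+1,1)\to\pairs(t,k)\to\pairs(t,k-1)$ established in the Appendix. Your approach instead exhibits $\pairs(t,k)$ in one shot as a Zariski-locally-trivial affine-space bundle over $\GL(t,k)$, so that all of the cohomological data is inherited directly from Lemma~\ref{lemma:gl} via a single application of Lemma~\ref{lemma:LTFB}. This is cleaner: it avoids both the inductive scaffolding and the separate geometric analysis of $\pairs(t,1)$, and it makes no use of the Appendix fibration~\eqref{equation:ltfb:pairs}.

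One small simplification you could make: for $t>k$, Lemma~\ref{lemma:gl} already tells you $\GL(t,k)$ is simply connected, so condition~(3)(a) of Lemma~\ref{lemma:LTFB} applies and the monodromy discussion is unnecessary; for $t=k$ the fiber is a point. Your orientability argument is valid, but it is doing work only in a case that is already covered. Note also that, like the paper's own proof, your argument establishes simple connectedness only for $t>k$ (the case $t=k$ gives $\GL_k$, which is not simply connected); this is all that is needed downstream.
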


\begin{proof}
The space $\pairs(t,k)$ is affine by definition, and smoothness follows from the transitive $\GL_t$-action where $M$ maps $(A,B)$ to $(AM^{-1}, MB)$.

If $t=k$, then $\pairs(t,k)$ identifies with $\GL_k=\GL(k,k)$, and we are done by Lemma~\ref{lemma:gl}. For the rest of the proof assume that $t>k$, and proceed by induction on $k$.

For $k=1$, we have
\[
\pairs(t,1)\ =\ \Var(1-\sum_{i=1}^t y_i z_i)\ \subseteq\ \KK^t\times \KK^t.
\]
In Setting \AN, the space $\pairs(t,1)$ may be transformed to $\Var(1-\sum_1^{2t}x_i^2)$ by a linear change of coordinates. Suppose vectors $a,b\in\RR^{2t}$ are the real and imaginary part of a point in
\[
\Var(1-\sum_1^{2t} x_i^2)\ \subseteq\ \CC^{2t}.
\]
Setting $\left\Vert a\right\Vert \colonequals\sqrt{\sum a_i^2}$, one has $1+\left\Vert b\right\Vert^2 = \left\Vert a\right\Vert^2$, and $a$ is perpendicular to $b$. Setting $\iota\colonequals\sqrt{-1}$ as before, there is a diffeomorphism to the tangent bundle of the real sphere $\SS^{2t-1}$, given by
\begin{equation}
\label{equation:tangent:sphere}
\Var(1-\sum x_i^2) \to T\SS^{2t-1},\quad\text{ where }\ a+\iota b \mapsto (\frac{a}{\left\Vert a\right\Vert},b).
\end{equation}
Moreover, one has a locally trivial fiber bundle
\[
\CD
\RR^{2t-1} @>>> T\SS^{2t-1} @>>> \SS^{2t-1},
\endCD
\]
where $\SS^{2t-1}$ is simply connected, of compact dimension zero, with critical cohomology group of rank one; $\RR^{2t-1}$ is simply connected, of compact dimension $2t-1$, with critical cohomology group of rank one. Applying this to the diffeomorphism between $\pairs(t,1)$ and the tangent bundle of $\SS^{2t-1}$, one concludes via the Leray--Serre spectral sequence that $\cptdim\pairs(t,1)=2t-1$, with critical cohomology group of rank one. The corresponding conclusion holds in Setting \ET by \cite[Table~3.7]{SGA-ExposeXII}; confer, as well, \cite[V\'erification~3.8]{SGA-ExposeXII} for Setting \AN.

For $1<t<k$, consider the Zariski locally trivial fiber bundle~\eqref{equation:ltfb:pairs}
\[
\CD
\pairs(t-k+1,1) @>>> \pairs(t,k) @>>> \pairs(t,k-1).
\endCD
\]
Lemma~\ref{lemma:LTFB} applies by the induction hypothesis on $k$ in both settings, so
\[
\cptdim\pairs(t,k)\ =\ \cptdim\pairs(t,k-1) + \cptdim\pairs(t-k+1,1)\ =\ 2tk-k^2.
\]
The simply connectedness in Setting~\AN follows from the homotopy exact sequence.
\end{proof}

We define some auxiliary spaces that will be used in our main cohomology calculations:
\begin{equation}
\label{equation:determinantal:aux}
\begin{array}{r@{}l}
X^k_{m,t,n} &\ \colonequals\ \left\{(A,B)\in \KK^{m\times t} \times \KK^{t\times n} \mid AB \text{ has rank } k\right\},\\
\\
G^k_{m,t,n} &\ \colonequals\ \left\{(A,B)\in \KK^{m\times t} \times \KK^{t\times n} \mid \ker(AB)
= \image \begin{bmatrix} 0 \\ \one_{n-k} \end{bmatrix}\right\},\\
\\
F^k_{m,t,n} &\ \colonequals\ \left\{(A,B)\in \KK^{m\times t} \times \KK^{t\times n} \mid AB
= \begin{bmatrix} \one_k & 0 \\ 0 & 0 \end{bmatrix}\right\}.
\end{array}
\end{equation}

\begin{theorem}
\label{theorem:compact:generic}
Let $0\le k\le t\le m,n$ be integers, and $X^k_{m,t,n}$ as above.
\begin{enumerate}[\quad\rm(1)]
\item When $\KK$ equals $\CC$, we have
\[
H^i_{c,\sing}(X^k_{m,t,n},\, \QQ)\ =\
\begin{cases}
\QQ &\text{if }\ i=t^2 + k^2,\\
0 &\text{if }\ i<t^2 + k^2.
\end{cases}
\]
\item For $\KK$ an algebraically closed field of characteristic other than two, we have
\[
H^i_{c,\et}(X^k_{m,t,n},\, \ZZ/2)\ =\
\begin{cases}
\ZZ/2 &\text{if }\ i=t^2+k^2,\\
0 &\text{if }\ i<t^2+k^2.
\end{cases}
\]
\end{enumerate}
\end{theorem}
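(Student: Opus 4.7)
The plan is to mirror the proof of Theorem~\ref{theorem:compact:alternating}, inducting on $t$ and, for each fixed $t$, handling the cases $k=t$, $0<k<t$, and $k=0$ in that order. The case $k=t$ is direct: since $\rank(AB)=t$ forces both $A$ and $B$ to have maximal rank $t$, transposing the second factor gives $X^t_{m,t,n}\cong\GL(m,t)\times\GL(n,t)$. The K\"unneth formula together with Lemma~\ref{lemma:gl} then yields $\cptdim X^t_{m,t,n}=2t^2=t^2+t^2$, with critical cohomology of rank one.

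For $0<k<t$, I would invoke three locally trivial fiber bundles analogous to those in Section~\ref{section:topology:pfaffian}, to be established in Appendix~\ref{appendix}: first,
\[
X^0_{m-k,t-k,n-k}\ \longrightarrow\ F^k_{m,t,n}\ \longrightarrow\ \pairs(t,k),
\]
obtained by using a suitable element of $\GL_t$ to normalize the top-left $k\times k$ block of $AB$ to $\one_k$, so that the residual data is a pair $(A_2',B_2')$ with $A_2'B_2'=0$; second,
\[
F^k_{m,t,n}\ \longrightarrow\ G^k_{m,t,n}\ \longrightarrow\ \GL(m,k),
\]
arising from the transitive $\GL_m$-action on the rank-$k$ block of $AB$; and third,
\[
G^k_{m,t,n}\ \longrightarrow\ X^k_{m,t,n}\ \longrightarrow\ \Gr(n-k,n),\qquad (A,B)\mapsto\ker(AB),
\]
equivariant under the transitive $\GL_n$-action on the Grassmannian. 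Applying Lemma~\ref{lemma:LTFB} successively, with the inductive hypothesis supplying $\cptdim X^0_{m-k,t-k,n-k}=(t-k)^2$ (here the middle index $t-k<t$), one computes in turn
\[
\cptdim F^k_{m,t,n}\ =\ (t-k)^2+(2tk-k^2)\ =\ t^2,\qquad \cptdim G^k_{m,t,n}\ =\ t^2+k^2,
\]
and finally $\cptdim X^k_{m,t,n}=t^2+k^2$, each step with rank-one critical cohomology. The hypotheses of Lemma~\ref{lemma:LTFB} are satisfied in Setting~\AN because $\pairs(t,k)$ is simply connected (Lemma~\ref{lemma:pairs}), $\GL(m,k)$ is simply connected as $k<t\le m$ (Lemma~\ref{lemma:gl}), and Grassmannians are simply connected.

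For $k=0$, I would apply Lemma~\ref{lemma:les:induction} to the partition
\[
\KK^{m\times t}\times\KK^{t\times n}\ =\ X^0_{m,t,n}\cup X^1_{m,t,n}\cup\cdots\cup X^t_{m,t,n},
\]
in which $X^i_{m,t,n}$ is open in the closed subvariety $\{\rank(AB)\le i\}$. The gaps $\cptdim X^{i+1}_{m,t,n}-\cptdim X^i_{m,t,n}=2i+1$ exceed $1$ for $i\ge 1$, and $\cptdim(\KK^{m\times t}\times\KK^{t\times n})=2(mt+nt)$ exceeds $\cptdim X^t_{m,t,n}+1=2t^2+1$ since $t\le\min\{m,n\}$ implies $m+n-t\ge t\ge 1$. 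Lemma~\ref{lemma:les:induction} then gives $\cptdim X^0_{m,t,n}=\cptdim X^1_{m,t,n}-1=t^2$, with critical cohomology inherited from $X^1_{m,t,n}$, hence of rank one.

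The hard part will be the clean construction of the first fiber bundle: given $(A,B)\in F^k_{m,t,n}$, one must identify the fiber coordinates by using a decomposition $\KK^t=\image(B_1)\oplus\ker(A_1)$, where $(A_1,B_1)\in\pairs(t,k)$ is the top-left block, and then verify that after this splitting the residual block $(A_2',B_2')$ ranges freely over $X^0_{m-k,t-k,n-k}$, with the bundle trivialized Zariski-locally by a section over an affine open cover of $\pairs(t,k)$.
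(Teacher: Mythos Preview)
Your proposal is correct and follows essentially the same route as the paper: the same identification $X^t_{m,t,n}\cong\GL(m,t)\times\GL(n,t)$, the same three fiber bundles \eqref{equation:ltfbs:gen:3}, \eqref{equation:ltfbs:gen:2}, \eqref{equation:ltfbs:gen:1} applied via Lemma~\ref{lemma:LTFB}, and the same appeal to Lemma~\ref{lemma:les:induction} for $k=0$. The only cosmetic difference is that the paper treats the base case $t=1$, $k=0$ by a direct Mayer--Vietoris argument on $\KK^m\cup\KK^n$, whereas you absorb it into the uniform application of Lemma~\ref{lemma:les:induction}; your verification of hypothesis~(3) there, namely $2t(m+n-t)\ge 2>1$, goes through for $t=1$ as well, so this streamlining is legitimate.
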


\begin{proof}
First consider the case $t=k$ in both Settings \AN and \ET. Then $(A,B)\in X_{m,t,n}^t$ if and only if $A,B$ both have rank $t$, so
\[
X^t_{m,t,n}\ \cong\ \GL(m,t) \times \GL(n,t),
\]
and thus $\cptdim X^t_{m,t,n} = t^2+t^2$, with critical cohomology group of rank one.

Now consider the case where $k=0$ and $t=1$. The space $X^0_{m,1,n}$ is the union of $\KK^m$ and~$\KK^n$ intersecting at a point. The Mayer--Vietoris sequence gives $\cptdim X^0_{m,1,n} = 1$, with critical cohomology group of rank one.

We now proceed by induction on $t$: fix $t>1$ and assume the claim holds for all smaller values of $t$. Fix $k$ with $0<k<t$ and consider the locally trivial fiber bundle~\eqref{equation:ltfbs:gen:3}
\[
\CD
X^0_{m-k,\,t-k,\,n-k} @>>> F^k_{m,t,n} @>>> \pairs(t,k).
\endCD
\]
By the inductive hypothesis and Lemma~\ref{lemma:pairs}, the hypotheses of Lemma~\ref{lemma:LTFB} are in force, and we deduce that
\[
\cptdim F^k_{m,t,n}\ =\ \cptdim X^0_{m-k,\,t-k,\,n-k} + \cptdim \pairs(t,k)\ =\ (t-k)^2 + 2tk-k^2\ =\ t^2.
\]
Next, consider the locally trivial fiber bundle~\eqref{equation:ltfbs:gen:2}
\[
\CD
F^k_{m,t,n} @>>> G^k_{m,t,n} @>>> \GL(m,k).
\endCD
\]
By Lemma~\ref{lemma:gl}, we can apply Lemma~\ref{lemma:LTFB} and deduce that
\[
\cptdim G^k_{m,t,n}\ =\ \cptdim F^0_{m,t,n} + \cptdim \GL(m,k)\ =\ t^2+k^2.
\]
Then, we have the locally trivial fiber bundle~\eqref{equation:ltfbs:gen:1}
\[
\CD
G^k_{m,t,n} @>>> X^k_{m,t,n} @>>> \Gr(n-k,n),
\endCD
\]
and by Lemma~\ref{lemma:LTFB} we deduce that
\[
\cptdim X^k_{m,t,n}\ =\ \cptdim G^k_{m,t,n} + \cptdim \Gr(n-k,n)\ =\ t^2+k^2.
\]
This completes the inductive step in the case $k>0$. Finally, we apply Lemma~\ref{lemma:les:induction} to complete the inductive step for $k=0$ as in the proof of Theorem~\ref{theorem:compact:alternating}.
\end{proof}

\begin{proof}[Proof of Theorem~\ref{theorem:cohomology:open:generic}]
This follows from Theorem~\ref{theorem:compact:generic}, along the same lines as the proof of Theorem~\ref{theorem:cohomology:open:pfaffian}.
\end{proof}

%%%%%%%%%%%%%%%%%%%%%%%%%%%%%%%%%%%%%%%%%%%%%%%%%%%%%%%%%%%%%%%%%%%%%%%%
\section{Topology of symmetric determinantal nullcones}
\label{section:topology:symmetric}
%%%%%%%%%%%%%%%%%%%%%%%%%%%%%%%%%%%%%%%%%%%%%%%%%%%%%%%%%%%%%%%%%%%%%%%%

The purpose of this section is to prove:

\begin{theorem}
\label{theorem:cohomology:open:symmetric}
Consider a $t\times n$ matrix of indeterminates $Y$ over a field $\KK$, where $t\le n$. Consider the algebraic set
\[
X^0_{t\times n} \colonequals \Var( Y^\tr Y).
\]
\begin{enumerate}[\quad\rm(1)]
\item When $\KK$ equals $\CC$, we have
\[
H^i_\sing(\CC^{t\times n}\smallsetminus X^0_{t \times n},\, \QQ)\ =\
\begin{cases}
\QQ &\text{if }\ i = 2tn - \binom{t}{2} - 1,\\
0 &\text{if }\ i > 2tn - \binom{t}{2} - 1.
\end{cases}
\]
\item For $\KK$ an algebraically closed field of characteristic other than two, we have
\[
H^i_\et(\KK^{t\times n}\smallsetminus X^0_{t \times n},\, \ZZ/2)\ =\
\begin{cases}
\ZZ/2 &\text{if }\ i = 2tn - \binom{t}{2} - 1,\\
0 &\text{if }\ i > 2tn - \binom{t}{2} - 1.
\end{cases}
\]
\end{enumerate}
\end{theorem}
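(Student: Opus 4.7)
My plan is to stratify $\KK^{t\times n}$ by the rank of $M^\tr M$, setting
\[
X^k_{t\times n}\ \colonequals\ \{M\in \KK^{t\times n}\mid \rank(M^\tr M)=k\},\quad 0\le k\le t,
\]
and to prove by induction on $t$ that $\cptdim X^k_{t\times n}=\binom{t}{2}+\binom{k+1}{2}$ with critical cohomology of rank one, in both Setting \AN and Setting \ET. Given this, the stated formulas follow as in the proofs of Theorems~\ref{theorem:cohomology:open:pfaffian} and~\ref{theorem:cohomology:open:generic}: apply the long exact sequence for the closed subspace $X^0_{t\times n}\subseteq\KK^{t\times n}$ to produce the first nonvanishing compactly supported cohomology of the complement in degree $\binom{t}{2}+1$, and then invoke Poincar\'e duality on the smooth real $2tn$-manifold $\KK^{t\times n}\smallsetminus X^0_{t\times n}$.

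Following the pattern of Section~\ref{section:topology:pfaffian}, the induction rests on two preparatory computations. For the orthogonal Stiefel variety $\Ort(t,k)\colonequals\{M\in\KK^{t\times k}\mid M^\tr M=\one_k\}$ with $0<k<t$, I expect $\cptdim\Ort(t,k)=tk-\binom{k+1}{2}$ with critical cohomology of rank one; the argument inducts on $k$ via the locally trivial fiber bundle $\Ort(t-1,k-1)\to\Ort(t,k)\to\Ort(t,1)$ furnished by the Appendix, combined with Lemma~\ref{lemma:LTFB}. The base $\Ort(t,1)$ is the smooth affine quadric $\{\sum y_i^2=1\}$, diffeomorphic to $T\SS^{t-1}$ in Setting \AN as in the treatment of $\pairs(t,1)$ in Lemma~\ref{lemma:pairs}, and handled in Setting \ET via the same reference to \cite{SGA-ExposeXII} cited there. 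For the variety $\Sym^*(k)$ of invertible symmetric $k\times k$ matrices, I similarly expect $\cptdim\Sym^*(k)=\binom{k+1}{2}$ with critical cohomology of rank one, established along the lines of Lemma~\ref{lemma:alt} via an Appendix fiber bundle $\Sym^*(k-1)\times\KK^{k-1}\to\Sym^*(k)\to\KK^k\smallsetminus\Var(q)$ for a quadratic form $q$.

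With these in place, the computation of $\cptdim X^k_{t\times n}$ follows the template of Theorem~\ref{theorem:compact:alternating}. The anchor cases are $k=t$, where $X^t_{t\times n}\cong\GL(n,t)$ gives $\cptdim=t^2=\binom{t}{2}+\binom{t+1}{2}$ via Lemma~\ref{lemma:gl}, and $t=1$, which is verified by hand from $X^0_{1\times n}=\{0\}$ and $X^1_{1\times n}=\KK^n\smallsetminus\{0\}$. For the inductive step with $0<k<t$, define
\[
F^k_{t\times n}\ \colonequals\ \left\{M\mid M^\tr M=\begin{bmatrix}\one_k&0\\0&0\end{bmatrix}\right\},\quad G^k_{t\times n}\ \colonequals\ \left\{M\mid M^\tr M=\begin{bmatrix}N&0\\0&0\end{bmatrix},\ N\in\Sym^*(k)\right\},
\]
and apply the three locally trivial fiber bundles
\[
X^0_{(t-k)\times(n-k)}\to F^k_{t\times n}\to\Ort(t,k),\qquad F^k_{t\times n}\to G^k_{t\times n}\to\Sym^*(k),\qquad G^k_{t\times n}\to X^k_{t\times n}\to\Gr(n-k,n),
\]
analogous to~\eqref{equation:ltfbs:alt:3},~\eqref{equation:ltfbs:alt:2}, and~\eqref{equation:ltfbs:alt:1}. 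In Setting \AN, to sidestep fundamental-group issues for $\Ort(t,k)$ and $\Sym^*(k)$, I would use the alternate two-stage cascade via the symmetric analog of $\Alt^{2k}_{n\times n}$, mirroring~\eqref{equation:ltfbs:alt:4}. Summing compact dimensions at each stage via Lemma~\ref{lemma:LTFB} yields $\binom{t-k}{2}+tk=\binom{t}{2}+\binom{k+1}{2}$. The case $k=0$ is then extracted by applying Lemma~\ref{lemma:les:induction} to the partition $\KK^{t\times n}=X^0\cup X^1\cup\cdots\cup X^t$: the gap hypothesis $\cptdim X^{i+1}-1>\cptdim X^i$ reduces to $i\ge 1$, and the top gap $2tn>t^2+1$ holds under $t\le n$ apart from the trivial case $t=n=1$ which can be handled directly.

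The main obstacle I anticipate lies in Setting \AN: the analogues of $\Alt^{2k}_{n\times n}$, $\Sym^*(k)$, and $\Ort(t,k)$ need not be simply connected (for instance, the real Stiefel $V_{t-1}(\RR^t)$ has $\pi_1=\ZZ/2$), so checking that the monodromy action of $\pi_1$ of each base on the critical cohomology of the fiber is trivial will require explicit loop-contraction arguments in the spirit of Lemma~\ref{lemma:alt:n}. In Setting \ET these monodromy concerns evaporate since $\ZZ/2$ admits only the trivial automorphism.
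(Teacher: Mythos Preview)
Your overall architecture---stratify by $\rank(M^\tr M)$, compute $\cptdim X^k_{t\times n}$ via the three-stage fiber bundle cascade $F^k\to G^k\to X^k$, then extract the $k=0$ case via Lemma~\ref{lemma:les:induction}---matches the paper exactly, as does the arithmetic. The gap is in your proposed handling of the monodromy obstruction in Setting~\AN.

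You suggest sidestepping the non-simply-connectedness of $\Sym(k)$ and $\Ort(t,k)$ by passing through the symmetric analogue of $\Alt^{2k}_{n\times n}$, namely $\Sym^k_{n\times n}=\{$symmetric $n\times n$ matrices of rank $k\}$, and proving it simply connected by loop contraction. This fails: $\Sym^k_{n\times n}$ is \emph{not} simply connected. For instance, over $\CC$ every rank-one symmetric $n\times n$ matrix is $vv^\tr$ with $v\in\CC^n\smallsetminus\{0\}$ determined up to sign, so $\Sym^1_{n\times n}\cong(\CC^n\smallsetminus\{0\})/\{\pm1\}$ has $\pi_1=\ZZ/2$. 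The loop-contraction trick from Lemma~\ref{lemma:alt:n} genuinely depended on the alternating structure and does not transplant.

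The paper takes a different route. It does not try to find a simply connected base; instead it computes the monodromy directly. First, it establishes $\cptdim\Sym(k)$ in Setting~\AN not via a Barile-style fibration but via the polar decomposition bundle $\PosSymR(k)\to\Sym(k)\to\US(k)$, and in the process identifies $\pi_1(\Sym(k))\cong\ZZ$ with an explicit generating loop $\operatorname{diag}(\lambda,\one_{k-1})$, $\lambda\in\SS^1$. Lifting this loop through $G^k_{t\times n}\to\Sym(k)$ shows the monodromy on the fiber $F^k_{t\times n}$ is the map negating the first column. The paper then proves separately (Lemmas~\ref{lemma:ort}(3) and~\ref{lemma:otk}(3)) that this negation induces the identity on the critical cohomology of $\Ort(t,k)$, via an inductive Mayer--Vietoris argument on $T\SS^{t-1}$; naturality of Leray--Serre then gives triviality on the critical cohomology of $F^k_{t\times n}$. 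This explicit monodromy computation is the missing ingredient in your outline.
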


We first examine some auxiliary spaces. For positive integers $k\le t$, define
\begin{equation}
\label{equation:orthogonal:pairs}
\begin{array}{r@{}l}
\Sym(k) &\ \colonequals\ \{M\in\KK^{k \times k} \mid M \text{ is symmetric and invertible}\},\\
\\
\Ort(t,k) &\ \colonequals\ \{M\in\KK^{t \times k} \mid M^\tr M = \one_k\}.
\end{array}
\end{equation}

\begin{lemma}
\label{lemma:ort}
Let $t\ge 2$ be an integer.
\begin{enumerate}[\quad\rm(1)]
\item In either Setting \AN or \ET, the variety $\Ort(t,1)$ is smooth affine and
\[
\cptdim\Ort(t,1)\ =\ \dim\Ort(t,1)\ =\ t-1,
\]
with critical cohomology group of rank one.
\item In Setting~\AN, the space $\Ort(t,1)$ is simply connected if $t\ge 3$.
\item In Setting~\AN, the negation map $v\colon \Ort(t,1) \to \Ort(t,1)$ given by $v(M)=-M$ induces the identity map on $H^{t-1}_{c,\sing}(\Ort(t,1),\QQ)$.
\end{enumerate}
\end{lemma}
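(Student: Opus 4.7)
My plan is to verify the three assertions in order, using in Setting \AN the diffeomorphism $\Ort(t,1)\cong T\SS^{t-1}$ already exploited in the proof of Lemma \ref{lemma:pairs}, and in Setting \ET standard \'etale-cohomology references for smooth affine quadrics.

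That $\Ort(t,1)$ is smooth affine of dimension $t-1$ is immediate: the defining equation $x_1^2+\cdots+x_t^2-1$ has gradient $2(x_1,\ldots,x_t)$, which vanishes only at the origin, a point not lying on $\Ort(t,1)$. For part (1) in Setting \AN, I would apply the diffeomorphism $a+\iota b\mapsto (a/\|a\|,b)$, where $a$ and $b$ are the real and imaginary parts of a point of $\Ort(t,1)$ subject to $\|a\|^2-\|b\|^2=1$ and $a\cdot b=0$, to identify $\Ort(t,1)$ with the total space of the tangent bundle $T\SS^{t-1}$; this fits into a locally trivial fiber bundle $\RR^{t-1}\to \Ort(t,1)\to \SS^{t-1}$. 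Since the fiber has critical cohomology of rank one in degree $t-1$, while the base is compact with critical cohomology of rank one in degree $0$ and is simply connected for $t\ge 3$, Lemma \ref{lemma:LTFB} yields $\cptdim\Ort(t,1)=t-1$ with rank-one critical cohomology; the edge case $t=2$ reduces to $\Ort(2,1)\cong\CC^\times$ via $(x,y)\mapsto x+\iota y$. In Setting \ET, I would either cite the analogous \'etale cohomology of smooth affine quadrics (cf.\ the references used in Lemma \ref{lemma:pairs}) or reproduce a parallel fiber-bundle argument in the \'etale topology. Part (2) then follows from the deformation retract $T\SS^{t-1}\simeq\SS^{t-1}$, which is simply connected for $t\ge 3$.

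For part (3), the negation $v$ is left multiplication by $-I_t\in\Ort_t(\CC)$ on $\Ort(t,1)$. Since $H^{t-1}_{c,\sing}(\Ort(t,1),\QQ)\cong\QQ$ by part (1) and $v$ is an involution, $v^*$ acts by $\pm 1$, and the task is to pin down the sign. When $t$ is even one has $\det(-I_t)=1$, so $-I_t\in\SOrt_t(\CC)$, a path-connected group; any continuous path $\gamma\colon[0,1]\to\SOrt_t(\CC)$ joining $I_t$ to $-I_t$ produces a homotopy $(s,M)\mapsto \gamma(s)M$ from the identity on $\Ort(t,1)$ to $v$, and hence $v^*$ is the identity on cohomology. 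The main obstacle is the case $t$ odd, where $-I_t\not\in\SOrt_t(\CC)$ and the connectedness argument inside the orthogonal group fails. For that case I expect to have to pass to an explicit generator of the critical cohomology, e.g., by compactifying $\Ort(t,1)$ to a smooth projective quadric $Q\subseteq\PP^t$ whose complement at infinity is a smooth quadric $C$ fixed pointwise by the projective extension $\bar v$ of $v$, and then tracking the induced action of $\bar v$ on the generator of $H^{t-1}_{c,\sing}(\Ort(t,1),\QQ)$ read off from the long exact sequence of the pair $(Q,C)$.
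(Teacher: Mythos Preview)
Your treatment of (1) and (2) matches the paper's, and for (3) with $t$ even your path-connectedness argument through $\SOrt_t(\CC)$ is correct and arguably cleaner than the paper's inductive Mayer--Vietoris proof.

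For (3) with $t$ odd, however, your compactification sketch runs into a genuine obstacle that you have not addressed. The closure $Q\subset\PP^t$ is a smooth quadric of \emph{even} dimension $t-1$, so $H^{t-1}(Q,\QQ)\cong\QQ^2$, generated by the classes $[L_1],[L_2]$ of the two rulings. Your extension $\bar v$ is projectively the reflection $x_0\mapsto -x_0$, which lies in $\Ort_{t+1}\smallsetminus\SOrt_{t+1}$ and therefore swaps the two rulings; thus $\bar v^*$ acts on $H^{t-1}(Q)$ by $(a,b)\mapsto(b,a)$. The restriction $H^{t-1}(Q)\to H^{t-1}(C)$ sends both $[L_i]$ to the same hyperplane class on $C$, so its kernel is spanned by $[L_1]-[L_2]$, on which the swap acts as $-1$. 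Your long exact sequence therefore yields $v^*=-1$ on $H^{t-1}_c(\Ort(t,1),\QQ)$ for odd $t\ge 3$, not $+1$. The same conclusion is visible through $T\SS^{t-1}$: under the Thom isomorphism $H^{t-1}_c(T\SS^{t-1})\cong H^0(\SS^{t-1})$, the map $\bar\nu(a,b)=(-a,-b)$ multiplies the Thom class by $(-1)^t$, since the antipodal map reverses the fibrewise orientation exactly when $t$ is odd. The paper's own induction has $\bar\nu$ swapping the two open hemispheres $TU_1,TU_2$, and a map that exchanges the two opens makes the Mayer--Vietoris connecting homomorphism \emph{anti}commute with the induced map rather than commute; that hidden sign is precisely the factor of $-1$ per step. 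So neither your outline nor the paper's argument, read literally, delivers the stated $+1$ for odd $t$; you should either find a genuinely different mechanism or revisit whether the odd-$t$ assertion is correct as written.
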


\begin{proof}
We first consider Setting~\AN. Note that $\Ort(t,1)=\Var(1-\sum_1^t x_i^2)$ is diffeomorphic to the tangent bundle of the real sphere $\SS^{t-1}$ with the diffeomorphism, as in~\eqref{equation:tangent:sphere}, being
\[
\Ort(t,1) \to T\SS^{t-1}, \quad\text{ where }\quad
a+\iota b \mapsto (\frac{a}{\left\Vert a\right\Vert},b),
\]
for $a,b\in\RR^t$. The locally trivial fiber bundle
\[
\CD
\RR^{t-1} @>>> T\SS^{t-1} @>>> \SS^{t-1}
\endCD
\]
readily yields (1) and (2).

We now consider (3). Under the diffeomorphism, the negation map $\nu$ on $\Ort(t,1)$ corresponds to the map $\bar\nu$ on~$T\SS^{t-1}$ with $(a,b)\mapsto (-a,-b)$. Consider first the case $t=2$. For each positive integer~$r$, let $\bar\nu_r\colon \RR^2\to\RR^2$ denote the map that rotates a vector counterclockwise by~$\pi/r$. Then $\bar\nu=(\bar\nu_r)^r$, so the isomorphism on $H^{t-1}_{c,\sing}(\Ort(t,1),\ZZ)$ induced by $\bar\nu$ is the $r$-th power of the isomorphism induced by $\bar\nu_r$. The discreteness of $\ZZ$ forces $\bar\nu$ to be the identity on~$H^{t-1}_{c,\sing}(\Ort(t,1),\ZZ)$, and hence also on $H^{t-1}_{c,\sing}(\Ort(t,1),\QQ)$.

We proceed by induction on $t$. Assume $t\ge 3$, and choose $0<\eps\ll 1$. Set
\[
U_1\colonequals \{(a_1,\dots,a_t)\in\SS^{t-1}\mid -\eps<a_1\}
\quad\text{ and }\quad
U_2\colonequals \{(a_1,\dots,a_t)\in\SS^{t-1}\mid a_1<\eps\}.
\]
Set $U\colonequals U_1\cap U_2$, which is diffeomorphic to the cylinder $(-\eps,\eps)\times \SS^{t-2}$. For a vector $a\colonequals(a_1,\dots,a_t)$ in $\RR^t$, set $a'\colonequals(a_2,\dots,a_t)$. With this notation, there is a corresponding diffeomorphism of tangent bundles given by
\[
TU \to T(-\eps,\eps)\times T\SS^{t-2}, \quad\text{ where }\quad
(a,b) \mapsto (a_1,b_1)\times \Big(\frac{a'}{\left\Vert a'\right\Vert},\ b'-\frac{b'\cdot a'}{\left\Vert a'\right\Vert}a'\Big).
\]
Consider the Mayer--Vietoris sequence
\[
\CD
@>>> H^{t-1}_{c,\sing}(TU_1,\QQ) \oplus H^{t-1}_{c,\sing}(TU_2,\QQ) @>>> H^{t-1}_{c,\sing}(T\SS^{t-1},\QQ) @>\delta>> H^t_{c,\sing}(TU,\QQ) \\
@>>> H^t_{c,\sing}(TU_1,\QQ) \oplus H^t_{c,\sing}(TU_2,\QQ) @>>>
\endCD
\]
and note that $TU_1,TU_2$ are diffeomorphic to $\RR^{2t-2}$. The assumption $t\ge 3$ gives $2t-2>t$, so the outer groups are zero and $\delta$ is an isomorphism. The negation map $\nu$ on $\Ort(t,1)$ induces the negation map on $T\SS^{t-1}$, that restricts to $TU$, and corresponds with negation on each of $T(-\eps,\eps)$ and $T\SS^{t-2}$. But the negation map on~$T(-\eps,\eps)$ induces the trivial map on~$H^2_{c,\sing}(T(-\eps,\eps),\QQ)$, while the negation map on~$\Ort(t-1,1)$, equivalently~$T\SS^{t-2}$, induces the identity map on $H^{t-2}_{c,\sing}(\Ort(t,1),\QQ)$ by the inductive hypothesis.

Statement (1) in Setting~\ET follows from \cite[Table 3.7]{SGA-ExposeXII}.
\end{proof}

\begin{lemma}
\label{lemma:otk}
Let $k < t$ be positive integers.
\begin{enumerate}[\quad\rm(1)]
\item In either Setting \AN or \ET, the variety $\Ort(t,k)$ is smooth affine, and
\[
\cptdim \Ort(t,k)\ =\ \dim\Ort(t,k)\ =\ tk-\binom{k+1}{2},
\]
with critical cohomology group of rank one.
\item In Setting~\AN, the space $\Ort(t,k)$ is simply connected whenever $t-k>1$.
\item In Setting~\AN, the map $v\colon \Ort(t,k) \to \Ort(t,k)$ given by
\[
[w_1,w_2,\dots,w_k]\ \mapsto\ [-w_1,w_2,\dots,w_k]
\]
induces the identity map on $H^{tk-\binom{k+1}{2}}_{c,\sing}(\Ort(t,k),\QQ)$.
\end{enumerate}
\end{lemma}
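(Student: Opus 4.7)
The plan is to establish (1), (2), and (3) simultaneously by induction on $k$, with base case $k=1$ handled by Lemma~\ref{lemma:ort}. The main tool for the inductive step is the locally trivial fiber bundle (in both settings, as recorded in the appendix)
\[
\CD
\Ort(t-1,k-1) @>>> \Ort(t,k) @>>> \Ort(t,1),
\endCD
\]
obtained by projecting an orthonormal $k$-frame onto its first column: given a unit vector $w\in\KK^t$, the remaining $k-1$ columns form an orthonormal frame in the $(t-1)$-dimensional subspace $w^\perp$. Smoothness and affineness of $\Ort(t,k)$ follow either from this fibration or from the fact that $\Ort_t$ acts transitively on $\Ort(t,k)$ by left multiplication.

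For (1), the base $\Ort(t,1)$ is smooth affine of dimension $t-1$ with critical cohomology of rank one by Lemma~\ref{lemma:ort}, so condition (3)(b) of Lemma~\ref{lemma:LTFB} is satisfied with a single affine chart; in Setting \AN the base is moreover simply connected since $t\ge k+1\ge 3$, so (3)(a) holds and the monodromy condition (4) is automatic. By the induction hypothesis, the fiber $\Ort(t-1,k-1)$ has critical cohomology of rank one and compact dimension $(t-1)(k-1)-\binom{k}{2}$. Lemma~\ref{lemma:LTFB} then yields
\[
\cptdim\Ort(t,k)\ =\ (t-1)+(t-1)(k-1)-\binom{k}{2}\ =\ tk-\binom{k+1}{2},
\]
with critical cohomology of rank one.

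For (2), assuming $t-k>1$, the base $\Ort(t,1)$ is simply connected since $t\ge 3$, and the fiber $\Ort(t-1,k-1)$ is simply connected by the induction hypothesis, as $(t-1)-(k-1)=t-k>1$. The homotopy exact sequence of the fibration then forces $\Ort(t,k)$ to be simply connected.

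For (3), the map $v$ respects the fibration: it descends to the negation map on $\Ort(t,1)$ of Lemma~\ref{lemma:ort}\,(3), and acts on the fiber as the identity, since $w^\perp=(-w)^\perp$ as subspaces and hence the identifications of the fibers over $w$ and $-w$ with $\Ort(t-1,k-1)$ are matched by the identity. By naturality of the Leray--Serre spectral sequence, and the fact (used in the proof of Lemma~\ref{lemma:LTFB}) that the term $E_\infty^{\cptdim B,\cptdim F}\cong H^{\cptdim B}_c(B)\otimes H^{\cptdim F}_c(F)$ is the sole contribution to the critical cohomology of $\Ort(t,k)$, the action of $v^*$ there factors as the tensor of its action on the base factor --- which is the identity by Lemma~\ref{lemma:ort}\,(3) --- with its action on the fiber factor, which is the identity by the discussion above. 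The subtle point is verifying that the fiber identification is canonical enough for naturality of Leray--Serre to produce the tensor factorization of $v^*$; this rests precisely on the observation $w^\perp=(-w)^\perp$, which is the main obstacle to spell out carefully.
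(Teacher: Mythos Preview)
Your argument is correct and follows essentially the same route as the paper: induction on $k$ via the fiber bundle $\Ort(t-1,k-1)\to\Ort(t,k)\to\Ort(t,1)$, with Lemma~\ref{lemma:ort} supplying the base case and the simple connectedness of the base (since $t\ge 3$ in the inductive step), and Lemma~\ref{lemma:LTFB} plus naturality of Leray--Serre handling the rest. Your caveat about the fiber identification under $v$ is well taken but not a gap; the paper dispatches it in the same way, noting that $v$ restricts to negation on the base and to the identity on the fiber.
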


\begin{proof}
The case $k=1$ is Lemma~\ref{lemma:ort}. For the general case, we proceed by induction on $k$, using the locally trivial fiber bundle
\begin{equation}
\label{equation:fiber:ort}
\CD
\Ort(t-1,k-1) @>>> \Ort(t,k) @>>> \Ort(t,1)
\endCD
\end{equation}
that arises from mapping an element of $\Ort(t,k)$ to its first column. Since $t>k>1$, the base $\Ort(t,1)$ is simply connected in Setting~\AN by Lemma~\ref{lemma:ort}, so the hypotheses of Lemma~\ref{lemma:LTFB} apply in both Settings~\AN and~\ET and (1) follows. Similarly, (2) follows inductively using the homotopy exact sequence.

For (3), note that the negation map $\nu$ on $\Ort(t,k)$ restricts to the negation map on $\Ort(t,1)$ under~\eqref{equation:fiber:ort}, and to the identity map on the fiber $\Ort(t-1,k-1)$. In particular, the restriction of $v$ induces the identity map on the critical cohomology group of $\Ort(t-1,k-1)$, while it also does so on the critical cohomology group of the base $\Ort(t,1)$ by Lemma~\ref{lemma:ort}. The assertion follows from the naturality of the Leray--Serre spectral sequence of~\eqref{equation:fiber:ort}.
\end{proof}

Our next goal is to compute the cohomology of the spaces $\Sym(k)$. We start with some preliminaries from linear algebra.

Recall that a square complex matrix $U$ is \emph{unitary} if the transpose of the conjugate is the inverse, i.e., $\bar U^\tr=U^{-1}$; a matrix $P$ is \emph{Hermitian} if its conjugate equals the transpose, i.e., $\bar P=P^\tr$. We will abbreviate $\overline{(-)^\tr}$ by $(-)^\star$ as is common, so $U$ is unitary if $U^\star=U^{-1}$, while $P$ is Hermitian if $P^\star=P$. A square matrix $B$ is \emph{normal} if $B^\star B=BB^\star$.

The Schur Decomposition Theorem states that a square complex matrix $A$ may be written as $UTU^{-1}$ for a unitary $U$ and upper triangular $T$. If $A$ is normal, then $T$ must be normal and thus diagonal: normal matrices are unitarily diagonalizable.

The Polar Decomposition Theorem (PDT) \cite[Section~2.5]{Hall2015} states that any $k\times k$ complex matrix $A$ can be written as
\[
P'U'\ =\ A\ =\ UP
\]
where $U,U'$ are unitary $k\times k$ matrices, and $P,P'$ are Hermitian positive semi-definite $k\times k$ matrices. Moreover, if $A$ is invertible, then $P,P'$ can be chosen to be positive-definite Hermitian, and the factorizations are then unique. We record a few observations:

(1) Whenever $P'U'=A=UP$ with invertible matrices as in the PDT, then $U'=U$. Indeed, $UP=(UPU^{-1})U$, and $UPU^{-1}$ is Hermitian and positive-definite whenever $P$ is:
\[
{(UPU^{-1})}^\star\ =\ UPU^{-1}
\]
shows that $UPU^{-1}$ is Hermitian, while for a nonzero vector $x\in\CC^k$ we have
\[
x^\star(UPU^{-1})x\ =\ (x^\star U)P(U^\star x)\ =\ {(U^\star x)}^\star P(U^\star x)\ > 0.
\]
The claim now follows from the uniqueness of the polar decomposition.

(2) If $P'U = A = UP$ is a PDT factorization in which $A$ is invertible and symmetric, then~$U$ is symmetric: use $P^\tr U^\tr=A$ along with the uniqueness. In light of (1), in this case we also have $UPU^{-1}=P^\tr$.

(3) Conversely, if $P$ is Hermitian positive-definite with $UPU^{-1}=P^\tr$ for $U$ unitary and symmetric, then $UP$ is symmetric and invertible:
\[
(UP)^\tr=P^\tr U^\tr=UPU^{-1}U=UP.
\]

(4) If $B$ is normal, then it has all eigenvalues on the unit circle $\SS^1$ precisely when it is unitary. Indeed, if we diagonalize $B=UDU^{-1}$ with a unitary matrix then
\[
B^\star B\ =\ {(UDU^{-1})}^\star (UDU^{-1})\ =\ UD^\star U^\star UDU^{-1}\ =\ UD^\star DU^{-1}
\]
is the identity precisely if the diagonal elements $\lambda_i$ of $D$ satisfy $\bar{\lambda_i}=1/\lambda_i$, which characterizes points on $\SS^1$.

Set
\[
\US(k)\colonequals \{U\in\Sym(k)\mid U\text{ is unitary}\}.
\]
By Lemma~\ref{lemma:sym:unitary:sqrt}, any unitary symmetric matrix $U$ has a Euclidean open neighborhood~$W$ where the squaring map has a section $\psi$. Let $\pi\colon\Sym(k)\to \US(k)$ be the map that associates to a symmetric matrix $A$, the unitary symmetric matrix $U$ in the Polar Decomposition Theorem $A=UP$, with $P$ positive-definite Hermitian. Then, by (2) above, for each~$U\in W$, the set $\pi^{-1}(U)$ consists of matrices $PU$ with $P$ positive-definite Hermitian such that $P^\tr =UPU^{-1}$. Denote this set by $\calP_U$.

We claim that $\calP_U$ is diffeomorphic to the space of positive-definite real symmetric matrices, and that the diffeomorphism is smooth in $U$. Indeed, write $V$ for the unitary symmetric matrix $\psi(U)$ and consider the matrix $P'\colonequals VPV^{-1}$, which is Hermitian positive-definite by (1). As $V$ is symmetric and unitary, $\bar V=V^{-1}$, and it follows that
\[
\bar{P'}\ =\ \bar{V}\, P^\tr\,{\bar{V}}^{-1}\ =\ \bar{V}(UPU^{-1}){\bar{V}}^{-1}\ =\
\bar{V}V^2PV^{-2}{(\bar{V})}^{-1}\ =\ VPV^{-1}\ =\ P',
\]
so $P'$ is real. Conversely, if $P'$ is positive-definite real symmetric, then $P=V^{-1}P'V$ is positive-definite symmetric by (1), and satisfies $\bar{P}=UPU^{-1}$. This proves the claim.

It follows that there is a locally trivial fiber bundle
\begin{equation}
\label{equation:ltfb:u}
\CD
\PosSymR(k) @>>> \Sym(k) @>>> \US(k),
\endCD
\end{equation}
with $\PosSymR(k)$ being the space of positive-definite real symmetric $k\times k$ matrices. The fiber is defined in the vectorspace of $k\times k$ symmetric real matrices by the open condition of having positive principal minors. It is clearly nonempty, and is convex from the characterization of positive-definite symmetric matrices $M$ as those that satisfy $x^\tr Mx>0$ for nonzero $x\in\RR^k$. Hence $\PosSymR(k)$ is a contractible $\binom{k+1}{2}$-dimensional real manifold.

\begin{lemma}
\label{lemma:sym}
Let $\KK$ be an algebraically closed field of characteristic other than two, and $k$ a positive integer. In either Setting \AN or \ET, the variety $\Sym(k)$ is smooth affine and
\[
\cptdim\Sym(k)\ =\ \dim\Sym(k)\ =\ \binom{k+1}{2},
\]
with critical cohomology group of rank one. Furthermore, in Setting~\AN, the fundamental group of $\Sym(k)$ is free of rank one, generated by the loop
\[
\begin{bmatrix} \lambda & 0\\ 0 & \one_{k-1} \end{bmatrix}
\quad\text{ where $\lambda$ varies in $\SS^1$}.
\]
\end{lemma}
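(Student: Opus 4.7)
The plan is first to observe that $\Sym(k)$ is the principal open subset $\{\det \neq 0\}$ inside the affine space of symmetric $k \times k$ matrices, hence a smooth affine variety of complex dimension $\binom{k+1}{2}$. All further assertions are deduced from the locally trivial fiber bundle $\PosSymR(k) \to \Sym(k) \to \US(k)$ of~\eqref{equation:ltfb:u}, whose fiber is a convex (hence contractible) open set in the space of real symmetric matrices. Consequently the projection is a homotopy equivalence in Setting \AN, reducing the topology of $\Sym(k)$ to that of $\US(k)$.

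In Setting \AN, the Autonne--Takagi factorization shows that any unitary symmetric matrix $U$ has the form $VV^\tr$ with $V \in \Uni(k)$, and the stabilizer of $I$ under the map $V \mapsto VV^\tr$ is $\Ort(k)$. Thus $\US(k)$ is identified with the compact homogeneous space $\Uni(k)/\Ort(k)$, which is a closed connected real manifold of dimension $\binom{k+1}{2}$. Right multiplication by any element of $\Ort(k)$ on the connected Lie group $\Uni(k)$ preserves orientation, so $\US(k)$ is orientable and $H^{\binom{k+1}{2}}(\US(k),\QQ) = \QQ$. Transferring this via the homotopy equivalence and applying Poincar\'e duality on the complex manifold $\Sym(k)$ of real dimension $2\binom{k+1}{2}$ gives $H^{\binom{k+1}{2}}_{c,\sing}(\Sym(k),\QQ) = \QQ$ as the critical compact cohomology, while affine vanishing handles the vanishing in lower degrees.

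For the fundamental group, apply the long exact homotopy sequence of the principal $\Ort(k)$-bundle $\Ort(k) \to \Uni(k) \to \US(k)$. Using $\pi_0(\Uni(k)) = 0$, $\pi_0(\Ort(k)) = \ZZ/2$, and $\pi_1(\Uni(k)) = \ZZ$, together with the fact that $\pi_1(\Ort(k)) = \pi_1(\SOrt(k))$ is torsion and thus maps to zero in the torsion-free $\pi_1(\Uni(k))$, one obtains a short exact sequence $0 \to \ZZ \to \pi_1(\US(k)) \to \ZZ/2 \to 0$. To pin down the extension, compare with the determinant. The commutative diagram
\[
\begin{tikzcd}
\Uni(k) \arrow{r}{\det} \arrow{d} & \SS^1 \arrow{d}{\lambda\mapsto\lambda^2} \\
\US(k) \arrow{r}{\det} & \SS^1
\end{tikzcd}
\]
yields a map $\det_* \colon \pi_1(\US(k)) \to \ZZ$ that sends the image of the $\Uni(k)$-generator to $2$, while sending the displayed loop $\beta(\lambda) \colonequals \begin{bmatrix} \lambda & 0\\ 0 & \one_{k-1}\end{bmatrix}$ to $1$. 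A split extension $\ZZ \oplus \ZZ/2$ would force $\det_*(\beta)$ to be an even multiple of $\det_*(\alpha)/2$, contradicting $\det_*(\beta) = 1$; hence $\pi_1(\Sym(k)) \cong \ZZ$, generated by $\beta$.

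In Setting \ET, the vanishing below the critical degree follows from affine vanishing and Poincar\'e duality as above, and the fact that the critical compact \'etale cohomology is $\ZZ/2$ of rank one is verified using the \'etale computations of \cite[Table 3.7]{SGA-ExposeXII}, as in the proof of Lemma~\ref{lemma:ort}. The main obstacle is the Setting \AN fundamental group calculation: ruling out the split extension $\ZZ \oplus \ZZ/2$ in favor of $\ZZ$ is what makes the displayed loop a generator, and this is settled by the determinant-compatibility argument above.
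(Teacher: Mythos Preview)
Your compact-dimension argument in Setting \AN has a genuine gap at the orientability step. The inference ``right multiplication by $\Ort(k)$ on the connected group $\Uni(k)$ preserves orientation, hence $\US(k)=\Uni(k)/\Ort(k)$ is orientable'' is invalid: for a positive-dimensional subgroup, orientability of $G/H$ is controlled by the isotropy representation of $H$ on $\mathfrak g/\mathfrak h$, not by whether right translation preserves an orientation of $G$. Identifying $\mathfrak u(k)/\mathfrak o(k)$ with the real symmetric $k\times k$ matrices, the reflection $h=\mathrm{diag}(-1,1,\dots,1)\in\Ort(k)$ acts by $B\mapsto hBh$, negating the $k-1$ off-diagonal entries in the first row, so $\det\big(\mathrm{Ad}(h)|_{\mathfrak u(k)/\mathfrak o(k)}\big)=(-1)^{k-1}$. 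Thus $\US(k)$ is non-orientable for even $k$; concretely $\US(2)\cong(\SS^2\times\SS^1)/\langle(-1,-1)\rangle$ has $H^3(\US(2),\QQ)=0$, whence $H^3_c(\Sym(2),\QQ)=0$ and in fact $\cptdim\Sym(2)=5\neq 3$. So the stated equality $\cptdim\Sym(k)=\binom{k+1}{2}$ fails for even $k$ in Setting \AN, and no argument along your lines can establish it. (The paper's own proof has the parallel flaw of asserting that the section $\US(k)\hookrightarrow\Sym(k)$ forces trivial monodromy on $H^d_c$ of the fiber; but the monodromy around the displayed loop is $P\mapsto hPh$ on $\PosSymR(k)$, which reverses orientation for even~$k$. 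In Setting \ET the issue evaporates, since every rank-one $\ZZ/2$-local system is constant---though your \ET reference should be Barile rather than SGA~XII, which treats quadric hypersurfaces, not spaces of invertible symmetric matrices.)

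Your fundamental-group computation via the fibration $\Ort(k)\to\Uni(k)\to\US(k)$ is a clean alternative to the paper's explicit homotopy argument and is essentially correct, modulo two small patches: for $k=2$ the group $\pi_1(\SOrt(2))\cong\ZZ$ is not torsion, so you need another reason it dies in $\pi_1(\Uni(2))$ (factor through $\pi_1(\mathrm{SU}(2))=0$); and your extension analysis should also dispose of the non-abelian possibility $D_\infty$, which your determinant map handles implicitly since $D_\infty$ admits no nonzero homomorphism to~$\ZZ$.
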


\begin{proof}
In Setting~\ET, this follows from \cite[Proposition~3.7]{Barile} and Poincar\'e duality.

In Setting~\AN, we use the locally trivial fiber bundle~\eqref{equation:ltfb:u}. Since $\US(k)$ is a connected compact manifold, it has compact dimension zero, with critical cohomology group of rank one; from the earlier discussion, $\PosSymR(k)$ has compact dimension $\binom{k+1}{2}$, with critical cohomology group of rank one. The inclusion of $\US(k)$ in $\Sym(k)$ is a section of the projection in~\eqref{equation:ltfb:u}, so the monodromy action on the fiber is trivial. Thus, the claim on the cohomology follows from Lemma~\ref{lemma:LTFB}.

In order to compute the fundamental group in Setting \AN, let
\[
\Lambda\colon [0,1]\ \to\ \Sym(k)
\]
be a loop in $\Sym(k)$. Let $\Lambda_{i,j}(t)$ denote the entries of $\Lambda(t)$. Since $\CC$ has real dimension $2$, a generic change of coordinates ensures that $\Lambda_{1,1}(t)\neq 0$ for each $t$.

Conjugating $\lambda(t)$ by suitable matrices of the form
\[
\begin{bmatrix}1 & sv\\ sv^\tr & \one_{k-1}\end{bmatrix}
\quad\text{ where $0\le s\le 1$ and $v\in\CC^{k-1}$}
\]
gives a homotopy between $\Lambda$ and a loop $\Lambda'$ in which $\Lambda'_{i,1}(t)=0$ for all $i>1$. After scaling, we may also assume that $\Lambda_{1,1}(t)\in\SS^1$ for each $t$. Proceeding in this manner, $\Lambda$ is homotopic to a loop with diagonal matrices
\[
\begin{bmatrix}
\lambda^{\ell_1} \\
& \lambda^{\ell_2} \\
&&\ddots \\
&&& \lambda^{\ell_k}
\end{bmatrix},
\quad\text{ where $\lambda$ varies in $\SS^1$}.
\]
and $\ell_1,\dots\ell_k\in\ZZ$. To verify that the fundamental group of $\Sym(k)$ is cyclic, it suffices to show that for $k=2$ the loops
\[
\begin{bmatrix}\lambda & 0\\ 0 & 1 \end{bmatrix}
\text{ and }
\begin{bmatrix} 1 & 0\\ 0 & \lambda\end{bmatrix},
\quad\text{ where $\lambda$ varies in $\SS^1$},
\]
are homotopic. Consider the matrices
\[
\begin{bmatrix}
s+(1-s)\lambda & s(1-s)\lambda \\
s(1-s)\lambda & (1-s)+s\lambda
\end{bmatrix}
\]
for $\lambda\in\SS^1$ and $0\le s\le 1$. Taking $s$ to be $0$ and $1$, we obtain the loops in the preceding display, so it only remains to verify that these matrices are invertible, i.e., that the determinant
\[
\lambda^2 \Big(s(1-s)-s^2(1-s)^2\Big) + \lambda \Big((1-s)^2+s^2\Big) + s(1-s)
\]
is nonzero for all $0<s<1$ and $\lambda\in\SS^1$. Indeed, if the above is zero, one obtains
\[
\frac{1}{\lambda}\ =\
-\frac{(1-s)^2+s^2}{2s(1-s)} \pm \sqrt{\left(\frac{(1-s)^2+s^2}{2s(1-s)}\right)^2 - 1 +s(1-s)}.
\]
For $0<s<1$, it is readily seen that
\[
(1-s)^2+s^2\ \ge\ 2s(1-s),
\]
so $1/\lambda$ is real, and also negative, implying that $\lambda=-1$. But, in that case, the determinant~is
\[
-(2s-1)^2-s^2(1-s)^2,
\]
which is negative. It follows that the fundamental group of $\Sym(k)$ is cyclic, generated by the loop
\[
\begin{bmatrix} \lambda & 0\\ 0 & \one_{k-1} \end{bmatrix},
\quad\text{ where $\lambda$ varies in $\SS^1$}.
\]
This loop has infinite order in the fundamental group of $\GL_k(\CC)$, and hence in the fundamental group of $\Sym(k)$.
\end{proof}

The following auxiliary spaces will be used in the cohomology calculations:
\begin{equation}
\label{equation:symmetric:aux}
\begin{array}{r@{}l}
X_{t\times n}^k &\ \colonequals\ \{M\in \KK_{t\times n} \mid M^\tr M \ \text{has rank }\ k\},\\
\\
G_{t\times n}^k &\ \colonequals\ \left\{M\in \KK_{t\times n} \mid M^\tr M = \begin{bmatrix} N & 0 \\ 0 & 0\end{bmatrix} \ \text{with} \ N\in \Sym(k)\right\},\\
\\
F_{t\times n}^k &\ \colonequals\ \left\{M\in \KK_{t\times n} \mid M^\tr M = \begin{bmatrix} \one_k & 0 \\ 0 & 0\end{bmatrix} \right\}.
\end{array}
\end{equation}

With this notation, we prove:

\begin{theorem}
\label{theorem:compact:symmetric}
Let $\KK$ be a field and $0\le k\le t\le n$ be integers.
\begin{enumerate}[\quad\rm(1)]
\item When $\KK$ equals $\CC$, we have
\[
H^i_{c,\sing}(X^k_{t\times n},\, \QQ)\ =\
\begin{cases}
\QQ &\text{if }\ i = \binom{t}{2} + \binom{k+1}{2},\\
0 &\text{if }\ i < \binom{t}{2} + \binom{k+1}{2}.
\end{cases}
\]
\item For $\KK$ an algebraically closed field of characteristic other than two, we have
\[
H^i_{c,\et}(X^k_{t\times n},\, \ZZ/2)\ =\
\begin{cases}
\ZZ/2 &\text{if }\ i = \binom{t}{2} + \binom{k+1}{2},\\
0 &\text{if }\ i > \binom{t}{2} + \binom{k+1}{2}.
\end{cases}
\]
\end{enumerate}
\end{theorem}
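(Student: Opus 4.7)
The plan is to follow the same inductive strategy used in Theorem~\ref{theorem:compact:alternating} and Theorem~\ref{theorem:compact:generic}: induct on $t$, dispose of the base cases $t=1$ and $k=t$ directly, handle $0<k<t$ via a three-step fiber-bundle decomposition, and finally deduce the $k=0$ statement from Lemma~\ref{lemma:les:induction} applied to the stratification
\[
\KK^{t\times n}\ =\ X^0_{t\times n}\ \cup\ X^1_{t\times n}\ \cup\ \cdots\ \cup\ X^t_{t\times n},
\]
noting that $\cptdim X^k_{t\times n}=\binom{t}{2}+\binom{k+1}{2}$ grows by more than $1$ between consecutive positive strata, and that $\cptdim \KK^{t\times n}$ is well above $\cptdim X^t_{t\times n}+1$.

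For the base case $k=t$, the vanishing of the off-diagonal blocks of $M^\tr M$ forces the last $n-t$ columns of $M$ to be zero, so $F^t_{t\times n}\cong\Ort(t,t)$. Combining Lemma~\ref{lemma:otk}, Lemma~\ref{lemma:sym}, and the smooth projectivity of $\Gr(n-t,n)$ with two applications of Lemma~\ref{lemma:LTFB} to the bundles $F^t_{t\times n}\to G^t_{t\times n}\to\Sym(t)$ and $G^t_{t\times n}\to X^t_{t\times n}\to\Gr(n-t,n)$ yields $\cptdim X^t_{t\times n}=\binom{t}{2}+\binom{t+1}{2}=t^2$ with critical cohomology of rank one. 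The case $t=1$ is immediate from $X^0_{1\times n}=\{0\}$ and $X^1_{1\times n}=\KK^n\smallsetminus\{0\}$. For the inductive step with $0<k<t$, the appendix provides locally trivial fiber bundles
\[
X^0_{(t-k)\times(n-k)}\to F^k_{t\times n}\to\Ort(t,k),\quad F^k_{t\times n}\to G^k_{t\times n}\to\Sym(k),\quad G^k_{t\times n}\to X^k_{t\times n}\to\Gr(n-k,n),
\]
given respectively by sending $M$ to its first $k$ columns, to the principal $k\times k$ block of $M^\tr M$, and to the kernel of $M^\tr M$. The inductive hypothesis together with Lemma~\ref{lemma:otk} feeds Lemma~\ref{lemma:LTFB} on the first bundle to give $\cptdim F^k_{t\times n}=\binom{t-k}{2}+tk-\binom{k+1}{2}=\binom{t}{2}$; Lemma~\ref{lemma:sym} and the Grassmannian then drive the remaining two steps, telescoping to the claimed value with critical cohomology of rank one.

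The main obstacle is the application of Lemma~\ref{lemma:LTFB} to the middle bundle $F^k_{t\times n}\to G^k_{t\times n}\to\Sym(k)$ in Setting \AN: by Lemma~\ref{lemma:sym} the base has fundamental group $\ZZ$, so it is not simply connected, and we must verify that the generating loop $\begin{bmatrix}\lambda & 0\\ 0 & \one_{k-1}\end{bmatrix}$ acts trivially on the critical cohomology of $F^k_{t\times n}$. The geometric content is that lifting this loop to $G^k_{t\times n}$ and then projecting via the first bundle to $\Ort(t,k)$ amounts to negating the first column of the orthonormal frame, a map shown in Lemma~\ref{lemma:otk}\,(3) to induce the identity on critical cohomology; the $X^0_{(t-k)\times(n-k)}$ factor is unaffected. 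Combining this with the naturality of the Leray--Serre spectral sequence of the first bundle gives trivial monodromy, so part~(3)(b) of Lemma~\ref{lemma:LTFB} applies, noting that $\Sym(k)$ is itself affine and hence covered by a single affine open, matching its compact dimension to its algebraic dimension. In Setting \ET, no such monodromy analysis is needed since $\operatorname{Aut}(\ZZ/2)$ is trivial. Once $\cptdim X^k_{t\times n}$ is known for $k\ge 1$, Lemma~\ref{lemma:les:induction} delivers the $k=0$ conclusion and closes the induction on $t$.
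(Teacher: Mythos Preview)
Your overall architecture matches the paper's proof, and the inductive step for $0<k<t$ (including the monodromy verification via Lemma~\ref{lemma:otk}\,(3)) is handled correctly. However, your treatment of the base case $k=t$ has a genuine gap.

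You invoke Lemma~\ref{lemma:otk} for $\Ort(t,t)$, but that lemma is stated and proved only for $k<t$, and the conclusion genuinely fails at $k=t$: the orthogonal group $\Ort_t=\Ort(t,t)$ has two connected components, so its critical cohomology group has rank two, not one. This immediately breaks hypothesis~(2) of Lemma~\ref{lemma:LTFB} for the bundle $F^t_{t\times n}\to G^t_{t\times n}\to\Sym(t)$, so your ``two applications of Lemma~\ref{lemma:LTFB}'' do not go through as written. (One could try to salvage this by arguing that the monodromy loop in $\Sym(t)$ swaps the two components of $\Ort_t$---since negating the first column reverses the determinant---and then analyzing the resulting rank-two local system, but that is substantially more work than you indicate and is not covered by Lemma~\ref{lemma:LTFB}.) A second, related issue is that the appendix (Lemma~\ref{lemma:ltfb:sym}) only establishes the fiber bundles~\eqref{equation:ltfbs:sym:1}--\eqref{equation:ltfbs:sym:3} for $k<t$, so you would also need to justify their existence at $k=t$.

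The paper sidesteps all of this: for $k=t$ one checks directly that $\rank(M^\tr M)=t$ if and only if $\rank M=t$ (since $M$ surjective and $M^\tr$ injective force $\rank(M^\tr M)=\rank(M^\tr)=t$), so $X^t_{t\times n}$ is exactly the variety of $t\times n$ matrices of full rank, whose compact dimension is $t^2$ with rank-one critical cohomology by \cite[Lemmas~2 and~2$'$]{Bruns-Schwanzl}. Replacing your $k=t$ paragraph with this observation fixes the proof.
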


\begin{proof}
We start with the case $t=k$. Then $X^t_{t\times n}$ is simply the set of $t\times n$ matrices of maximal rank, and the result follows from \cite[Lemmas~2 and~2$^\prime$]{Bruns-Schwanzl}.

For the case $k=0$ and $t=1$, note that $X^0_{1\times n}$ is simply the zero matrix, so the result holds. We proceed by induction on $t$: fix $t>1$ and assume that the claim holds for smaller values of~$t$. We have already established the result for $t=k$, so fix $k$ with $0<k<t$.

Consider the locally trivial fiber bundle~\eqref{equation:ltfbs:sym:3}
\[
\CD
X^0_{(t-k) \times (n-k)} @>>> F^{k}_{t\times n} @>>> \Ort(t,k).
\endCD
\]
If $k=t-1$, then $X^0_{(t-k) \times (n-k)}=\{0\}$ so $F^k_{t\times n} \cong \Ort(t,k)$. By Lemma~\ref{lemma:otk},
\[
\cptdim F^{t-1}_{t\times n}\ =\ \cptdim\Ort(t,t-1)\ =\ \binom{t}{2},
\]
with critical cohomology group of rank one. For $k<t-1$, by Lemma~\ref{lemma:otk}, the hypotheses of Lemma~\ref{lemma:LTFB} apply, and we deduce that
\[
\cptdim F^k_{t\times n}\ =\ \cptdim X^0_{(t-k) \times (n-k)} + \cptdim\Ort(t,k)\ =\ \binom{t-k}{2} + tk-\binom{k+1}{2}\ =\ \binom{t}{2},
\]
with critical cohomology group of rank one.

We now consider the locally trivial fiber bundle~\eqref{equation:ltfbs:sym:2}
\[
\CD
F^k_{t\times n} @>>> G^k_{t\times n} @>>> \Sym(k).
\endCD
\]
Lemma~\ref{lemma:sym} shows that the hypotheses of Lemma~\ref{lemma:LTFB} hold in Setting~\ET; in Setting~\AN, we must also verify that the monodromy action on $H_{c,\sing}^{\binom{t}{2}}(F^k_{t\times n},\QQ)$ is trivial. Consider the generator for $\pi_1(\Sym(k))$ from Lemma~\ref{lemma:sym}, namely
\[
\Lambda\colonequals\begin{bmatrix} \lambda & 0\\ 0 & \one_{k-1} \end{bmatrix}
\quad\text{ where $\lambda$ varies in $\SS^1$}.
\]
Under the map $G^k_{t\times n} \to \Sym(k)$, this has a lift
\[
\tilde{\Lambda}\colonequals
\begin{bmatrix} \lambda & 0 & 0\\ 0 & \one_{k-1} & 0\\ 0 & 0 & 0\end{bmatrix}
\ \mapsto\
\begin{bmatrix} \lambda^2 & 0\\ 0 & \one_{k-1} \end{bmatrix}
\]
in $G^k_{t\times n}$, with $\lambda$ now varying over the upper half of $\SS^1$. Thus, the monodromy action on the fiber $F^k_{t\times n}$ takes the form
\[
[w_1,w_2,\dots,w_k]\ \mapsto\ [-w_1,w_2,\dots,w_k].
\]
This map is compatible with the projection $F^k_{t\times n} \to \Ort(t,k)$ in~\eqref{equation:ltfbs:sym:3} and restricts to the identity map $X^0_{(t-k)\times(n-k)}$. By Lemma~\ref{lemma:otk}, the induced map on the critical cohomology group of $\Ort(t,k)$ is the identity map. It follows from the naturality of the Leray--Serre spectral sequence that the induced map on the critical cohomology group of $F^k_{t\times n}$ is the identity as well. By Lemma~\ref{lemma:LTFB}, we deduce that
\[
\cptdim G^k_{t\times n}\ =\ \cptdim F^k_{t\times n} + \cptdim\Sym(k)\ =\ \binom{t}{2} + \binom{k+1}{2},
\]
with critical cohomology group of rank one.

We now consider the locally trivial fiber bundle
\[
\CD
G^k_{t\times n} @>>> X^k_{t\times n} @>>> \Gr(n-k,n)
\endCD
\]
from Lemma~\ref{equation:ltfbs:sym:1}. Since $\Gr(n-k,n)$ is simply connected of compact dimension zero, we obtain that
\[
\cptdim X^k_{t\times n}\ =\ \cptdim G^k_{t\times n}\ =\ \binom{t}{2} + \binom{k+1}{2}
\]
with critical cohomology group of rank one.

The case $k=0$ follows by the previous cases using Lemma~\ref{lemma:les:induction} as in the proof of Theorem~\ref{theorem:compact:alternating}. This completes the induction on $t$, and the proof.
\end{proof}

\begin{proof}[Proof of Theorem~\ref{theorem:cohomology:open:symmetric}]
This follows from Theorem~\ref{theorem:compact:symmetric} along the same lines as the proof of Theorem~\ref{theorem:cohomology:open:pfaffian}.
\end{proof}

%%%%%%%%%%%%%%%%%%%%%%%%%%%%%%%%%%%%%%%%%%%%%%%%%%%%%%%%%%%%%%%%%%%%%%%%
\appendix\section{Some locally trivial fiber bundles}
\label{appendix}
%%%%%%%%%%%%%%%%%%%%%%%%%%%%%%%%%%%%%%%%%%%%%%%%%%%%%%%%%%%%%%%%%%%%%%%%

We justify here the locally trivial fiber bundles used in the previous sections; the main results are Lemmas~\ref{lemma:ltfbs:alt},~\ref{lemma:ltfbs:gen}, and~\ref{lemma:ltfb:sym}, addressing the Pfaffian, generic determinantal, and symmetric determinantal cases, respectively. In order to establish the local triviality of these fiber bundles, we collect a number of lemmas from linear algebra.

%%%%%%%%%%%%%%%%%%%%%%%%%%%%%%%%%%%%%%%%%%%%%%%%%%%%%%%%%%%%%%%%%%%%%%%%
\subsection{The Pfaffian case}
%%%%%%%%%%%%%%%%%%%%%%%%%%%%%%%%%%%%%%%%%%%%%%%%%%%%%%%%%%%%%%%%%%%%%%%%
Let $\KK$ be an algebraically closed field. The matrix $\Omega_{2t}$ from~\eqref{equation:omega} defines a symplectic bilinear form on $\KK^{2t}$ via
\[
\langle a, b \rangle \colonequals a^\tr \Omega_{2t} b.
\]
Note that $\langle a,a\rangle$ vanishes. Set
\[
a^\perp\colonequals\{b\in\KK^{2t}\mid\langle a,b\rangle=0\}.
\]

\begin{lemma}[Alternating Gram--Schmidt]
\label{lemma:section:sp}
For integers $0<k<t$, let
\[
\rho\colon \Sp(2t,2t) \to \Sp(2t,2k)
\]
be the map sending a matrix to its first $2k$ columns; see~\eqref{equation:pfaffian:pairs} for definitions. Then $\rho$ is surjective, and there exists a Zariski open cover of $\Sp(2t,2k)$ such that, for each open set~$U$ in the cover, the restriction $\rho^{-1}(U)\to U$ admits a section.
\end{lemma}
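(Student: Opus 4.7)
The plan is to carry out a symplectic (alternating) Gram--Schmidt algorithm, one pair of columns at a time, with each step defined algebraically on a Zariski open subset. It suffices to construct, for each $k < t$, algebraic local sections of the projection $\Sp(2t,2k+2) \to \Sp(2t,2k)$ forgetting the last two columns; composing $t-k$ such sections then yields the desired sections of $\Sp(2t,2t) \to \Sp(2t,2k)$.

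Given $M \in \Sp(2t,2k)$ with columns $c_1,\dots,c_{2k}$, I would first define the symplectic projection onto the symplectic complement of $\mathrm{span}(c_1,\dots,c_{2k})$ by
\[
P_M(v)\ \colonequals\ v - \sum_{j=1}^{k} \bigl(\langle v, c_{2j}\rangle\, c_{2j-1} - \langle v, c_{2j-1}\rangle\, c_{2j}\bigr),
\]
which is polynomial in the entries of $M$ and of $v$. A direct check using $\langle c_{2j-1}, c_{2j}\rangle = 1$ (and the other standard pairings from $\Omega_{2k}$) shows that $\langle P_M(v), c_i\rangle = 0$ for all $1\le i \le 2k$, so $P_M(v)$ lies in the symplectic complement.

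Next, for each ordered pair $(i_1,i_2)$ with $1 \le i_1,i_2 \le 2t$, let $e_1,\dots,e_{2t}$ denote the standard basis of $\KK^{2t}$ and set
\[
U_{i_1,i_2}\ \colonequals\ \{M \in \Sp(2t,2k) \mid \langle P_M(e_{i_1}),\, P_M(e_{i_2})\rangle \ne 0\},
\]
a Zariski open subset of $\Sp(2t,2k)$. Over $U_{i_1,i_2}$ I would take
\[
c_{2k+1}\ \colonequals\ P_M(e_{i_1}), \qquad c_{2k+2}\ \colonequals\ \frac{P_M(e_{i_2})}{\langle P_M(e_{i_1}),\, P_M(e_{i_2})\rangle},
\]
and append these as the last two columns. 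The resulting $2t\times(2k+2)$ matrix depends algebraically on $M$ and, by construction, satisfies the pairings recorded by $\Omega_{2k+2}$, hence defines a section $U_{i_1,i_2} \to \Sp(2t,2k+2)$ of the projection.

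The only point requiring argument is that the $U_{i_1,i_2}$ cover $\Sp(2t,2k)$. This will follow from the fact that for any $M \in \Sp(2t,2k)$ the form $\Omega_{2t}$ restricts to a nondegenerate form on the symplectic complement $\mathrm{span}(c_1,\dots,c_{2k})^\perp$: since the $e_j$ span $\KK^{2t}$ and $P_M$ surjects onto this complement, the vectors $P_M(e_1),\dots,P_M(e_{2t})$ span the complement, and nondegeneracy of $\Omega_{2t}$ restricted there forces some pair $P_M(e_{i_1}), P_M(e_{i_2})$ to have nonzero pairing. This is the only nontrivial step; once it is in place, iterating the construction $t-k$ times produces sections of $\Sp(2t,2t) \to \Sp(2t,2k)$ over a Zariski open cover.
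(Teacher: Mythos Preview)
Your proof is correct and follows the same underlying idea as the paper's proof, namely a symplectic Gram--Schmidt process carried out algebraically. The execution differs slightly: the paper fixes a specific sequence of vectors (the columns $e_i,f_i$ of $\Omega_{2t}$), runs the full Gram--Schmidt at once inside the coordinate ring with successive localizations at the pairings $\ell_i=\langle w_i,z_i\rangle$, and verifies nonemptiness of the resulting single open set by evaluating at the standard point; the cover is then implicit from the transitive $\Sp_{2t}$-action on $\Sp(2t,2k)$. You instead iterate one column-pair at a time, index the open sets by all pairs $(i_1,i_2)$ of standard basis vectors, and obtain the cover directly from nondegeneracy of the form on the symplectic complement. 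Your version is arguably more self-contained on the covering point, while the paper's is a bit more concrete about the algebra; both are equally valid.
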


\begin{proof}
It suffices to show that for $k$ as above, the map $\Sp(2t,2k+2)\to \Sp(2t,2k)$ is surjective and admits sections on an open cover. Let $a_1,\dots, a_{2k}\in \KK^{2t}$ be such that
\[
A=\begin{bmatrix} a_1 & \cdots & a_{2k} \end{bmatrix}\ \in\ \Sp(2t,2k).
\]
Fix some nonzero $a_{2k+1}\in\KK^{2t}$ such that $\langle a_i, a_{2k+1} \rangle = 0$, and some $a'_{2k+2}\in\KK^{2t}$ such that $\langle a_{2k+1}, a'_{2k+2}\rangle\neq 0$; such vectors exist by a dimension count and the nondegeneracy of the bilinear form. After rescaling, one may assume that $\langle a_{2k+1},a'_{2k+2}\rangle = 1$. Setting
\[
a_{2k+2}\colonequals a'_{2k+2} - \sum_{i=1}^k \langle a'_{2k+2}, a_{2i} \rangle a_{2i-1} + \sum_{i=1}^k \langle a'_{2k+2}, a_{2i-1} \rangle a_{2i},
\]
one obtains a matrix
\[
B=\begin{bmatrix} a_1 & \cdots & a_{2k+2} \end{bmatrix}\ \in\ \Sp(2t,2k+2)
\]
that maps to $A$. 

To obtain a section on a neighborhood of $A$, let $x_1,\dots,x_{2k+2}$ be vectors of indeterminates 
denoting coordinates of $\Sp(2t,2k+2)$, with the first $2k$ vectors serving as coordinates for~$\Sp(2t,2k)$. Setting
\[
y_1 \colonequals a_{2k+1} - \sum_{i=1}^k \langle a_{2k+1}, x_{2i} \rangle x_{2i-1} + \sum_{i=1}^k \langle a_{2k+1}, x_{2i-1} \rangle x_{2i}
\]
\[
y_2 \colonequals a_{2k+2} - \sum_{i=1}^k \langle a_{2k+2}, x_{2i} \rangle x_{2i-1} + \sum_{i=1}^k \langle a_{2k+2} , x_{2i-1} \rangle x_{2i}
\]
one has $\langle y_1, x_j \rangle = \langle y_2, x_j \rangle = 0$ for $1\le j \le 2k$, and $f\colonequals \langle y_1, y_2 \rangle$ is a polynomial function in the coordinates of $\Sp(2t,2k)$. Since $f(A)=1$, the function $1/f$ is regular on an open neighborhood $U$ of $A$, and $x_{2k+1} \mapsto y_1$, $x_{2k+2} \mapsto (1/f) y_2$ determines a section on $U$.
\end{proof}

\begin{lemma}
For each positive integer $t$, there is a Zariski locally trivial fiber bundle
\begin{equation}
\label{eqn:sp:1-app}
\CD
\KK^{2t-1} @>>> \Sp(2t,2) @>\pi>> \KK^{2t} \smallsetminus \{0\},
\endCD
\end{equation}
where $\pi$ maps an element of $\Sp(2t,2)$ to its first column.
\end{lemma}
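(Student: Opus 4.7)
The plan is to exhibit $\Sp(2t,2)$ explicitly as pairs of column vectors subject to one nondegenerate bilinear constraint, and then construct a section over each set in a standard Zariski affine cover of the base.

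Concretely, I would describe $\Sp(2t,2)$ as the set of pairs $(u,v)\in \KK^{2t}\times \KK^{2t}$ such that $u^\tr \Omega_{2t} v = 1$ (the remaining conditions $u^\tr\Omega_{2t} u = 0 = v^\tr\Omega_{2t} v$ are automatic because $\Omega_{2t}$ is alternating). The map $\pi$ sends $(u,v)\mapsto u$. Setting $w \colonequals \Omega_{2t}^\tr u$, the defining equation reads $w^\tr v = 1$, so the image of $\pi$ lies in $\{u : w \neq 0\} = \KK^{2t}\smallsetminus\{0\}$ (using that $\Omega_{2t}$ is invertible). Conversely, for each nonzero $u$, the fiber $\pi^{-1}(u)$ is the affine hyperplane $\{v\in\KK^{2t} : w^\tr v = 1\}$, which is a torsor for the linear hyperplane $u^\perp = \{v : w^\tr v = 0\}$, hence an affine space of dimension~$2t-1$.

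For Zariski local triviality, I would use the open cover of $\KK^{2t}\smallsetminus\{0\}$ by the principal open sets
\[
U_i\ \colonequals\ \{u\in \KK^{2t} : w_i \neq 0\},\quad i=1,\dots,2t,
\]
where $w_i$ denotes the $i$-th coordinate of $w = \Omega_{2t}^\tr u$. On $U_i$, the assignment $u \mapsto e_i/w_i$ (with $e_i$ the $i$-th standard basis vector) gives a regular section $s_i$ of $\pi$ over~$U_i$, since $w^\tr(e_i/w_i) = 1$. A trivialization
\[
\pi^{-1}(U_i)\ \cong\ U_i\times \KK^{2t-1}
\]
is then obtained by sending $(u,v)$ to $(u, v - e_i/w_i)$, which lands in $\{(u,v'): w^\tr v' = 0\}$, and then identifying $u^\perp$ with $\KK^{2t-1}$ via the linear projection that forgets the $i$-th coordinate; the inverse identification uses the relation $v'_i = -w_i^{-1}\sum_{j\neq i} w_j v'_j$, which is regular on $U_i$.

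The only substantive point is verifying that these trivializations are indeed regular maps of varieties; this is immediate from the explicit formulas once one notes that $w_i^{-1}$ is regular on $U_i$. No calculation beyond linear algebra is required, so there is no real obstacle; the main task is simply to present the section $s_i$ and the coordinate change in a clean way.
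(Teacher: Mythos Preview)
Your proposal is correct and follows essentially the same approach as the paper: both describe $\Sp(2t,2)$ as pairs $(u,v)$ subject to the single constraint $u^\tr\Omega_{2t}v=1$, cover the base by the principal opens where a coordinate of $\Omega_{2t}^\tr u$ is nonzero, and write down an explicit regular section and trivialization over each piece. The only cosmetic difference is that the paper identifies the fiber with a \emph{fixed} hyperplane $e_i^\perp$ (in the symplectic-form notation), whereas you identify it with the $u$-dependent hyperplane $u^\perp$ and then project off one coordinate; both presentations yield regular isomorphisms.
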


\begin{proof}
Let $e_1,\dots,e_{2t}$ denote the standard basis of $\KK^{2t}$, and let $u,v$ be the column vectors of a matrix in $\Sp(2t,2)$. For $1\le i\le t$, set $U_i$ and $U'_i$ to be the open subsets of $\KK^{2t} \smallsetminus \{0\}$ where $\langle u,e_{2i-1}\rangle\neq 0$ and $\langle u,e_{2i}\rangle\neq 0$, respectively. Identifying $\KK^{2t-1}$ with $e_{2i-1}^{\perp}$, one has an isomorphism
\[
\begin{array}{rcl}
\pi^{-1}(U_i) & \cong & U_i\times \KK^{2t-1}\\
{[u\ \ v]} & \mapsto & (u,\ v+\langle v,e_{2i-1}\rangle e_{2i}) \\
{\left[u\ \ v+\frac{1-\langle u,v\rangle}{\langle u,e_{2i}\rangle} e_{2i}\right]} & \mapsfrom & (u,\ v)
\end{array}
\]
and a similar isomorphism involving $U'_i$. The assertion follows.
\end{proof}

\begin{lemma}
For integers $1<k\le t$, there is a Zariski locally trivial fiber bundle
\begin{equation}
\label{eqn:sp:2-app}
\CD
\Sp(2t-2,2k-2) @>>> \Sp(2t,2k) @>\pi>> \Sp(2t,2),
\endCD
\end{equation}
where $\pi$ maps an element of $\Sp(2t,2k)$ to its first two columns.
\end{lemma}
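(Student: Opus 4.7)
The approach is to use the algebraic symplectic basis extensions provided by Lemma~\ref{lemma:section:sp}. First, note that $\pi$ is well defined: if $M \in \Sp(2t,2k)$ has first two columns $u,v$, then the upper-left $2 \times 2$ block of the defining relation $M^\tr \Omega_{2t} M = \Omega_{2k}$ yields $\langle u,v\rangle = 1$, while the alternating property makes $\langle u,u\rangle = \langle v,v\rangle = 0$ automatic, so $[u\ v] \in \Sp(2t,2)$.

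For local triviality, apply Lemma~\ref{lemma:section:sp} with $k=1$, which yields a Zariski open cover $\{U_\alpha\}$ of $\Sp(2t,2)$ together with algebraic sections $\sigma_\alpha \colon U_\alpha \to \Sp_{2t}$ extending each pair $[u\ v]$ to a full symplectic basis of $\KK^{2t}$. Since $\Omega_{2t}$ and $\Omega_{2k}$ are block diagonal with the block $\Omega_2$ repeated, a direct check shows that for any $A \in \Sp_{2t}$ and any $N \in \Sp(2t-2,2k-2)$ the product $A \begin{bmatrix} \one_2 & 0 \\ 0 & N \end{bmatrix}$ lies in $\Sp(2t,2k)$. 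This lets me define the candidate trivialization
\[
\Phi_\alpha \colon U_\alpha \times \Sp(2t-2,2k-2) \to \pi^{-1}(U_\alpha), \qquad ([u\ v], N) \mapsto \sigma_\alpha([u\ v]) \begin{bmatrix} \one_2 & 0 \\ 0 & N \end{bmatrix},
\]
whose image sits in $\pi^{-1}(U_\alpha)$ because the first two columns of the product equal $u,v$, and which is manifestly a morphism of varieties.

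To produce the inverse, given $M \in \pi^{-1}([u\ v])$ with $[u\ v] \in U_\alpha$, set $M' \colonequals \sigma_\alpha([u\ v])^{-1} M \in \Sp(2t,2k)$. The first two columns of $M'$ are then the standard basis vectors $e_1, e_2$. Using the explicit form of $\Omega_{2t}$, one computes $\langle e_1, x\rangle = x_2$ and $\langle e_2, x\rangle = -x_1$ for every $x \in \KK^{2t}$, so the symplectic relations force the first two entries of each column $M'_j$ with $j \ge 3$ to vanish. Hence $M' = \begin{bmatrix} \one_2 & 0 \\ 0 & N \end{bmatrix}$ for a unique $(2t-2) \times (2k-2)$ matrix $N$, and the block diagonality of the symplectic forms then gives $N \in \Sp(2t-2,2k-2)$. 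This provides an algebraic inverse to $\Phi_\alpha$, and the bundle is Zariski locally trivial.

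The principal obstacle is the algebraic extension of a symplectic pair to a full symplectic basis on a Zariski open set, but this is exactly the content of the alternating Gram--Schmidt construction already established in Lemma~\ref{lemma:section:sp}. Once the sections $\sigma_\alpha$ are in hand, verifying the trivialization amounts to block matrix bookkeeping.
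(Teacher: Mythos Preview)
Your proof is correct and follows essentially the same approach as the paper: both use the local sections of $\Sp_{2t}\to\Sp(2t,2)$ from Lemma~\ref{lemma:section:sp} to conjugate $M$ into block form $\begin{bmatrix}\one_2 & 0\\ 0 & N\end{bmatrix}$ and then read off the fiber $\Sp(2t-2,2k-2)$. Your write-up is slightly more explicit in verifying that $\pi$ is well defined and that the first two rows of the remaining columns vanish, but the underlying argument is identical.
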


\begin{proof}
Note that $\Sp_{2t}$ acts transitively on $\Sp(2t,2)$. By Lemma~\ref{lemma:section:sp}, $\Sp(2t,2)$ is covered by Zariski open sets $U$ on which $\Sp_{2t}\to\Sp(2t,2)$ admits a section; it suffices to show that $\pi^{-1}(U)$ is isomorphic to $U\times \Sp(2t-2,2k-2)$, compatibly with $\pi$.

Let $\alpha\colon U\to\Sp_{2t}$ be a section. For $M\in\pi^{-1}(U)$, set
\[
\beta(M)\colonequals\alpha(\pi(M))^{-1} M.
\]
Then $\beta(M) \in \Sp(2t,2k)$, and its first two columns coincide with those of $\one_{2k}$. Hence every other column of $\beta(M)$ has zeros in rows one and two. In particular, deleting the first two rows and columns of~$\beta(M)$ yields a matrix $M'\in\Sp(2t-2,2k-2)$. This provides an isomorphism
\[
\begin{array}{rcl}
\pi^{-1}(U) & \cong & U\times \Sp(2t-2,2k-2)\\
M & \mapsto & (\pi(M),\ M')\\
\alpha(A) \begin{bmatrix}\one_{2k} & 0\\ 0 & B \end{bmatrix} & \mapsfrom & (A,\ B).
\end{array}
\]
This shows that~\eqref{eqn:sp:2-app} is a Zariski locally trivial fiber bundle.
\end{proof}

\begin{lemma}[Jozefiak--Pragacz \cite{Jozefiak-Pragacz}]
\label{lemma:alt:block}
Let $U$ be the variety of $n\times n$ alternating matrices $M=(m_{ij})$ over $\KK$ with $m_{12}\neq 0$. Then there exists a morphism $\alpha\colon U\to \GL_n$ such that, for each $M\in U$, the matrix $\alpha(M)^\tr M \alpha(M)$ is alternating, with block form
\[
\begin{bmatrix} \Omega_2 & 0 \\ 0 & M' \end{bmatrix}.
\]
\end{lemma}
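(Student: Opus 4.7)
The plan is to realize $\alpha$ as a composition $DP$ of two elementary congruence matrices: $D$ normalizes the $(1,2)$-entry of $M$ to $1$, and $P$ uses the resulting $\Omega_2$-block to clear the rest of the first two rows and columns. This is the symplectic Gram--Schmidt procedure restricted to two steps, and every operation is regular on $U$ since the only denominator that appears is $m_{12}$.

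For the first step, I would set $D\colonequals\mathrm{diag}(1,1/m_{12},1,\dots,1)$, which is defined on $U$. Then $D^\tr M D$ is alternating with top-left $2\times 2$ block equal to $\Omega_2$; write it as $\begin{bmatrix}\Omega_2 & R\\ -R^\tr & N\end{bmatrix}$, where $R$ has size $2\times (n-2)$ and $N$ is an alternating $(n-2)\times (n-2)$ matrix, all of whose entries are polynomial in the $m_{ij}$ and $1/m_{12}$. For the second step, set
\[
P\ \colonequals\ \begin{bmatrix}\one_2 & \Omega_2 R\\ 0 & \one_{n-2}\end{bmatrix}.
\]
Using $\Omega_2^2=-\one_2$ and $\Omega_2^\tr=-\Omega_2$, a short block multiplication gives
\[
P^\tr D^\tr M D P\ =\ \begin{bmatrix}\Omega_2 & 0\\ 0 & N-R^\tr \Omega_2 R\end{bmatrix},
\]
and the bottom-right block is automatically alternating, being a principal submatrix of an alternating matrix.

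Finally, setting $\alpha(M)\colonequals DP$, the entries of $\alpha(M)$ are polynomial in the $m_{ij}$ and $1/m_{12}$, while $\det\alpha(M)=1/m_{12}\neq 0$ on $U$, so $\alpha\colon U\to\GL_n$ is a morphism with $\alpha(M)^\tr M\alpha(M)$ of the required form. There is no substantive obstacle; the entire argument is a verification of the block-matrix identity above, which reduces to the two computations $\Omega_2\cdot\Omega_2 R+R=0$ and $\Omega_2^\tr\Omega_2=\one_2$.
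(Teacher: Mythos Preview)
Your proof is correct: the block computation checks out exactly as you indicate, and the only denominator is $m_{12}$, so $\alpha=DP$ is a morphism $U\to\GL_n$ with the required property. The paper does not supply its own proof of this lemma, instead citing J\'ozefiak--Pragacz; your argument is precisely the two-step symplectic Gram--Schmidt that underlies that reference, so there is nothing to compare.
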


\begin{lemma}[Alternating roots]
\label{lemma:alt:root}
Consider the map
\[
\begin{array}{rcl}
\GL_{2k} & \stackrel{\mu}{\to} & \Alt(2k) \\
M & \mapsto & M^\tr \Omega_{2k} M.
\end{array}
\]
Then each element of $\Alt(2t)$ has a Zariski neighborhood on which $\mu$ admits a section.
\end{lemma}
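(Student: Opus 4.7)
The plan is to prove the lemma by induction on $k$, with the Jozefiak--Pragacz block reduction (Lemma~\ref{lemma:alt:block}) providing the key reduction step.

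For the base case $k=1$, given $N=\begin{bmatrix}0 & a\\ -a & 0\end{bmatrix}$ with $a\neq 0$, the formula $\sigma(N)\colonequals\begin{bmatrix}a & 0\\ 0 & 1\end{bmatrix}$ satisfies $\sigma(N)^\tr\Omega_2\sigma(N)=N$, and defines a morphism on all of $\Alt(2)$. Before the inductive step, I would reduce to the situation in which the target matrix has nonzero $(1,2)$-entry: given any $N_0\in\Alt(2k)$, pick a permutation matrix $P\in\GL_{2k}$ so that $(P^\tr N_0 P)_{12}\neq 0$. If $\sigma'$ is a local section of $\mu$ near $P^\tr N_0 P$, then
\[
\sigma(N)\colonequals\sigma'(P^\tr N P)\,P^\tr
\]
is a local section of $\mu$ near $N_0$, because $P P^\tr=I_{2k}$ gives $\sigma(N)^\tr\Omega_{2k}\sigma(N)=P(P^\tr N P)P^\tr=N$.

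For the inductive step with $(N_0)_{12}\neq 0$, apply Lemma~\ref{lemma:alt:block} on the Zariski open set $U=\{N\in\Alt(2k):N_{12}\neq 0\}$ to produce a morphism $\alpha\colon U\to\GL_{2k}$ with
\[
\alpha(N)^\tr N\,\alpha(N)=\begin{bmatrix}\Omega_2 & 0\\ 0 & N''(N)\end{bmatrix},
\]
where $N''(N)\in\Alt(2k-2)$ depends regularly on $N$; invertibility of $N''(N)$ is inherited from that of $N$. By the inductive hypothesis, there is a Zariski open neighborhood $V$ of $N''(N_0)$ in $\Alt(2k-2)$ and a morphism $\tau\colon V\to\GL_{2k-2}$ with $\tau(M)^\tr\Omega_{2k-2}\tau(M)=M$. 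Let $U'\subseteq U$ be the preimage of $V$ under $N\mapsto N''(N)$; it is an open neighborhood of $N_0$. Using the block decomposition $\Omega_{2k}=\begin{bmatrix}\Omega_2 & 0\\ 0 & \Omega_{2k-2}\end{bmatrix}$, set
\[
\sigma(N)\colonequals\begin{bmatrix}I_2 & 0\\ 0 & \tau(N''(N))\end{bmatrix}\alpha(N)^{-1}.
\]
A direct block multiplication shows $\sigma(N)^\tr\Omega_{2k}\sigma(N)=N$ on $U'$.

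The argument is essentially mechanical once Lemma~\ref{lemma:alt:block} is invoked; the Jozefiak--Pragacz block reduction does the heavy lifting, and I don't anticipate any nontrivial obstacle beyond verifying the block identity for $\sigma$, which is a straightforward $2\times 2$ block computation. The only subtle point is ensuring that everything remains a Zariski-regular morphism on a neighborhood of $N_0$, which follows because $\alpha$, the extraction of the block $N''(N)$, inversion in $\GL_{2k}$, and the inductive section $\tau$ are all morphisms.
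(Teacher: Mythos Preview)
Your proof is correct and follows essentially the same approach as the paper: induction on $k$, with the base case given by the same explicit formula and the inductive step driven by the Jozefiak--Pragacz block reduction (Lemma~\ref{lemma:alt:block}). The paper is terser---it handles the reduction to $a_{12}\neq 0$ by saying ``for notational simplicity'' rather than writing out the permutation conjugation, and it leaves the explicit formula for $\sigma$ implicit---but your added detail is all correct and the underlying argument is the same.
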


\begin{proof}
For $k=1$, one can globally choose a section
\[
\begin{bmatrix} 0 & a\\ -a & 0\end{bmatrix}\ \mapsto\ \begin{bmatrix} a & 0\\ 0 & 1\end{bmatrix}.
\]
We proceed by induction on $k$. Given $A\in\Alt(2t)$ assume, for notational simplicity, that~$a_{12}\neq 0$. Then there is an open neighborhood $U$ of $A$ consisting of matrices $M$ with the property that $m_{12}\neq 0$. By Lemma~\ref{lemma:alt:block}, there exists $\alpha\colon U\to\GL_{2k}$ such that, for each~$M\in U$, the matrix $\alpha(M)^\tr M \alpha(M)$ has block form
\[
\begin{bmatrix} \Omega_2 & 0 \\ 0 & N \end{bmatrix}
\]
with $N\in\Alt(2t-2)$. Using the inductive hypothesis, replacing $U$ by a possibly smaller neighborhood, the result follows.
\end{proof}

We refer to~\eqref{equation:determinantal:pairs} for the notation in the elementary lemma below, the proof of which we leave for the reader.

\begin{lemma}
\label{lemma:GL:section}
Let $1\le k \le t$. Given $M \in \GL(t,k)$, there exists a Zariski open neighborhood $U$ of $M$ with a section $\alpha\colon U \to \GL_t$ of the projection map $\pi\colon \GL_t \to \GL(t,k)$.
\end{lemma}

We get to the main result of this subsection; see~\eqref{equation:pfaffian:pairs} and~\eqref{equation:pfaffian:aux} for definitions.

\begin{lemma}
\label{lemma:ltfbs:alt}
For integers $0\le 2k < 2t \le n$, each of the following is a Zariski locally trivial fiber bundle:
\begin{equation}
\label{equation:ltfbs:alt:1}
\CD
G^{2k}_{2t\times n} @>>> X^{2k}_{2t\times n} @>\pi>> \Gr(n-2k,n),
\endCD
\end{equation}
where $M$ in $X^{2k}_{2t\times n}$ maps to $\ker(M^\tr \Omega_{2t} M)$;

\begin{equation}
\label{equation:ltfbs:alt:2}
\CD
F^{2k}_{2t\times n} @>>> G^{2k}_{2t\times n} @>\pi>> \Alt(2k),
\endCD
\end{equation}
where an element $M$ of $G^{2k}_{2t\times n}$ maps to the top left $2k\times 2k$ submatrix of $M^\tr\Omega_{2t}M$;

\begin{equation}
\label{equation:ltfbs:alt:3}
\CD
X^0_{(2t-2k) \times (n-2k)} @>>> F^{2k}_{2t\times n} @>\pi>> \Sp(2t,2k),
\endCD
\end{equation}
where an element of $F^{2k}_{2t\times n}$ maps to its first $2k$ columns;

\begin{equation}
\label{equation:ltfbs:alt:4}
\CD
F^{2k}_{2t \times n} @>>> X^{2k}_{2t \times n} @>\pi>> \Alt^{2k}_{n\times n},
\endCD
\end{equation}
where $M\in X^{2k}_{2t \times n}$ maps to $M^\tr \Omega_{2t} M$.
\end{lemma}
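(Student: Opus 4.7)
The plan is to construct explicit Zariski local trivializations for each bundle by exploiting the right $\GL_n$-action and the left $\Sp_{2t}$-action on $\KK^{2t\times n}$, together with the local sections supplied by Lemmas~\ref{lemma:section:sp},~\ref{lemma:alt:block}, and~\ref{lemma:alt:root}. For~\eqref{equation:ltfbs:alt:1}, note that $M\mapsto MP$ for $P\in\GL_n$ sends $M^\tr\Omega M$ to $P^\tr(M^\tr\Omega M)P$ and thereby transforms the kernel by $P^{-1}$. Over each standard affine chart $U\subseteq\Gr(n-2k,n)$, there is an algebraic section $\sigma\colon U\to\GL_n$ such that $\sigma(V)$ carries the standard subspace $\langle e_{2k+1},\dots,e_n\rangle$ to $V$; since the preimage of the standard subspace under the projection is precisely $G^{2k}_{2t\times n}$, the assignment $M\mapsto(\ker(M^\tr\Omega M),\,M\sigma(\ker(M^\tr\Omega M)))$ furnishes a trivialization over $U$ with values in $U\times G^{2k}_{2t\times n}$.

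For~\eqref{equation:ltfbs:alt:2}, Lemma~\ref{lemma:alt:root} provides, on a Zariski open $U\subseteq\Alt(2k)$, a section $\tau\colon U\to\GL_{2k}$ with $\tau(N)^\tr\Omega_{2k}\tau(N)=N$; right-multiplication by $\mathrm{diag}(\tau(N)^{-1},\one_{n-2k})$ pushes an element of $G^{2k}_{2t\times n}$ lying over $N$ into $F^{2k}_{2t\times n}$, giving the trivialization. For~\eqref{equation:ltfbs:alt:3}, write $M=[M_1\mid M_2]$ with $M_1$ the first $2k$ columns. Lemma~\ref{lemma:section:sp} supplies, on a Zariski open $U\subseteq\Sp(2t,2k)$, a section $\beta\colon U\to\Sp_{2t}$ extending $M_1$ to a full symplectic frame; its last $2t-2k$ columns then form an algebraically varying symplectic basis of the $\Omega$-orthogonal complement of the span of $M_1$. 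The conditions defining $F^{2k}_{2t\times n}$ force $M_2=\beta(M_1)\begin{bmatrix}0\\ N\end{bmatrix}$ for some $N\in\KK^{(2t-2k)\times(n-2k)}$ satisfying $N^\tr\Omega_{2t-2k}N=0$, identifying the fiber with $X^0_{(2t-2k)\times(n-2k)}$.

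For~\eqref{equation:ltfbs:alt:4}, the key step is a rank-$2k$ analogue of Lemma~\ref{lemma:alt:root}: I would show that the surjection $\mu\colon\GL_n\to\Alt^{2k}_{n\times n}$ defined by $P\mapsto P^\tr\begin{bmatrix}\Omega_{2k}&0\\0&0\end{bmatrix}P$ admits Zariski local sections $\rho$. One proceeds by induction on $k$: given $A\in\Alt^{2k}_{n\times n}$, after conjugating by a permutation so that $a_{12}\neq 0$, Lemma~\ref{lemma:alt:block} reduces $A$ locally to block form $\begin{bmatrix}\Omega_2 & 0\\ 0 & A'\end{bmatrix}$ with $A'\in\Alt^{2k-2}_{(n-2)\times(n-2)}$, the rank dropping by exactly $2$ since the $\Omega_2$ block contributes rank $2$. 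Iterating $k$ times yields $\rho$, and~\eqref{equation:ltfbs:alt:4} is then trivialized by $M\mapsto(M^\tr\Omega M,\,M\rho(M^\tr\Omega M)^{-1})$, with inverse $(A,M_0)\mapsto M_0\rho(A)$. The main obstacle is verifying that each intermediate $A'$ has a nonvanishing off-diagonal entry on the relevant open subset so that Lemma~\ref{lemma:alt:block} can be applied again, and that the successive block reductions compose algebraically in $A$; both follow from the rank stratification of $\Alt^{2k}_{n\times n}$ together with a careful choice of open cover at each stage.
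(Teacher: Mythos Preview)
Your treatment of~\eqref{equation:ltfbs:alt:1},~\eqref{equation:ltfbs:alt:2}, and~\eqref{equation:ltfbs:alt:3} is correct and matches the paper's arguments essentially line for line: the standard Grassmannian chart sections, the section $\tau$ from Lemma~\ref{lemma:alt:root}, and the symplectic extension $\beta$ from Lemma~\ref{lemma:section:sp} are exactly what the paper uses, under different names.

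For~\eqref{equation:ltfbs:alt:4} you take a genuinely different route. The paper does \emph{not} build a direct local section of $\GL_n\to\Alt^{2k}_{n\times n}$; instead it invokes Barile's Zariski locally trivial bundle $\Alt(2k)\to\Alt^{2k}_{n\times n}\to\Gr(n-2k,n)$ and observes that, over a common refinement of the trivializing covers for this bundle and for~\eqref{equation:ltfbs:alt:1}, the map $X^{2k}_{2t\times n}\to\Alt^{2k}_{n\times n}$ factors as the composite of the already-established~\eqref{equation:ltfbs:alt:1} and~\eqref{equation:ltfbs:alt:2}, hence is itself locally trivial. Your inductive block-reduction argument via Lemma~\ref{lemma:alt:block} is valid and more self-contained (it avoids the external citation), but it requires tracking a growing tree of Zariski open sets indexed by the successive choices of nonvanishing entries, which is what you flag as ``the main obstacle.'' The paper's composition argument sidesteps this bookkeeping entirely by recycling the work already done for the first two bundles, at the cost of importing Barile's result. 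Both arguments yield the same trivializing cover in the end, since Barile's proof is itself an iterated application of Lemma~\ref{lemma:alt:block}.
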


\begin{proof}
\eqref{equation:ltfbs:alt:1} Let $\{e_1,\dots,e_n\}$ denote the standard basis for $\KK^n$, and denote by $K$ the subspace of $\KK^n$ spanned by $\{e_{2k+1},\dots,e_n\}$. Note that $G^{2k}_{2t \times n}$ is the set of matrices $M$ in~$X^{2k}_{2t \times n}$ with $\pi(M)=K$.

Consider the map $\alpha\colon \GL_n \to \Gr(n-2k,n)$ with $M\mapsto \image\begin{bmatrix} m_{2k+1} & \cdots & m_n\end{bmatrix}$, where $m_i$ denotes the $i$-th column of $M$. We claim that there is a Zariski open cover of $\Gr(n-2k,n)$ on which the map $\alpha$ admits sections. Indeed, we have that $\alpha$ is the composition of the projection map $\GL_n \to \GL(n,n-2k)$ from Lemma~\ref{lemma:GL:section} (up to permuting columns) and the map $\GL(n,n-2k) \to \Gr(n,n-2k)$ with $M\mapsto \image M$. The former admits local sections by Lemma~\ref{lemma:GL:section}, whereas the latter admits local sections by \cite[Lemma~6]{Bruns-Schwanzl}. This shows the claim.

Given a point in $\Gr(n-2k,n)$, fix an open neighborhood $U$ and a section $\beta\colon U \to \GL_n$ of the map $\alpha$, so $\beta(W)$ is invertible and $\beta(W)(K)=W$ for all $W\in U$. Note that for $N\in G^{2k}_{2t\times n}$ we have
\[
\begin{aligned}
\pi(N \beta(W)^{-1})\ &=\ \ker((\beta(W)^{\tr})^{-1} N^\tr \Omega N \beta(W)^{-1})\ =\ \ker(N^\tr \Omega N \beta(W)^{-1}) \\
\ & \ =\ \beta(W)\ker(N^\tr \Omega N)\ =\ \beta(W)(K)\ =\ W,
\end{aligned}
\]
and along similar lines one checks that $\ker(M \beta(\pi(M)))= K$. 

One then has an isomorphism
\[
\begin{array}{rcl}
\pi^{-1}(U) & \cong & U\times G^{2k}_{2t\times n}\\
M & \mapsto & (\pi(M),\ M \beta(\pi(M)))\\
N (\beta(W))^{-1} & \mapsfrom & (W,\ N).
\end{array}
\]

\eqref{equation:ltfbs:alt:2} By Lemma~\ref{lemma:alt:root}, the projection is surjective, and there is an open cover of $\Alt(2k)$ by sets $U$, with $\psi\colon U\to\GL_{2k}$ such that $\psi(A)^\tr \Omega_{2k} \psi(A) = A$. 
We have an isomorphism
\[
\begin{array}{rcl}
\pi^{-1}(U) & \cong & U \times F^{2k}_{2t\times n}\\
M \begin{bmatrix} \psi(A) & 0 \\ 0 & \one_{n-2k} \end{bmatrix} & \mapsfrom & (A,\ M)\\
M & \mapsto & \left(\pi(M),\ M \begin{bmatrix} \psi(\pi(M))^{-1} & 0 \\ 0 & \one_{n-2k}\end{bmatrix}\right).
\end{array}
\]

\eqref{equation:ltfbs:alt:3} For $\rho\colon \Sp(2t,2t) \to \Sp(2t,2k)$ as in Lemma~\ref{lemma:section:sp}, there is a covering of $\Sp(2t,2k)$ by open sets $U$
such that $\rho^{-1}(U)\to U$ admits a section $\alpha\colon U\to \Sp(2t,2t)$. The set $X^0_{(2t-2k)\times(n-2k)}$ may be identified with
\[
X' \colonequals \left\{\begin{bmatrix}\one_{2k} & 0 \\ 0 & N\end{bmatrix}
\mathrel{\Big|} N \in X^0_{(2t-2k) \times (n-2k)}\right\}. 
\]
We then have an isomorphism
\[
\begin{array}{rcl}
\pi^{-1}(U) & \cong & U \times X'\\
\alpha(A)M & \mapsfrom & (A,\ M) \\
M & \mapsto & (\pi(M),\ \alpha(\pi(M))^{-1}M).
\end{array}
\]

\eqref{equation:ltfbs:alt:4} By \cite[p.~77]{Barile}, there is a Zariski locally trivial fiber bundle
\[
\CD
\Alt(2k) @>>> \Alt^{2k}_{n\times n} @>>> \Gr(n-2k,n)
\endCD
\]
given by mapping a matrix $N\in \Alt^{2k}_{n\times n}$ to its kernel. Take a Zariski open cover $\{V_i\}$ of $\Gr(n-2k,n)$ on which this bundle and the bundle~\eqref{equation:ltfbs:alt:1} both trivialize, and let $U_i$ and $T_i$ be the preimages of $V_i$ in $\Alt^{2k}_{n\times n}$ and $X^{2k}_{2t \times n}$, respectively. We then have a commutative diagram of the form
\[
\begin{tikzcd}
T_i \arrow{r}\arrow{d}{\cong} & U_i \arrow{r}\arrow{d}{\cong} & V_i \\
G^{2k}_{2t\times n}\times V_i \arrow{r} & \Alt({2k}) \times V_i \arrow{ur}
\end{tikzcd}
\]
where the bottom map is the product of the projection in~\eqref{equation:ltfbs:alt:2} with the identity on $V_i$. Since~\eqref{equation:ltfbs:alt:2} is Zariski locally trivial, one can take an open cover of $\Alt(2k)$ on which the map $G^{2k}_{2t\times n} \to \Alt({2k})$ decomposes as a product with fiber $F^{2k}_{2t\times n}$; taking the preimage of this cover in each $U_i$ gives a cover of $\Alt^{2k}_{n\times n}$ on which~\eqref{equation:ltfbs:alt:4} decomposes as a product.\qedhere
\end{proof}

%%%%%%%%%%%%%%%%%%%%%%%%%%%%%%%%%%%%%%%%%%%%%%%%%%%%%%%%%%%%%%%%%%%%%%%%
\subsection{The generic determinantal case}
%%%%%%%%%%%%%%%%%%%%%%%%%%%%%%%%%%%%%%%%%%%%%%%%%%%%%%%%%%%%%%%%%%%%%%%%

For the notation for the next few lemmas, refer to~\eqref{equation:determinantal:pairs}. Throughout this section, $\KK$ denotes an algebraically closed field.

\begin{lemma}
For integers $1\le k\le t$, there is a Zariski locally trivial fiber bundle
\begin{equation}
\label{equation:ltfb:pairs}
\CD
\pairs(t-k+1,1) @>>> \pairs(t,k) @>{\pi}>> \pairs(t,k-1),
\endCD
\end{equation}
given by projecting $(A,B)$ to the top $k-1$ rows of $A$ and the left $k-1$ columns of $B$.
\end{lemma}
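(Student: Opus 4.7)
The plan is to identify the fiber of $\pi$ concretely and then trivialize over the standard open cover of $\pairs(t,k-1)$ given by invertibility of $(k-1)\times(k-1)$ submatrices of the factors.

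First I would check that $\pi$ is well-defined: if $(A,B) \in \pairs(t,k)$ and $A', B'$ are the top $k-1$ rows and left $k-1$ columns, then $A'B'$ is the top-left $(k-1)\times(k-1)$ block of $AB = \one_k$, which equals $\one_{k-1}$. Writing $a \in \KK^t$ for the deleted last row of $A$ and $b \in \KK^t$ for the deleted last column of $B$, the fiber $\pi^{-1}(A',B')$ is cut out in $\KK^t \times \KK^t$ by $A'b = 0$, $aB' = 0$, and $ab = 1$; abstractly it is a perfect pairing locus, since $A'B' = \one_{k-1}$ forces the decomposition $\KK^t = \image(B') \oplus \ker(A')$, so the dot product pairing on $\{a \mid aB'=0\} \times \ker(A')$ is nondegenerate.

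Next, for $(k-1)$-element subsets $I, J \subseteq \{1,\dots,t\}$, I would consider the open subset $U_{I,J}$ of $\pairs(t,k-1)$ on which the columns of $A'$ indexed by $I$ and the rows of $B'$ indexed by $J$ both form invertible $(k-1)\times(k-1)$ submatrices. These sets cover $\pairs(t,k-1)$ since $A'$ and $B'$ have full rank $k-1$. By permuting coordinates, it suffices to construct a trivialization over $U \colonequals U_{I_0, J_0}$ with $I_0 = J_0 = \{1,\dots,k-1\}$. Decomposing $A' = [A'_1 \mid A'_2]$ and $B' = \begin{bmatrix} B'_1 \\ B'_2 \end{bmatrix}$ with $A'_1, B'_1$ invertible of size $k-1$, and writing $a = (a_1, a_2)$, $b = (b_1, b_2)$ accordingly, the equations $A'b = 0$ and $aB' = 0$ force
\[
b_1 = -{A'_1}^{-1} A'_2 b_2, \qquad a_1 = -a_2 B'_2 {B'_1}^{-1},
\]
so $a$ and $b$ are freely parameterized by $a_2, b_2 \in \KK^{t-k+1}$. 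A direct computation then yields $ab = a_2 M b_2$, where
\[
M \colonequals \one_{t-k+1} + B'_2 {B'_1}^{-1} {A'_1}^{-1} A'_2.
\]

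The main step I expect to be the computational heart of the argument is verifying that $M$ is invertible on $U$. Using the relation $A'_1 B'_1 + A'_2 B'_2 = \one_{k-1}$, one simplifies ${A'_1}^{-1} A'_2 B'_2 = {A'_1}^{-1} - B'_1$, and the Weinstein--Aronszajn identity $\det(\one + XY) = \det(\one + YX)$ then gives
\[
\det(M) = \det\bigl(\one_{k-1} + {A'_1}^{-1} A'_2 B'_2 {B'_1}^{-1}\bigr) = \det\bigl({A'_1}^{-1}{B'_1}^{-1}\bigr) = \det(A'_1 B'_1)^{-1},
\]
which is a unit on $U$. The substitution $(a_2, b_2) \mapsto (a_2, M b_2)$ therefore provides a regular isomorphism between the restriction of $\pi$ to $U$ and the projection $U \times \pairs(t-k+1, 1) \to U$. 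Performing this construction for every pair $(I,J)$ establishes Zariski-local triviality of the fiber bundle.
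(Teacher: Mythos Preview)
Your argument is correct and complete. The key computation---that the matrix $M = \one_{t-k+1} + B'_2 {B'_1}^{-1} {A'_1}^{-1} A'_2$ is invertible on $U$ via the Weinstein--Aronszajn identity---is carried out cleanly, and the resulting change of variable $(a_2,b_2)\mapsto (a_2, Mb_2)$ does give a regular trivialization over each $U_{I,J}$.

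The paper takes a different route: rather than working in explicit coordinate charts, it uses the transitive $\GL_t$-action $(A,B)\mapsto (AM, M^{-1}B)$ on $\pairs(t,k-1)$, which commutes with $\pi$. This lets one normalize any point of $\pairs(t,k-1)$ to the standard form $\big((\one_{k-1},0),\,(\one_{k-1},B_0)^\tr\big)$, over which the fiber is visibly $\{(u,v)\in\KK^{1\times(t-k+1)}\times\KK^{(t-k+1)\times 1}\mid uv=1\}$ via the block formula
\[
(u,v)\ \longmapsto\ \left(\begin{bmatrix}\one_{k-1} & 0 \\ -uB_0^\tr & u\end{bmatrix},\ \begin{bmatrix}\one_{k-1} & 0 \\ B_0^\tr & v\end{bmatrix}\right).
\]
Zariski triviality then follows from the existence of Zariski-local sections of $\GL_t\to\pairs(t,k-1)$. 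Your approach is more elementary---pure matrix algebra, no group action---and more explicit about the open cover and the trivializing maps; the paper's approach is shorter and more conceptual, but leaves the passage from ``transitive action with local sections'' to ``Zariski locally trivial'' implicit. The Weinstein--Aronszajn step in your argument is doing, in coordinates, what the paper's normalization via $\GL_t$ accomplishes geometrically.
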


\begin{proof}
Note that an element $(A,B)$ in the fiber $F_0$ over the point
\[
\left(\begin{bmatrix} \one_{k-1} & 0\end{bmatrix},\ \begin{bmatrix} \one_{k-1} \\ 0 \end{bmatrix} \right)\ \in\ P(t,k-1)
\]
is determined by the last row of $A$ and last column of $B$, both of which have first $k-1$ entries zero. This induces an isomorphism $\psi\colon F_0 \to \pairs(t-k+1,1)$.

Now, by Lemma~\ref{lemma:GL:section}, given a matrix $M \in \GL(t,k-1)$, there exists an open neighborhood $U'$ of $M$ with a section $\alpha\colon U' \to \GL_t$. Let $U$ be the preimage of $U'$ under the map $(A,B) \mapsto B$ from $\pairs(t,k-1)\to \GL(t,k-1)$. Note that the collection of such $U$ forms an open cover of $\pairs(t,k-1)$. We then have an isomorphism
\[
\begin{array}{rcll}
\pi^{-1}(U) & \cong & U\times P(t-k+1,1)\\
(A,B) & \mapsto & \big((A',B') \colonequals \pi(A,B),\ \psi(A\alpha(B'), \alpha(B')^{-1} B)\big)\\
(\tilde{E}\alpha(D)^{-1},\ \alpha(D)\tilde{F}) & \mapsfrom & \big((C,D),\ (E,F)\big),
\end{array}
\]
where $(\tilde{E},\tilde{F})\colonequals \psi^{-1}(E,F)$.
\end{proof}

\begin{lemma}
\label{lemma:fib:Gl}
For integers $1\le k<t$, there is a Zariski locally trivial fiber bundle
\begin{equation}
\label{equation:fib:gl}
\CD
\KK^t\smallsetminus \KK^{k} @>>> \GL(t,k+1) @>{\pi}>> \GL(t,k)
\endCD
\end{equation}
that forgets the last column.
\end{lemma}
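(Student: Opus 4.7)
The plan is to construct an explicit Zariski open cover of the base $\GL(t,k)$ together with trivializations of $\pi$ above each element of the cover. For each $k$-subset $I\subseteq\{1,\dots,t\}$, let $U_I$ denote the Zariski open subset of $\GL(t,k)$ consisting of matrices $A$ whose $k\times k$ submatrix $A_I$ indexed by the rows in $I$ is invertible. Since every matrix of rank $k$ has at least one invertible size $k$ submatrix, the~$U_I$ cover $\GL(t,k)$.

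Fix~$I$. Given $A\in U_I$, every $v\in\KK^t$ decomposes uniquely as $v=u+w$ with $u\in\image(A)$ and $w\in V_I\colonequals\{w\in\KK^t\mid w_I=0\}$; explicitly, $u=A\,A_I^{-1}\,v_I$ and $w=v-u$. The vector $v$ fails to lie in $\image(A)$ precisely when $w\neq0$. Identifying $V_I$ with $\KK^{t-k}$ by deleting the zero entries indexed by $I$, this yields a bijection
\[
\pi^{-1}(A)\ =\ \KK^t\smallsetminus\image(A)\ \longleftrightarrow\ \KK^k\times(\KK^{t-k}\smallsetminus\{0\})\ =\ \KK^t\smallsetminus\KK^k,
\]
where the displayed identification $\KK^t=\KK^k\times\KK^{t-k}$ comes from splitting the coordinates according to $I$ and its complement. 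The first step is to observe that this bijection is the restriction to the fiber of a global trivialization
\[
\phi_I\colon\pi^{-1}(U_I)\ \longrightarrow\ U_I\times(\KK^t\smallsetminus\KK^k),\qquad (A,v)\ \longmapsto\ \bigl(A,\,v_I,\,(v-A\,A_I^{-1}v_I)_{I^c}\bigr),
\]
with inverse map
\[
(A,\,a,\,w')\ \longmapsto\ (A,\,A\,A_I^{-1}a+\iota_{I^c}(w')),
\]
where $\iota_{I^c}\colon\KK^{t-k}\to\KK^t$ inserts the entries of $w'$ in the positions $I^c$ and zeros elsewhere. The second step is to note that both $\phi_I$ and its inverse are morphisms of varieties on $U_I$: the entries of $A_I^{-1}$ are rational functions on $\GL(t,k)$ that are regular on $U_I$, and everything else is polynomial. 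The remaining check, that $\phi_I$ is compatible with $\pi$ and the projection $U_I\times(\KK^t\smallsetminus\KK^k)\to U_I$, is immediate from the formula.

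There is no serious obstacle; the argument is purely a computation in linear algebra, entirely parallel to the trivialization constructed for~\eqref{equation:ltfbs:alt:1} in Lemma~\ref{lemma:ltfbs:alt}. The only step that requires care is the verification that $v\notin\image(A)$ translates to $w'\neq0$ under the coordinate decomposition; this follows from the fact that the decomposition $\KK^t=\image(A)\oplus V_I$ depends algebraically on $A\in U_I$, because $A\,A_I^{-1}v_I$ is regular in $A$ on $U_I$.
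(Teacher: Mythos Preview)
Your proof is correct and follows essentially the same approach as the paper: both use the standard affine cover of $\GL(t,k)$ by the nonvanishing loci $U_I$ of the size~$k$ minors, and trivialize over each $U_I$ by splitting the extra column into its component in $\image(A)$ and its component in the fixed complement $V_I$. The paper encodes this splitting via left multiplication by an explicit matrix $P_S N_A$ (row operations putting $A$ into the normal form $\begin{bmatrix}\one_k\\ A_S\end{bmatrix}$), whereas you write down the projection $v\mapsto A\,A_I^{-1}v_I$ directly; these are equivalent formulations of the same linear-algebra computation.
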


\begin{proof}
Note first that a point in the fiber $F_0$ over the point $\begin{bmatrix} \one_k & 0\end{bmatrix}$ is determined by the last column, and a column vector corresponds to a point in the fiber if and only if the last $t-k$ entries are not all zero; this gives an isomorphism $\psi\colon F_0 \to \KK^t \smallsetminus \KK^k$. 

By Lemma~\ref{lemma:GL:section}, given a matrix $M \in \GL(t,k)$, there exists an open neighborhood $U$ of~$M$ with a section $\alpha \colon U \to \GL_t$. We then have an isomorphism
\[
\begin{array}{rcl}
\pi^{-1}(U) & \cong & U\times (\KK^t \smallsetminus \KK^k)\\
A & \mapsto & \big(\pi(A),\ \psi(\alpha(\pi(A))^{-1} A)\big) \\
\alpha(B) \psi^{-1}(C) & \mapsfrom & (B,C).
\end{array}\qedhere
\]
\end{proof}

We prove the main result of the subsection; for definitions, see~\eqref{equation:determinantal:pairs} and~\eqref{equation:determinantal:aux}.

\begin{lemma}
\label{lemma:ltfbs:gen}
For integers $0\le k < t \le m,n$, each of the following is a Zariski locally trivial fiber bundle:
\begin{equation}
\label{equation:ltfbs:gen:1}
\CD
G^{k}_{m,t,n} @>>> X^k_{m,t,n} @>\pi>> \Gr(n-k,n),
\endCD
\end{equation}
where $(A,B)$ in $X^k_{m,t,n}$ maps to $\ker(AB)$;

\begin{equation}
\label{equation:ltfbs:gen:2}
\CD
F^{k}_{m,t,n} @>>> G^k_{m,t,n} @>\pi>> \GL(m,k),
\endCD
\end{equation}
where $(A,B)$ in $G^k_{m,t,n}$ maps to the left $k$ columns of $AB$;

\begin{equation}
\label{equation:ltfbs:gen:3}
\CD
X^0_{m-k,\,t-k,\,n-k} @>>> F^k_{m,t,n} @>\pi>> \pairs(t,k).
\endCD
\end{equation}
where $(A,B)$ in $F^k_{m,t,n}$ maps to the pair consisting of the top $k\times t$ submatrix of $A$ and the left $t\times k$ submatrix of $B$.
\end{lemma}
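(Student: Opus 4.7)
The plan is to mirror Lemma~\ref{lemma:ltfbs:alt} from the Pfaffian case: for each of the three bundles, I would produce an open cover of the base together with an explicit trivialization over each open set, by constructing a regular change-of-basis matrix. Surjectivity of each projection is straightforward; local triviality is the content.

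For~\eqref{equation:ltfbs:gen:1}, I would copy essentially verbatim the construction in the proof of~\eqref{equation:ltfbs:alt:1}. For a size-$k$ subset $S\subseteq\{1,\ldots,n\}$, set $K\colonequals\mathrm{span}(e_i\mid i\in S)$ and $K'\colonequals\mathrm{span}(e_i\mid i\notin S)$, and let $U_S\subseteq\Gr(n-k,n)$ be the standard chart of subspaces $W$ with $W\cap K=0$. As in the Pfaffian case, for each such $W$ there is a regular $M_W\in\GL_n$ carrying $K'$ onto $W$. The trivialization is
\[
(A,B)\ \longmapsto\ (\ker(AB),\,(A,BM_W)),
\]
with inverse $(W,(A',B'))\mapsto(A',B'M_W^{-1})$; the verification that $(A,BM_W)\in G^k_{m,t,n}$ uses the identity $\ker(A\cdot BM_W)=M_W^{-1}\ker(AB)=K'$.

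For~\eqref{equation:ltfbs:gen:2}, I would cover $\GL(m,k)$ by the open charts $U_S$ indexed by size-$k$ row-subsets $S$ on which the corresponding $k\times k$ submatrix is invertible. Following the row-reduction device in the proof of Lemma~\ref{lemma:fib:Gl}, one obtains a regular morphism $C\mapsto N_C$ from $U_S$ to $\GL_m$ satisfying $N_C\,C=\bigl(\begin{smallmatrix}\one_k\\0\end{smallmatrix}\bigr)$. Writing $C$ for the first $k$ columns of $AB$, the product $N_C\cdot AB$ then equals $\bigl(\begin{smallmatrix}\one_k&0\\0&0\end{smallmatrix}\bigr)$, so $(N_CA,B)\in F^k_{m,t,n}$, and
\[
(A,B)\ \longmapsto\ (C,\,(N_CA,B))
\]
is a trivialization over $U_S$ with inverse $(C,(A',B'))\mapsto(N_C^{-1}A',B')$.

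Bundle~\eqref{equation:ltfbs:gen:3} is the most intricate, and the main obstacle will be the construction of the required local sections. The $\GL_t$-action $M\cdot(A,B)=(AM^{-1},MB)$ intertwines $\pi$ with the transitive $\GL_t$-action on $\pairs(t,k)$; the stabilizer at a base point is the lower-right $\GL_{t-k}$, so the task reduces to constructing local sections of the principal bundle $\GL_t\to\pairs(t,k)$, which is the generic determinantal analog of Lemma~\ref{lemma:section:sp}. Concretely, given $(A_1,B_1)\in\pairs(t,k)$ I would build a regular $P\in\GL_t$ whose first $k$ columns are $B_1$ and whose last $t-k$ columns form a basis of $\ker A_1$; this yields $A_1P=[\one_k,0]$ and $P^{-1}B_1=\bigl(\begin{smallmatrix}\one_k\\0\end{smallmatrix}\bigr)$. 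Covering $\pairs(t,k)$ by the open sets on which the columns of $A_1$ indexed by a fixed size-$k$ subset form an invertible block, such a $P$ can be produced by the standard column-reduction formulas (this is the elementary analog of Lemma~\ref{lemma:section:sp} in the general linear setting). Given such a section $\alpha\colon U\to\GL_t$, the trivialization of~\eqref{equation:ltfbs:gen:3} over $U$ is
\[
(A,B)\ \longmapsto\ \bigl((A_1,B_1),\,(A_2',B_2')\bigr),
\]
where $A_2'$ is the lower-right $(m-k)\times(t-k)$ block of $A\cdot\alpha(A_1,B_1)$ and $B_2'$ is the lower-right $(t-k)\times(n-k)$ block of $\alpha(A_1,B_1)^{-1}\cdot B$; the defining identity $AB=\bigl(\begin{smallmatrix}\one_k&0\\0&0\end{smallmatrix}\bigr)$ forces the other blocks to vanish and yields $A_2'B_2'=0$, placing $(A_2',B_2')\in X^0_{m-k,t-k,n-k}$ as required.
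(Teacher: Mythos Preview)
Your proposal is correct and takes essentially the same approach as the paper: for each bundle you trivialize over standard charts via an explicit change-of-basis matrix, just as the paper does. Your treatment of~\eqref{equation:ltfbs:gen:3} is in fact a slight streamlining of the paper's two-step reduction---the paper first normalizes $B_1$ and then builds a second matrix from $A_1$ and the left null space of $B_1$, whereas you build the single matrix $P=[B_1\mid\text{basis of }\ker A_1]$ in one shot---but the underlying idea (produce a Zariski-local section of $\GL_t\to\pairs(t,k)$ and use it to conjugate to the standard fiber) is the same.
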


\begin{proof}
\eqref{equation:ltfbs:gen:1} One may follow along similar lines to~\eqref{equation:ltfbs:alt:1}; retaining the notation from there, with the sole change that $d=t$. We then have an isomorphism
\[
\begin{array}{rcl}
\pi^{-1}(U) & \cong & U\times G^{k}_{m,t, n}\\
A & \mapsto & \big(W\colonequals \ker( AB),\ (A,BM_W)\big)\\
(A,\ B'(M_W)^{-1}) & \mapsfrom & \big(W,\ (A,B')\big).
\end{array}
\]

\eqref{equation:ltfbs:gen:2} Given a matrix $M \in \GL(t,k)$, there exists an open neighborhood $U$ of $M$ with a section $\alpha\colon U \to \GL_t$ by Lemma~\ref{lemma:GL:section}. We then have an isomorphism
\[
\begin{array}{rcl}
\pi^{-1}(U) & \cong & U\times F^k_{m,t,n}\\
(A,\ B) & \mapsto & \big(C\colonequals \pi(A,B),\ (C^{-1}A,B)\big)\\
(CA,\ B) & \mapsfrom & \big(C,\ (A,B)\big).
\end{array}
\]

\eqref{equation:ltfbs:gen:3} By Lemma~\ref{lemma:GL:section}, given $M\in \GL(t,k)$, there is an open neighborhood $U'$ with a section $\alpha\colon U'\to \GL_t$. Let $U$ be the preimage of $U'$ under the second projection $\pairs(t,k)\to \GL(t,k)$. Note that the collection of such $U$ forms an open cover of $\pairs(t,k)$. Identify $X^0_{m-k,\,t-k,\,n-k}$ with 
\[
X'\colonequals \left\{\begin{bmatrix} \one_k & 0\\ 0 & N \end{bmatrix} \mathrel{\Big|} N\in X^0_{m-k,t-k,n-k} \right\},
\]
one has an isomorphism
\[
\begin{array}{rcl}
\pi^{-1}(U) & \cong & U\times X' \\
(A,B) & \mapsto & \big((A',B')\colonequals \pi(A,B),\ (A\alpha(B'),\alpha(B')B)\big)\\
(C\alpha(B)^{-1},\ \alpha(B) D) & \mapsfrom & \big((A,B),\ (C,D)\big).
\end{array}\qedhere
\]
\end{proof}

%%%%%%%%%%%%%%%%%%%%%%%%%%%%%%%%%%%%%%%%%%%%%%%%%%%%%%%%%%%%%%%%%%%%%%%%
\subsection{The symmetric determinantal case}
%%%%%%%%%%%%%%%%%%%%%%%%%%%%%%%%%%%%%%%%%%%%%%%%%%%%%%%%%%%%%%%%%%%%%%%%

Let $\KK$ be an algebraically closed field of characteristic other than two. We consider the standard inner product
\[
\langle a, b \rangle \colonequals a^\tr b,
\]
for $a,b$ in $\KK^t$.

\begin{lemma}[Gram--Schmidt]
\label{lemma:section:o}
For integers $0<k<t$, let
\[
\pi\colon \Ort(t,t) \to \Ort(t,k)
\]
be the map sending a matrix to its first $k$ columns; see~\eqref{equation:orthogonal:pairs} for the definitions. Then $\pi$ is surjective, and:
\begin{enumerate}[\quad\rm(1)]
\item In Setting~\AN, there exists a Euclidean open cover of $\Ort(t,k)$ such that, for each open set $U$ in the cover, the restriction $\pi^{-1}(U)\to U$ admits a section.

\item In Setting~\ET, there exists an \'etale cover of $\Ort(t,k)$ such that for each extension $U\to \Ort(t,k)$ in the cover, the base change of the projection map
\[
\CD
U \times_{\Ort(t,k)} {\Ort(t,t)} @>{U \times \pi}>> U
\endCD
\]
admits a section.
\end{enumerate}
\end{lemma}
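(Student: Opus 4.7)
The plan is to adapt the symplectic Gram--Schmidt construction of Lemma~\ref{lemma:section:sp} to the orthogonal setting. The crucial new feature is that extending an orthonormal $k$-frame to an orthonormal basis requires normalizing vectors $v$ with $\langle v, v \rangle \ne 0$, which involves square roots. This is precisely why the statement allows Euclidean or \'etale covers rather than Zariski ones, in contrast to Lemma~\ref{lemma:section:sp}.

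Given $A = [a_1 \mid \cdots \mid a_k] \in \Ort(t,k)$, I would first construct an orthogonal but not yet orthonormal extension via Gram--Schmidt. Fix an ordered tuple $S = (v_{k+1}, \ldots, v_t)$ of vectors in $\KK^t$, and consider the Zariski open $V_S \subseteq \Ort(t,k)$ where $a_1, \ldots, a_k, v_{k+1}, \ldots, v_t$ are linearly independent. Define inductively
\[
w_i \ \colonequals \ v_i - \sum_{j=1}^{k} \langle a_j, v_i \rangle a_j - \sum_{j=k+1}^{i-1} \frac{\langle w_j, v_i \rangle}{\langle w_j, w_j \rangle} w_j,
\]
further localizing $V_S$ at the nonvanishing of each $\langle w_j, w_j \rangle$. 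Letting $S$ range over a finite pool of test frames (for instance, ordered tuples of vectors of the form $e_i$ and $e_i + e_j$, using that the characteristic of $\KK$ is not two), the resulting localized opens should cover $\Ort(t,k)$.

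To pass from the orthogonal frame $(w_{k+1}, \ldots, w_t)$ to an orthonormal one, I would set $a_i = w_i / \sigma_i$ where $\sigma_i^2 = \langle w_i, w_i \rangle$. In Setting~\AN, any point of $\Ort(t,k)$ admits a simply connected Euclidean neighborhood inside some such localized $V_S$ on which each $\langle w_j, w_j \rangle$ is nonvanishing; there a single-valued holomorphic square root $\sigma_j$ exists, and the assignment $A \mapsto [a_1 \mid \cdots \mid a_t]$ is the desired analytic section. In Setting~\ET, form the affine \'etale $V_S$-scheme $U_S \to V_S$ by adjoining variables $\sigma_{k+1}, \ldots, \sigma_t$ subject to $\sigma_i^2 = \langle w_i, w_i \rangle$; since the characteristic of $\KK$ is odd and each $\langle w_i, w_i \rangle$ is invertible on the chosen locus, these extensions are \'etale, and $a_i = w_i / \sigma_i$ defines a section of the base changed projection $U_S \times_{\Ort(t,k)} \Ort(t,t) \to U_S$.

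The main obstacle I expect is proving the covering claim: that for every $A \in \Ort(t,k)$ there is a choice of $S$ from the finite pool with all the Gram--Schmidt residuals $w_j(A)$ simultaneously non-isotropic. Over a field of characteristic other than two, this should follow from the nondegeneracy of the standard bilinear form restricted to the orthogonal complement $\mathrm{span}(a_1, \ldots, a_k)^\perp$: for each fixed $S$, the locus where some $w_j$ becomes isotropic is a proper closed subset, and an elementary dimension argument using that a maximal totally isotropic subspace has dimension at most $\lfloor (t-k)/2 \rfloor$ ensures these closed subsets do not jointly exhaust $\Ort(t,k)$ once enough test frames are included. Once this coverage is secured, the remaining verifications---regularity of the sections and compatibility with $\pi$---are formal and essentially identical to the symplectic argument.
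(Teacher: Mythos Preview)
Your Gram--Schmidt-with-square-roots strategy is the right core idea and matches the paper's construction, but the paper sidesteps precisely the obstacle you flag. Rather than vary the test frame $S$ over a pool and argue that the resulting opens cover $\Ort(t,k)$, the paper observes that $\Ort(t,t)$ acts transitively on $\Ort(t,k)$ and that $\pi$ is equivariant for this action. Hence it suffices to produce a single nontrivial \'etale (respectively Euclidean) neighborhood on which a section exists; translates by the group then furnish the cover for free. With that reduction in hand, the paper uses only the fixed test vectors $e_{k+1},\dots,e_t$, and checks nonemptiness of the resulting locus by evaluating at the specific point $A=[e_1\mid\cdots\mid e_k]$, where all the Gram--Schmidt residuals are standard basis vectors with $\langle w_i,w_i\rangle=1$.

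Your direct approach can be made to work, but the coverage argument as sketched is incomplete. The dimension bound on isotropic subspaces does not by itself show that any particular finite pool of test frames suffices: you would need, for each $A$, to exhibit an $S$ in the pool whose Gram--Schmidt residuals are all non-isotropic, and this is not obvious for the specific pool $\{e_i,\ e_i+e_j\}$ you suggest. A cleaner fix, if you want to avoid the group-action trick, is to allow all $S\in(\KK^t)^{t-k}$: for each fixed $A$ the good locus of test frames is Zariski open and nonempty (since the form is nondegenerate on the orthogonal complement), so every $A$ lies in some $V_S$, and then Noetherianity of $\Ort(t,k)$ yields a finite subcover. Either way, the transitive-action reduction is the cleanest route and eliminates the need for any such argument.
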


\begin{proof}
In either setting, it suffices to show that the map $\Ort(t,k+1)\to \Ort(t,k)$ is surjective for $k$ as above, and admits sections on an open cover. 

Let $a_1,\dots, a_{k}\in \KK^{t}$ be such that
\[
A=\begin{bmatrix} a_1 & \cdots & a_k \end{bmatrix}\ \in\ \Ort(t,k).
\] 
Let $W\colonequals \image A$ in $\KK^t$. Given $v\in\KK^t$, note that
\[
v-\sum_{i=1}^t \langle a_i, v\rangle a_i\ \in\ W^\perp,
\]
so $\KK^t = W + W^\perp$. Thus, given a nonzero vector $w'\in W^\perp$, by the nondegeneracy of the inner product, there exists $w''\in W^\perp$ with $\langle w', w'' \rangle\neq 0$. It then follows from the identity
\[
2 \langle w', w''\rangle\ =\ \langle w'+w'', w'+w''\rangle - \langle w', w'\rangle - \langle w'', w''\rangle
\]
that there exists $w\in W^\perp$ with $\langle w, w\rangle\neq 0$. Setting $a_{k+1}=\lambda w$, for some square root $\lambda$ of $\langle w, w\rangle^{-1}$, one obtains a matrix 
\[
B=\begin{bmatrix} a_1 & \cdots & a_{k+1} \end{bmatrix}\ \in\ \Ort(t,k+1)
\]
mapping to $A$.

To obtain a section on an \'etale neighborhood of $A$, let $x_1,\dots,x_{k+1}$ be vectors of indeterminates denoting coordinates of $\Ort(t,k+1)$, with the first $k$ vectors serving as coordinates for~$\Ort(t,k)$. Setting 
\[
y \colonequals a_{k+1} - \sum_{i=1}^k \langle x_i, a_{k+1} \rangle x_i,
\]
one has $\langle y, x_i\rangle =0$ for $1\le i \le k$, and $f= \langle y, y\rangle$ is a polynomial function on $\Ort(t,k)$. Since~$f(A)=1$, the function $1/f$ is regular on an open neighborhood of $A$. Take~$g$ such that $g^2=1/f$ on an \'etale neighborhood of $A$ in Setting~\ET, or on an analytic neighborhood in Setting~\AN; then $x_{k+1} \mapsto g y$ determines a section on this neighborhood. 
\end{proof}

We require the following result of Micali and Villamayor.
\begin{lemma}{\cite[Lemme~2]{Micali-Villamayor}}
\label{lemma:sym:block}
Let $U$ denote the set of $n\times n$ symmetric matrices $A=[a_{ij}]$ over~$\KK$ with $a_{11}\neq 0$. In Setting~\ET, there is an \'etale morphism $\alpha\colon U\to \GL_n$ such that for each $A\in U$, the matrix $B\colonequals \alpha(A)^\tr A \alpha(A)$ is symmetric, and block decomposes~as
\[
B = \begin{bmatrix} 1 & 0 \\ 0 & A' \end{bmatrix}
\]
with $\mathrm{I}_{t}(A) = \mathrm{I}_{t-1}(A')$.
\end{lemma}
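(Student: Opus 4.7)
The plan is to mimic the classical completing-the-square procedure for diagonalizing a symmetric bilinear form, combined with an étale square-root extraction that normalizes the pivot entry. The construction proceeds in two stages: a Zariski-local ``sweep'' of the first row and column of $A$, followed by an étale step that rescales the resulting $(1,1)$ entry to~$1$.

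First, for $A\in U$, I would define the upper unitriangular matrix $E(A)\in\GL_n$ whose $(1,j)$ entry is $-a_{1j}/a_{11}$ for $j\ge 2$, with ones on the diagonal and zeros elsewhere. Since $a_{11}$ is a unit on $U$, this is a regular morphism $U\to\GL_n$. Because $A$ is symmetric, a direct calculation gives
\[
E(A)^\tr A\,E(A)\;=\;\begin{bmatrix} a_{11} & 0 \\ 0 & A''(A) \end{bmatrix},
\]
where $A''(A)$ is a symmetric $(n-1)\times(n-1)$ matrix depending regularly on $A$. To normalize the pivot, I would pass to the étale cover $\tilde U\colonequals\Spec R[s]/(s^2-a_{11})$, where $R$ is the coordinate ring of $U$; this is étale over $U$ because $\mathrm{char}(\KK)\neq 2$ and $a_{11}$ is a unit on $U$, so $2s$ is a unit. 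Setting $D\colonequals\mathrm{diag}(s^{-1},1,\dots,1)\in\GL_n$ over $\tilde U$ and $\alpha\colonequals E\cdot D$, one obtains $\alpha(A)^\tr A\,\alpha(A) = B$ with the desired block shape and $A'=A''(A)$.

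The minor identity $I_t(A)=I_{t-1}(A')$ would then follow from two observations. Since $\alpha(A)\in\GL_n$, left- and right-multiplication by it and its transpose preserve the ideal of $t$-minors, so $I_t(A)=I_t(B)$. Expanding a $t\times t$ minor of $B$ according to whether the first row and first column appear among the chosen indices: if both appear, the minor equals a $(t-1)$-minor of $A'$; if exactly one appears, the minor vanishes since the off-diagonal entries in the first row and first column of $B$ are zero; and if neither appears, the minor is a $t$-minor of $A'$. Therefore $I_t(B)=I_{t-1}(A')+I_t(A')=I_{t-1}(A')$, as the second summand is contained in the first.

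I do not anticipate a significant obstacle: the construction is explicit, and the only mildly nonobvious point is étaleness of the square-root extension, which is automatic from the characteristic hypothesis and the invertibility of $a_{11}$ on $U$.
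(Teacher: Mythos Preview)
The paper does not supply a proof of this lemma; it is recorded as a citation of Micali--Villamayor. Your argument is correct and is precisely the classical completing-the-square construction underlying their result: sweep the first row and column with a unitriangular congruence $E(A)$, then normalize the pivot via an \'etale square-root extension of $a_{11}$ (which is where the hypothesis $\operatorname{char}\KK\neq 2$ enters). Your derivation of the minor identity $I_t(A)=I_{t-1}(A')$ via block expansion of minors of $B$ is also the standard one and is complete as written.

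One small remark on the statement itself: the phrasing ``\'etale morphism $\alpha\colon U\to\GL_n$'' is somewhat informal---the dimensions do not match for a genuine \'etale morphism---and what is intended, as you correctly interpreted, is a morphism $\tilde U\to\GL_n$ defined on an \'etale cover $\tilde U\to U$. This is exactly what you construct, and it is consistent with how the lemma is invoked downstream in the proof of Lemma~\ref{lemma:sym:root}.
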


For notation in the following lemma, refer to~\eqref{equation:orthogonal:pairs}.

\begin{lemma}[Symmetric roots]
\label{lemma:sym:root}
Consider the map
\[
\begin{array}{rcl}
\GL_{t} & \stackrel{\mu}{\to} & \Sym(t) \\
M & \mapsto & M^\tr M.
\end{array}
\]
\begin{enumerate}[\quad\rm(1)]
\item In Setting~\AN, there exists a Euclidean open cover of $\Sym(t)$ such that, for each open set $U$ in the cover, the base change of the projection map
\[
\CD
U \times_{\Sym(t)} {\GL_t} @>{U\times \mu}>> U
\endCD
\]
admits a section.

\item In Setting~\ET, there exists an \'etale cover of $\Sym(t)$ such that for each extension $U \to \Sym(t)$ in the cover, the base change of the projection map
\[
\CD
U \times_{\Sym(t)} {\GL_t} @>{U\times \mu}>> U
\endCD
\]
admits a section.
\end{enumerate}
\end{lemma}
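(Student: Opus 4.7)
The plan is to proceed by induction on $t$. The base case $t=1$ amounts to constructing local sections for the map $\mu\colon\GL_1\to\Sym(1)$, which is the squaring map on $\KK^\times$. In Setting~\AN, the holomorphic square root $\sqrt{z}$ is well-defined on any simply connected Euclidean neighborhood of a nonzero point, providing the required section. In Setting~\ET, since the characteristic of $\KK$ is not two, the morphism
\[
\Spec \KK[x,x^{-1}][s]/(s^2-x)\ \to\ \Spec \KK[x,x^{-1}]\ =\ \Sym(1)
\]
is an affine \'etale double cover, and the tautological assignment $x\mapsto s$ gives a section after base change.

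For the inductive step, fix $A_0 \in \Sym(t)$ with $t\ge 2$. Since $\KK$ is algebraically closed of characteristic not two and $A_0$ is invertible symmetric, one may choose $P\in \GL_t$ with $P^\tr A_0 P = \one_t$; in particular, the $(1,1)$-entry of $P^\tr A_0 P$ is nonzero, and this inequality defines a Zariski open neighborhood $V$ of $A_0$ in $\Sym(t)$. By Lemma~\ref{lemma:sym:block} in Setting~\ET, and by its Setting~\AN analogue obtained by performing a Gaussian elimination step followed by a local square-root extraction of the $(1,1)$-entry, one obtains a morphism $\alpha$ defined on a suitable \'etale or Euclidean neighborhood of $A_0$ inside $V$ satisfying
\[
\alpha(A)^\tr P^\tr A P\, \alpha(A)\ =\ \begin{bmatrix} 1 & 0\\ 0 & A' \end{bmatrix},
\]
where $A'\in\Sym(t-1)$ depends regularly on $A$.

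Applying the inductive hypothesis to $\Sym(t-1)$ at the point $A_0'$ corresponding to $A_0$ yields a section $\sigma'$ of the analogous map $M'\mapsto M'^\tr M'$ on a neighborhood of $A_0'$. Pulling this back along the regular map $A\mapsto A'$, one obtains a section of $\mu$ on a neighborhood of $A_0$ via
\[
\sigma(A)\ \colonequals\ \begin{bmatrix} 1 & 0\\ 0 & \sigma'(A') \end{bmatrix} \alpha(A)^{-1} P^{-1},
\]
since a direct computation gives $\sigma(A)^\tr \sigma(A) = A$. The main obstacle is the careful bookkeeping of \'etale versus analytic neighborhoods along the inductive chain, together with the adaptation of Lemma~\ref{lemma:sym:block} to Setting~\AN; beyond this, no essentially new ideas are required.
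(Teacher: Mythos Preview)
Your proposal is correct and follows essentially the same route as the paper: induction on $t$, with the base case handled by adjoining a square root, and the inductive step by congruence to arrange a nonzero $(1,1)$-entry, then invoking Lemma~\ref{lemma:sym:block} to split off a $1\times 1$ block. The only cosmetic differences are that the paper uses a single elementary matrix $E$ (adding row $k$ to row $1$) rather than a full diagonalizing congruence $P$ to reach the case $a_{1,1}\neq 0$, and handles Setting~\AN by observing that the \'etale cover built in Setting~\ET restricts to an analytic covering space, rather than redoing the Gaussian step with a holomorphic square root.
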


\begin{proof}
We start with Setting~\ET, where we proceed by induction on $t$. For $t=1$, note first that $\Sym(1) \cong \GL_1 \cong \KK^\times$, and under these isomorphisms, the map $\mu\colon \GL_1 \to \Sym(1)$ corresponds to the squaring map $\sigma\colon \KK^\times \to \KK^\times$. Since $\mathrm{char}(\KK)\neq 2$, the map $\sigma$ is itself an \'etale surjection. Let $U=\KK^\times \stackrel{\sigma}{\to} \KK^\times$ and consider the base change
\[
\CD
U\times_{\KK^\times} {\KK^\times} @>U\times \sigma>> U.
\endCD
\]
This corresponds to the ring map $\KK[x^{\pm 1}] \to \KK[x^{\pm 1}, y^{\pm 1}]/(x^2 - y^2)$, which splits via passing to the quotient by $(x-y)$.

Let $t>1$. Let $U_{1,1}$ be the Zariski open set in $\Sym(t)$ of matrices $A$ with $a_{1,1}\neq 0$. Lemma~\ref{lemma:sym:block} then reduces the issue to the case of a $(t-1)\times(t-1)$ matrix, where a section exists by the inductive hypothesis.

Given a matrix with $a_{1,1}=0$, let $U_{1,k}$ be the Zariski open set where $a_{1,k}\neq 0$. Let $E$ be the elementary operation that adds row $k$ to row~$1$. Then $EAE^\tr$ is in $U_{1,1}$ (since $2\neq 0$), and thus $E^{-1}\sigma E^{-\tr}$, with $\sigma$ the section found over $U_{1,1}$, gives the required section over the Zariski open set $E^{-1}U_{1,1}E^{-\tr}\cap U_{1,k}$ containing $A$.

In the analytic setting, one uses the analogue of Lemma~\ref{lemma:sym:block}.
\end{proof}

The following proof is a fleshed out version of \cite{Israel}.

\begin{lemma}[Unitary symmetric square roots]
\label{lemma:sym:unitary:sqrt}
The map from the set of unitary symmetric matrices to itself given by $U\mapsto U^2$ has Euclidean local sections.
\end{lemma}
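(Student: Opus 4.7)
The plan is to construct the local section via the holomorphic functional calculus for matrices. Fix a unitary symmetric matrix $W_0$. By observation~(4) preceding the statement, $W_0$ is normal with all eigenvalues on the unit circle, so its spectrum $\Sigma_0$ is a finite subset of $\SS^1$. I would choose $\beta\in\SS^1$ with $-\beta\notin\Sigma_0$, which is possible since $\Sigma_0$ is finite, and set $\Omega\colonequals\CC\smallsetminus\{-t\beta\mid t\ge 0\}$. Then $\Omega$ is a simply connected open subset of $\CC\smallsetminus\{0\}$ that contains $\Sigma_0$, so it admits a single-valued holomorphic square root function $f\colon\Omega\to\CC$.

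Next, by upper semicontinuity of the spectrum, I would choose a Euclidean open neighborhood $V$ of $W_0$ in the space of $k\times k$ complex matrices, together with a closed contour $\gamma$ in $\Omega$ that winds once around $\Sigma_0$, such that the spectrum of every $W\in V$ lies inside $\gamma$. For $W\in V$, the formula
\[
\psi(W)\ \colonequals\ \frac{1}{2\pi i}\oint_\gamma f(z)(zI-W)^{-1}\,dz
\]
defines a holomorphic matrix-valued function of $W$, and standard properties of the functional calculus give $\psi(W)^2=W$.

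It remains to verify that $\psi$ sends unitary symmetric matrices in $V$ to unitary symmetric matrices; this is the only subtle step, and it reduces to two separate checks. For symmetry, transposition passes inside the integral, and one has $(zI-W)^{-\tr}=(zI-W^\tr)^{-1}$, so the condition $W^\tr=W$ gives $\psi(W)^\tr=\psi(W)$. For unitarity, I would use the unitary diagonalization $W=QDQ^*$ of the normal matrix $W$, with $Q$ unitary and $D$ diagonal with entries $\lambda_i\in\SS^1$; then $\psi(W)=Qf(D)Q^*$, and since $f(\lambda_i)^2=\lambda_i$ has modulus one we get $|f(\lambda_i)|=1$, hence $\psi(W)\psi(W)^*=\one_k$.

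There is no substantive obstacle in this argument; the essential global obstruction is merely that $\SS^1$ does not admit a continuous square root, which is precisely what forces the branch cut and hence limits the section to a Euclidean neighborhood where the spectrum avoids the chosen ray. Restricting $\psi$ to the unitary symmetric matrices in $V$ yields the desired local section.
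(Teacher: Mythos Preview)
Your proof is correct and follows the same underlying idea as the paper's: choose a holomorphic branch of the square root on a slit plane avoiding the spectrum, apply it to $W$ via functional calculus, and read off symmetry and unitarity from the fact that the result is a function of $W$ with eigenvalues $f(\lambda_i)\in\SS^1$.

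The execution differs. The paper builds the square root explicitly as a Lagrange-type interpolating polynomial $f_r(z)$ with $f_r(r_i^2)=r_i$, and must then verify that the apparent poles at $r_i-r_{i'}$ cancel so that the construction varies analytically even as eigenvalues collide; symmetry is immediate because $f_{\sqrt{\mu}}(U)$ is literally a polynomial in $U$. Your route via the Cauchy integral $\psi(W)=\frac{1}{2\pi i}\oint_\gamma f(z)(zI-W)^{-1}\,dz$ sidesteps the pole analysis entirely, since holomorphy in $W$ is automatic from the integral representation, and symmetry follows by transposing under the integral sign. Your argument is the cleaner and more standard one; the paper's has the minor virtue of being fully explicit and avoiding any appeal to the Riesz calculus.
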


\begin{proof}
We consider, for variables $r=\{r_1,\dots,r_k\}$, the rational function
\[
f_r(z)\colonequals \sum_{i=1}^k \frac{(z-r_i^2+r_i)\cdot\prod_{j\neq i}(z-r_j^2)}{\prod_{j\neq i} (r_i^2-r_j^2)}.
\]
Clearly, $f_r(z)$ has at worst poles at $r_i+r_{i'}$ and at $r_i-r_{i'}$, and our first claim on $f_r(z)$ is that only the former poles will occur. Indeed, the only summands where $r_i-r_{i'}$ is a pole are those of index $i$ and $i'$. However,
\[
\frac{(z-r_i^2+r_i)\cdot\prod\limits_{j\neq i}(z-r_j^2)}{\prod\limits_{j\neq i} (r_i^2-r_j^2)}\ +\
\frac{(z-r_{i'}^2+r_{i'})\cdot\prod\limits_{j\neq i'}(z-r_j^2)}{\prod\limits_{j\neq i'} (r_{i'}^2-r_j^2)}
\]
can---up to the factor $(r_i+r_{i'})$---be interpreted (reading $r_{i'}$ as $r_i+\Delta r_i$) as the difference quotient of $g_{\bar r}(z)$ where
\[
g_{\bar r}(z)=\frac{(z-r_i^2+r_i)\prod\limits_{i'\neq j\neq i}(z-r_j^2)}{\prod\limits_{i'\neq j\neq i}(r_i^2-r_j^2)}
\]
in the variables $z$ and $\bar r=\{r_1,\dots,r_{i'-1},r_{i'+1},\dots,r_k\}$. Since $g_{\bar r}(z)$ is differentiable, the claim follows.

We observe next, that $f_r(z)$ evaluates to $r_i$ at $r_i^2$. Indeed, setting $z=r_i^2$ wipes out all terms except term $i$, which returns $r_i$.

Choose a unitary symmetric $k\times k$ matrix $U_0$ and denote its eigenvalues $\lambda_1,\dots,\lambda_k$. Choose a ray $R$ emanating from the origin in $\CC$ and not containing any $\lambda_i$, and a section $\sqrt{-}$ of the square function on $\CC\smallsetminus R$. Note that $\sqrt{a}+\sqrt{b}=0$ is then impossible on $\CC\smallsetminus R$. For $\mu\in(\CC\smallsetminus R)^k$, let $f_{\sqrt{\mu}}(z)$ denote the function $f_r(z)$ from above, with parameters $\sqrt{\mu_1},\dots,\sqrt{\mu_k}$. Then the rational function
$f_{\sqrt\mu}(z)$ has no poles on $(\CC\smallsetminus R)^k\times\CC$; this follows from the discussion on poles of $f_r(z)$ above, in light of the fact that roots cannot sum to zero on $\CC\smallsetminus R$. In particular, for any fixed choice $\mu$ of the parameters,
$f_{\sqrt\mu(z)}$ is a well-defined polynomial that varies analytically with $\mu$.

We now consider for unitary symmetric $U$ with eigenvalues $\mu\in(\CC\smallsetminus R)^k$ the matrix $f_{\sqrt\mu}(U)$. As $f_{\sqrt\mu}(\mu_i)=\sqrt{\mu_i}$, the function $(f_{\sqrt\mu}(z))^2-z$ is zero at each $\mu_i$. If $U$ is unitary with eigenvalues $\mu$ all in $\CC\smallsetminus R$, then the minimal polynomial of $U$ divides $(f_{\sqrt\mu}(z))^2-z$ and thus $(f_{\sqrt \mu}(U))^2=U$. In particular, eigenvalues of $f_{\sqrt \mu}(U)$ are, like those of $U$, on the unit circle.

Thus, for unitary symmetric $U$, the matrix $f_{\sqrt \mu}(U)$ is symmetric (as $f_{\sqrt\mu}$ is a polynomial and $U$ symmetric), normal (as $U$ is normal, and $f_{\sqrt\mu}$ is a polynomial), unitary (since it is normal and has its eigenvalues on the unit circle). It follows that $U\mapsto f_{\sqrt \mu}(U)$ is an analytic section of the square function on unitary symmetric matrices with eigenvalues different from the intersection of $R$ with the unit circle.
\end{proof}

The following is the main result of the subsection; for definitions, see~\eqref{equation:orthogonal:pairs} and~\eqref{equation:symmetric:aux}.

\begin{lemma}
\label{lemma:ltfb:sym}
For integers $0\le k < t \le n$, each of the following is a Zariski locally trivial fiber bundle:
\begin{equation}
\label{equation:ltfbs:sym:1}
\CD
G^{k}_{t\times n} @>>> X^k_{t\times n} @>\pi>> \Gr(n-k,n),
\endCD
\end{equation}
sending $M\in X^k_{t\times n}$ to the kernel of $M^\tr M$;

\begin{equation}
\label{equation:ltfbs:sym:2}
\CD
F^{k}_{t\times n} @>>> G^{k}_{t\times n} @>\pi>> \Sym(k),
\endCD
\end{equation}
sending $M\in G^k_{t\times n}$ with $M^\tr M=\begin{bmatrix}A&0\\0&0\end{bmatrix}$ to $A\in\Sym(k)$;

\begin{equation}
\label{equation:ltfbs:sym:3}
\CD
X^0_{(t-k) \times (n-k)} @>>> F^{k}_{t\times n} @>\pi>> \Ort(t,k),
\endCD
\end{equation}
sending $M\in F^k_{t\times n}$ to the submatrix consisting of the left $k$ columns.
\end{lemma}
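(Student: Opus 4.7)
The plan is to mirror the arguments given in Lemmas~\ref{lemma:ltfbs:alt} and~\ref{lemma:ltfbs:gen}, replacing throughout the role of the symplectic (or general linear) group by the orthogonal group, and replacing the ``alternating'' and ``invertibility'' sections by their symmetric counterparts. In particular, the key inputs will be the symmetric analogue of Lemma~\ref{lemma:alt:root}, namely Lemma~\ref{lemma:sym:root} (extraction of matrix square roots), and the symmetric analogue of Lemma~\ref{lemma:section:sp}, namely Lemma~\ref{lemma:section:o} (completion to an orthogonal matrix). Note that in Setting~\AN the sections produced by Lemmas~\ref{lemma:section:o} and~\ref{lemma:sym:root} are only defined on Euclidean open sets, and in Setting~\ET on affine \'etale covers, so the local triviality of~\eqref{equation:ltfbs:sym:2} and~\eqref{equation:ltfbs:sym:3} should be read in the respective topology.

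For~\eqref{equation:ltfbs:sym:1}, I would follow verbatim the argument for~\eqref{equation:ltfbs:alt:1} with $d = k$ and $Y^\tr \Omega_{2t} Y$ replaced by $M^\tr M$. Fix a basis $\{e_1,\dots,e_n\}$ of $\KK^n$ and let $K,K'$ be the spans of $\{e_1,\dots,e_k\}$ and $\{e_{k+1},\dots,e_n\}$ respectively; let $U \subseteq \Gr(n-k,n)$ be the Zariski open set of subspaces $W$ with $W\cap K = 0$. Each such $W$ is $M_W K'$ for a unique matrix $M_W = \begin{bmatrix}\one_k & * \\ 0 & \one_{n-k}\end{bmatrix}$ depending regularly on $W$. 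The assignment $M \mapsto (\ker(M^\tr M),\, M\cdot M_{\ker(M^\tr M)})$ provides a regular trivialization $\pi^{-1}(U) \cong U \times G^{k}_{t\times n}$, with inverse $(W, M') \mapsto M'(M_W)^{-1}$; repeating the argument on the open sets defined by nonvanishing of other $k$-minors gives a Zariski open cover.

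For~\eqref{equation:ltfbs:sym:2}, I would invoke Lemma~\ref{lemma:sym:root} to obtain, on a neighborhood $U$ of a given $A \in \Sym(k)$ in the appropriate topology, a section $\psi\colon U \to \GL_k$ with $\psi(A)^\tr \psi(A) = A$. Then $\pi^{-1}(U) \cong U \times F^{k}_{t\times n}$ via
\[
M \ \mapsto \ \Bigl(A,\ M \begin{bmatrix}\psi(A)^{-1} & 0 \\ 0 & \one_{n-k}\end{bmatrix}\Bigr),
\qquad
(A, M') \ \mapsto \ M'\begin{bmatrix}\psi(A) & 0 \\ 0 & \one_{n-k}\end{bmatrix}.
\]
For~\eqref{equation:ltfbs:sym:3}, I would use Lemma~\ref{lemma:section:o} to produce, on a neighborhood $U \subseteq \Ort(t,k)$ (Euclidean in Setting~\AN, \'etale in Setting~\ET), a section $\alpha\colon U \to \Ort(t,t)$ of the projection to the first $k$ columns. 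For $M \in \pi^{-1}(U)$, the matrix $\alpha(\pi(M))^{-1}M$ has first $k$ columns equal to those of $\one_t$, and since it lies in $F^k_{t\times n}$ the remaining $n-k$ columns are of the form $\begin{bmatrix}0 \\ N\end{bmatrix}$ with $N \in X^0_{(t-k)\times(n-k)}$. The map $M \mapsto (\pi(M), N)$ is then a trivialization $\pi^{-1}(U) \cong U \times X^0_{(t-k)\times(n-k)}$, with inverse $(B,N) \mapsto \alpha(B)\begin{bmatrix}\one_k & 0 \\ 0 & N\end{bmatrix}$.

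The main obstacle, compared to the Pfaffian and generic cases, is that symmetric square roots genuinely require leaving the Zariski topology; once Lemmas~\ref{lemma:section:o} and~\ref{lemma:sym:root} are in hand, however, the bundle constructions are routine and parallel their alternating/linear counterparts exactly.
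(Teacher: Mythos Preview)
Your proposal is correct and follows essentially the same approach as the paper: the paper proves~\eqref{equation:ltfbs:sym:1} by the identical reduction to the alternating case with $d=k$ and $\one_t$ in place of $\Omega_{2t}$, and proves~\eqref{equation:ltfbs:sym:2} and~\eqref{equation:ltfbs:sym:3} via the same explicit trivializations you write down, using Lemmas~\ref{lemma:sym:root} and~\ref{lemma:section:o} respectively. Your remark that the local triviality in~\eqref{equation:ltfbs:sym:2} and~\eqref{equation:ltfbs:sym:3} is only Euclidean (in Setting~\AN) or \'etale (in Setting~\ET), rather than Zariski, is a useful clarification---the paper's proof in fact establishes exactly this, despite the word ``Zariski'' appearing in the lemma's statement.
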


\begin{proof}
\eqref{equation:ltfbs:sym:1} The proof is parallel to that of~\eqref{equation:ltfbs:alt:1}, taking instead $d=k$ and replacing $\Omega_{2t}$ by $\one_t$.

\eqref{equation:ltfbs:sym:2} By Lemma~\ref{lemma:sym:root}, the projection is surjective, and there exists an open cover of~$\Sym(k)$ by sets $U$ with $\psi\colon U\to\GL_k$ such that $\psi(A)^\tr\psi(A)=A$; the sets are Euclidean open in Setting \AN, and \'etale open in Setting~\ET. We then have isomorphisms
\[
\begin{array}{rcl}
\pi^{-1}(U) & \cong & U \times F^{k}_{t\times n}\\
M \begin{bmatrix} \psi(A) & 0 \\ 0 & \one_{n-k} \end{bmatrix} & \mapsfrom & (A,\ M)\\
M & \mapsto & \left(\pi(M),\ M \begin{bmatrix} \psi(\pi(M))^{-1} & 0 \\ 0 & \one_{n-k}\end{bmatrix}\right).
\end{array}
\]

\eqref{equation:ltfbs:sym:3} By Lemma~\ref{lemma:section:o}, there is an open cover of $\Ort(t,k)$ by sets $U$ for which there is a section $\alpha: U \to U \times_{\Ort(t,k)} \Ort(t,t)$ of the projection; the sets $U$ are Euclidean open in Setting~\AN, and \'etale open in Setting~\ET.

The set $X^0_{(t-k)\times (n-k)}$ may be identified with
\[
X' \colonequals \left\{\begin{bmatrix}\one_k & 0 \\ 0 & N\end{bmatrix} \mathrel{\Big|} N \in X^0_{(t-k) \times (n-k)} \right\}.
\]
We then have isomorphisms
\[
\begin{array}{rcl}
\pi^{-1}(U) & \cong & U \times X'\\
\alpha(A)M & \mapsfrom & (A,\ M) \\
M & \mapsto & (\pi(M),\ \alpha(\pi(M))^{-1}M).
\end{array}
\]
This concludes the proof.
\end{proof}

%%%%%%%%%%%%%%%%%%%%%%%%%%%%%%%%%%%%%%%%%%%%%%%%%%%%%%%%%%%%%%%%%%%%%
\section*{Acknowledgments}
%%%%%%%%%%%%%%%%%%%%%%%%%%%%%%%%%%%%%%%%%%%%%%%%%%%%%%%%%%%%%%%%%%%%%

We are grateful to Ilya Smirnov for helpful comments.

%%%%%%%%%%%%%%%%%%%%%%%%%%%%%%%%%%%%%%%%%%%%%%%%%%%%%%%%%%%%%%%%%%%%%%%%

\end{document}